\documentclass[11pt]{amsart}

\allowbreak

\numberwithin{equation}{section}

\usepackage{amsfonts,amssymb,amscd,amsmath,latexsym,amsbsy}
\usepackage{graphicx}
\usepackage{subcaption}
\usepackage{amssymb}
\usepackage{amsfonts}
\usepackage{latexsym}
\usepackage{mathrsfs}
\usepackage{mathtools}
\usepackage{amsthm}
\usepackage{comment}
\usepackage{listings}
\usepackage{tikz-cd}
\usepackage{tikz}
\usetikzlibrary {calc}
\usepackage{hyperref}
\usepackage{amsmath}
\usepackage{leftidx}
\usepackage[toc,page]{appendix}
\usepackage{upgreek}

\allowdisplaybreaks
\newtheorem{thm}{Theorem}[section]

\newtheorem{lmm}[thm]{Lemma}
\newtheorem{prop}[thm]{Proposition} 
\newtheorem{coro}[thm]{Corollary}

\theoremstyle{definition}
\newtheorem{definition}[thm]{Definition}
\newtheorem{remark}[thm]{Remark}
\newtheorem*{remark*}{Remark} 

\newtheorem{example}[thm]{Example}

\numberwithin{equation}{section}

\newcommand{\SM}{\mathcal M}
\newcommand{\SR}{\mathcal R}

\newcommand{\R}{\ensuremath{\mathbb{R}}}
\newcommand{\RR}{\ensuremath{\mathbb{R}}}
\newcommand{\Z}{\ensuremath{\mathbb{Z}}}
\newcommand{\N}{\ensuremath{\mathbb{N}}}

\newcommand{\WF}{\ensuremath{\mathrm{WF}}}
\newcommand{\Ell}{\ensuremath{\mathrm{Ell}}}
\newcommand{\supp}{\ensuremath{\mathrm{supp}}}

\newcommand{\mk}{\ensuremath{\mathfrak}}

\newcommand{\msf}{\ensuremath{\mathsf}}

\newcommand{\la}{\ensuremath{\langle}}
\newcommand{\ra}{\ensuremath{\rangle}}

\newcommand{\Op}{\ensuremath{\mathrm{Op}}}

\newcommand{\Id}{\ensuremath{\mathrm{Id}}}
\newcommand{\oc}{\ensuremath{\mathrm{1c}}}

\newcommand{\sct}{\ensuremath{\mathrm{sc}}}
\newcommand{\ps}{\ensuremath{\mathrm{ps}}}

\newcommand{\sub}{\ensuremath{\mathrm{sub}}}

\newcommand{\ff}{\ensuremath{\mathrm{ff}}}
\newcommand\SL{\mathcal{L}}

\newcommand\xoc{{x_{\oc}}}
\newcommand\yoc{{y_{\oc}}}
\newcommand\etaoc{{\eta_{\oc}}}
\newcommand\xioc{{\xi_{\oc}}}

\newcommand\rhops{{\rho_{\ps}}}
\newcommand\ffocps{{\ff_{\oc-\ps}}}
 
\newcommand\ffococ{{\ff_{\oc-\oc}}} 

\newcommand\xoco{x_{\oc, 1}}
\newcommand\yoco{y_{\oc, 1}}
\newcommand\xioco{\xi_{\oc, 1}}
\newcommand\etaoco{\eta_{\oc, 1}}
\newcommand\xoct{x_{\oc, 2}}
\newcommand\yoct{y_{\oc, 2}}
\newcommand\xioct{\xi_{\oc, 2}}
\newcommand\etaoct{\eta_{\oc, 2}}
\newcommand{\TSJ}{\mk{T}}

\newcommand{\xiocr}{\xi_{\oc,0}}
\newcommand{\xiocl}{\xi_{\oc,0}}

\newcommand{\xsc}{{x}}

\newcommand{\cl}{\ensuremath{\mathrm{cl}}}

\newcommand{\Char}{\ensuremath{\mathrm{Char}}}

\newcommand{\Cl}{\ensuremath{\mathfrak{Cl}_g}}
\newcommand{\Cli}{\ensuremath{\mathfrak{Cl}_{g_i}}}
\newcommand{\Clo}{\ensuremath{\mathfrak{Cl}_{g_1}}}
\newcommand{\Clt}{\ensuremath{\mathfrak{Cl}_{g_2}}}

\newcommand{\ococb}{\ensuremath{ \leftidx{^{ \mathsf{L}\oc}}{ T^*X^2_b} } }
\newcommand{\secm}{\ensuremath{ {}^{\oc,\sct,\xiocl}{T^*X^2_b} } }
\newcommand{\secmz}{\ensuremath{{}^{\oc,\sct,0}{T^*X^2_b}}}
\newcommand{\Lagsec}{\Uplambda^{\oc,\sct}_{\xi_{\oc,0}}}
\newcommand{\Lagsecz}{\Uplambda^{\oc,\sct}_{0}}
\newcommand{\Legsec}{\mathcal{L}^{\oc,\sct}_{\xi_{\oc,0}}}
\newcommand{\ffsec}{\ff_{(\oc,\sct,\xiocl)}}

\newcommand{\rHp}{\ensuremath{ \mathsf{H}^{m',0}_p } }

\newcommand\rv[1]{{\color{blue} #1}}

\newcommand{\Rp}{\mathcal{R}_+}
\newcommand{\Rm}{\mathcal{R}_-}

\newcommand\Legps{L}
\newcommand\Leg{\mathcal{L}}
\newcommand\Lagps{\Lambda}
\newcommand\Lag{\Uplambda}

\newcommand\FSR{\overline{\Lambda_-'}}
\newcommand\BSR{\overline{\Lambda_+'}}

\newcommand\Poi{\mathcal{P}}

\newcommand\Poiz{\mathcal{P}_0}
\newcommand\Poipm{\mathcal{P}_\pm}
\newcommand\Radm{\mathcal{R}_-}
\newcommand\Radp{\mathcal{R}_+}

\newcommand\base{\mathrm{base}}

\newcommand{\ococparaone}{v}
\newcommand{\ococparatwo}{w}

\newcommand\ocphase{\overline{{}^{\oc} T^* \R^n}}
\newcommand\psphase{\overline{{}^{\ps} T^* \R^{n+1}}}
\newcommand{\scphase}{{}^{\sct} T^* \overline{\R^n}}
\usepackage[letterpaper,top=2cm,bottom=2cm,left=2cm,right=2cm,marginparwidth=1.75cm]{geometry}

\title{Determining metrics from the scattering map of the time-dependent Schr\"odinger equation}

\author{Qiuye Jia}
\date{\today}

\begin{document}

\begin{abstract}
For a time dependent Schr\"odinger equation, the scattering map is the map sending the asymptotic profile of solution as $t\to-\infty$ to its asymptotic profile as $t\to+\infty$. 
In this paper we show that, for certain class of metrics, the scattering maps associated to two Schr\"odinger operators with two time dependent metrics only differ by a compact operator if and only if these two metrics are related by a pull-back of a diffeomorphism.
\end{abstract}

\maketitle

\tableofcontents
\section{Introduction}

\subsection{The scattering map and the main result}
\label{subsec:main-results}

In this article, we study the problem that how the scattering map of the time-dependent Sch\"odinger equation under a time-dependent metric and time-dependent potential determine the metric itself. 
More precisely, we consider the Schr\"odinger operator $P$ on $\R^{n+1}_{z, t}$, where $z \in \R^n$, $t \in \R$, 
\begin{equation} \label{eq:def-Schrodinger-op}
P = D_t + \Delta_{g(t)} + V(z, t), 
\end{equation}
where $D_t = -i \partial_t$, $\Delta_{g(t)}$ is the positive Laplace operator with respect to a smooth family of metrics $g(t)$ on $\R^n_z$, and $V$ is a smooth complex-valued potential function. 
Here allowing a time-dependent metric $g(t)$ that is different from the Euclidean one models that our particle is going through a curved space or an inhomogeneous medium that varies with time. 
In addition, we make the assumption that $g(t)$ is a compactly supported, in spacetime, perturbation of the flat metric:
\begin{equation} \label{eq:g-definition-perturbation}
g(t) = g_0+ \tilde{g}(t) \text{ with } \tilde{g} \text{ that is compactly supported in spacetime}, 
\end{equation}
and $V$ is also compactly supported in spacetime. 
Thus, there exist large constants $R$, $T$ such that if either $|z| \geq R$, or if $|t| \geq T$, then $g(t) = \sum_{i,j} g_{ij}(z,t) dz_i dz_j$ coincides with the flat metric $g_0 = \sum_i dz_i^2$, and $V$ vanishes identically. To put this another way, near spacetime infinity, $P$ coincides with the `free' Schr\"odinger operator $P_0 := D_t + \Delta_0$, where $\Delta_0$ is the standard (positive) Laplacian on $\RR^n$. We assume, in addition, that each $g(t)$ is non-trapping in the sense that its geodesics always escape any compact region in $\R^n$ in finite time.

The scattering map concerns final state data or scattering data of global solutions to $Pu = 0$. Every  global solution $u$ has an asymptotic expansion of the form 
\begin{equation}\label{eq:u expansion}
u \sim (4\pi it)^{-n/2} e^{i|z|^2/4t} f_\pm\big( \frac{z}{2t} \big) + O(|t|^{-n/2 - \epsilon}), \quad t \to \pm \infty, \, \epsilon>0
\end{equation}
for large positive or negative times, where this holds in a pointwise sense if $f_\pm$ are sufficiently regular, and distributionally in general. The functions $f_\pm$ are by definition the asymptotic data of the solution $u$. 
Then the \emph{scattering map} of $P$ is the map
\begin{equation} \label{eq:S-def-intro}
    S: \; f_- \to f_+.
\end{equation}
The main theorem of \cite{HJ2026-scattering-map} shows that this map is an elliptic Fourier integral operator—more precisely, a Legendre distribution—of a novel type.
See \cite[Section~1]{HJ2026-scattering-map} for more detailed discussion.

When two metric $g_1(t)$ and $g_2(t)$ are related by a pull-back:
\begin{equation} \label{eq:g2g1-pullback}
    g_1 = \psi^*g_2,
\end{equation}
where $\psi: \R^{n+1} \to \R^{n+1}$ is a diffeomorphism that preserves time slices, which means
\begin{equation} \label{eq:psi-concrete}
    \psi(t,z) = (t,\psi_1(t,z))
\end{equation}
with $\psi_1(t,\cdot): \R^n \to \R^n$ being a diffeomorphism, and $\psi_1$ is the identity for $|t| \geq T$ or $|z| \geq R$.\footnote{This can be relaxed to only require that $\psi$ becomes a fixed isometry of the Euclidean space, i.e. a translation and multiplication by an element in $SO(n)$, for $t \geq T$ and $t \leq -T$ respectively. }, one can show that the scattering map $S_{g_i}$ associated to 
\begin{equation} \label{eq:def-Pi}
    P_i=D_t+\Delta_{g_i(t)}+V_i(z,t),\; i=1,2
\end{equation}
are the same on the leading order and $S_{g_1} - S_{g_2}$ is a compact operator from $L^2(\R^n)$ to itself. 
In fact, due to the lack of invariance of the time-dependent Schr\"odinger equation under pull-back, even this `forward problem', though much easier than the inverse problem discussed below, is not entirely straightforward. 
See Appendix~\ref{sec:appendix-forward-invariance} for a more precise version and details.

The aim of this article is to answer the inverse problem arises from this. That is, suppose either of the following holds:
\begin{align}
& \text{the scattering map } S_{g_i} \text{ associated to } P_i  \text{ in \eqref{eq:def-Pi} coincide on the leading order, }  \label{eq:S1-S2-condition-1} \\
  & S_{g_1}-S_{g_2}  \text{ is a compact operator from } L^2(\R^n) \text{ to itself},  \label{eq:S1-S2-condition-2}
\end{align}
then does this information from the PDE side determine the geometric information?
Or more precisely do $g_1$ and $g_2$ have to be related as in \eqref{eq:g2g1-pullback}?
We will give an affirmative answer to this question in this article, with certain restriction on one of $g_i$, which arises naturally in the boundary rigidity problem. Concretely, we will consider the following class of metrics.

\begin{definition} \label{definition:metric-classes}
Let $B_R(0) \subset \R^n$ be the ball of radius $R$ centered at the origin and $T \in \R_+$, such that $[-T,T] \times B_R(0)$ contains the support of $g-g_0$, then we say $g$ \emph{admits a convex function} if there is a function $f \in C^\infty([-T,T] \times B_R(0))$ such that $\mathrm{Hess} \, f$ is strictly positive, where the Hessian is with respect to $g(t)$ on $B_R(0)$.
\end{definition}

\begin{example} \label{example:curvature condition}
A simple example is $f=z_1^2+...+z_n^2$ on $\R^n$ with Euclidean metric.
The geometric importance of (the existence of) such a function is that its level sets give a convex foliation of $B_R(0)$, which in turn allows us to determine the metric using the lens data. See Section~\ref{sec:preliminaries-inverse-problems} for more discussion.

The existence of such $f$ can be guaranteed by certain curvature conditions.
For example, it exists when for all $t \in [-T,T]$ either of the following holds:
\begin{enumerate}
\item $g(t)$ has non-positive sectional curvature. \label{condition:defocusing-metric}
\item $g(t)$ has non-negative sectional curvature. \label{condition:focusing-metric}
\end{enumerate}
See \cite[Lemma~2.1]{paternain2019geodesic} for details.
Roughly speaking, the conditions require that for each fixed time $t$, the geodesic flow of $g(t)$ only has the effect of either focusing or defocusing.
We should emphasize that we do allow a transition between those two scenarios. That is, $g(t)$ satisfies \eqref{condition:defocusing-metric} for some $t$ while $g(t)$ satisfies \eqref{condition:focusing-metric} for some other $t$.
At the moment of transition, one would have the Euclidean metric, since the sectional curvature will vanish identically and our space (consider the entire $\R^n$ now) is complete and simply connected.

As already pointed out after \cite[Corollary~1.1]{SUV2021local-gloabl}, using \cite[Lemma~2.1]{paternain2019geodesic}, the condition in Case~\eqref{condition:focusing-metric} can be relaxed to be $K \geq -\kappa$, where $\kappa>0$ is a constant depending only on $R$. 
We choose to keep using $K \geq 0$ to have a clearer and simpler formulation.
\end{example}



Then our main result is the following. 
\begin{thm} \label{thm:main-intro}
    Suppose that either $g_1$ or $g_2$ admits a convex function in the sense of Definition~\ref{definition:metric-classes}, 
    then $S_{g_1}-S_{g_2}$ is a compact operator from $L^2(\R^n)$ 
    to itself if and only if $g_1 = \psi^*g_2$ with $\psi$ as in \eqref{eq:psi-concrete}.
    In particular, this shows that the other $g_i$ admits a convex function as well.
\end{thm}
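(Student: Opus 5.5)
The plan has two directions: the forward direction is the invariance result in Appendix~\ref{sec:appendix-forward-invariance}, and the inverse direction goes from compactness of $S_{g_1}-S_{g_2}$ to geometric data and then to a diffeomorphism. For the forward direction we invoke the appendix. If $g_1=\psi^*g_2$ with $\psi$ as in \eqref{eq:psi-concrete}, then conjugating $P_2$ by $\psi$ gives an operator of the form \eqref{eq:def-Pi}, up to a gauge factor and a lower order (first order in $z$) term. The appendix shows that these change the scattering map only by a compact operator. The reason is that $\psi$ is the identity near spacetime infinity, so the asymptotic profiles $f_\pm$ are unchanged.

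For the inverse direction, the first step is to read off the canonical relation from \cite{HJ2026-scattering-map}. There $S_{g_i}$ is shown to be an elliptic Legendre distribution, whose principal symbol lives on a Legendrian determined by the bicharacteristic flow of $P_i$. Since $S_{g_i}$ is elliptic with nonvanishing principal symbol, the difference $S_{g_1}-S_{g_2}$ is compact on $L^2$ if and only if the two Legendrians coincide and the principal symbols agree. We therefore extract that the underlying Lagrangian/Legendrian relations for $g_1$ and $g_2$ are equal. The concrete content is at finite frequency $\xi_{\oc,0}$, in the $\oc$ regime described by $\Lagsec$. There the relation records, for an incoming free trajectory entering the support region with position and momentum, the outgoing position and momentum after it passes through the time-dependent metric.

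The second step translates this into lens data for each time slice. I would use the semiclassical rescaling in which the momentum $\xi\to\infty$ at fixed $\xi_{\oc,0}$. Then the time spent in $B_R$ is $O(1/|\xi|)$, so the Hamilton flow of $|\xi|^2_{g(t)}$ happens effectively at a frozen time $t_0$, and the scattering relation reduces to that of the geodesic flow of $g(t_0)$ on $B_R$. Thus equality of the Legendrians, taken at the subprincipal level of this limit, should give equality of the lens data (scattering relation plus travel times) of $g_1(t_0)$ and $g_2(t_0)$ for every $t_0$. I expect this step to be the main obstacle. It requires identifying precisely which face of the compactified phase space, such as $\ffsec$, carries the frozen-time geodesic scattering relation. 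It also requires showing that travel times, and not only exit points and directions, are encoded, presumably through the phase or the $t$-shift of the outgoing trajectory.

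The third step applies lens rigidity under a convex foliation, as in \cite{SUV2021local-gloabl} together with \cite[Lemma~2.1]{paternain2019geodesic}, where convex foliations arise. For each $t$, the metric admitting a convex function (say $g_1$) is lens rigid, so there is a diffeomorphism $\psi_1(t,\cdot)$ of $B_R$, equal to the identity on the boundary, with $g_1(t)=\psi_1(t,\cdot)^*g_2(t)$. Smooth dependence on $t$ should come from the construction of $\psi_1$ through boundary normal coordinates and the foliation argument, since the lens data vary smoothly in $t$. For $|t|\ge T$ the metrics are flat and we take $\psi_1$ to be the identity. This gives $\psi$ as in \eqref{eq:psi-concrete}. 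Because $\psi$ pulls back $g_2$ to $g_1$, the function $f\circ\psi^{-1}$ is strictly convex for $g_2$, which is the final claim of the theorem.
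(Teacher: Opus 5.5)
\textbf{Where your plan agrees with the paper.} The `if' direction is delegated to Appendix~\ref{sec:appendix-forward-invariance}. The last step is Theorem~\ref{thm:boundary-rigidity} applied slice by slice, with the convex function transported by $\psi$. Your frozen-time picture is also right: bicharacteristics at ps-fiber infinity stay in the slice $t=-\frac12\xioc$. The exit data do follow from $\Cl$ on the boundary (Proposition~\ref{prop:determine-truncated-SC-map}).

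\textbf{The gap: your Step~2 never produces the travel times.} Without equal lengths, Theorem~\ref{thm:boundary-rigidity} cannot be invoked. Your phrase ``should give \dots\ presumably through the phase'' is exactly the missing argument. In the paper, lengths are not coordinates on the Legendrian; they are the \emph{value} of the phase at its critical points. The argument runs as follows.
\begin{itemize}
\item Blowing up $J$ from \eqref{eq:J-def} makes $N_1=(\xioco+\xioct)/\xoco$ a function on the lifted graph, and $N_1=-\varphi_1$ there (Proposition~\ref{prop:N1-equals-1c1c-phase}).
\item $\varphi_1$ is the total sojourn time (Proposition~\ref{prop:varphi1-equals-sojourn}). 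This uses $H_p^{2,0}\varphi_1=2$ from \eqref{eq:varphi1-const-speed} and $\varphi_\pm=z\cdot\overline{y_{\oc,\pm}}$ on the free part.
\item Telescoping over the common exterior pieces of the geodesics then gives $\ell_{g_1(t)}=\ell_{g_2(t)}$.
\item $\oc$-cut-offs only see $\ffococ$, so the paper gets equality of $N_1$ from compactness by second microlocalization. Conjugating $\tilde A_\pm\in\Psi^{0,0}_{\sct}$ by $e^{i\xi_{\oc,0}/2\xoco^2}$ makes one composition Schwartz (Corollary~\ref{coro:ALAR-trivial}). The other is elliptic, hence non-compact (Corollary~\ref{coro:sec-micro-noncompact}).
\end{itemize}

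Your Step~1 is likewise only asserted. What you actually need from it is the cut-off argument with Lemma~\ref{lemma:1c1c-FIO-compact-criterion}, which gives equal Legendrians. From there, a cheaper finish than the paper's seems to be available. On each slice $\{t=t_0\}$ of $\Leg$ one has $d\varphi_1=\etaoco\cdot d\yoco+\tilde\eta_{\oc,2}\cdot d\yoct$, which is the first-variation formula for the sojourn time. Geodesics missing $B_R(0)$ have zero sojourn time, and these slices are connected. So equal Legendrians should already force equal sojourn times.

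\textbf{Do not lean on the `if' direction as you justify it.} As you note, conjugation produces a first-order term: $\psi_*P_1\psi^*=P_2+W'\cdot D_{z'}$ with $W'=(\partial_t\psi_1)\circ\psi^{-1}$. Since $\psi=\mathrm{Id}$ near infinity, this operator has scattering map exactly $S_{g_1}$. But $W'\cdot D_{z'}$ is not a gauge term in general. It has ps-order $1$, the order of the subprincipal symbol, so it enters the transport equation.

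By my WKB computation (not in the paper), a packet $a\,e^{i(k\cdot z'-|k|^2t)}$ crossing at time $t_0$ picks up the factor $\exp\bigl(-\frac{i}{2}\int_\gamma g_2(W'(t_0),\dot\gamma)\,ds\bigr)$ along the $g_2(t_0)$-geodesic $\gamma$. This factor does not tend to $1$ as $|k|\to\infty$.

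A concrete case: take $g_2=g_0$ and $\psi_1(t,z)=z+\epsilon\chi(t)\eta(z)e_1$ with $0\le\eta\not\equiv0$. On the line $b+se_1$ the phase is $-\frac{\epsilon}{2}\chi'(t_0)\int\eta(b+se_1)\,ds$. This is nonzero when $\chi'(t_0)\neq0$ and the line meets $\{\eta>0\}$. By this computation $S_{g_1}-S_{g_2}$ would not be compact, although $g_1=\psi^*g_2$ is flat and admits a convex function.

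The appendix appears to miss this because it uses $p_1=\psi_{\oc-\ps}^*p_2$. That identity holds only modulo the degree-one term $W\cdot\zeta$, where $W=(D_z\psi_1)^{-1}\partial_t\psi_1$. As stated, that direction seems to need an extra hypothesis, namely that these line integrals vanish.
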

See Theorem~\ref{thm:forward-coincide-leading-order} in Appendix~\ref{sec:appendix-forward-invariance} for the `if' direction and the primary goal of this article is to establish the “only if’’ direction.
As we will discuss in Remark~\ref{remark:coincide-leading-order-vs-compact}, the condition that $S_{g_1}-S_{g_2}$ is a compact operator from $L^2(\R^n)$ to itself can be implied from the statement that $S_{g_1}$ and $S_{g_2}$ coincide on the leading order in the sense that \eqref{eq:Sg1-Sg2-leading-same-remark} holds, so we have answered both questions we proposed above in \eqref{eq:S1-S2-condition-1}\eqref{eq:S1-S2-condition-2}.

The condition that either $g_1$ or $g_2$ admits a convex function only enters in the last step invoking Theorem~\ref{thm:boundary-rigidity}, 
one can replace this condition by any other conditions that allow lens data (i.e. the scattering relation and length of geodesics) to determine the metric up to a pull-back.
The core of this paper is to determine the geometric information (i.e. lens data) from the analytic information (i.e. the scattering map).

An interesting direction for the future work is to determine the potential from the scattering map. 
The effect of potentials does not enter the leading order behavior of the scattering map, hence can't be detected by the compactness of $S_{g_1}-S_{g_2}$.
It is expected that the potential and even certain information about the diffeomorphism $\psi$ can be determined if we exploit the information of lower order symbols of the scattering map.

\subsection{Strategy of the proof} \label{subsec:strategy-sketch}

We briefly summarize the strategy of the proof here.
Generally speaking, in geometric inverse problems, one can determine the metric on a manifold with boundary up to a pullback by a diffeomorphism fixing the boundary through the scattering relation in \eqref{eq:scattering-relation} below and the length of geodesics under certain restrictions. 
See \cite[Section~1]{SUV2021local-gloabl}\cite{uhlmann2016journey}\cite{Stefanov-survey} and references therein for a complete survey. 
We will choose this manifold with boundary to be a large ball $B_R(0)$ containing the support of $g(t)-g_0$ for all $t$.
Then the question is reduced to determining the scattering relation and the length of geodesics from the information given by our PDE, i.e. the scattering map.
We will show that these two types of information are encoded in the graph of the classical scattering map as in \eqref{eq:Cl-def-intro} below lifted to the 1c–1c phase space (see \eqref{eq:def-lifted-ococ-bundle}) by demonstrating following two principles:
\begin{enumerate}
    \item The scattering relation is encoded in the restriction of the classical scattering map to the boundary of the 1c-phase space. See Proposition~\ref{prop:determine-truncated-SC-map} for a precise version. \label{principle:1}
    \item The lengths of geodesics are captured by the 1-jet (at the boundary) of the lifted graph of the scattering map.
    See Theorem~\ref{thm:Cl-determines-length} for a precise version. \label{principle:2}
\end{enumerate}
We briefly explain these two principles below. 
First, the scattering relation is encoded by the asymptotic directions of the bicharacteristic flow and the location escaping $B_R(0)$, which are in turn
recorded as the location and frequency variables 
of our 1c-1c Lagrangian submanifold capturing the oscillation of the scattering map.
Notice that the scattering map records the information living at spacetime infinity and for each fixed $t$ there is a $(n-1)$-dimensional family of geodesics that tend to the same point at spacetime infinity\footnote{One can fix the escaping direction and translate the escaping location on half of $\partial B_R(0)$. }, hence a total $n$-dimensional family of geodesics tending to the same point at the equator $\{ t/|z| = 0, 1/|z| = 0 \}$ since all different $t$ are compressed together. 
So distinguishing different geodesics from the analytic information at infinity is non-trivial.
This is precisely one of the key advantages of introducing this 1-cusp frame work: one can distinguish geodesics within this family of geodesics having the same asymptotic direction using the 1-cusp frequency!

More precisely, we will use the following bulk-boundary duality between the parabolic scattering cotangent bundle over the bulk $\overline{\R^{n+1}}$ (the radially compactified $\R^{n+1}$), which is denoted by $\overline{{}^{\ps}T^*\R^{n+1}}$, and the 1-cusp cotangent bundle over half of its boundary identified as $\overline{\R^n}$, which is denoted by $\overline{{}^{\oc}T^*\R^n}$.
The frequency in $\overline{{}^{\ps}T^*\R^{n+1}}$ at the place escaping $B_R(0)$ determines the asymptotic direction of bicharacteristics, which in turn corresponds to the position part of $\overline{{}^{\oc}T^*\R^n}$.
On the other hand, the position on the bulk, or more precisely its component that is orthogonal to the frequency at the place escaping $B_R(0)$, parametrizes the family of geodesics having the same asymptotic direction and will correspond to the tangential component of 1-cusp frequency in $\overline{{}^{\oc}T^*\R^n}$.
In this way, we obtain the \emph{classical scattering map}
\begin{equation} \label{eq:Cl-def-intro}
    \Cl : \; \overline{{}^{\oc}T^*\R^n} \to \overline{{}^{\oc}T^*\R^n}
\end{equation}
such that it sends the endpoint of a bicharacteristic line at far past (in terms of the bicharacteristic flow, not necessarily the actual time) to the endpoint at the far future.

Such duality implicitly appeared in \cite{gell2022propagation} in the form of module regularity. Then \cite{HJ2026final} pointed out that such regularity is the 1-cusp regularity and applied such duality to the final state problem of the nonlinear Schr\"odinger equation.
The way in which we will exploit such duality will be more close to the theory of 1c-ps Fourier integral operators developed in \cite{HJ2026-scattering-map}.


Finally, we are left to determine the length of geodesics. We will show that this is encoded in the 1-jet of the graph of the classical scattering map, which is denoted by $\mathrm{Gr}(\Cl)$.
We will show that the subleading part of the radial 1-cusp frequency encodes the `sojourn time' introduced by Guillemin \cite{Guillemin-sojourn}, which in turn determines the length of geodesics.
In order to `extract' this subleading order information, we introduce the technique of second-microlocalization in Section~\ref{sec:second-micro}. In our setting, this means introducing blow-up at certain parts of the phase space to create new frequencies characterizing oscillations on a slower scale.
We will show that if one blow-up the 1-cusp phase space at a fixed radial frequency, then the front face is effectively the scattering phase space, and the radial frequency in this scattering phase space determines the sojourn time of geodesics, which justifies principle \eqref{principle:2} above.

Technically, such radial frequency will be recovered from the function value of phase functions parametrizing our Lagrangian submanifolds at their critical points.
This reflects a major difference between classical Fourier integral operators with homogeneous phase functions and
Legendre distributions (or Fourier integral operators with inhomogeneous phase functions).
For the former, the phase function always vanishes at its critical points due to the homogeneity and Euler's homogeneous function theorem.
For the latter, the value of the phase function at its critical points won't necessarily vanish and can record the travel time of the bicharacteristic flow.
Such property is also exploited in \cite{hassell-wunsch2005schrodinger}\cite{Hassell-Wunsch-semiclassical-resolvent}.


\subsection{Literature review}

Recovering metrics from the scattering data has been a classical topic in the scattering theory.
In the setting of the time independent Schr\"odinger equation, Joshi and S\'a Barreto established results recovering jets of asymptotically conic metrics \cite{Joshi-Sabarreto-conic-metric} and asymptotically hyperbolic metrics \cite{inverse-scattering-hyperbolic} from the scattering data at a fixed energy level.

In \cite{BK-CPDE}, Belishev and Kurylev showed that the `spectral data' of the Laplace operator on a bounded domain can be used to recover the metric.
A similar result is deduced in \cite{BK-wave} for the wave equation. See also \cite{BK-BC-IP} for more about this boundary control approach.
Eskin \cite{Eskin-recover-metric} considered such question of recovering metric on $\R^n$ from the scattering data of the time-independent Sch\"odinger equation under the assumption that the perturbation is small.

On the other hand, in the setting of time-dependent Schr\"odinger equation with a time-dependent-metric, the literature is relatively sparse. The classes of Fourier integral operators characterizing the Poisson operator and the scattering map in this setting are only developed very recently by the author and Hassell \cite{HJ2026-scattering-map}.
To the author's knowledge, this is the first result of determining a time-dependent metric from the scattering map of a time-dependent Sch\"odinger equation.
For nonlinear wave equations, the problem of recovering time-dependent metrics from the scattering map is considered by Hintz, S\'a Barreto, Uhlmann, and Zhang \cite{hintz2024inverse}.

Though we are not treating the problem of recovering potentials here, it is worthwhile to mention that this problem has even longer history.
In the inverse scattering approach, for integrable systems, Gel'fand, Levitan, and Marchenko developed methods to explicitly recover potentials from the scattering data.
Later, Faddeev \cite{Faddeev} further developed this approach. See \cite[Section~3]{Melrose-geometric-scattering} and references therein for more discussions.
For the time-dependent Sch\"odinger equation, the question of determining a time-dependent potential in 1+1 dimension is studied by Zhou \cite{Zhou-TDS-1Dinverse}.


Now we turn to discuss related developments in microlocal analysis.
See \cite[Section~1.3]{HJ2026-scattering-map} for a more complete discussion about the development of Fourier integral operators and Legendre distributions.
We only mention here that the theory of Lagrangian distributions and Fourier integral operators arose in the 1960s through work of Maslov \cite{Maslov1967}, Egorov \cite{Egorov1971} and others.
Then works of H\"ormander \cite{FIO1}, Duistermaat-H\"ormander \cite{FIOII} established a geometrical, invariant symbolic calculus of it.
Then this is extended to allow clean parametrizations in Duistermaat-Guillemin \cite{duistermaat-guillemin1975spectrum}.
The theory of Legendre distributions, which takes the spatial infinity arising in compactification instead of fiber infinity as the boundary, is established in Melrose-Zworski \cite{melrose1996scattering}, which allows one to study global properties on non-compact manifolds using such a microlocal theory.
Later Hassell and Vasy \cite{hassell1999spectral}\cite{hassell2001resolvent} extended such theory to handle manifolds with codimension two corners. 
In addition, an analogue of \eqref{eq:S formula} expressing the scattering map in terms of microlocalized Poisson operators is used by \cite{vasy1998geometric} to avoid treating conic intersection pairs Legendre submanifolds in \cite{melrose1996scattering}.

In the setting of time-dependent Schr\"odinger equation, Hassell and Wunsch \cite{hassell-wunsch2005schrodinger} gave the propagator of Schr\"odinger equation a Fourier integral operator (or Legendre distribution) characterization.
In terms of terminology we will introduce in Section~\ref{sec:second-micro}, one can view one of the strategies in this work as completely removing the 1-cusp level oscillation and focus on the analysis on the scattering level.
In \cite{HJ2026-scattering-map}, the author and Hassell developed the 1c-ps Fourier integral operator characterizing the Poisson operator and 1c-1c Fourier integral operator class characterizing the scattering map.
Here 1c stands for `1-cusp', which arises from the pseudodifferential algebra constructed in \cite{Zachos:Thesis}\cite{zachos2022inverting}, and ps stands for `parabolic scattering', which is used to study time-dependent Schr\"odinger equations in \cite{gell2022propagation}\cite{gell2023scattering}.

The second microlocalization is formulated by Bony \cite{Bony-second-micro} in a way that is different from our approach.
The implementation here follows Vasy \cite{Vasy-second-microlocal-1}\cite{Vasy-second-microlocal-3}.
See also \cite{GHS2}\cite{NRL-I}\cite{NRL-II} for similar constructions.


\section{Some preliminaries on Inverse problems}
\label{sec:preliminaries-inverse-problems}

As mentioned in Section~\ref{subsec:strategy-sketch}, the first reduction in our proof relies on building connections to results in geometric inverse problems, so we recall some basic ingredients in this direction here.

Let $(\mk{B}, \mk{g})$ be a compact manifold with boundary with metric $\mk{g}$, then generally speaking the goal of a geometric inverse problem is the following:
\begin{center}
Determine $\mk{g}$ through `measurements' at $\partial \mk{B}$.
\end{center}

Of course, those measurements have to contain information about the interior of $\mk{B}$ to achieve this. One typical example and what we will use is the lens relation, which consists of two parts: the scattering relation and length data. To define the lens data, 
we introduce $\partial_- S^* \mk{B}$ and $\partial_+ S^* \mk{B}$, which are the inward and outward pointing part of $S^*_{\partial \mk{B}} \mk{B}$\footnote{This denotes the restriction of the unit (with respect to the dual metric induced by $\mk{g}$) cosphere bundle of $\mk{B}$ to $\partial \mk{B}$.}.

The scattering relation
\begin{equation} \label{eq:scattering-relation}
\mathsf{SR}_{\mk{g}}: \quad \partial_- S^* \mk{B} \to \partial_+ S^* \mk{B}
\end{equation}
is defined as follows: let $(p,\xi) \in \partial_- S^* \mk{B}$ and denote the unique lifted geodesic\footnote{By lifted geodesics, we always mean the lift in the cotangent bundle or its restriction to the cosphere bundle. Which is the same, via identifying the cotangent and tangent bundle using the Riemannian structure, as the perhaps more conventional geodesic flow on the tangent bundle.} by $\gamma_{p,\xi}$, then we set $\mathsf{SR}(p,\xi)$ to be the first point when $\gamma_{p,\xi}$ hits $\partial_+ S^*\mk{B}$. 
The length data is based on this: we define 
\begin{equation} \label{eq:def-length-g}
\ell_{\mk{g}}: \partial_- S^* \mk{B} \to [0,\infty)
\end{equation}
to be the map sending $(p,\xi)$ to the length of this curve. We will only concern the case where $\mk{g}$ is \emph{non-trapping} in this paper, which means that the geodesic flow always exits $\mk{B}$ within finite time, or equivalently $\ell_{\mk{g}}$ always has a finite value.

So a more concrete formulation of the goal above is to determine $\mk{g}$ from $(\mathsf{SR}_{\mk{g}},\ell_{\mk{g}})$. But this has the following clear obstruction: let $f: \mk{B} \to \mk{B}$ be a diffeomorphism fixing the boundary, then those datum of $f^*\mk{g}$ will be the same as those of $\mk{g}$.

Consequently, the meaningful goal is the following:
\begin{center}
Suppose $\mk{g}_1,\mk{g}_2$ are two metrics on $\mk{B}$ having the same lens data (that is $(\mathsf{SR}_{\mk{g_1}},\ell_{\mk{g_1}})=(\mathsf{SR}_{\mk{g_2}},\ell_{\mk{g_2}})$), then does it have to be the case that $\mk{g}_1 = f^* \mk{g}_2$ for some diffeomorphism $f$?
\end{center}

The answer is affirmative in a lot of cases. We state the result that we will use by Stefanov, Uhlmann and Vasy \cite{SUV2021local-gloabl}, which is currently the most complete answer to this question,
and then briefly recall some results that are directly related to our situation.
As geometric inverse problems are not the main subject of this paper, we can't give a complete survey on the vast literature on this problem.


\begin{thm}[Adapted from Stefanov-Uhlmann-Vasy \cite{SUV2021local-gloabl}] \label{thm:boundary-rigidity}
Suppose $\mk{g}_1$ and $\mk{g}_2$ are two metrics on $\mk{B}$, which is a compact manifold of dimension at least $3$. 
If either of $\mk{g}_i$ admits a convex function $f$ with $\{f=0\} = \partial \mk{B}$,
then there is a diffeomorphism $\psi: \mk{B} \to \mk{B}$, $\psi|_{\partial \mk{B}} = \mathrm{Id}$, such that $\mk{g}_1 = \psi^* \mk{g}_2$.

In addition, if we have two families of $\mk{g}_1(t),\mk{g}_2(t)$ depending smoothly on a parameter $t$ (ranging over a compact interval) as above, with one of $\mk{g}_i(t)$ admitting a convex function for all $t$,
then we can choose a family of diffeomorphisms $\psi_t$ depending smoothly
on $t$ as above such that $\mk{g}_1(t) = \psi_t^* \mk{g}_2(t)$.
\end{thm}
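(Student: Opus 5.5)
The plan is to derive the first part directly from the main theorem of \cite{SUV2021local-gloabl}, and then to add the smooth dependence on the parameter $t$ as an extra argument. Throughout, as the surrounding discussion indicates, the hypothesis is that $\mk{g}_1$ and $\mk{g}_2$ have the same lens data, $(\mathsf{SR}_{\mk{g}_1},\ell_{\mk{g}_1})=(\mathsf{SR}_{\mk{g}_2},\ell_{\mk{g}_2})$.

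\textbf{Fixed $t$.} Say $\mk{g}_1$ admits the convex function $f$, with $\{f=0\}=\partial\mk{B}$. The level sets of $f$ then form a strictly convex foliation of $\mk{B}$, exhausting it from the boundary. The first step is boundary determination: equal lens data force the full jets of $\mk{g}_1$ and $\mk{g}_2$ at $\partial\mk{B}$ to agree in boundary normal coordinates. After composing with the diffeomorphism matching the boundary normal coordinates, we may therefore assume $\mk{g}_1-\mk{g}_2$ vanishes to infinite order at $\partial\mk{B}$.

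The second step is the local lens rigidity result of Stefanov--Uhlmann--Vasy near a strictly convex boundary point, which is where $\dim\mk{B}\ge 3$ is used. Their layer-stripping argument propagates this along the foliation, reaching the whole of $\mk{B}$ in finitely many steps by compactness. This yields $\psi$ with $\psi|_{\partial\mk{B}}=\mathrm{Id}$ and $\mk{g}_1=\psi^*\mk{g}_2$. The argument is symmetric once one notes that $\psi^*$ transports the convex function, so it does not matter which $\mk{g}_i$ carries $f$.

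\textbf{Uniqueness.} For smooth dependence on $t$, I will not track smoothness through the SUV construction. Instead I will use that an isometry $\psi$ fixing $\partial\mk{B}$ is unique, and write it in closed form. Because $\psi$ fixes the boundary and is isometric, its differential on $\partial\mk{B}$ is the identity on inward unit covectors. Hence
\begin{equation*}
\psi\bigl(\gamma^{1}_{p,\xi}(s)\bigr)=\gamma^{2}_{p,\xi}(s)\quad\text{for all }(p,\xi)\in\partial_-S^*\mk{B},\ 0\le s\le \ell(p,\xi),
\end{equation*}
where $\gamma^i$ denotes the $\mk{g}_i$-geodesic. Every point of $\mk{B}$ lies on such a geodesic, by non-trapping, so this formula pins $\psi_t$ down uniquely for each $t$.

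\textbf{Smoothness in $t$.} Fix $(t_0,x_0)$. Choose a unit covector $v$ at $x_0$ whose backward $\mk{g}_1(t_0)$-geodesic leaves $\mk{B}$ transversally at some $p_0$. Such a $v$ exists because $\partial\mk{B}$ is strictly convex, and nontangential exit is an open condition.

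For $(t,x)$ near $(t_0,x_0)$, transport $v$ to $x$ using a fixed coordinate chart and normalize it with respect to $\mk{g}_1(t)$. Flow backward by the $\mk{g}_1(t)$ geodesic flow, which depends smoothly on $t$. By the implicit function theorem, transversality gives a smooth exit point $p(t,x)$, a smooth inward covector $\xi(t,x)$, and a smooth exit time $s(t,x)$. Substituting into the display gives
\begin{equation*}
\psi_t(x)=\gamma^{2,t}_{p(t,x),\xi(t,x)}\bigl(s(t,x)\bigr),
\end{equation*}
which is manifestly smooth in $(t,x)$, with uniqueness guaranteeing this local description matches the global $\psi_t$. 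A finite cover of the compact set $[t_1,t_2]\times\mk{B}$ then gives global smoothness.

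\textbf{Expected obstacle.} The main difficulty is the transversality and uniformity needed in the smoothness step. One must check that the exit time and exit point vary smoothly even near $\partial\mk{B}$ itself, where short geodesics could graze the boundary. Near the boundary I would instead use $\mk{g}_i(t)$-boundary normal coordinates, which depend smoothly on $t$, and write $\psi_t$ as the map between these coordinates directly.
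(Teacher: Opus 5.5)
Your proposal is correct. For the first part it matches the paper: both simply invoke \cite[Theorem~8.1]{SUV2021local-gloabl}. You are also right to make the equal-lens-data hypothesis explicit, since the statement as printed omits it.

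For the smooth dependence on $t$ you take a genuinely different route. The paper argues that the SUV construction is itself smooth in the metric. Its reasons are that the local step, which matches boundary jets via normal coordinates, depends smoothly on $\mk{g}$; that the layer-stripping step size has a lower bound that is uniform under perturbation; and that each step is smooth. From this, $\psi_t$ inherits smoothness from the smooth $t$-dependence of $\mk{g}_i(t)$ and of the convex foliation.

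You instead use SUV only as a black box at each fixed $t$. You obtain smoothness from two ingredients. The first is uniqueness of a boundary-fixing isometry: it is determined by its value and differential at one point of the connected manifold $\mk{B}$. The second is the explicit representation $\psi_t(x)=\gamma^{2,t}_{p(t,x),\xi(t,x)}(s(t,x))$, produced by the implicit function theorem from the backward $\mk{g}_1(t)$-geodesic flow.

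Your route is more self-contained and easier to check. It needs no audit of SUV's internal layer-stripping argument, whose smooth dependence on the metric the paper asserts rather than verifies. It shows that \emph{any} admissible choice of $\psi_t$ is automatically jointly smooth in $(t,x)$. It also needs no smooth $t$-dependence of the convex function, nor that the same $\mk{g}_i$ carry it for every $t$. What it uses is connectedness of $\mk{B}$, non-trapping, and strict convexity of $\partial\mk{B}$, all of which follow from the convex-function hypothesis. The paper's approach stays inside the SUV machinery, but only at the level of a sketch.

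Two small points. First, $d\psi$ at $\partial\mk{B}$ is the identity only after the standard identification of $\partial_- S^*\mk{B}$ for the two metrics through tangential components, which uses $\mk{g}_1(t)|_{T\partial\mk{B}}=\mk{g}_2(t)|_{T\partial\mk{B}}$. It need not be the identity on $T_p\mk{B}$. Second, your ``expected obstacle'' near the boundary does not arise. Extend $\mk{g}_i(t)$ slightly past $\partial\mk{B}$ and apply the implicit function theorem to $\rho(\gamma(-s))=0$ for a boundary defining function $\rho$. When $x_0\in\partial\mk{B}$, choose $v$ inward-pointing, so that $s=0$ is a nondegenerate root. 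This makes boundary normal coordinates unnecessary.
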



\begin{proof}
The main part of the theorem follows from \cite[Theorem~8.1]{SUV2021local-gloabl}. 
The only point that is not explicitly included there is the smooth dependence we stated in the last part of the theorem when we consider a family of metrics. But this follows from the construction in \cite{SUV2021local-gloabl}.

Concretely, the proof of the `local step' \cite[Lemma~7.1]{SUV2021local-gloabl}, which constructs a diffeomorphism matches the jet of two metrics at $\partial \mk{B}$ via geodesic normal coordinates, depends on metrics smoothly. 
In the step extending this $\psi$ to a global one in \cite[Section~7.4]{SUV2021local-gloabl}, the step size is uniformly lower bounded by a positive constant that is robust under small perturbations of the metric, and the construction in each step depends on the metric smoothly. So we know that the global diffeomorphism constructed there depends on the metric and the convex foliation smoothly.
On the other hand, our condition guarantees that the given metric and the convex foliation depends on $t$ smoothly, so $\psi_t$ depends on $t$ smoothly as well.
\end{proof}

As we have mentioned after Theorem~\ref{thm:main-intro}, Theorem~\ref{thm:boundary-rigidity} is only used in Section~\ref{sec:determine-metric}, which is the last step of our proof: determining the metric after reading all needed geometric information from our Legendrian distributions. We use this version because one of the major advantages of our calculus of Legendrian distributions is that we can handle the presence of conjugate points. 
In the non-positive curvature case of Example~\ref{example:curvature condition},
there is no conjugate point. Since our metric can be embedded into a compact region of the Euclidean space of the same dimension, one can use the result of Croke \cite{croke1991rigidity}, which is extended to the case where one only assumes some decay condition by Guillarmou, Mazzucchelli and Tzou \cite{guillarmou2019asymptotically}.
See also results by Gromov in \cite[Section~5.5 B$'$]{gromov1983filling} and by Michel in \cite[Theorem~B]{michel1981rigidite}, under similar conditions.

\section{The scattering and parabolic scattering pseudodifferential algebras}
\label{sec: parabolic sc PsiDO}
In this section we give a rather brief introduction of the scattering and parabolic scattering pseudodifferential algebra. 


\subsection{Scattering pseudodifferential algebra}
\label{subsec:sc-bundle-PsiDO}

The scattering cotangent bundle, symbol class and pseudodifferential algebra are introduced by Melrose \cite{Melrose1994}.

We use $\overline{\R^n}$ to denote the radially compactified $\R^n$, which is $\R^n \bigsqcup [0,1)_x \times \mathbb{S}^{n-1}/\sim$ with $\sim$ being the identification $Z \in \R^n \sim (1/|Z|,Z/|Z|) \in [0,1)_x \times \mathbb{S}^{n-1}$ for $|Z|>1$.
The boundary defining function of $\partial \overline{\R^n}$ can be taken as $x = |Z|^{-1}$ away from the origin.
Let $y$ be a coordinate system of $\partial \overline{\R^n} = \mathbb{S}^{n-1}$, then $(x,y)$ forms a local coordinate system of $\overline{\R^n}$ near infinity.
The scattering cotangent bundle over $\R^n$ (or in fact over $\overline{\R^n}$) is the one taking $\frac{d\xsc}{\xsc^2}, \frac{dy}{\xsc}$ as its frame, and the canonical 1-form in terms its frequencies is written as 
\begin{equation} \label{eq:sc-1-form}
    \xi_{\sct} \frac{d\xsc}{\xsc^2} + \eta_{\sct} \frac{dy}{\xsc}.
\end{equation}
We denote the scattering cotangent bundle after compactifying fibers radially to be a ball by $\overline{{}^{\sct}T^*\R^n}$ and denote the boundary defining function of fiber infinity by $\rho_{\sct}$.

For $m,l \in \R$, the class of ``scattering symbols'' $S_{\mathrm{sc}}^{m,l}(\R^n)$ with differential order $m$ and spacetime order $s$,  
is defined to be the set of smooth functions $a(Z,Z^*)$ on $T^*\R^n$ (with $Z^*$ being the fiber variable) such that there exist constants $C_{\alpha \beta}$ such that
\begin{equation} 
	\label{eq: sc symbol}
	|\partial_Z^\alpha\partial_{Z^*}^\beta a(Z,Z^*)| \leq C_{\alpha\beta} \la Z^* \ra^{m-|\beta|} \la Z \ra^{l -|\alpha|} 
\end{equation}
for all multi-indices $\alpha,\beta\in \N^{n}$.
The topology on $S_{\mathrm{sc}}^{m,l}(\R^n)$ is the Fr\'echet topology induced by the countably many seminorms determined by \eqref{eq: sc symbol}.
Then the space of scattering pseudodifferential operators of order $(m,l)$, which is denoted by $\Psi_{\mathrm{sc}}^{m,l}(\R^n)$, consists of operators acting by the standard (left) quantization 
\begin{equation}
	q_L(a) f(z)  = \frac{1}{(2\pi)^{n}} \int   e^{i Z^* \cdot (Z-Z')} a(Z,Z^*) f(Z') dZ' dZ^*,
	\label{eq:quant_sc}
\end{equation}
with $a \in S_{\mathrm{sc}}^{m,l}(\R^n)$.

The principal symbol of $A = q_{L}(a) \in \Psi_{\sct}^{m,l}(\R^{n+1})$ is defined to be the equivalence class of $a$ in $S_{\sct}^{m,l}(\R^{n+1})$ quotient by $S_{\sct}^{m-1,l-1}(\R^{n+1})$:
\begin{align*}
[a] \in S_{\sct}^{m,l}(\R^{n+1})/S_{\sct}^{m-1,l-1}(\R^{n+1}).
\end{align*}

A symbol $a \in S_{\sct}^{m,l}(\R^{n+1})$ (and corresponding operator $A=q_{L}(a)$) is said to be elliptic if it satisfies:
\begin{align*}
|\rho_{\sct}^m x^{l}a| \geq C, \text{ when } \rho_{\sct} \leq \epsilon 
\text{ or } x \leq \epsilon, 
\end{align*}
for some $\epsilon>0,C>0$. And we say that $a$ (and corresponding operator $A=q_{L}(a)$) is elliptic at $q \in \partial \overline{{}^{\sct}T^*\R^{n}}$ if there is a neighborhood of $q$ on which above inequality is satisfied. And the set of all such $q \in \partial \overline{{}^{\sct}T^*\R^{n}}$ is denoted by $\Ell_{\sct}^{m,l}(a)$ (or $\Ell_{\sct}^{m,l}(A)$).


\subsection{Parabolic scattering pseudodifferential algebra}
\label{subsec:ps-bundle-PsiDO}

We discuss the parabolic scattering pseudodifferential algebra in this subsection.
As the name indicates, it is the `parabolic version' of the scattering pseudodifferential algebra, where `parabolic' refers to the way to compactify the fiber infinity. 
More precisely, let 
\begin{align}
(t,z,\tau,\zeta),
\end{align}
be coordinates on $T^*\R^{n+1}$, with $\tau dt + \zeta \cdot dz$ being the canonical form.
Then the (compactified) parabolic scattering cotangent bundle, denoted by $\overline{^{\ps}T^*\R^{n+1}}$ , is defined by compactifying $T^*\R^{n+1}$ in following way.
The `base' $\R^{n+1}$ is compactified radially to be a ball with boundary defining function
\begin{align}\label{eq:xps defn}
x_{\ps} = (1+t^2+|z|^2)^{-1/2},
\end{align}
while each fiber is compactified `parabolically' to be a ball with boundary defining function
\begin{align}\label{eq:rhops defn}
\rho_{\ps} = (1+\tau^2+|\zeta|^4)^{-1/4},
\end{align}
with $\ps$ standing for `parabolic scattering'. 

\begin{definition}
The symbol class $S_{\ps}^{m,l}(\R^{n+1})$, with $m,l$ differential and decay (in fact growth) order respectively, we use, is defined to be the space of $a \in C^\infty(T^*\R^{n+1})$ (in fact also their extension to $\overline{^{\ps}T^*\R^{n+1}}$) such that
\begin{align*}
\|a\|_{ S_{\ps,N}^{m,l}} 
:= \sum_{|\alpha|+k+|\beta|+j \leq N} \sup_{T^*\R^{n+1}} |x_{\ps}^{l-|\alpha|-k} \rho_{\ps}^{m-\beta-2j} \partial_z^\alpha \partial_t^k \partial_\zeta^\beta \partial_\tau^j a(z,t,\zeta,\tau) | < \infty,
\end{align*}
for any $N \in \N$. And these norms give $S_{\ps}^{m,l}(\R^{n+1})$ a structure of Fr\'echet space. In addition, when $\rho_{\ps}^m x_{\ps}^l a$ extends to a smooth function on $\overline{^{\ps}T^*\R^{n+1}}$, then we say $a$ is classical, and the corresponding symbol class is denoted by $S_{\ps,\cl}^{m,l}(\R^{n+1})$.
\end{definition}
\begin{remark}
In the definition above, each $\partial_\tau$ improves the decay at fiber infinity by 2 order whereas $\partial_\zeta$ only improves one order. This reflects the parabolic nature of this symbol class.

Geometrically, the difference between the large symbol class $S_{\ps}^{m,l}(\R^{n+1})$ and the `small' symbol class $S_{\ps,\cl}^{m,l}(\R^{n+1})$ is that the membership of the former means only conormal property on $\overline{^{\ps}T^*\R^{n+1}}$, while the membership of the latter means smoothness on $\overline{^{\ps}T^*\R^{n+1}}$ (in particular, down to its boundaries).
\end{remark}

Then the corresponding parabolic scattering pseudodifferential operators 
$\Psi_{\ps}^{m,l}(\R^{n+1})$ are operators that are quantizations of symbols in
$S_{\ps}^{m,l}(\R^{n+1})$, which means they have Schwartz kernels of the form
\begin{align*}
q_{L,\ps}(a) = (2\pi)^{-(n+1)} \int e^{ i (t-t')\tau+(z-z')\cdot \zeta}
a(t,z,\tau,\zeta) d\zeta d\tau,
\end{align*}
in the distributional sense.

The principal symbol of $A = q_{L,\ps}(a) \in \Psi_{\ps}^{m,l}(\R^{n+1})$ is defined to be the equivalence class of $a$ in $S_{\ps}^{m,l}(\R^{n+1})$ quotient by $S_{\ps}^{m-1,l-1}(\R^{n+1})$:
\begin{align*}
[a] \in S_{\ps}^{m,l}(\R^{n+1})/S_{\ps}^{m-1,l-1}(\R^{n+1}).
\end{align*}

A symbol $a \in S_{\ps}^{m,l}(\R^{n+1})$ (and corresponding operator $A=q_{L,\ps}(a)$) is said to be elliptic if it satisfies:
\begin{align*}
|\rho_{\ps}^m x_{\ps}^{l}a| \geq C, \text{ when } \rho_{\ps} \leq \epsilon 
\text{ or } x_{\ps} \leq \epsilon, 
\end{align*}
for some $\epsilon>0,C>0$. And we say that $a$ (and corresponding operator $A=q_{L,\ps}(a)$) is elliptic at $q \in \partial \overline{^{\ps}T^*\R^{n+1}}$ if there is a neighborhood of $q$ on which above inequality is satisfied. And the set of all such $q \in \partial \overline{^{\ps}T^*\R^{n+1}}$ is denoted by $\Ell_{\ps}^{m,l}(a)$ (or $\Ell_{\ps}^{m,l}(A)$).

Let $A \in \Psi_{\ps}^{m,l}(\R^{n+1})$ be classical, i.e. $A = q_{L, \ps}(a)$ where the symbol $a$ is classical. Thus, by assumption, $\rho_{\ps}^{m} x_{\ps}^{l}  a$ extends smoothly to the boundary of $\overline{^{\ps}T^*\R^{n+1}}$. 
We define $\Sigma(A)$ to be the set where the normalized symbol $\rho_{\ps}^{m} x_{\ps}^{l}  a$ vanishes on $\overline{^{\ps}T^*\R^{n+1}}$, and define the 
 characteristic set $\Char_{\ps}^{m,l}(A)$ (or $\Char_{\ps}^{m,l}(a)$) to be the intersection of $\Sigma(A)$ with the boundary of phase space: 
\begin{align}
\Char_{\ps}^{m,l}(A) = \{ q \in \partial \overline{^{\ps}T^*\R^{n+1}} \ | 
\  (\rho_{\ps}^m x_{\ps}^{l}a)(q) = 0 \}.
\end{align}
Notice that $\Char_{\ps}^{m,l}(A)$ (but not $\Sigma(A)$) depends only on the principal symbol of $A$. Microlocal propagation takes place at the boundary of phase space, therefore strictly speaking only $\Char_{\ps}^{m,l}(A)$ is relevant, but we will sometimes find it convenient to discuss propagation through the interior of phase space, that is, in $\Sigma(A) \setminus \Char_{\ps}^{m,l}(A)$, as well. (Actually only the jet of $\Sigma(A)$ at the boundary of $\overline{^{\ps}T^*\R^{n+1}}$ is relevant.) 


When $a \in S_{\ps,\cl}^{m,l}(\R^{n+1})$, we define the rescaled Hamilton vector field to be
\begin{align}\label{eq:rescaled Hvf}
H^{m,l}_a = \rho_{\ps}^{m-1} x_{\ps}^{l-1} H_a.
\end{align}
Due to the parabolic nature, $\partial_\tau a \partial_t - \partial_ta \partial_\tau$ is one more order lower than other parts of the expression at fiber infinity:
\begin{align}
H^{m,l}_a|_{\rho_{\ps}=0}
= \rho_{\ps}^{m-1} x_{\ps}^{l-1}(\partial_\zeta a \cdot \partial_z - \partial_z a \cdot \partial_\zeta).
\end{align}
As shown in \cite[Section~2]{gell2022propagation}, $H^{m,l}_a$ extends to a smooth vector field on $\overline{^{\ps}T^*\R^{n+1}}$ that is tangent to the boundary. In addition, it is tangent to $\Char_{\ps}^{m,l}(a)$, and the vector field restricted to the characteristic set depends only on the principal symbol.

Finally we recall the concept of (parabolic) wavefront sets, which is also called the micro-support.
For $A = q_{L,\ps}(a)$, its parabolic scattering operator wavefront set $WF'_{\ps}(A)$ is defined, as a subset of $\partial \overline{^{\ps}T^*\R^{n+1}}$, as follows.
For $q \in \partial \overline{^{\ps}T^*\R^{n+1}}$, we say
$q \notin \WF'_{\ps}(A)$ if and only if there is $\chi \in C^\infty(\overline{^{\ps}T^*\R^{n+1}})$ with $\chi(q) = 1$ such that $\chi a \in \mathcal{S}(\overline{^{\ps}T^*\R^{n+1}})$.
In particular, $\WF'_{\ps} (A) = \emptyset$ if and only if $A \in \Psi_{\ps}^{-\infty,-\infty}(\R^{n+1})$.

Similarly, for $u \in \mathcal{S}'(\R^{n+1})$ its $\ps$-wavefront set of order $s,r$, denoted by $\WF_{\ps}^{s,r}(u)$, is defined to be the (microlocal) loci of where $u$ fails to lie in $H_{\ps}^{s,r}(\R^{n+1})$.
Concretely, for $q \in \partial \overline{^{\ps}T^*\R^{n+1}}$, we have
\begin{equation}
    q \notin \WF_{\ps}^{s,r}(u) \iff \text{ there exists } A \in \Psi_{\ps}^{s,r} \text{ that is elliptic at } q, \, \text{ such that } Au \in L^2(\R^{n+1}).
\end{equation}
Then the (microlocal) loci of where $u$ fails to lie in $\mathcal{S}(\R^{n+1})$ is denoted by
\begin{equation}
    \WF_{\ps}(u) = \overline{\bigcup_{s,r} \WF_{\ps}^{s,r}(u)}.
\end{equation}

\section{The 1-cusp pseudodifferential algebra}
\label{sec:the_1_cusp_pseudodifferential_algebra}

\subsection{The 1-cusp pseudodifferential algebra}
\label{subsec:the_1_cusp_pseudodifferential_algebra}

In this subsection we briefly introduce the 1-cusp pseudodifferential algebra, and refer readers to \cite{Zachos:Thesis}\cite[Section~2]{zachos2022inverting}\cite[Section~2]{jia2022tensorial} for more details.
For its connection to time-dependent Sch\"odinger equations, see \cite[Section~3.3]{HJ2026-scattering-map}.

Let $M$ be an $m-$dimensional manifold with boundary, with boundary defining function $x_{\oc}$, and suppose $y=(y_1,..,y_{m-1})$ is a coordinate system of $\partial M$, which together with $x_{\oc}$ form a local coordinate system of $M$ near the boundary. Then the space of 1-cusp vector fields, denoted by $\mathcal{V}_{\oc}$, is locally spanned over $C^\infty(M)$ by
\begin{align} \label{eq: 1-cusp vector fields, local}
x_{\oc}^3\partial_{x_{\oc}}, x_{\oc}\partial_{y_j}, j=1,2,...,m-1.
\end{align}
$\mathcal{V}_{\oc}$ give rise to a vector bundle with \eqref{eq: 1-cusp vector fields, local} being its local frame, which is called 1-cusp tangent bundle, and denoted by $^{\oc}TM$.

The 1-cusp differential operators of order (at most) $k$, denoted by $\mathrm{Diff}_{\oc}^k(M)$, consists of polynomials of these vector fields of degree at most $k$:
\begin{align}  \label{eq: Diff 1c definition}
\begin{split}
\mathrm{Diff}_{\oc}^k(M) 
= \{ \sum_{\alpha+|\beta| \leq k}  a_{\alpha\beta}(x_{\oc},y)(x_{\oc}^3\partial_{x_{\oc}})^\alpha(x_{\oc}\partial_{y})^\beta : 
\\ a_{\alpha \beta} \in C^\infty(M), \alpha \in \N, \beta \in \N^{m-1} \}.
\end{split}
\end{align}

The 1-cusp cotangent bundle, denoted by $^{\oc}T^*M$ is the dual bundle of $^{\oc}TM$. It is locally spanned over $C^\infty(M)$ by 
\begin{align}
\frac{dx_{\oc}}{x_{\oc}^3}, \frac{dy_j}{x_{\oc}}, j=1,2,...,m-1.
\end{align}
Then $T^*M$ embeds into $^{\oc}T^*M$ canonically in the interior of $M$, giving it a symplectic structure naturally. In particular, one may write the canonical one form as
\begin{align}  \label{eq: 1c- canonical form}
\xi_{\oc} \frac{dx_{\oc}}{x_{\oc}^3} + \eta_{\oc} \cdot \frac{dy}{x_{\oc}},
\end{align}
where $(\xi_{\oc},\eta_{\oc})$ are coordinates of fibers on $^{\oc}T^*M$. 
We use $\overline{^{\oc}T^*M}$ to denote the compactification of $^{\oc}T^*M$, obtained by compactifying each fiber radially to be a ball
with boundary defining function
\begin{align}
\rho_{\oc} = (1+\xi_{\oc}^2+|\eta_{\oc}|^2)^{-1/2}.
\end{align}

Notice the minor difference with $\overline{^{\ps}T^*\R^{n+1}}$, which is compactified on both base and fiber level, is because the 1-cusp construction is happening on the manifold with boundary $M$, which is already `compactified in a priori'.

The 1-cusp symbol class of differential order $m$ and decay (in fact growth) order $l$, denoted by $S_{\oc}^{m,l}(M)$ ,is defined to be smooth functions on ${}^{\oc}T^*M$ satisfying for any $j,\alpha,k,\beta$ there exists a constant $C_{j\alpha k \beta}$ such that
\begin{align}
 |x_{\oc}^l \rho_{\oc}^{m-k-|\beta|} (x_{\oc}\partial_{x_{\oc}})^j\partial_y^\alpha\partial_{\xi_{\oc}}^k\partial_{\eta_{\oc}}^\beta a(x_{\oc},y,\xi_{\oc},\eta_{\oc})|
 \leq C_{j\alpha k \beta}.
\end{align}
And locally they quantize to be operators acting by
\begin{align}
(q_{L,\oc}(a)u)(x_{\oc},y) = (2\pi)^{-n}
\int e^{ i \xi_{\oc}\frac{x_{\oc}-x_{\oc}'}{x_{\oc}^3}+\eta_{\oc} \cdot \frac{y-y'}{x_{\oc}}}
a(x_{\oc},y,\xi_{\oc},\eta_{\oc}) u(x_{\oc}',y') d\xi_{\oc}d\eta_{\oc} \frac{dx_{\oc}'dy'}{(x_{\oc}')^{n+2}}.
\end{align}
The collection of all such operators, are called 1-cusp pseudodifferential operators with differential order $m$ and decay order $l$, and denoted by $\Psi_{\oc}^{m,l}(M)$.

Next we define the ellipticity of symbols and operators.
\begin{definition}
A symbol $a \in S^{m,l}_{\oc}(M)$ is called elliptic if
\begin{align*}
|a(x_{\oc},y,\xi_{\oc},\eta_{\oc})| \geq cx_{\oc}^{-l} \la (\xi_{\oc},\eta_{\mathrm{1c}}) \ra^m, \quad c>0 \text{ when }  |(\xi_{\oc},\eta_{\oc})|^{-1} \leq \epsilon \text{ or } x_{\oc} \leq \epsilon,
\end{align*}
for some $\epsilon>0,C>0$, and its quantization $A$ is also called elliptic in this case.
\end{definition}
Under this condition, see \cite[Section~2.5]{zachos2022inverting}, its quantization $A$ has a parametrix $B \in \Psi_{\oc}^{-m,-l}(M)$ such that
\begin{align*}
AB-\Id, \; BA-\Id \in \Psi_{\oc}^{-\infty,-\infty}(M).
\end{align*}

One can now define 1-cusp Sobolev spaces
$H^{s,r}_{\oc}(M)$ (see \cite[Section~2.5]{zachos2022inverting}) for $s \geq 0$ by choosing $A \in \Psi_{\oc}^{s,0}(M)$ elliptic, and demanding
\begin{equation} \label{eq:1c-Sobolev-def}
u\in H^{s,r}_{\oc}(M) \Leftrightarrow u\in x_{\oc}^r L_{\oc}^2(M)\ \text{and}\ Au\in x_{\oc}^r L_{\oc}^2(M);
\end{equation}
here $L_{\oc}^2(M)$ is the $L^2$ space relative to the 1-cusp density $\frac{dx\,dy}{x_{\oc}^{n+2}}$.
They are equipped with norms:
$$
\|u\|^2_{H^{s,r}_{\mathrm{1c},h}}=\|x_{\oc}^{-r}u\|_{L_{\oc}^2}^2+\sum_{j+|\alpha|\leq s}\|(hx_{\oc}^3D_{x_{\oc}})^j(h^{1/2}x_{\oc}D_y)^\alpha\|_{L_{\oc}^2}^2.
$$
The 1-cusp Sobolev spaces for other $s$ are defined via interpolation and duality.

1-cusp pseudodifferential operators are bounded on these Sobolev spaces, namely for $A \in \Psi_{\oc}^{m,l}(M)$ and all $s,r$, $A$ is bounded linear operator from $H^{s,r}_{\oc}$ to $H^{s-m,r-l}_{\oc}$.

The 1-cusp wavefront set, which characterizes the location of singularities in the phase space, is defined by:
\begin{definition} \label{defn: 1c WF, both decay and smoothness}
\begin{align} \label{eq: definition of 1c WF, both decay and smoothness}
\begin{split}
\WF_{\oc}(u) =&  \Big(\{ q \in \partial \overline{{}^\oc{T^*X}}: \text{There exists } A \in \Psi_{\oc}^{0,0}(X) \text{ such that } 
 \\& A \text{ is elliptic at } q, \,Au \in \dot{C}^\infty(X) \} \Big)^c.
\end{split}
\end{align}
\end{definition}
And we have a refined version which captures the order of the singularities:
\begin{definition} \label{defn: 1c WF, both decay and smoothness, with order}
\begin{align} \label{eq: definition of 1c WF, both decay and smoothness, with order}
\begin{split}
\WF_{\oc}^{s,r}(u) =&  \Big(\{ q \in \partial \overline{{}^\oc{T^*X}}: \text{There exists } A \in \Psi_{\oc}^{s,r}(X) \text{ such that } 
 \\& A \text{ is elliptic at } q, \,Au \in L^2(X)  \}\Big)^c.
\end{split}
\end{align}
\end{definition}

The residual one, for example $\WF_{\oc}^{-\infty,r}(u)$ is defined by
\begin{align}
\WF_{\oc}^{-\infty,r}(u) = \bigcap_{s \in \Z} \WF_{\oc}^{s,r}(u). 
\end{align}
On the opposite direction, we define 
\begin{align}
\WF_{\oc}^{\infty,r}(u) = \overline{\bigcup_{s \in \Z} \WF_{\oc}^{s,r}(u)}. 
\end{align}

\section{\texorpdfstring{The $\oc-\ps$ geometry and analysis}{The 1c-ps geometry and analysis}} 
\label{sec:1c-ps-geometry-distribution}

In this section, we recall main geometric and analytic ingredients needed for the theory $\oc-\ps$ Fourier integral operators from \cite[Section~3]{gell2022propagation} and \cite[Section~4, Section~5]{HJ2026-scattering-map}. This is used to characterize the Poisson operator sending the asymptotic data $f_\pm$ in \eqref{eq:u expansion} to the solution $u$.

\subsection{The bulk-boundary duality for the Schr\"odinger equation}
\label{subsec:bulk-boundary}

In this subsection, we first recall the dynamical structure of the flow of the Hamilton vector field of our symbol and then explain the bulk-boundary duality that we mentioned in Section~\ref{subsec:strategy-sketch} in this subsection. 

Let $p$ be the left symbol of $P$, the interesting part is where the Schr\"odinger operator is non-elliptic, i.e. the characteristic variety:
\begin{equation}
    \Sigma(P):= p^{-1}(0) \subset \overline{{}^{\ps}T^*\R^{n+1}}.  
\end{equation}
The set 
\begin{equation} \label{eq:CharP-def}
    \Char(P) = \Sigma(P) \cap \partial(\overline{{}^{\ps}T^*\R^{n+1}})
\end{equation}
is where the microlocal propagation takes place.
We denote the boundary defining function of fiber infinity of $\overline{{}^{\ps}T^*\R^{n+1}}$ by $\rhops$.

One of the key ingredient of our analysis is the global source-sink structure of the rescaled Hamilton flow associated to $P$, which we introduce below.
Using the symplectic structure on $T^*\R^{n+1}$ we have a local expression for the Hamilton vector field
\begin{align}
H_a = \partial_\tau a \partial_t - \partial_ta \partial_\tau
+ \partial_\zeta a \cdot \partial_z - \partial_z a \cdot \partial_\zeta,
\end{align}
whose integral curves (more precisely, their closures) are called bicharacteristic lines.
As the discussion in \cite[Section~3]{gell2022propagation} shows, 
the bicharacteristic lines starting on the boundary with finite frequency will remain on the boundary for all time and only those bicharacteristics at fiber infinity of $\overline{{}^{\ps}T^*\R^{n+1}}$ will leave $\partial \overline{\R^{n+1}}$ and meet the metric perturbation, so we will restrict ourselves to be near the characteristic set in such a region.
In addition, such a bicharacteristic line will stay in the same time slice, or in other words, it reaches the endpoint `instantly' in terms of $t$.
In this region, we know $\tau^{1/2}$ and $|\zeta|_g$ are comparable and we can take
\begin{equation} \label{eq:rho-ps-def}
    \rhops = \big( \sum_{j,k} g^{jk}(z, t) \zeta_j \zeta_k \big)^{-1/2}.
\end{equation}

Then let $\hat{\tau} = \rho_{\ps}^2\tau \in \R$, and $\hat{\zeta} \in \R^{n-1}$ be coordinates parametrizing $\rho_{\ps} \zeta$ in the sphere with respect to $g$, a valid coordinate system with $(t,z)$ in a compact region is
\begin{equation} \label{eq:ps-coordinates}
    (t,z,\rho_{\ps},\hat{\tau},\hat{\zeta}).
\end{equation}

Let $\rho_{\base} = (1+t^2+|z|^2)^{-1/2}$ be the defining function of the spacetime infinity, then the rescaled Hamilton vector field of $p$
\begin{equation} \label{eq:Hp20-def}
    H_p^{2,0} = \rhops \rho_{\base}^{-1}H_p
\end{equation}
is a smooth vector field on $\overline{{}^{\ps}T^*\R^{n+1}}$ that is tangent to its boundary. 
The flow of $H_p^{2,0}$ have a global source-sink structure with following `radial sets' being the source and sink.
\begin{definition}
 \label{defn: radial sets}
The radial set (of $P$) $\mathcal{R}$ is defined to be 
\begin{align}
\mathcal{R} = \{ q \in \Char(P): H^{2,0}_p \text{ vanishes at } q \}.
\end{align}
\end{definition}
Since the flow of $H^{2,0}_p$ has a source-sink structure , we have the decomposition of $\mathcal{R}$:
\begin{equation}
    \mathcal{R} = \Radm \cup \Radp \subset \overline{{}^{\ps}T_{\partial \overline{\R^{n+1}}}^*\R^{n+1}},
\end{equation}
where $\Radm$ is the source and $\Radp$ is the sink. 
In the region $|t/ z | \leq C$ for a constant $C>0$, they are given by 
\begin{equation}\label{eq:rad.corner.pm}
\mathcal{R}_\pm \supset \{ x_{\ps} = 0, \ t/|z| = \pm \rho_{\ps}/2, \ \hat \zeta/|\zeta| = \pm z/|z|, \ \tau/|\zeta|^2 = -1 \} \cap \{  \frac{|t|}{|z|} \leq C \}.
\end{equation}

So one can see that $\Radm$ is a graph over the southern hemisphere (i.e the part of $\partial \overline{\R^{n+1}}$ with $t/\la z \ra \leq 0$) and $\Radp$ is a graph over the northern hemisphere (i.e. the part of $\partial \overline{\R^{n+1}}$ with $t/\la z \ra \geq 0$).
Both of them turn vertical (in the sense of tending to fiber infinity) when approaching the equator $\{ t/|z| = 0, 1/|z|=0 \}$.
Though they overlap at the equator on the base level, but they tend to the opposite direction in terms of the frequency, hence they remain disjoint on the phase space level.

As aforementioned, for each point on $\partial \overline{\R^{n+1}}$, there is an $n$-dimensional family of geodesic tending to it. On a phase space level, this corresponds to different bicharacteristic lines tending to or emanating from the same point in $\mathcal{R}_\pm$. Then they are distinguished via blowing up $\mathcal{R}_\pm$ within $\Sigma$:
\begin{equation}
    [\Sigma; \mathcal{R}_\pm].
\end{equation}
We denote the front face created by the blow up by $W_\pm$, then $W_\pm$ precisely parametrizes all bicharacteristic lines. Using $W_\pm$, we have the following characterization of our bulk-boundary duality happens between a point on a bicharacteristic line and the `endpoint' of this bicharacteristic line. 
\begin{prop}\cite[Lemma~3.4]{HJ2026-scattering-map}
\label{prop:Wpm-1c-identification}
$W_\pm$ is canonically diffeomorphic to $\overline{{}^{\oc}T^*\mathcal{R}_\pm}$, the radially compactified 1-cusp cotangent bundle over $\mathcal{R}_\pm$.
\end{prop}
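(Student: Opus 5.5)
The plan is to work in explicit local coordinates near $\mathcal{R}_\pm$ in the region where the bicharacteristics leave spacetime infinity, that is, near fiber infinity over the equator. There I would write down the flow of $H^{2,0}_p$ linearized at $\mathcal{R}_\pm$. Near the equator the metric is flat, so $P = P_0$ there, and the computation is that of the free operator. The Hamilton flow of $p = \tau + |\zeta|^2$ is explicit: $z(s) = z_0 + 2s\zeta$, with $t$, $\zeta$, $\tau$ constant, since we are at fiber infinity and the rescaled flow keeps $t$ fixed.

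First step: choose coordinates on $\mathcal{R}_\pm$ adapted to \eqref{eq:rad.corner.pm}. The radial set $\mathcal{R}_\pm$ is a graph over a hemisphere of $\partial\overline{\R^{n+1}}$, so it is a manifold with boundary. Its boundary is where it meets the equator and turns vertical. A natural boundary defining function is $x_{\oc} \sim (t/|z|)^{1/2}$ or $\rho_{\ps}$; these agree up to constants on $\mathcal{R}_\pm$ since $t/|z| = \pm\rho_{\ps}/2$. I would take $y = z/|z| \in \mathbb{S}^{n-1}$ as the tangential coordinate. I would fix the precise power of $x_{\oc}$ by comparing with the stationary phase structure of \eqref{eq:u expansion}: the asymptotic profile $f_\pm(z/2t)$ lives on $\overline{\R^n}$ with variable $z/2t$, whose inverse size is $\sim t/|z|$.

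Second step: blow up $\mathcal{R}_\pm$ inside $\Sigma$ and identify the front face $W_\pm$ with the space of bicharacteristic lines near $\mathcal{R}_\pm$. Each line is determined by its limit point $q \in \mathcal{R}_\pm$ together with a normal direction. The normal bundle of $\mathcal{R}_\pm$ in $\Sigma$ has rank $n$, and its fiberwise radial compactification after blow-up becomes a ball bundle over $\mathcal{R}_\pm$. For free lines the invariants are the following.
\begin{itemize}
\item The asymptotic direction $\hat\zeta$ gives the base point $y$.
\item The component of $z$ orthogonal to $\zeta$ gives the tangential frequency $\eta_{\oc}$.
\item The quantity $t$ (equivalently $z\cdot\zeta/|\zeta|$ combined with $t$) gives the radial frequency $\xi_{\oc}$.
\end{itemize}
I would define the map $W_\pm \to \overline{{}^{\oc}T^*\mathcal{R}_\pm}$ by these formulas. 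I would then check that it is a diffeomorphism up to the boundary faces: the face of $W_\pm$ over $\partial\mathcal{R}_\pm$ should match $x_{\oc}=0$, and the face at infinity in the normal directions should match fiber infinity $\rho_{\oc}=0$. Concretely, I would compute the blow-up coordinates (quotients of defining functions of $\mathcal{R}_\pm$ by a chosen total defining function) and verify that they are smooth functions of $(x_{\oc},y,\xi_{\oc},\eta_{\oc},\rho_{\oc})$, and conversely. The weights $x_{\oc}^3$ and $x_{\oc}$ in the frame $dx_{\oc}/x_{\oc}^3,\ dy/x_{\oc}$ should come out exactly from the rates at which the linearized defining functions vanish. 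These rates are governed by the eigenvalues of the linearization of $H^{2,0}_p$ at the source/sink, and the linearization is parabolic-degenerate at the equator.

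Third step: canonicity. I would show the identification is independent of coordinate choices by characterizing it invariantly. The symplectic form on $T^*\R^{n+1}$, restricted to $\Sigma$ and rescaled, descends to the space of bicharacteristics (symplectic reduction), and the resulting symplectic structure on the interior of $W_\pm$ should match the 1-cusp symplectic form on ${}^{\oc}T^*\mathcal{R}_\pm$. A map that is symplectic and restricts to the identity on the zero section should then be canonical.

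The main obstacle is the second step near the corner where $\mathcal{R}_\pm$ becomes vertical at the equator. There the blow-up geometry degenerates, and getting smoothness of the identification down to $x_{\oc}=0$ with exactly the 1-cusp weights (rather than scattering or cusp weights) requires careful bookkeeping of the parabolic homogeneity. I would handle this by working in the projective coordinates $|z|^{-1},\ t/|z|,\ \rho_{\ps}$ and computing the blow-up explicitly.
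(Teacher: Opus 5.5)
Your route is the right one, and it is the computation behind the identification the paper records in \eqref{eq:xioc-etaoc-t-z-relation}; the paper itself only cites \cite[Lemma~3.4]{HJ2026-scattering-map}. That route is: work in the free region near the corner, send $\hat\zeta\mapsto y$, transverse position $\mapsto\eta_{\oc}$ and time $\mapsto\xi_{\oc}$, then compare blow-up coordinates with 1-cusp ones.

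However, the decisive check in your second step is only announced, and several of the reasons you give for it are wrong:
\begin{enumerate}
\item $\mathcal R_\pm$ has codimension $n+1$ in $\Sigma$, not $n$.
\item The fibres of $W_\pm$ are balls because they are closed half-spheres, which happens because $\mathcal R_\pm\subset\{x_{\ps}=0\}$.
\item The linearized flow is not degenerate at the equator, and the 1-cusp weights have nothing to do with its eigenvalues.
\end{enumerate}

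Here is the actual computation. Near $z\approx|z|e_1$, use on $\Sigma=\{\tau+|\zeta|^2=0\}$ the coordinates $\rho_b=1/z_1$, $s=t/z_1$, $\theta=z'/z_1$, $\rho_{\ps}=|\zeta|^{-1}$ and $\hat\zeta'$, with $\hat\zeta_1=(1-|\hat\zeta'|^2)^{1/2}$. Set $F=\theta-\hat\zeta'/\hat\zeta_1$ and $G=s-\rho_{\ps}/(2\hat\zeta_1)$. Then $\mathcal R_+=\{\rho_b=F=G=0\}$, and up to a positive factor
\[
H^{2,0}_p=-2\hat\zeta_1\bigl(\rho_b\partial_{\rho_b}+F\cdot\partial_F+G\,\partial_G\bigr).
\]
This is an isotropic radial sink, uniformly up to $\rho_{\ps}=0$. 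Consequently $\mathsf{a}=F/\rho_b$ and $\mathsf{b}=G/\rho_b$ are constant along bicharacteristics. Together with $\rho_{\ps}$ and $\hat\zeta'$, they are coordinates on $W_+$ away from its fibre boundary.

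On the line $z=w+2t\zeta$ one finds $\mathsf{a}=w'-(\hat\zeta'/\hat\zeta_1)w_1$ and $\mathsf{b}=-\rho_{\ps}w_1/(2\hat\zeta_1)$. Now write $W\cdot dZ$ in the frame $dx/x^3,\ dy/x$, taking $Z=\zeta$, $W=-w=2t\zeta-z$, $x=1/|Z|=\rho_{\ps}$ and $y=\hat\zeta'$. This gives
\[
\eta_{\oc}=-\mathsf{a},\qquad \xi_{\oc}=-2\mathsf{b}+x_{\oc}\,y\cdot\mathsf{a}=\frac{z\cdot\zeta}{|\zeta|^2}-2t .
\]
This is a smooth, fibrewise-linear isomorphism, so fibre infinity also matches $\rho_{\oc}=0$. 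At $\rho_{\ps}=0$, where $\mathsf{b}\to t_0$, it is exactly \eqref{eq:xioc-etaoc-t-z-relation}.

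The $dx/x^3$ weight comes from $\mathcal R_+$ sitting at $s=\rho_{\ps}/(2\hat\zeta_1)$. That makes $\mathsf{b}$ equal, up to the factor $-\tfrac12$ and an $O(x_{\oc})$ term, to $x_{\oc}$ times the radial scattering frequency $w\cdot\hat\zeta$. Over the interior of $\mathcal R_+$, the chart $1/t,\ z/t$ gives the projective coordinate $w$ directly, i.e.\ the ordinary $T^*\R^n$.

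Two further corrections.
\begin{itemize}
\item The computation forces $x_{\oc}=\rho_{\ps}|_{\mathcal R_\pm}=2|t|/|z|$. Your alternative $(t/|z|)^{1/2}$ does not agree with this up to constants, and would produce a different, wrong structure.
\item Your third step is unnecessary and, as stated, false. The reduced form on $\Sigma$ is $d\zeta\wedge d(z-2t\zeta)$, so $(Z,W)=(\zeta,2t\zeta-z)$ is indeed symplectic. But $(Z,W)\mapsto(Z+W,W)$ is also symplectic and fixes the zero section pointwise, so these two properties do not determine the map. Uniqueness holds only among maps covering the identity of $\mathcal R_\pm$, which are translations by closed $1$-forms. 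The identification is canonical simply because it is given by the explicit formula above. Equivalently, it is the canonical relation of the free Poisson kernel \eqref{eq:free-Poisson-kernel}, where $d_Z(-t|Z|^2+z\cdot Z)=z-2tZ$, with the sign coming from the twist in \eqref{eq:microlocalized sojourn relns}.
\end{itemize}
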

Since we have a canonical identification between $\mathcal{R}_\pm$ with $\overline{\R^n}$ via projection to the base, $\overline{{}^{\oc}T^*\mathcal{R}_\pm}$ above can be replaced by $\overline{{}^{\oc}T^*\R^n}$.
In addition, more precise characterization of the diffeomorphism in Proposition~\ref{prop:Wpm-1c-identification} is available.
For a bicharacteristics, the `location' of its corresponding point is just the  .
The fibers of $W_\pm$ over $\mathcal{R}_\pm$, which corresponds to different bicharacteristics tending to the same point in $\mathcal{R}_\pm$, is identified with the fiber of $\overline{{}^{\oc}T^*\R^n}$. Hence those 1-cusp frequencies are parametrizing such family of bicharacteristic lines, which are originally parametrized by position variables in the bulk $\R^{n+1}$.
Under this identification, bicharacteristics at fiber infinity in the bulk corresponds to $\overline{{}^{\oc}T_{\partial \overline{\R^n}}^*\R^n}$, i.e. the part of the 1-cusp cotangent bundle over the boundary of the `base'.
More explicitly, consider the case for $\mathcal{R}_+$ for definiteness, let $(t_0,z_0)$ be the position part of a point on a bicharacteristic $\tilde{\gamma}$ after escaping the metric perturbation, then in terms of frequencies in \eqref{eq: 1c- canonical form}, the point in $\overline{{}^{\oc}T_{\partial \overline{\R^n}}^*\R^n}$ corresponds to this line has frequency 
\begin{equation} \label{eq:xioc-etaoc-t-z-relation}
\xioc = -2t_0 + O(\xoc), \quad \etaoc = -z_0' - \frac{(z_0' \cdot \yoc)\yoc}{1 - |\yoc|^2},
\end{equation}
where we assumed that the asymptotic direction is near $(1,0,..,0)$ and $z_0' = ((z_0)_2, \dots, (z_0)_n)$  parametrizes part of $z_0$ that is perpendicular to the asymptotic direction.


\subsection{Geometry of the 1c-ps phase space}
The phase space for 1c-ps Lagrangian distributions (a class of operators that includes suitably microlocalized Poisson operators) is obtained by blowing up the corner of
\begin{align} \label{eq:calM0-def-product-bundle}
\mathcal{M}_0 =  \overline{^\ps{T^*\RR^{n+1}}} \times \overline{^\oc{T^*\RR^n}} 
\end{align}
at base infinity of $\overline{^\oc{T^*\RR^n}}$ and fiber-infinity of $\overline{^\ps{T^*Y}}$. Here the $\RR^n$ factor represents the interior of either $\Rp$ or $\Rm$.  We refer readers to \cite{melrose1993atiyah}\cite{Melrose1994} for more details about blow ups. Concretely, we define
\begin{align} \label{eq: 1c-ps cotangent bundle definition}
\mathcal{M}:= [ \mathcal{M}_0 ; \{ \rho_{\ps} = 0, x_{\oc} =0 \} ], \quad \rhops = \big( \sum_{j,k} g^{jk}(z, t) \zeta_j \zeta_k \big)^{-1/2}, 
\end{align}
and denote the blow down map by 
\begin{align}
\beta_{\oc-\ps}: \mathcal{M} \rightarrow \mathcal{M}_0.
\end{align}
The front face created by this blow-up shall be denoted $\ffocps$. 
The new smooth coordinate on $\ffocps$ introduced by the blow up is 
\begin{equation}\label{eq:sigma}
\varsigma = \frac{x_{\oc}}{\rho_{\ps}},
\end{equation}
or its reciprocal. In the interior of $\ffocps$, either $\xoc$ or $\rhops$ can be taken as a boundary defining function. 

The manifolds $\SM_0$ and $\SM$ have codimension 4 corners. However, we shall only be interested in a neighbourhood of a compact subset $K$ of the interior of $\ffocps$; in particular, we shall stay away from all the other boundary hypersurfaces. So, in effect, we are dealing with a manifold with boundary. 

The manifold $\mathcal{M}$ is endowed with a canonical symplectic structure\footnote{In this article, we allow symplectic structures to blow up or degenerate at the boundary} $\omega$ from $\mathcal{M}_0$ by lifting the symplectic form on $\mathcal{M}_0$, which in turn is equipped with the product symplectic structure from its two factors. We are particularly interested in the symplectic/contact structures on $\ffocps$ induced by this symplectic structure in the interior. To prepare for this, we shall specify coordinates to use in a neighbourhood of $K \subset \ffocps$. These will be $\xoc$ (a boundary defining function) and $\yoc$, which are base coordinates on $\RR^n$ near base infinity;  $\xioc$, $\etaoc$, their one-cusp dual coordinates as defined in Section~\ref{sec:the_1_cusp_pseudodifferential_algebra}; $z$, $t$, Euclidean space and time coordinates;  $\varsigma$ as defined in \eqref{eq:sigma}; and fibre coordinates near fibre-infinity, which we take to be $\tilde \tau = \tau \xoc^2$, $\hat \zeta = \zeta / |\zeta|$. We remark that we are also mostly interested in $(z, t)$ near the perturbation of the metric, which by assumption is a compact set in spacetime. On the other hand, $(\zeta, \tau)$ will be near infinity, since $\rhops = 0$ at $\ffocps$. To summarize, our coordinates are
\begin{equation}\label{eq:coordinates near ffocps}
\xoc, \quad \yoc, \quad \xioc, \quad \etaoc, \quad z, \quad t, \quad \varsigma, \quad \tilde \tau, \quad \hat \zeta.
\end{equation}


\begin{definition}[Admissible 1c-ps Lagrangian submanifold and 1c-ps fibred-Legendre submanifold]  \label{defn: admissible 1c-ps Lagrangian submanifold and 1c-ps fibred-Legendre submanifold}
We define an admissible 1c-ps Lagrangian submanifold of $\SM$ to be a $2n+1$-dimensional submanifold $\Lambda$ that is Lagrangian in the interior (the canonical symplectic form $\omega$ vanishes on it), such that 
\begin{itemize}
\item $\Lambda$ meets $\ffocps$ transversally, 
\item the differential $dt$ is non-vanishing on $\Lambda \cap \ffocps$, and 
\item  its closure is disjoint from all other boundary hypersurfaces of $\SM$ (other than $\ffocps$). 
\end{itemize}
We define a fibred-Legendre submanifold $L$ of $\ffocps$ to be the boundary of an admissible 1c-ps Lagrangian submanifold. In other words, there exists $\Lambda$ as above such that $L = \Lambda \cap \ffocps$. 
\end{definition}
See \cite[Proposition~4.4]{HJ2026-scattering-map} for the reason of the term `fibred' from a symplectic fibration.
The 1c-ps Lagrangian submanifolds that are used for our analysis are the twisted forward and backward sojourn relations which arise from the bulk-boundary duality we discussed in Section~\ref{subsec:bulk-boundary}.
As the calculation in \cite[Section~4.4]{HJ2026-scattering-map} shows, the rescaled Hamilton vector field $H_p^{2,0}$ is a smooth vector field tangent to the boundary of $[\Sigma; \SR]$ except at $W_\pm$, where it is transverse: inward-pointing at $W_-$ and outward-pointing at $W_+$. Using the non-trapping assumption we see that each point of $W_\pm$ gives rise to a smooth integral curve of  $H_p^{2,0}$, that travels from $W_-$ to $W_+$ in finite parameter time $s$. Let $q_-$ be a point of $W_-$, and let  $\gamma_{q_-}(s)$ be the integral curve of $H_p^{2,0}$ emanating from $q_-$ at time $s=0$, and arriving at $W_+$ at time $T(q_-) > 0$. Similarly, let $q_+$ be a point of $W_+$, and let $\mu_{q_+}(s)$ be the integral curve of $H_p^{2,0}$ emanating from $q_+$ at time $s=0$, and arriving at time $T'(q_+) < 0$. 
The forward sojourn relation is the subset of $\SM_0 =  \overline{^\ps{T^*\RR^{n+1}}} \times \overline{^\oc T^*{\Rm}} $ defined by 
\begin{equation}\label{eq:FSR def}
\FSR = \{ (\gamma_{q_-}(s), q_-) \mid q_- \in W_- , \ s \in [0, T(q_-)] \}. 
\end{equation}
Similarly, the backward sojourn relation is defined by 
\begin{equation}\label{eq:BSR def}
\BSR = \{ (\mu_{q_+}(s), q_+) \mid q_+ \in W_+ , \ s \in [T'(q_+), 0] \}. 
\end{equation}

Then our Lagrangian will be lifted and microlocalized version of them, after twisting the sign of the 1-cusp frequencies.
Let $U_\pm \subset W_\pm \sim \overline{ ^{\oc}T^* \R^n} $ be open sets disjoint from fibre-infinity, and let $G_\pm \subset \overline{ ^{\ps}T^* \R^{n+1}}$ be open sets disjoint from spacetime infinity. Consider the microlocalized Lagrangians 
\begin{equation}\label{eq:microlocalized sojourn relns}
\Lambda_\pm = \beta_{\oc-\ps}^*\big(\{ (\gamma_{q_\pm}(s), -q_{\pm}) \in \overline{\Lambda_\pm} \mid q_{\pm} \in U_\pm, \ \gamma_{q_\pm}(s) \in G_\pm  \}\big)
\end{equation}
where $-q_\pm$ means changing the sign of the fiber part and we now view it as being in $\SM$ rather than $\SM_0$. That is, we take these sets to be the closure, in $\SM$, of their interiors lifted to $\SM$ from $\SM_0$ via the blowdown map $\beta_{\oc-\ps}$. 
This $\Lambda_\pm$ depend on the choice of $U_\pm$ and $G_\pm$ but we do not indicate this in the notation, regarding these choices of open sets as fixed. 
As shown in \cite[Proposition~4.12]{HJ2026-scattering-map}, this $\Lambda_\pm$ is admissible in the sense of Definition~\ref{definition:admissible-1c1cLag}.

Discussion above also gives rise to the following map $\msf{I}_\pm$ that we will use later.
For $\tilde{q} \in \Char(P)$, there is a unique $q_\pm \in W_\pm$ such that
\begin{equation}
(\tilde{q}, q_\pm) \in \overline{\Lambda'_\pm}.
\end{equation}
That is, $q_\pm$ is the endpoint of the $H_p^{2,0}$-flow starting from $\tilde{q}$ in the forward/backward direction.
Then we view $q_\pm$ as a point in $\overline{{}^{\oc}T^*\R^n}$ and define
\begin{equation} \label{eq:msf-I-def}
    \msf{I}_\pm(\tilde{q}) = q_\pm \in \overline{{}^{\oc}T^*\R^n}.
\end{equation}
In addition, for $\tilde{q}$ in fiber infinity of $\psphase$, we have $\msf{I}(\tilde{q}) \in \overline{{}^{\oc}T_{\partial\overline{\R^n} }^*\R^n}$.

Next we recall the definition of parameterizations of a 1c-ps fibred Legendre submanifold and its Lagrangian extension, which means giving representations of them (locally) as the graph of the differential of a phase function. Our definition is slightly different with \cite[Definition~4.5]{HJ2026-scattering-map} in that we include the parametrization of the Lagrangian extension as well since this will encode the geometric information we need.

\begin{definition} \label{def:1c-ps-parametrization}
	Suppose $\Legps$ is an 1c-ps fibered-Legendre submanifold of $\ffocps \subset \mathcal{M}$, $q \in \Legps$ and $(\theta_0, \theta_1)$ is in a non-empty open subset $U$ of $\RR^{k_1+k_2}$, then we say that 
\begin{align}
\Phi_{\oc-\ps} =  \frac{ \varphi_0(t,\theta_0) }{x_{\oc}^2} + \frac{\varphi_1(\msf{K},\theta_0,\theta_1)}{x_{\oc}} ,\; \mathsf{K} = (\xoc,\yoc,z,t), 
\end{align}
gives a non-degenerate parametrization of $\Legps$ near $q$ if there is a point $q' \in \RR^{2n}_{\yoc, z, t} \times U_{\theta_0, \theta_1}$ such that the differentials
\begin{equation}\label{eq:varphi0}
d_{t,\theta_0} \frac{\partial \varphi_0}{\partial \theta_{0j}}, \quad 1 \leq j \leq k_1,
\end{equation}
and
\begin{equation}\label{eq:varphi1}
d_{\yoc, z, \theta_1} \frac{\partial \varphi_1}{\partial \theta_{1j}}, 1 \leq j \leq k_2,
\end{equation}
are linearly independent at $q' \in \RR^{2n}_{\yoc, z, t} \times U$, such that $\Legps$ is given locally by 
\begin{align}\label{eq:1c-ps param}
\Legps
=\{ (\msf{K},d_{\msf{K}}\Phi_{\oc-\ps}) \mid \; \xoc = 0, (\yoc, z, t,\theta_0,\theta_1) \in C_{\Phi_{\oc-\ps}}\},
\end{align}
where 
\begin{align}  \label{eq: 1c-ps critical set}
C_{\Phi_{\oc-\ps}} =  \{ (\yoc, z, t,\theta_0,\theta_1) \mid d_{\theta_0}\varphi_0 = 0, d_{\theta_1}\varphi_1 = 0 \}
\end{align}
with the point $q' \in C_{\Phi_{\oc-\ps}}$ corresponding under \eqref{eq:1c-ps param} to $q \in L$. 
Let $\Lagps$ be a Lagrangian extension of $\Legps$, then we say such $\Phi_{\oc-\ps}$ satisfying the independence condition of its differentials above gives a non-degenerate parametrization of $\Lagps$ if 
$\Lagps$ is given locally by 
\begin{align}\label{eq:1c-ps param-Lag}
\Lagps
=\{ (\msf{K},d_{\msf{K}}\Phi_{\oc-\ps}) \mid \;  (x_{\oc},\yoc, z, t,\theta_0,\theta_1) \in \tilde{C}_{\Phi_{\oc-\ps}}\},
\end{align}
where 
\begin{align}  \label{eq: 1c-ps critical set-Lag}
\tilde{C}_{\Phi_{\oc-\ps}} =  \{ (\xoc, \yoc, z, t,\theta_0,\theta_1) \mid d_{\theta_0}\varphi_0 + x_{\oc}d_{\theta_0}\varphi_1 = 0, d_{\theta_1}\varphi_1 = 0 \}.
\end{align}
\end{definition}
As shown in \cite[Proposition~4.6]{HJ2026-scattering-map}, a non-degenerate parametrization of $\Legps$ always exists. In addition, the same proof, except for that allowing functions constructed in the proof to depend on $\xoc$ to represent the Lagrangian extension, proves the existence of non-degenerate parametrization of the Lagrangian extension $\Lagps$ as well. See also the discussion in \cite[Remark~4.8]{HJ2026-scattering-map}.

In addition, using \cite[Lemma~B.1]{HJ2026-scattering-map}, after a linear change of coordinates in $z$ (with $\tilde \zeta$ transforming correspondingly) we have a splitting of coordinates so that in a neighborhood of $q_0 \in \Legps$ the projection
 \begin{align}\label{eq:fullrank}
    \Lambda_\pm \ni (t,z,\tilde{{\tau}},\tilde{\zeta},x_{\oc},y_{\oc},\xi_{\oc},\eta_{\oc}) \rightarrow      (t,z',\tilde{{\zeta}}'',x_{\oc},y_{\oc})
 \end{align}
has full rank $2n+1$, where $z'=(z_1,...,z_k),\ \tilde{\zeta}''=(\tilde{\zeta}_{k+1},...,\tilde{\zeta}_n)$ for some $k \in \{ 1, \dots, n\}$. Under this splitting of coordinates, using \cite[Proposition~4.6, Lemma~B.2]{HJ2026-scattering-map} the phase function used to parametrize $\Legps$ (together with its Lagrangian extension $\Lambda_\pm$) can be taken as the following normal form:
\begin{align} \label{eq:1c-ps-phase-normal-form}
\Phi_{\oc-\ps} = -\frac{t}{x_{\oc}^2}+\frac{z'' \cdot \tilde{\zeta}''-\tilde{\varphi}_1(t,z',\tilde{{\zeta}}'',x_{\oc},y_{\oc}) }{x_{\oc}}.
\end{align}

\subsection{The calculus of 1c-ps Fourier Integral Operators}
\label{subsec:1c-ps-calculus}
In this subsection, we recall the calculus of 1c-ps Fourier Integral Operators developed in \cite[Section~5]{HJ2026-scattering-map}.

\begin{definition} \label{defn: 1c-ps FIO}
Let $\Legps$ be a fibered-Legendre submanifold of $\SM$ and $\Lagps$ be its Lagrangian extension in Definition~\ref{defn: admissible 1c-ps Lagrangian submanifold and 1c-ps fibred-Legendre submanifold}. 
A 1c-ps Legendre distribution associated to  $\Legps$
of order $m$ is a distributional half-density that can be written (modulo $\mathcal{S}(\R^{n+1} \times \R^n)$) as a finite sum of oscillatory integrals of the form 
\begin{multline} \label{eq: 1c-ps FIO definition}
u(\msf{K}) =  (2\pi)^{- ( \frac{2n+1}{4} ) - \frac{k_0+k_1}{2} }
\Big(\int e^{ i( \frac{ \varphi_0(t,\theta_0) }{x_{\oc}^2} + \frac{\varphi_1(\msf{K}',\theta_0,\theta_1)}{x_{\oc}} )} 
x_{\oc}^{-(m+\frac{2k_0+k_1}{2})-\frac{1}{4}}
\\  \times a(\msf{K},\theta_0,\theta_1) d\theta_0d\theta_1 \Big)
|dtdz|^{1/2}|\frac{dx_{\oc}d\yoc}{x_{\oc}^{n+2}}|^{1/2}, \quad \msf{K} = (\xoc, \yoc, t, z), \quad \msf{K}' = (\yoc, t, z),
\end{multline}
where $\Phi_{\oc-\ps} = \frac{ \varphi_0(t,\theta_0) }{x_{\oc}^2} + \frac{\varphi_1(\msf{K},\theta_0,\theta_1)}{x_{\oc}}$ is a local non-degenerate parametrization of $\Legps$ (see Definition~\ref{def:1c-ps-parametrization}), with $\theta_0 \in \R^{k_0}$, $\theta_1 \in \R^{k_1}$, and 
 $a \in C_c^\infty([0,\epsilon)_{\xoc} \times \R^{n-1}_{\yoc} \times \R^{n+1}_{t,z} \times \R^{k_1 + k_2}_{\theta_0, \theta_1})$ is assumed to be supported in the region where $\Phi_{\oc-\ps}$ parametrizes $\Legps$. 
 The set of such Legendre distributions is denoted $I_{\oc-\ps}^m(\R^{n+1} \times \R^n, \Lagps; \Omega^{1/2})$, where $\Omega^{1/2}$ is the half-density bundle whose typical section is as in the expression above.
 A linear operator $A$, mapping half-densities on $\R^n$ to half-densities on $\R^{n+1}$ is called a 1c-ps Fourier Integral Operator of order $m$ associated to $\Legps$ if its Schwartz kernel is a Legendre distribution of order $m$ associated to $\Legps$. 
\end{definition}

Then we recall some key facts we need about 1c-ps Fourier integral operators.
First of all, the oscillatory integral in the definition above can be written as, modulo a Schwartz error, an oscillatory integral using another parametrization, on the region where both parametrizations are valid.

\begin{prop}\cite[Proposition~5.4]{HJ2026-scattering-map}
\label{prop:invariance under parametrization change} Assume that $u$ in \eqref{eq: 1c-ps FIO definition} has essential support in an open set $U \subset \Legps$, and that $\tilde \Phi_{\oc-\ps}$ is another parametrization of $\Legps$ in $U$. Then $u$ can be written, modulo $\mathcal{S}(\R^{n+1} \times \R^n)$, as as oscillatory integral with respect to the parametrizing function $\tilde \Phi_{\oc-\ps}$. 
\end{prop}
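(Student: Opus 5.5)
The plan is to follow the classical H\"ormander argument for the equivalence of phase functions, adapted to the two-scale structure of $\Phi_{\oc-\ps}$. Using a partition of unity on $\Legps$, we reduce to $u$ with essential support in a small neighbourhood of a single point $q\in U$. It suffices to show that both $u$ written with $\Phi_{\oc-\ps}$ and $u$ written with $\tilde\Phi_{\oc-\ps}$ can be rewritten, modulo $\mathcal S(\R^{n+1}\times\R^n)$, as oscillatory integrals with one common reference phase. We take this reference phase to be the normal form \eqref{eq:1c-ps-phase-normal-form}, $-t/\xoc^2+(z''\cdot\tilde\zeta''-\tilde\varphi_1)/\xoc$. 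It has $k_0=1$, and its $\theta_1$-variables are $\tilde\zeta''$.

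\textbf{Step 1: reduce the number of parameters.} Consider a general non-degenerate $\Phi_{\oc-\ps}=\varphi_0(t,\theta_0)/\xoc^2+\varphi_1(\msf K,\theta_0,\theta_1)/\xoc$. We first treat the slow scale. Non-degeneracy \eqref{eq:varphi0} allows us to split $\theta_0=(\theta_0',\theta_0'')$ so that $d^2_{\theta_0''\theta_0''}\varphi_0$ is non-degenerate at $q'$ and one parameter survives. We then apply the Morse lemma with parameters to write $\varphi_0=\tilde\varphi_0(t,\theta_0')+\tfrac12\langle Q\theta_0'',\theta_0''\rangle$ after a change of variables $\theta_0''\mapsto \theta_0''(t,\theta_0)$. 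Stationary phase in $\theta_0''$ with large parameter $\xoc^{-2}$ eliminates these variables. The $\varphi_1/\xoc$ term rides along as part of the amplitude, since $e^{i\varphi_1/\xoc}$ is smooth relative to the $\xoc^{-2}$ scale only after a further shift of the critical point by $O(\xoc)$. To handle this cleanly, I would instead do stationary phase on the full phase $\Phi_{\oc-\ps}$, whose $\theta_0''$-critical point is $\theta_0''=\theta_0''_c+\xoc\,c(\msf K,\theta)$. Substituting back produces a new phase of the same two-scale form, with $\varphi_1$ modified by $O(1)$ terms, and a smooth amplitude. Each eliminated $\theta_0$-variable contributes $\xoc^{1}$, matching the weight $\xoc^{-(2k_0+k_1)/2}$ in \eqref{eq: 1c-ps FIO definition}. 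We repeat the same procedure at the fast scale, using \eqref{eq:varphi1} and large parameter $\xoc^{-1}$ in the redundant $\theta_1$-variables; each of these contributes $\xoc^{1/2}$. The result is that the order $m$ is preserved.

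\textbf{Step 2: identify the phase with the normal form.} Once the number of parameters is minimal, the critical set $\tilde C_{\Phi}$ is diffeomorphic to the Lagrangian extension $\Lagps$ via the map in \eqref{eq:1c-ps param-Lag}. Using the full-rank projection \eqref{eq:fullrank}, this gives a diffeomorphism $\theta=\theta(\msf K,\tilde\theta)$ matching the critical sets of $\Phi_{\oc-\ps}$ and of the normal form. The map respects the splitting: $\theta_0$ depends only on $(t,\theta_0)$, while $\theta_1$ may depend on everything. Because both phases parametrize the same $\Lagps$, not just the same $\Legps$, their differentials and values agree on the critical set; this is the point where parametrizing the extension matters. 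A H\"ormander-type homotopy $\Phi_s=(1-s)\Phi+s\tilde\Phi\circ\theta$ then applies, combined with Moser's trick solving $\partial_s\Phi_s=\langle d_\theta\Phi_s, v_s\rangle$. Here the difference vanishes to second order on the critical set, so $v_s$ exists and is smooth. This yields a fibre change of variables carrying one phase exactly to the other, and the Jacobian and a compensating factor are absorbed into the amplitude. If the signatures of the Hessians differ, extra quadratic dummy variables are added first.

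\textbf{Main obstacle.} The hardest step will be keeping the two-scale structure under the Moser deformation. Since $\Phi_s$ must stay of the form $\varphi_{0,s}(t,\theta_0)/\xoc^2+\varphi_{1,s}/\xoc$, the vector field $v_s$ must have a $\theta_0$-component independent of $(\yoc,z)$ and of $\theta_1$. Its $\theta_1$-component is allowed an $O(\xoc^{-1})$-singular coefficient only in the combination that is compensated by the $\xoc^{-2}$ term. I would handle this by solving the deformation equation order by order in $\xoc$: the $\xoc^{-2}$ coefficient gives a problem involving $(t,\theta_0)$ alone, solved exactly as in the classical case, and the $\xoc^{-1}$ coefficient then gives a problem for $\theta_1$ with $\theta_0$ as a parameter. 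Any remaining $O(1)$ phase discrepancy is put into the amplitude as a smooth factor $e^{i\,O(1)}$.
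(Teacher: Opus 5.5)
This is essentially the route the paper relies on: it cites \cite[Proposition~5.4]{HJ2026-scattering-map} rather than reproving it, and indicates (in the 1c,sc discussion) that these phase-equivalence results are the standard H\"ormander/Melrose--Zworski argument with the parabolic scaling as the only new feature; your insistence that both phases parametrize the same extension $\Lagps$ (really its 1-jet at $\xoc=0$) is also the correct reading, since $\Phi_{\oc-\ps}+c(t)/\xoc$ parametrizes the same $\Legps$ yet differs by the non-smooth factor $e^{ic(t)/\xoc}$ (compare Corollary~\ref{coro:1cps-phase-value-equal} and the remark after it). Two corrections: first, the normal form \eqref{eq:1c-ps-phase-normal-form} has $k_0=0$, not $k_0=1$, and admissibility ($dt\neq0$ on $\Legps$) forces $d^2_{\theta_0\theta_0}\varphi_0$ to be nondegenerate on the critical set, so all $\theta_0$ are eliminated, the $\xoc^{-2}$ parts of the reduced phases coincide (both are read off from $\xioc|_{\Legps}$ as a function of $t$), and your ``main obstacle'' becomes the Melrose--Zworski equivalence for $\varphi_1/\xoc$ with $t$ as a parameter; second, reducing both phases to a common one only gives one inclusion, so to land on $\tilde\Phi_{\oc-\ps}$ you must also go back up, e.g.\ by adding quadratic dummy variables $\langle Q\omega,\omega\rangle/(2\xoc^{2})$ or $\langle Q\omega,\omega\rangle/(2\xoc)$ to the reduced phase before applying the equivalence theorem.
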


Now we discuss the calculus of 1c-ps Fourier integral operators. We first recall the definition of its principal symbol.
Suppose $\lambda = (\lambda_1,...,\lambda_{2n})$ are local coordinates of $\Legps \cap \partial \mathcal{M}$, such that $(x_{\oc},\lambda,\partial_{{\theta}_0}\varphi_0,\partial_{\theta_1}\varphi_1)$ gives a coordinate system of 
$\mathcal{M} \times \R^{N_0}_{\theta_0} \times \R^{N_1}_{\theta_1}$ near the critical set $C_{\Phi_{\oc-\ps}}$ in (\ref{eq: 1c-ps critical set}).
Then the symbol of $A \in I^{m}_{\oc-\ps}(\R^{n+1} \times \R^n, \Lagps)$ written as in (\ref{eq: 1c-ps FIO definition}) with respect to this coordinate system and this parametrization is the half density
\begin{align} \label{eq: 1c-ps principal symbol, half density form}
a(0,\lambda,0,0) |\det \frac{\partial (\lambda,\partial_{\theta_0} \varphi_0,\partial_{\theta_1}\varphi_1)}{\partial (\msf{K}',\theta_0,\theta_1)}|^{-1/2}
|d\lambda|^{1/2} \otimes |d\xoc|^{-m-n/2-5/4}.
\end{align}
This expression is designed to be invariant under changes of parametrization and scaling of the boundary defining function $\xoc$ (hence the inclusion of the  factor $|d\xoc|^{-m-n/2-5/4}$).
To make it invariant under changes of parametrization with different signatures and changes between different choices of boundary defining functions, we define the bundle 
\begin{align} \label{eq: S[m]bundle, 1c-ps case}
S^{[m]}_{\oc-\ps}(\Legps):=  |N^*(\ffocps)|^{-m-\frac{2n+5}{4} } \otimes M(\Legps) \otimes E(\Legps),
\end{align}
where $M(\Legps)$ is the Maslov bundle used in \cite[Section~3.2,3.3]{FIO1} and originates from \cite{maslov1972asymptotic};
and $|N^*(\ffocps)|^{-m-\frac{2n+5}{4}}$ models a section that is homogeneous of degree $-m-\frac{2n+5}{4}$ in $x_{\oc}$. \footnote{This degree can be computed in the case without any extra parameters $\theta_0,\theta_1$, in which case \eqref{eq: 1c-ps FIO definition} has the same homogeneity in $\xoc$ as
$x_{\oc}^{-m-\frac{3}{4}} |\frac{dx_{\oc}d\yoc}{x_{\oc}^{n+2}}|^{1/2}$.}
Here $E(\Legps)$ is the line bundle arises in \cite[Section~3.1]{hassell2001resolvent}, whose transition coefficient depends on the choice of the boundary defining function (or more precisely, the zeroth and first order jet of it).

Then \eqref{eq: 1c-ps principal symbol, half density form} is a section of $\Omega^{1/2}(\Legps) \otimes S_{\oc-\ps}^{[m]}(\Legps)$ associated to the 1c-ps Fourier integral operator $A$ that is independent of the choice of local coordinates and parametrizations.
Then we have the following principal symbol map 
\begin{align} \label{eq: invariant, symbol map, 1c-ps}
\begin{split}
\sigma^m_{\oc-\ps}: \quad   I^m_{\oc-\ps}(\R^{n+1} \times \R^n ,\Lagps; \Omega^{1/2}) 
\rightarrow 
 C^\infty(\Legps \cap \partial \mathcal{M}; 
\Omega^{1/2}(\Legps) \otimes S_{\oc-\ps}^{[m]}(\Legps)).
\end{split}
\end{align}

Then this principal symbol map gives the following short exact sequence in \cite[Proposition~5.5]{HJ2026-scattering-map}, which is the key ingredient to establish the calculus of 1c-ps Fourier integral operators.

\begin{align} \label{eq:1c-ps-exact-sequence}
\begin{split}
0 & \rightarrow I^{m-1}_{\oc-\ps}(\R^{n+1} \times \R^n,\Lagps)
\rightarrow I^{m}_{\oc-\ps}(\R^{n+1} \times \R^n, \Lagps)
\\ & \xrightarrow{\sigma^m_{\oc-\ps}} C^\infty(\Legps \cap \partial \mathcal{M}; 
\Omega^{1/2}(L) \otimes S_{\oc-\ps}^{[m]}(\Legps)) \rightarrow 0.
\end{split}
\end{align}

Next we recall how pseudodifferential operators act on 1c-ps Fourier integral operators.
First we consider the case when we compose a ps-pseudodifferential operator from the left.
\begin{prop}{\cite[Theorem~5.7]{HJ2026-scattering-map}} 
\label{prop: PsiDO- 1c-ps FIO composition}
Suppose $Q \in \Psi_{\ps}^{m',0}(\R^{n+1})$ has parabolically homogeneous principal symbol at fibre-infinity. If $A \in I_{\oc-\ps}^{m}(\R^{n+1} \times \R^n, \Lagps; \Omega^{1/2})$, then we have  
\begin{equation}
QA \in I^{m+m'}_{\oc-\ps}(\R^{n+1} \times \R^n,\Lagps),
\end{equation}
with principal symbol 
\begin{align}  \label{eq: QA principal symbol, non-vanishing q, density version}
\sigma^{m+m'}_{\oc-\ps}(QA) = \sigma^{m'}_{\ps}(Q)|_{\Legps} \otimes \sigma^m_{\oc-\ps}(A),
\end{align}
where we lift the principal symbol of $Q$ to $\SM$, and view as a section of $|N^*(\ffocps)|^{-m'}$ over $\Legps$. 
\end{prop}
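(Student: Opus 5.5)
The plan is to compute $QA$ directly on the oscillatory integral representation \eqref{eq: 1c-ps FIO definition}, by letting $Q$ act in the $(t,z)$ variables for fixed $(\xoc,\yoc,\theta_0,\theta_1)$. By Proposition~\ref{prop:invariance under parametrization change} and a partition of unity, it suffices to treat a single term with phase $\Phi_{\oc-\ps}=\varphi_0/\xoc^2+\varphi_1/\xoc$ and compactly supported amplitude $a$. Schwartz errors can be ignored, since $Q$ maps $\mathcal{S}$ to $\mathcal{S}$. Writing $q$ for the left symbol of $Q$, we have
\begin{equation*}
Q\big(e^{i\Phi_{\oc-\ps}}a\big)(t,z)=(2\pi)^{-(n+1)}\int e^{i(t-t')\tau+i(z-z')\cdot\zeta+i\Phi_{\oc-\ps}(t',z')}\,q(t,z,\tau,\zeta)\,a(t',z')\,dt'dz'd\tau d\zeta .
\end{equation*}
The goal is to show that this equals $e^{i\Phi_{\oc-\ps}(t,z)}b$, with $b$ an amplitude of the same type as $a$ but with growth order raised by $m'$. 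The leading term of $b$ should be $q(t,z,d_{t,z}\Phi_{\oc-\ps})\,a$.

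\emph{Step 1: the scaling of $d_{t,z}\Phi_{\oc-\ps}$.} Near $\ffocps$ we have $\partial_t\Phi_{\oc-\ps}\sim\xoc^{-2}$ and $\partial_z\Phi_{\oc-\ps}\sim\xoc^{-1}$. Hence $\rhops(d\Phi_{\oc-\ps})\sim\xoc$, with $\varsigma$ bounded on the support, so $d\Phi_{\oc-\ps}$ lies in the parabolic fibre-infinity region. Parabolic homogeneity of the principal symbol of $Q$ then gives
\begin{equation*}
q(t,z,d\Phi_{\oc-\ps})=\xoc^{-m'}\big(\sigma^{m'}_{\ps}(Q)\circ\beta_{\oc-\ps}\big)+O(\xoc^{-m'+1}),
\end{equation*}
where $\sigma^{m'}_{\ps}(Q)$ is evaluated at the lifted point of $\SM$. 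On $C_{\Phi_{\oc-\ps}}$ this point is exactly the corresponding point of $\Legps$. Away from a conic-parabolic neighbourhood of $d\Phi_{\oc-\ps}$, integration by parts in $(t',z')$ gives $O(\xoc^\infty)$ contributions, because $|(\tau,\zeta)-d\Phi_{\oc-\ps}|$ is large in parabolic units.

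\emph{Step 2: asymptotic expansion.} Near $d\Phi_{\oc-\ps}$, Taylor expand
\begin{equation*}
\Phi_{\oc-\ps}(t',z')=\Phi_{\oc-\ps}(t,z)+(t'-t)\partial_t\Phi_{\oc-\ps}+(z'-z)\cdot\partial_z\Phi_{\oc-\ps}+R(t,z;t',z').
\end{equation*}
Then apply the standard Kuranishi trick and stationary phase in $(t',z',\tau,\zeta)$. This yields
\begin{equation*}
b\sim\sum_{j,\beta}\frac{1}{j!\,\beta!}\,\partial_\tau^j\partial_\zeta^\beta q(t,z,d\Phi_{\oc-\ps})\;D_{t'}^jD_{z'}^\beta\big(e^{iR}a\big)\big|_{(t',z')=(t,z)}.
\end{equation*}
The bookkeeping is as follows. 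By the parabolic symbol estimates, each $\partial_\tau$ on $q$ gains $\rhops^2\sim\xoc^2$ and each $\partial_\zeta$ gains $\xoc$. A derivative hitting $R$ produces at worst $\partial_t^2\Phi_{\oc-\ps}\sim\xoc^{-2}$, $\partial_t\partial_z\Phi_{\oc-\ps}\sim\xoc^{-1}$ or $\partial_z^2\Phi_{\oc-\ps}\sim\xoc^{-1}$. Since $R$ vanishes to second order, every such factor must be paired with at least two derivatives. The worst term uses $j=2$ to hit $\partial_t^2\varphi_0/\xoc^2$ once, giving gain $\xoc^4\cdot\xoc^{-2}=\xoc^2$. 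A mixed $\tau\zeta$ pair gives $\xoc^{3}\xoc^{-1}$, and a $\zeta\zeta$ pair gives $\xoc^2\xoc^{-1}=\xoc$. Therefore every term after the first is $O(\xoc^{-m'+1})$ relative to $a$. Each term is again smooth in $(\xoc,\yoc,t,z,\theta)$ and compactly supported, so it is an amplitude of the same class.

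\emph{Step 3: conclusion.} Borel summation of the expansion gives $QA\in I^{m+m'}_{\oc-\ps}$, with the same phase and hence the same $\Lagps$. Its leading amplitude is $\xoc^{-m'}\sigma^{m'}_{\ps}(Q)\,a$. Reading this off through \eqref{eq: 1c-ps principal symbol, half density form}, the extra factor $\xoc^{-m'}$ corresponds to the section of $|N^*(\ffocps)|^{-m'}$. This gives \eqref{eq: QA principal symbol, non-vanishing q, density version}, and by \eqref{eq:1c-ps-exact-sequence} the remainder lies in $I^{m+m'-1}_{\oc-\ps}$.

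The main obstacle is Step~2. One must make the stationary-phase remainder estimates uniform down to $\xoc=0$ and uniform in the parameters $\theta$. One must also check carefully that the anisotropic (parabolic) gains of $q$ always beat the anisotropic growth of the Hessian of $\Phi_{\oc-\ps}$. I would first do this by rescaling $\tau=\xoc^{-2}\tilde\tau$ and $\zeta=\xoc^{-1}\tilde\zeta$, which makes the problem a semiclassical-type expansion with small parameter $\xoc$.
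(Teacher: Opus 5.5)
The paper does not prove this statement. It quotes it from \cite[Theorem~5.7]{HJ2026-scattering-map}, so there is no in-paper argument to match. Your direct computation of $e^{-i\Phi_{\oc-\ps}}Q(e^{i\Phi_{\oc-\ps}}a)$ is a correct and standard way to prove it, but three points need to be made explicit.

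\emph{Localizing near the critical set.} The claim $\rhops(d_{t,z}\Phi_{\oc-\ps})\sim\xoc$ does not hold on all of $\mathrm{supp}\,a$. It requires $(\partial_t\varphi_0,\partial_z\varphi_1)\neq 0$. That is true near $C_{\Phi_{\oc-\ps}}$, because $\Legps$ lies in the interior of $\ffocps$. So you should first discard, modulo $\mathcal{S}$, the part of $a$ away from $C_{\Phi_{\oc-\ps}}$ by non-stationary phase in $\theta$. Where both derivatives vanish, $\rhops(d\Phi_{\oc-\ps})$ can be as large as $\xoc^{1/2}$, and then a $\tau\tau$ pair gains nothing.

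\emph{The gain must improve with $(j,\beta)$.} Your count proves more than you state, and you need the stronger version. A Taylor factor $\partial_t^k\partial_z^{\beta'}\Phi_{\oc-\ps}$ costs $\xoc^{-2}$ only when $\beta'=0$, which forces $k\geq 2$. Otherwise it costs $\xoc^{-1}$, since $\varphi_0$ is independent of $z$. In both cases the cost is at most half the parabolic weight $2k+|\beta'|$, so the $(j,\beta)$ term is $O(\xoc^{-m'+(2j+|\beta|)/2})$. This improvement is what makes the series asymptotic and Borel summable. Knowing only that each later term is $O(\xoc^{-m'+1})$ is not enough.

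\emph{Classical symbol.} The lower-order terms of $b$ are smooth in $\xoc$ down to $0$, as Definition~\ref{defn: 1c-ps FIO} requires, only if the full symbol of $Q$ is parabolically classical. Homogeneity of the principal part alone does not give this. The same assumption is implicit in the statement.

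\emph{A simpler route.} An approach closer to the framework here avoids the uniform stationary-phase estimates you identify as the main obstacle. First cut off $(\xoc^2\tau,\xoc\zeta)$ to a compact set away from $0$; the rest is $O(\xoc^\infty)$ by your integration by parts in $(t',z')$. Also cut off to $(t',z')$ near $(t,z)$. Then substitute $\tau=\tilde\tau/\xoc^2$, $\zeta=\tilde\zeta/\xoc$, and keep $(t',z',\tilde\tau,\tilde\zeta)$ as phase variables. The kernel of $QA$ then has phase
\[
\frac{(t-t')\tilde\tau+\varphi_0(t',\theta_0)}{\xoc^2}+\frac{(z-z')\cdot\tilde\zeta+\varphi_1(\xoc,\yoc,t',z',\theta_0,\theta_1)}{\xoc}.
\]
This is again a non-degenerate parametrization of $\Legps$ in the sense of Definition~\ref{def:1c-ps-parametrization}. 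The two groups of parameters are $(t',\tilde\tau,\theta_0)$ and $(z',\tilde\zeta,\theta_1)$.

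The Jacobian $\xoc^{-n-2}$ exactly matches the shift of $\frac{2k_0+k_1}{2}$ in \eqref{eq: 1c-ps FIO definition}. The amplitude is $\xoc^{-m'}\tilde q\,a$ with $\tilde q=\xoc^{m'}q(t,z,\tilde\tau/\xoc^2,\tilde\zeta/\xoc)$. So $QA\in I^{m+m'}_{\oc-\ps}$ follows with no expansion at all.

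The symbol formula then follows from \eqref{eq: 1c-ps principal symbol, half density form}. The new Jacobian is block triangular, with the old one and a block of determinant $\pm1$ on the diagonal, and the added Hessian block has signature $0$.

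This route pushes all the uniformity into Proposition~\ref{prop:invariance under parametrization change}. Yours gives an explicit full expansion of the amplitude with the original phase. That is the form needed for the transport computation in Proposition~\ref{prop: vanishing principal symbol product}.
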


When the pseudodifferential operator has vanishing principal symbol on $\Lagps$, then we have the following refined characterization.
\begin{prop}{\cite[Theorem~5.8]{HJ2026-scattering-map}} \label{prop: vanishing principal symbol product}
Suppose $P \in \Psi_{\ps}^{m',0}(\R^{n+1})$, and its parabolically homogeneous principal symbol $p_{\hom}$ vanishes identically on the projection of $\Legps$ in $\psphase$. In addition, let $p$ be its left full symbol and assume that $\tilde{p}(t,z,\tilde{\tau},\tilde{\zeta})=x_{\oc}^{m'}p(t,z,\tau,\zeta)$ is smooth.\footnote{This in particular is satisfied by all differential operators.} 
If $A \in I_{\oc-\ps}^{m}(\R^{n+1} \times \R^n,\Lambda_\pm; \Omega^{1/2})$, then we have 
\begin{align*}
PA \in I^{m+m'-1}_{\oc-\ps}(\R^{n+1} \times \R^n,\Lambda_\pm; \Omega^{1/2}).
\end{align*}
The principal symbol is of $PA$ is as follows: let 
\begin{align*}
 \sigma^m_{\oc-\ps}(A) = \textbf{a} \otimes |dx_{\oc}|^{-m-\frac{2n+5}{4}},
\end{align*}
with $\textbf{a}$ being a section of $\Omega^{1/2}(\Legps) \otimes M(\Legps)$, then
\begin{align} \label{eq: 1c-ps vanishing principal composition, principal symbol} 
\begin{split}
\sigma_{\oc-\ps}^{m+m'-1}(PA) = & (-i\mathscr{L}_{ \rHp} + i (\frac{m'-1}{2} + m + \frac{2n+5}{4} )(x_{\oc}^{-1} \rHp x_{\oc}) + p_{\sub})\textbf{a}
\\& \otimes |dx_{\oc}|^{-m-m'+1-\frac{2n+5}{4}}.
\end{split}
\end{align}
\end{prop}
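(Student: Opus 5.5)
The plan is to compute $PA$ directly from a local oscillatory-integral representation of $A$ and read off the principal symbol. The scheme is the classical one for a vanishing principal symbol: a Taylor expansion of the symbol of $P$ at $d\Phi$, integration by parts in the fibre variables, and identification of the result with a transport equation along the Hamilton flow.

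\textbf{Reduction to a local computation.} By a partition of unity and Proposition~\ref{prop:invariance under parametrization change}, it suffices to treat a single term \eqref{eq: 1c-ps FIO definition} near a point $q_0\in\Legps$. There I would use the normal form \eqref{eq:1c-ps-phase-normal-form},
\[
\Phi=-t/\xoc^2+\bigl(z''\cdot\tilde\zeta''-\tilde\varphi_1\bigr)/\xoc ,
\]
with $\theta_1=\tilde\zeta''$ and no $\theta_0$. This choice makes the $\xoc$-scalings explicit: $\tau\sim\xoc^{-2}$ and $\zeta\sim\xoc^{-1}$, which is exactly the parabolic scaling. Since $P$ acts only in $(t,z)$, I write
\[
P\bigl(e^{i\Phi}b\bigr)=e^{i\Phi}\Bigl(p(t,z,d_{t,z}\Phi)\,b+\sum_j \partial_{\zeta_j}p\,D_{z_j}b+\partial_\tau p\,D_t b+\tfrac12\,(\partial_\zeta^2 p)\!:\!(\partial_z^2\Phi)\,b+\dots\Bigr),
\]
where $b$ is the amplitude together with its $\xoc$-power. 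I would then check with the assumption that $\tilde p=\xoc^{m'}p$ is smooth in $(t,z,\tilde\tau,\tilde\zeta)$ that each successive term gains one power of $\xoc$. For differential operators this expansion is exact and finite; for general $P$ it is the standard asymptotic expansion of the left quantization acting on an oscillatory function.

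\textbf{Using the vanishing hypothesis.} The key point is the leading term $\xoc^{m'}p(d\Phi)=\tilde p(t,z,\xoc^2\partial_t\Phi,\xoc\partial_z\Phi)$. By hypothesis $p_{\hom}$ vanishes on the projection of $\Legps$, that is, on $C_\Phi\cap\{\xoc=0\}$. Since the differentials $d_{\yoc,z,\theta_1}\partial_{\theta_1}\varphi_1$ are independent there, I would write
\[
\tilde p(d\Phi)=\sum_j c_j\,\partial_{\theta_{1j}}\varphi_1+\xoc\, e,
\]
with $c_j$ and $e$ smooth. Next I would use $\partial_{\theta_1}\varphi_1\,e^{i\Phi}=-i\,\xoc\,\partial_{\theta_1}e^{i\Phi}$ and integrate by parts in $\theta_1$. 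This converts the $c_j$-terms into $\xoc\,\partial_{\theta_1}(c_j a)$. Hence the whole expression has amplitude order one better, i.e.\ $PA\in I^{m+m'-1}_{\oc-\ps}$.

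\textbf{Identifying the symbol.} On $C_\Phi\cap\{\xoc=0\}$ I would collect the first-order terms:
\begin{itemize}
\item the $\partial_\zeta p\cdot\partial_z$ and $\partial_\tau p\,\partial_t$ derivatives falling on $a$;
\item the $\partial_{\theta_1}(c_ja)$ terms;
\item the Hessian term.
\end{itemize}
Together these should form $-i\rHp$ applied to $a$, lifted to $\Legps$ via the identification of $\lambda$-coordinates, plus a zeroth-order multiplication term. Derivatives falling on the power $\xoc^{-(m+k_1/2)-1/4}$, together with the normalisation $|d\xoc|^{-m-\frac{2n+5}{4}}$ and the rescaling by $\xoc^{m'-1}$, produce the term
\[
i\Bigl(\tfrac{m'-1}{2}+m+\tfrac{2n+5}{4}\Bigr)\bigl(\xoc^{-1}\rHp\xoc\bigr).
\]
The Hessian term combined with the Jacobian factor $\lvert\det\partial(\lambda,\partial_\theta\varphi)/\partial(\msf K',\theta)\rvert^{-1/2}$ assembles into the divergence part of the Lie derivative of the half-density $\mathbf a$. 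The lower-order part of the full symbol of $P$ contributes $p_{\sub}$. Finally, independence of the parametrization follows from the exact sequence \eqref{eq:1c-ps-exact-sequence} and the invariance of both sides.

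\textbf{Main obstacle.} The hard step will be the bookkeeping in the symbol identification. Two things must be tracked: the $\xoc$-powers under the parabolic scaling, so that $\partial_\tau p\,\partial_t$ and $\partial_\zeta p\cdot\partial_z$ contribute at the same order; and the verification that the Hessian and Jacobian terms produce exactly the half-density Lie derivative together with the stated constant $\frac{m'-1}{2}+m+\frac{2n+5}{4}$. I would first carry this out in the case with no $\theta$ variables, where $\Legps$ is a graph, and then extend to the general case via the normal form.
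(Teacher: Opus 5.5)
Your overall scheme is the standard one and does give $PA\in I^{m+m'-1}_{\oc-\ps}$: normal-form phase, expansion of $P(e^{i\Phi}b)$, writing $\tilde p(d\Phi)=\sum_j c_j\,\partial_{\theta_{1j}}\varphi_1+\xoc e$, and integrating by parts in $\theta_1$. The paper itself quotes the statement from \cite{HJ2026-scattering-map} without proof. The gap is in the symbol identification, where two of your stated expectations are false and would lead to the wrong operator.

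First, the parabolic scaling does \emph{not} put $\partial_\tau p\,D_t$ and $\partial_\zeta p\cdot D_z$ at the same order. At $d\Phi$ one has $\partial_\tau p=\xoc^{2-m'}\partial_{\tilde\tau}\tilde p$ but $\partial_\zeta p=\xoc^{1-m'}\partial_{\tilde\zeta}\tilde p$, so $\partial_\tau p\,D_tb$ sits a full order below the principal level. This is exactly why $\rHp$ has no $\partial_t$-component at $\ffocps$; compare the remark in Section~\ref{subsec:ps-bundle-PsiDO} that the $\partial_\tau a\,\partial_t-\partial_t a\,\partial_\tau$ part of $H_a$ is lower order at fibre infinity. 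Keeping $\partial_{\tilde\tau}\tilde p\,\partial_t$ in your first-order operator would not produce $\mathscr{L}_{\rHp}$.

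The $\tau$-derivative of $p_{\hom}$ does enter at principal order, but only as a multiplier, through the remainder $e$, which your identification step omits. Since $\xoc^2\partial_t\Phi=-1+\xoc\partial_t\varphi_1$ and $\xoc e=\tilde p|_{\Lambda_\pm}$ on $\tilde C_\Phi$, the restriction $e|_{\xoc=0}$ is the normal derivative of $\tilde p$ along the Lagrangian extension. Besides $p_{m'-1}$, it contains $\partial_t\varphi_1\,\partial_{\tilde\tau}p_{\hom}$ and terms coming from the $\xoc$-dependence of $\tilde\varphi_1$. Add to this the term $\tfrac{i}{2}\sum_j\partial_{z_j}\partial_{\tilde\zeta_j}p_{\hom}$ left over when your Hessian term is compared with $\tfrac12\mathrm{div}$ of the projected $\rHp$. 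This whole package, not just ``the lower-order part of the full symbol'', is what must be matched with $p_{\sub}$.

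Second, nothing in the computation differentiates $\xoc^{-(m+k_1/2)-1/4}$. $P$ acts only in $(t,z)$, and $p$ is lifted from the $\ps$ factor of the product $\SM_0$ with its product symplectic form, so $\rHp\xoc=0$. Hence the term $i(\frac{m'-1}{2}+m+\frac{2n+5}{4})(\xoc^{-1}\rHp\xoc)$ vanishes identically when $\xoc$ is the defining function. It appears in the formula only so that the formula transforms correctly under a change of defining function of $\ffocps$, via $|N^*\ffocps|$ and $E(\Legps)$. Your account of its origin is borrowed from the scattering calculus, where $P$ acts in the boundary defining function and the rescaled Hamilton field does not annihilate it.

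A clean way to do the bookkeeping, which also makes your expansion rigorous despite the two scales in $\Phi$, is as follows. Since $\varphi_0=-t$ is linear, conjugation by $e^{-it/\xoc^2}$ shifts $\tau\mapsto\tau-\xoc^{-2}$ exactly. Take $h=\xoc$ and semiclassical dual variables $(\tau_h,\zeta_h)=(h\tau,h\zeta)$. Then the operator acting on $e^{i\varphi_1/h}b$ is $h^{-m'}$ times a semiclassical operator with symbol $\tilde p(t,z,-1+h\tau_h,\zeta_h)$, where $\xoc=h$ inside $\tilde p$. Its principal part $p_{\hom}(t,z,-1,\zeta_h)$ is independent of $\tau_h$. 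The standard semiclassical transport computation then yields the stated formula with both corrections built in.
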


We can also compose 1c-pseudodifferential operators from the right. As one would expect from the duality in Section~\ref{subsec:bulk-boundary}, the role of differential and decay order are switched compared with the $\ps$-setting.
\begin{prop}{\cite[Theorem~5.9]{HJ2026-scattering-map}} \label{prop: PsiDO- 1c-ps FIO composition-1c-right}
Suppose $Q' \in \Psi_{\oc}^{-\infty,m'}(\R^n)$. If $A \in I_{\oc-\ps}^{m}(\R^{n+1} \times \R^n, \Lagps)$ 
then we have  
\begin{equation}
AQ' \in I^{m+m'}_{\oc-\ps}(\R^{n+1} \times \R^n,\Lagps),
\end{equation}
with principal symbol 
\begin{align}  \label{eq: AQ' principal symbol, non-vanishing q, density version}
\sigma^{m+m'}_{\oc-\ps}(AQ') =   \sigma^m_{\oc-\ps}(A)
\otimes \sigma^{m'}_{\oc}(Q')|_{\Legps}
\end{align}
where we lift the principal symbol of $Q'$ to $\SM$, and view as a section of $|N^*(\ffocps)|^{-m'}$ over $\Legps$. 
\end{prop}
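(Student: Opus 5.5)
The plan is to compute the Schwartz kernel of $AQ'$ directly, working locally on $\Legps$ with the oscillatory integral representation \eqref{eq: 1c-ps FIO definition}. By the exact sequence \eqref{eq:1c-ps-exact-sequence} and Proposition~\ref{prop:invariance under parametrization change}, it suffices to treat a single term $u = \int e^{i\Phi_{\oc-\ps}} x_{\oc}^{-(m+\frac{2k_0+k_1}{2})-\frac14} a \, d\theta_0 d\theta_1$, with $a$ compactly supported in the region where $\Phi_{\oc-\ps} = \varphi_0/\xoc^2 + \varphi_1/\xoc$ parametrizes $\Legps$. Composing on the right with $Q'$ means applying the transpose $(Q')^t \in \Psi_{\oc}^{-\infty,m'}(\R^n)$ in the boundary variables $(\xoc,\yoc)$, with $(t,z,\theta_0,\theta_1)$ as parameters. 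So the claim reduces to a parametrized version of the familiar statement ``a pseudodifferential operator acting on an oscillatory function $e^{i\Phi}a$ returns $e^{i\Phi}b$, with $b$ having an asymptotic expansion whose leading term is the symbol evaluated at $d\Phi$.''

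The key computation is the conjugation $e^{-i\Phi}(Q')^t e^{i\Phi}$. I will write the kernel of $(Q')^t$ in 1-cusp scaled variables
\[
X = (x_{\oc}'-x_{\oc})/x_{\oc}^3, \qquad Y = (y_{\oc}'-y_{\oc})/x_{\oc},
\]
and Taylor expand $\Phi(x_{\oc}',y_{\oc}') - \Phi(x_{\oc},y_{\oc})$ about the diagonal. The linear term is $\hat\xi\,X + \hat\eta\cdot Y$, where
\[
\hat\xi = x_{\oc}^3\partial_{x_{\oc}}\Phi = -2\varphi_0 - x_{\oc}\varphi_1 + x_{\oc}^2\partial_{x_{\oc}}\varphi_1, \qquad \hat\eta = x_{\oc}\partial_{y_{\oc}}\Phi = \partial_{y_{\oc}}\varphi_1 .
\]
These are smooth up to $\xoc = 0$ and bounded, because $U_\pm$ avoids fiber infinity. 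Checking scalings, the quadratic terms are $O(\xoc)$:
\[
\partial_{x_{\oc}}^2\Phi\,(x_{\oc}^3X)^2 = O(x_{\oc}^2X^2), \qquad \partial_{x_{\oc}}\partial_{y_{\oc}}\Phi\, x_{\oc}^4 XY = O(x_{\oc}^2), \qquad \partial_{y_{\oc}}^2\Phi\,(x_{\oc}Y)^2 = O(x_{\oc}Y^2).
\]
Higher terms are smaller still. Hence $e^{-i\Phi}(Q')^t(e^{i\Phi}\cdot)$ has full symbol $q(\xoc,\yoc,\xi+\hat\xi,\eta+\hat\eta) + O(\xoc)$. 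Applied to the smooth amplitude, this produces a new amplitude
\[
b = q(\xoc,\yoc,\hat\xi,\hat\eta)\,a + \xoc\,\tilde b .
\]
Here $q \in S^{-\infty,m'}_{\oc}$ contributes the factor $x_{\oc}^{-m'}$ times a smooth function. Absorbing $x_{\oc}^{-m'}$ into the power of $\xoc$ in front raises the order from $m$ to $m+m'$, and the remainder $\xoc\tilde b$ lands in $I^{m+m'-1}_{\oc-\ps}$. Thus $AQ' \in I^{m+m'}_{\oc-\ps}$.

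For the principal symbol, I will restrict to the critical set. There $(\hat\xi,\hat\eta)|_{\xoc=0}$ is exactly the 1-cusp fiber coordinate of the point of $\Legps$; this is consistent with the sign twist $-q_\pm$ in \eqref{eq:microlocalized sojourn relns}, once we account for the transpose. Hence the leading amplitude is $\sigma^{m'}_{\oc}(Q')|_{\Legps}\cdot a(0,\lambda,0,0)$. Since the density factor in \eqref{eq: 1c-ps principal symbol, half density form} is untouched, and the extra $x_{\oc}^{-m'}$ is recorded as a section of $|N^*(\ffocps)|^{-m'}$, this gives \eqref{eq: AQ' principal symbol, non-vanishing q, density version}.

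I expect the main obstacle to be justifying the expansion rigorously and uniformly in the parameters $(t,z,\theta_0,\theta_1)$. This needs two things. First, the part of the kernel of $Q'$ away from the 1-cusp diagonal, i.e.\ where $|X|+|Y|$ is large, must contribute only $\mathcal{S}$ errors; I would deduce this from the rapid off-diagonal decay of 1c kernels in the scaled variables together with $Q'$ having order $-\infty$ in the differential sense. Second, the Taylor remainder must be controlled by integrating by parts in the frequency variables, using that $q$ is Schwartz in $(\xi_{\oc},\eta_{\oc})$. I would first try to reduce to a symbol $q$ compactly supported in $\{\xoc<\epsilon\}$, and then run the standard Kuranishi-type argument with $X,Y$ as variables.
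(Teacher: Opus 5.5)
The paper gives no proof of this statement. It is imported verbatim from \cite[Theorem~5.9]{HJ2026-scattering-map}, so there is no in-paper argument to compare against.

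Your argument is the standard proof of a statement of this kind, and it is correct. You conjugate $(Q')^t$ by $e^{i\Phi_{\oc-\ps}}$ in the rescaled variables $X=(x_{\oc}'-x_{\oc})/x_{\oc}^3$, $Y=(y_{\oc}'-y_{\oc})/x_{\oc}$. You observe that the linear part of the phase difference is $\hat\xi X+\hat\eta\cdot Y$, with $(\hat\xi,\hat\eta)$ the 1c-fibre coordinates of $\Lagps$, and that every higher Taylor term carries a factor $x_{\oc}$. You then use that the scaled kernel of $Q'$ is Schwartz in $(X,Y)$ because its differential order is $-\infty$. Your sign bookkeeping is also right: the transpose undoes the twist in \eqref{eq:microlocalized sojourn relns}, so $\sigma_{\oc}(Q')$ is evaluated at $q_\pm$, not at $-q_\pm$.

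One hypothesis is used silently and should be stated. You need $Q'$ to be classical, i.e.\ $x_{\oc}^{m'}q$ smooth down to $x_{\oc}=0$. This is used where you say $q$ ``contributes the factor $x_{\oc}^{-m'}$ times a smooth function'', and again where you place $x_{\oc}\tilde b$ in $I^{m+m'-1}_{\oc-\ps}$.

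The class $S^{m,l}_{\oc}$ as defined in the paper only imposes conormal bounds in $x_{\oc}\partial_{x_{\oc}}$. Take, for instance, $q=x_{\oc}^{-m'}(2+\sin\log x_{\oc})\,q_0$ with $q_0$ smooth and Schwartz in the fibre. Then the new amplitude is merely conormal, with no limit as $x_{\oc}\to 0$. In that case $AQ'$ does not lie in $I^{m+m'}_{\oc-\ps}$, whose amplitudes in \eqref{eq: 1c-ps FIO definition} are required to be $C^\infty$ in $x_{\oc}$. So the statement must be read with a classicality assumption, just as its ps counterpart Proposition~\ref{prop: PsiDO- 1c-ps FIO composition} assumes a homogeneous principal symbol.
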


Next we give a characterization of Poisson operators as 1c-ps Fourier integral operators.
The (forward and backward) Poisson operators $\mathcal{P}_\pm$ is the operator sending the `final state data' $f_\pm$ in \eqref{eq:u expansion} to the solution $u$.
With this calculus of 1c-ps Fourier integral operators, we have the following characterization of these two Poisson operators. 
We begin with introducing pseudodifferential operators microlocalizing the Poisson operator to the part associated to $\Lambda_\pm$, which is the part meeting the geometric perturbation.
Let $U_\pm$ and $G_\pm$ be as in \eqref{eq:microlocalized sojourn relns}, we choose 
\begin{equation} \label{eq:Q1c-Qps-def}
    Q_{\oc} \in \Psi_{\oc}^{0,0}(\R^n), \quad Q_{\ps} \in \Psi_{\ps}^{0,0}(\R^{n+1})
\end{equation}
such that $\WF'(Q_{\oc}) \subset U_-$ (hence disjoint from fiber-infinity in $\overline{{}^{\oc} T^* \R^n}$), and $\WF'(Q_{\ps}) \subset G_-$ (hence is away from spacetime infinity). We further assume that 
$Q_{\oc}$ is microlocally equal to the identity on all points in $\overline{{}^{\oc} T^* \R^n}$ whose corresponding (under the identification in Proposition~\ref{prop:Wpm-1c-identification}) bicharacteristics meet the perturbation, i.e. meet $\WF'_{\ps}(P - P_0)$ and
$Q_{\ps} \in \Psi_{\ps}^{0,0}(\R^{n+1})$ is microlocally equal to the identity on $\WF'(P - P_0)$. 
Then our characterization of the microlocalized forward and backward Poisson operators is as follows.
\begin{prop}{\cite[Proposition~5.16]{HJ2026-scattering-map}}
\label{prop:microlocalized Poisson are 1c-ps FIOs} 
Let $Q_{\oc}$ and $Q_{\ps}$ be as above, then 
\begin{equation}
    Q_{\ps} \Poipm Q_{\oc} \in I^{-3/4}_{\oc-\ps}(\R^{n+1} \times \R^n, \Lambda_\pm).
\end{equation}
\end{prop}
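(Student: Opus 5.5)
The plan is a parametrix construction. We build an explicit element of $I^{-3/4}_{\oc-\ps}(\R^{n+1}\times\R^n,\Lambda_-)$ that solves $PA \equiv 0$ to infinite order and has the same asymptotic data as $\Poi_- Q_{\oc}$. We then use uniqueness for the final state problem, in the form of propagation of singularities and radial point estimates in the $\ps$-calculus of \cite{gell2022propagation}, to show the difference is residual after cutting off by $Q_{\ps}$. The case of $\Poi_+$ and $\Lambda_+$ is identical with time reversed.

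\emph{Step 1: the free Poisson operator.} For $P_0 = D_t+\Delta_0$ the Poisson operator has kernel given by the oscillatory integral
\[
\Poi_0 f(t,z) = (2\pi)^{-n}\int e^{i(z\cdot w - t|w|^2)}\, c\, f(w/2)\,dw .
\]
Near the equator we pass to 1c coordinates $w/2 = \yoc$-direction$/\xoc$. After a stationary phase reduction in the radial variable, the phase takes the normal form \eqref{eq:1c-ps-phase-normal-form}: $-t/\xoc^2 + (z''\cdot\tilde\zeta''-\tilde\varphi_1)/\xoc$. We check that this phase parametrizes the free sojourn relation and that the amplitude's power of $\xoc$, matched against \eqref{eq: 1c-ps FIO definition}, gives order $m=-3/4$. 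We also check that the symbol is nonvanishing.

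\emph{Step 2: transport along $\Lambda_-$.} Since $P=P_0$ outside a compact set, $\Lambda_-$ coincides with the free Lagrangian on the incoming part of each integral curve $\gamma_{q_-}$, before it reaches $\WF'(P-P_0)$. The symbol of $P$ vanishes on $\Legps$, so Proposition~\ref{prop: vanishing principal symbol product} applies. It shows that $PA\in I^{-3/4+1}$ (here $m'=2$), and that its principal symbol is obtained by a first-order transport operator $-i\mathscr{L}_{\rHp}+\dots$ acting on $\mathbf a$. We solve this ODE along the flow of $H^{2,0}_p$ inside $\Legps$, with initial data equal to the free symbol on the incoming part. Non-trapping makes the solution global on $U_-$.

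Next we pick $A_0$ with this symbol using the exact sequence \eqref{eq:1c-ps-exact-sequence}. Where conjugate points force a change of phase function, we patch the local oscillatory integrals via Proposition~\ref{prop:invariance under parametrization change}. Iterating produces $A_j\in I^{-3/4-j}$ with $P(A_0+\dots+A_j)\in I^{-3/4-j}$. An asymptotic (Borel) sum then gives $A$ with $PA\in\mathcal S$ microlocally on $G_-$. By construction $A$ equals the free Poisson kernel composed with $Q_{\oc}$ in the incoming region, so it has asymptotic data $Q_{\oc}f$.

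\emph{Step 3: identification with $Q_{\ps}\Poi_-Q_{\oc}$.} Set $E = \Poi_-Q_{\oc}-A$. Then $PE$ is residual, and $E$ has trivial incoming data at $\Radm$ modulo smoothing. The above-threshold radial estimate at the source $\Radm$, combined with propagation of $\ps$-regularity along $H^{2,0}_p$, shows that $\WF_{\ps}(E)$ does not meet $\WF'(Q_{\ps})$. Hence $Q_{\ps}E$ is Schwartz, and $Q_{\ps}\Poi_-Q_{\oc} \equiv Q_{\ps}A \in I^{-3/4}_{\oc-\ps}$ by Proposition~\ref{prop: PsiDO- 1c-ps FIO composition}.

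I expect Step 3 to be the main obstacle. Making the residual estimate uniform in the 1c parameters on the right requires combining the 1c regularity in $\yoc$ and $\xoc$ (via Proposition~\ref{prop: PsiDO- 1c-ps FIO composition-1c-right}) with the $\ps$ radial estimates near the equator, where $\mathcal R_\pm$ turn vertical. A secondary difficulty is checking in Step 1 that the normalization of $\xoc$ and of the half-densities produces exactly the order $-3/4$.
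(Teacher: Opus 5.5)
You have the right strategy: transport equations along $\Lambda_-$, then uniqueness via the above-threshold estimate at $\Radm$. At the symbolic level, your homogeneous transport problem with free initial data is equivalent to what is done in \cite{HJ2026-scattering-map}. The gap is at the operator level, in Steps 2--3.

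An element of $I^{-3/4}_{\oc-\ps}(\R^{n+1}\times\R^n,\Lambda_-)$ has amplitude compactly supported in $(t,z)$. Moreover $\Lambda_-$ is localized to $G_-$, whose closure avoids spacetime infinity, as admissibility requires. So your $A$ is, modulo $\mathcal{S}$, compactly supported in spacetime: it carries \emph{no} asymptotic data, and $PA$ cannot be residual. Where $A$ is cut off on the incoming side by some $\chi$, the principal symbol of $PA$ is proportional to $(H^{2,0}_p\chi)\,\mathbf{a}$. This is an error of order $1/4$, the same order as before solving the transport equation. It sits on bicharacteristics that afterwards cross $\WF'_{\ps}(P-P_0)$ and enter $\WF'_{\ps}(Q_{\ps})$.

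Consequently, in Step 3 three things fail:
\begin{itemize}
\item $PE$ is not residual.
\item $E=\Poi_-Q_{\oc}-A$ carries the full incoming data $Q_{\oc}f$, so the above-threshold estimate at $\Radm$ is not available.
\item The incoming cutoff error propagates forward into $\WF'_{\ps}(Q_{\ps})$.
\end{itemize}
Your assertion that $A$ `equals the free Poisson kernel in the incoming region' is incompatible with $A\in I_{\oc-\ps}$. The free kernel $e^{i(-t/\xoc^2+z\cdot\overline{\yoc}/\xoc)}$ is not in the class near spacetime infinity.

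The missing idea is to keep the free Poisson operator as a separate global piece and put only the correction into the class. This is the route of \cite{HJ2026-scattering-map}, recalled in the proof of Proposition~\ref{prop:Poisson-leading-invariance}.
\begin{itemize}
\item Take $\tilde Q_{\ps}\in\Psi^{0,0}_{\ps}(\R^{n+1})$ microlocally equal to the identity near $\Radm$, with no point of $\WF'_{\ps}(\tilde Q_{\ps})$ forward of $\WF'_{\ps}(P-P_0)$.
\item Then $P\tilde Q_{\ps}\Poiz Q_{\oc}\equiv[P,\tilde Q_{\ps}]\Poiz Q_{\oc}$ modulo residual terms. Split $[P,\tilde Q_{\ps}]=Q_1+Q_2$ with $Q_2\Poiz Q_{\oc}$ residual.
\item Build $K_-=\sum_j K_{-,j}$, with $K_{-,j}\in I^{-3/4-j}_{\oc-\ps}(\R^{n+1}\times\R^n,\Lambda_-)$, by solving \emph{inhomogeneous} transport equations with forcing $\sigma^{1/4}_{\oc-\ps}(Q_1\Poiz Q_{\oc})$ and zero data in the past.
\end{itemize}
The difference $\Poi_-Q_{\oc}-\tilde Q_{\ps}\Poiz Q_{\oc}+K_-$ is then obtained by applying the forward inverse of $P$ to a residual term plus the errors from cutting $K_-$ off at the \emph{outgoing} end. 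Here the forward inverse is the one that is above threshold at $\Radm$ and below at $\Radp$, in the framework of \cite{gell2022propagation}. The outgoing cutoff errors propagate only forward, so they can be placed beyond $\WF'_{\ps}(Q_{\ps})$. Both $Q_{\ps}\tilde Q_{\ps}\Poiz Q_{\oc}$ and $Q_{\ps}K_-$ lie in the class, since on $\WF'_{\ps}(\tilde Q_{\ps})$ the Lagrangian $\Lambda_-$ coincides with the free one. The proposition follows.

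In this form the uniformity issue you flag is routine. $Q_{\ps}$ composed with the forward inverse maps $\mathcal{S}(\R^{n+1})$ continuously to itself. Composing it with an operator whose kernel lies in $\mathcal{S}(\R^{n+1}\times\R^n)$ therefore again gives a Schwartz kernel.
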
 

\begin{remark}
As one can see from the definition of our microlocalizers $Q_{\oc},Q_{\ps}$, they are introduced to microlocalize the Poisson operator to the interesting and challenging part meeting the geometric and potential.
Other parts of $\Poipm$ will behave like the Poisson operator associated to the free operator $P_0$, whose kernel has an explicit expression 
\begin{equation} \label{eq:free-Poisson-kernel}
    e^{i(-t|Z|^2+z \cdot Z)},
\end{equation}
where $Z \in \R^n$ is the position variable on the 1-cusp side, and no extra parameter or oscillatory integral representation is needed. To write \eqref{eq:free-Poisson-kernel} in a way that is more compatible with our general expression of the phase function in \eqref{eq: 1c-ps FIO definition}, we set $x_{\oc} = 1/|Z|$, and let $\overline{y_{\oc}} = Z/|Z|$, which can be identified with $y_{\oc} \in \mathbb{S}^{n-1}$. Using those variables, \eqref{eq:free-Poisson-kernel} can be rewritten as 
\begin{equation} \label{eq:free-Poisson-kernel-2}
    e^{i(-\frac{t}{x_{\oc}^2}+\frac{z \cdot \overline{y_{\oc}}}{x_{\oc}})}.
\end{equation}
\end{remark}

We conclude this section by remarking that the reason of investigating forward and backward Poisson operators is that the scattering map can be constructed out of them directly. More precisely, we have:
\begin{equation} \label{eq:S formula} 
 S = i(2\pi)^n \mathcal{P}_+^* [P,Q_+] \mathcal{P}_-,
\end{equation}
where $Q_+ \in \Psi_{\ps}^{0,0}(\R^{n+1})$ is microlocally equal to the identity on a neighborhood of $\mathcal{R}_+$ with $\WF'_{\ps}(Q_+)$ contained in a slightly enlarged neighborhood of $\Radp$. In particular, $\WF'_{\ps}([P,Q_+])$ is away from both of $\mathcal{R}_\pm$. See \cite[Section~7]{gell2022propagation} (with a correction on an overall sign in \cite[Eq~(1.7)]{HJ2026-scattering-map}).

\section{The 1c-1c analysis and the structure of the scattering map}

In this section, we recall main geometric and analytic ingredients needed for the theory $\oc-\oc$ Fourier integral operators from \cite[Section~6, Section~7]{HJ2026-scattering-map}. This is used to characterize our scattering map.

\subsection{Geometry of the 1c-1c phase space}

Let $X$ be a manifold with boundary $\partial X$ (in our case, $X=\overline{\R^n}$), the b-double space introduced by Melrose \cite{melrose1993atiyah}
is defined to be
\begin{align} \label{eq: defn  b-double space}
X_b^2 : = [ X \times X ; \partial X \times \partial X ],
\end{align}
and the blow down map $X_b^2 \rightarrow X^2$ is denoted $\beta_b$.
Then we define the b-lifted 1c-1c cotangent bundle to be
\begin{align} \label{eq:def-lifted-ococ-bundle}
\ococb:= \beta_b^*(\overline{{}^{\oc}{T^*X}} \times \overline{{}^{\oc}{T^*X}}),
\end{align}
where the right hand side is viewed pulling back a bundle over $X^2$ to $X_b^2$.
We denote the corresponding projection map $\ococb \to \ocphase \times \ocphase$ by $\tilde{\beta}_b$.
We denote the lift of $\partial X \times X, X \times \partial X, \partial X \times \partial X$ under $\beta_b$ by $\mathrm{lb}$ (`left boundary'), $\mathrm{rb}$ (`right boundary') and $\mathrm{bf}$ (`b-face') respectively. We denote the part of the bundle $\ococb$ lying over $\mathrm{bf}$ by $\ffococ$. 
Let $(x_{\oc,1}, y_1;x_{\oc,2}, y_2)$ be coordinates on $X^2$, then we use the notation
\begin{align*}
x_{\oc} = x_{\oc,1}, \; \sigma = \frac{x_{\oc,1}}{x_{\oc,2}},
\end{align*}
and $\msf{X}:=(x_{\oc},\sigma, y_1, y_2)$ forms a local coordinate system of $X_b^2$ on the region $\{ C^{-1} \leq \sigma \leq C \}$ for a fixed $C$, which is the interesting part for us.
The canonical 1-form on $\ococb$ is given by the sum of the canonical one form lifted from the left and right factors:
\begin{equation}\label{eq:1c1c-1form--1}
 \alpha_{\oc-\oc} = \xioco \frac{d\xoco}{\xoco^3} + \etaoco \frac{d\yoco}{\xoco} + \xioct \frac{d(\xoco/\sigma)}{(\xoco/\sigma)^3} + \etaoct \frac{d\yoct}{(\xoco/\sigma)},
\end{equation}
where we used  $\xoct = \xoco/\sigma$. This can be rewritten as:
\begin{align}
\label{eq:1c1c-1form-0}
\begin{split}
 \alpha_{\oc-\oc} =
( \xioco + \sigma^2 \xioct) \frac{d\xoco}{\xoco^3} - (\sigma \xioct) \frac{d\sigma}{\xoco^2} + \etaoco \frac{d\yoco}{\xoco} + \sigma \etaoct \frac{d\yoct}{\xoco}.
\end{split}
\end{align}
Then we can define our $\oc-\oc$ frequencies to be coefficients in this form:
\begin{equation} \label{eq:1c1c-1form-1}
 \alpha_{\oc-\oc} = \nu_1 \frac{d\xoco}{\xoco^3} + \nu_2 \frac{d\sigma}{\xoco^2} + \etaoco \frac{d\yoco}{\xoco} +  \tilde{\eta}_{\oc,2} \frac{d\yoct}{\xoco},
\end{equation}
and its differential gives our symplectic form 
\begin{equation}\label{eq:omega 1c1c}
\omega_{\oc-\oc} = d \alpha_{\oc-\oc}. 
\end{equation}
Then the class of Legendre and Lagrangian submanifolds we will use is the following.

\begin{definition}[Admissible 1c-1c Lagrangian submanifold and 1c-1c fibred-Legendre submanifold]  \label{definition:admissible-1c1cLag}
We define an admissible $\oc-\oc$ Lagrangian submanifold of $\ococb$ to be a $2n$-dimensional submanifold $\Uplambda$ that is Lagrangian in the interior (that is, the symplectic form $\omega_{\oc-\oc}$ from \eqref{eq:omega 1c1c} vanishes on it), such that 
\begin{itemize}
\item $\Uplambda$ meets $\ffococ$ transversally, 
\item the differential $d\xioco$ is nonvanishing on $\Uplambda \cap \ffococ$, and 
\item  its closure is disjoint from all other boundary hypersurfaces of $\ococb$ (other than $\ffococ$). 
\end{itemize}
We define a fibered-Legendre submanifold $\SL$ of $\ffococ$ to be the boundary of an admissible 1c-1c Lagrangian submanifold. In other words, there exists $\Uplambda$ as above such that $\SL = \Uplambda \cap \ffococ$. 
\end{definition}

See \cite[Proposition~6.3]{HJ2026-scattering-map} for the symplectic fibration justifying the name.
Similar to Definition~\ref{def:1c-ps-parametrization}, we will define the parametrization of both Legendre and Lagrangian submanifolds in Definition~\ref{definition:admissible-1c1cLag}.

\begin{definition}\label{defn: 1c-1c parametrization clean}
Suppose $\Legps$ is an 1c-1c fibred-Legendre submanifold of $\ffococ \subset \mathcal{M}$, $q \in \SL$ and $(\ococparaone, \ococparatwo)$ is in a non-empty open subset $U$ of $\RR^{k_0+k_1}$. We say that 
\begin{align}\label{eq:Phi 1c-1c param clean}
\Phi_{\oc-\oc}(\msf{K}, \ococparaone, \ococparatwo) =  \frac{ \varphi_0(\sigma,\ococparaone) }{x_{\oc,1}^2} + \frac{\varphi_1(\msf{K},\ococparaone,\ococparatwo)}{x_{\oc,1}} , \quad \msf{K} = (\xoco, \sigma, \yoco, \yoct), 
\end{align} 
gives a \emph{clean parametrization} of $\SL$ near $q$ with excess $e$ if there is a point $q' = (\msf{K}, \ococparaone', \ococparatwo')$ such that the differentials
\begin{equation} \label{eq:varphi0 1c clean}
d_{\sigma,\ococparaone} \frac{\partial \varphi_0}{\partial v_j}, \quad 1 \leq j \leq k_0,
\end{equation}
are linearly independent at $(\sigma, \ococparaone')$, and the differentials 
\begin{equation}\label{eq:varphi1 1c clean}
d_{\yoco, \yoct, \ococparatwo} \frac{\partial \varphi_1}{\partial w_j}, 1 \leq j \leq k_1,
\end{equation}
have a fixed rank $k_1 - e$ near $q'$, such that $\SL$ is given locally by 
\begin{align}\label{eq:1c-1c param clean}
\SL
=\{ (\msf{K},d_{\msf{K}}\Phi_{\oc-\oc}(\msf{K},\ococparaone,\ococparatwo)) \mid \; \xoc = 0, (\msf{K},\ococparaone,\ococparatwo) \in C_{\Phi_{\oc-\oc}}\},
\end{align}
where 
\begin{align}  \label{eq: 1c-1c critical set clean}
C_{\Phi_{\oc-\oc}} =  \{ (\msf{K},\ococparaone,\ococparatwo) \mid d_{\ococparaone}\varphi_0 = 0, d_{\ococparatwo}\varphi_1 = 0 \}.
\end{align}
When $e=0$, we say this parametrization is \emph{non-degenerate}.
As before, either $\ococparaone$ or $\ococparatwo$ may be absent, in which case conditions for the derivatives in \eqref{eq:varphi0 1c clean}, resp. \eqref{eq:varphi1 1c clean} are dispensed with, as well as stationarity with respect to $\ococparaone$, resp. $\ococparatwo$ in \eqref{eq: 1c-1c critical set clean}. 

Let $\Lag$ be a Lagrangian extension of $\Leg$, then we say such $\Phi_{\oc-\oc}$ satisfying the constant rank condition of its differentials above gives a clean parametrization of $\Lag$ if 
$\Lag$ is given locally by 
\begin{align}\label{eq:1c-1c param-Lag}
\Lag
=\{ (\msf{K},d_{\msf{K}}\Phi_{\oc-\oc}) \mid \;  (\msf{K},v,w) \in \tilde{C}_{\Phi_{\oc-\oc}}\},
\end{align}
where 
\begin{align}  \label{eq: 1c-1c critical set-Lag}
\tilde{C}_{\Phi_{\oc-\oc}} =  \{ (\msf{K},v,w) \mid d_{v}\varphi_0 + \xoco d_{v}\varphi_1 = 0, d_{w}\varphi_1 = 0 \}.
\end{align}
When $e=0$ in the constant rank condition, we say this parametrization is non-degenerate.
\end{definition}

From \eqref{eq:S formula}, the 1c-1c Fourier integral operator that we consider will arise from composing an 1c-ps Fourier integral operator and its adjoint.
So if we use the phase function in \eqref{eq:1c-ps-phase-normal-form} in the 1c-ps Fourier integral operators, then the corresponding composition will be the difference of two such functions. Then we have the following normal form for parametrizing 1c-1c Legendre submanifolds (and their Lagrangian extensions):
\begin{align} \label{eq:1c-1c-phase-normal-form}
\Phi_{\oc-\oc}(\msf{K},t, \theta_1) =  \frac{ t(1-\sigma^2) }{\xoco^2} + \frac{\varphi_1(\msf{K},t,\theta_1)}{\xoco} , \quad \msf{K} = (\xoco, \sigma, \yoco, \yoct).
\end{align}

\subsection{The calculus of 1c-1c Fourier integral operators and the scattering map}
\label{subsec:1c-1c-calculus}
In this subsection, we recall the calculus of 1c-1c Fourier integral operators, which is used to characterize our scattering map. 

\begin{definition} \label{def:1c-1c-FIO}
Let $\Uplambda$ be an admissible Lagrangian submanifold of $\ococb$ as in Definition~\ref{definition:admissible-1c1cLag}, with boundary $\SL$. 
We define 
$I^{m}_{\oc-\oc}(X_b^2, \SL; \Omega_{\oc-\oc}^{1/2})$, i.e., the space of 1c-1c fibered-Legendre distributions of order $m$, to be the space of operators with Schwartz kernel given (modulo a Schwartz function) by a finite sum of terms of the form 
\begin{align} \label{eq: 1c-1c FIO, local form}
\begin{split}
 & (2\pi)^{-\frac{n+(k_0+k_1-e)}{2}} \Big(\int 
e^{i\Phi_{\oc-\oc}(\msf{K},v,w)} 
 a(\msf{K},v,w)
\\& x_{\oc,1}^{-m-\frac{2k_0+(k_1-e)}{2}+ \frac{n+1}{2}} dvdw \Big) 
|\frac{d\sigma dy_{\oc,2}}{x_{\oc,1}^{n+1}}|^{1/2}|\frac{dx_{\oc,1}dy_{\oc,1}}{x_{\oc,1}^{n+2}}|^{1/2},
\end{split}
\end{align}
where $\Phi_{\oc-\oc}$ is a phase function of the form \eqref{eq:Phi 1c-1c param clean} locally parametrizing $\mathcal{L}$ non-degenerately in the sense of Definition \ref{defn: 1c-1c parametrization clean}.
Moreover,  $a \in C^\infty_c([0,\infty)_{x_{\oc}} \times [C^{-1},C]_{\sigma} \times \mathbb{S}^{n-1} \times \mathbb{S}^{n-1} \times \R^{k_0} \times \R^{k_1})$, where $v \in \R^{k_0},w \in \R^{k_1}$. 
\end{definition}

In the same way as in Proposition~\ref{prop:invariance under parametrization change} above, 
over a region on which two phase functions are clean parametrizations, the oscillatory integral above using one of them can be written as an oscillatory integral using the other as the phase function, modulo a Schwartz error. In addition, allowing clean phase function in \eqref{eq: 1c-1c FIO, local form} instead of using non-degenerate phase functions only does not enlarge the operator class, see \cite[Remark~7.2, Proposition~7.3]{HJ2026-scattering-map}.

Now we begin the discussion about the calculus of 1c-1c- Fourier integral operators by the principal symbol. 
For $A$ in the form (\ref{eq: 1c-1c FIO, local form}), let $\msf{X}'=(\sigma,\yoco,\yoct)$, and let $\lambda$ be a set of functions of $(\msf{X}',v,w)$ such that $(x_{\oc},\lambda,d_{v}\varphi_0+x_{\oc}d_v\varphi_1,d_w\varphi_1)$ form a local coordinate system near $C_{\Phi}$.
When $e=0$ (that is, when the parametrization is non-degenerate), we define the principal symbol of $A$ to be the half density on $\SL \cap \partial X_b^2$ given by:
\begin{align} \label{defn: 1c-1c principal symbol, non-degenerate}
a(0,\msf{X}',v,w)|_{C_{\Phi_{\oc-\oc}}} | \frac{\partial(d_v\varphi_0,d_{w}\varphi_1,\lambda)}{\partial(\msf{X}',v,w)}|^{-1/2} |d\lambda|^{1/2}.
\end{align}
More generally, let $\Phi_{\oc-\oc}$ be a clean phase function as in \eqref{eq:Phi 1c-1c param clean}, then there is a (local) splitting of $\ococparatwo$:
\begin{equation} \label{eq:ococpara-splitting}
   \ococparatwo = (\ococparatwo',\ococparatwo''), \; \ococparatwo' \in \R^{k_1-e} , \ococparatwo'' \in \R^e
\end{equation}
such that for each fixed $\ococparatwo''$, $\Phi_{\oc-\oc}$ gives a non-degenerate parametrization of $\Uplambda$, which means that there is a fibration of $C_{\Phi_{\oc-\oc}}$ (the critical set of $\Phi_{\oc-\oc}$) over $\Uplambda$ with $e$-dimensional fiber parametrized by $\ococparatwo''$.
One can also view such a clean phase function as a family of non-degenerate phase functions parametrized by $w''$.
In the same way as in \cite[Equation~(7.6), Lemma~7.2]{duistermaat-guillemin1975spectrum}, integrating the amplitude along such fiber parametrized by $w''$ gives our half-density as in \eqref{defn: 1c-1c principal symbol, non-degenerate} when we parametrize using non-degenerate phase functions. More precisely, in this setting we have:
\begin{align} \label{defn: 1c-1c principal symbol, clean}
\int_{C_q} a(0,\msf{X}',v,w)  d\ococparatwo''
| \frac{\partial(d_v\varphi_0,d_{w'}\varphi_1,\lambda)}{\partial(\msf{X}',v,w')}|^{-1/2} |d\lambda|^{1/2}.
\end{align}
Here $q \in \Uplambda$ and $C_q$ is the fiber in $C_{\Phi_{\oc-\oc}}$ over $q$.
Then this is invariant (except for Maslov factors and $E(\SL)$ factors) in the leading order, which can be shown in the same way as in \cite[Section~25.1]{hormander2009analysis}. 
Notice that here we are only concerning the clean parametrization with excess in $\varphi_1$, hence the parabolic scaling between $\varphi_0$ and $\varphi_1$ part is not involved and the proof for invariance under change of coordinates in the homogeneous case, after converting to polar coordinates, almost applies here directly.

In a manner parallel to the 1c‑ps setting, to make this concept of principal symbol invariant under change of parametrizations and choice of boundary defining functions, we introduce
\begin{align} \label{eq:S[m]-bundle}
S_{\oc-\oc}^{[m]}(\SL):= |N^*\partial X_b^2|^{- m - \frac{n+1}{2} }
\otimes M(\SL) \otimes E(\SL),
\end{align}
where $M(\SL)$ is the Maslov bundle used in \cite[Section~3.2,3.3]{FIO1}, which in turn comes from \cite{maslov1972asymptotic}; and $|N^*\partial X_b^2|^{-m-\frac{n+1}{2}}$, models a section that is homogeneous of degree $-m-\frac{n+1}{2}$ in $x_{\oc}$.
The order $-m-\frac{n+1}{2}$ comes from considering the homogeneous degree of:
\begin{align*}
  x_{\oc}^{-m + \frac{n+1}{2} }|\frac{dx_{\oc}dy_{\oc,1}}{x_{\oc}^{n+2}}|^{1/2}
|\frac{d\sigma dy_{\oc,2}}{x_{\oc}^{n+1}}|^{1/2},
\end{align*}
while the factor $x^{- \frac{2k_1+k_2}{2} }$ is encoded in $| \frac{\partial(d_v\varphi_0,d_w\varphi_1,\lambda)}{\partial(\msf{X},v,w)}|^{-1/2}$ in an invariant manner. Similar to the 1c-ps setting, the line bundle $E(\SL)$ here is introduced to make this definition invariant under change of boundary defining functions, see \cite[Section~3.1]{hassell2001resolvent} for more details.
Combining discussions above, we can view the symbol map as
\begin{align} \label{eq: invariant, symbol map, 1c-1c}
\begin{split}
\sigma^m_{\oc-\oc}: \quad  I^m_{\oc-\oc}(X_b^2,\Uplambda;\Omega_{\oc-\oc}^{1/2}) 
\rightarrow 
 C^\infty(\SL \cap \partial X_b^2; 
\Omega^{1/2}(\SL) \otimes S_{\oc-\oc}^{[m]}(\SL)).
\end{split}
\end{align}
We say $A \in I^m_{\oc-\oc}(X_b^2,\Uplambda;\Omega_{\oc-\oc}^{1/2})$ is elliptic at $q \in \SL$ if the amplitude on right hand side of \eqref{defn: 1c-1c principal symbol, non-degenerate} or \eqref{defn: 1c-1c principal symbol, clean} is non-vanishing at the point that is sent to $q$ under the parametrization map in Definition~\ref{defn: 1c-1c parametrization clean}. This is equivalent to requiring the principal symbol to be invertible near $q$ as the section of $\Omega^{1/2}(\SL) \otimes S_{\oc-\oc}^{[m]}(\SL)$.
The fact that this principal symbol map captures leading order singularity can be summarized in the following short exact sequence from \cite[Proposition~7.4]{HJ2026-scattering-map}:
\begin{align} \label{eq:1c-1c-FIO-short-exact-sequence}
\begin{split}
0  \rightarrow I^{m-1}_{\oc-\oc}(X_b^2,\SL)
\rightarrow I^{m}_{\oc-\oc}(X_b^2,\SL)
\xrightarrow{\sigma^m_{\oc-\oc}} C^\infty(\SL \cap \partial X_b^2; 
\Omega^{1/2}(\SL) \otimes S_{\oc-\oc}^{[m]}(\SL)) \rightarrow 0.
\end{split}
\end{align}

Next we recall results concerning the composition of $\oc-\oc$ Fourier integral operators.
Let $\mathcal{L}_i = C_i' \cap \ffococ$, $i=1,2$ be admissible 1c-1c Legendre submanifolds with $C_i'$ being the corresponding Lagrangian submanifolds.
Suppose $C_1,C_2$ are lifts of canonical relations in $\leftidx{^{\oc}}{T^*X} \times \leftidx{^{\oc}}{T^*X}$ to $\ococb$, and $C_2 \times C_1$ 
intersect the lift to $\ococb \times \ococb$ of the diagonal in the second and third components of
\begin{align*}
\leftidx{^{\oc}}{T^*X} \times \leftidx{^{\oc}}{T^*X} \times \leftidx{^{\oc}}{T^*X} \times \leftidx{^{\oc}}{T^*X}
\end{align*}
transversally. 
Recall from \cite[Theorem~4.2.2]{FIO1} 
that there is a natural bilinear map giving the product of density bundles:
\begin{align}  \label{eq: symbol product, bundle maps}
\begin{split}
& \Omega^{1/2}(\mathcal{L}_2) \otimes S^{[m_2]}(\mathcal{L}_2)
 \times \Omega^{1/2}(\mathcal{L}_1) \otimes S^{[m_1]}(\mathcal{L}_1) 
 \\ & \rightarrow  \Omega^{1/2}((C_2 \circ C_1)' \cap \ffococ) \otimes S^{[m_2+m_1]}( (C_2 \circ C_1) ' \cap \ffococ).
\end{split}
\end{align}
Denoting this bilinear map by `$\times$', we have the following composition law in the calculus of $\oc-\oc$ Fourier integral operators.
\begin{prop}{Adapted version of \cite[Proposition~7.5]{HJ2026-scattering-map}}
\label{prop: 1c-1c transversal composition}
Suppose $C_1 \times C_2$ satisfies the transversal intersection condition above, and $A_1 \in I^{m_1}_{\oc-\oc}(X^2_b,C_1'), A_2 \in I^{m_2}_{\oc-\oc}(X^2_b,C_2')$, then we have
\begin{align*}
A_2A_1 \in I^{m_1+m_2}_{\oc-\oc}(X_b^2, (C_2 \circ C_1)').
\end{align*}
And when they have $a_1,a_2$ as their principal symbol respectively, then $A_2A_1$ has principal symbol
\begin{align*}
a_2 \times a_1.
\end{align*}
In addition, when $C_1,C_2$ are both graphs of symplectomorphisms, if $A_1,A_2$ are elliptic (in the sense defined after \eqref{eq:1c-1c-FIO-short-exact-sequence}) at $q_1',q_2'$ such that $(q_2,q_1)$ is sent to $q \in C_2 \circ C_1$, then $A_2A_1$ is elliptic at $q'$.
\end{prop}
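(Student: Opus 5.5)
The plan is to prove the composition result locally, by writing both kernels as oscillatory integrals and reading off the composite directly. After a partition of unity and a choice of non-degenerate phases $\Phi_1,\Phi_2$ of the normal form \eqref{eq:1c-1c-phase-normal-form}, I write the kernel of $A_2A_1$ as
\begin{equation*}
\int e^{i(\Phi_2(\msf{K}_2,v_2,w_2)+\Phi_1(\msf{K}_1,v_1,w_1))}\, a_2 a_1 \, dv_1\,dw_1\,dv_2\,dw_2\, d\sigma'\,dy',
\end{equation*}
where the intermediate variables $(x'_{\oc},y')$ come from the middle factor. Using the b-coordinates $\sigma=\sigma_2\sigma_1$, I treat $\sigma'$ (the ratio $\xoco/x'_{\oc}$) as a new $v$-type variable and $y'$ as a new $w$-type variable. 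The combined phase is then again of the form $\varphi_0/\xoco^2+\varphi_1/\xoco$ after rescaling the middle defining function to $\xoco$; the factor $\sigma'^2$ is absorbed into $\varphi_0$ and $\sigma'$ into $\varphi_1$. The middle density $|d\sigma' dy'/x_{\oc}^{n+1}|$ combines with the two half-density factors, and collecting powers of $\xoco$ gives exactly the exponent required in \eqref{eq: 1c-1c FIO, local form} for order $m_1+m_2$ with $k_0\to k_0^{(1)}+k_0^{(2)}+1$ and $k_1\to k_1^{(1)}+k_1^{(2)}+(n-1)$. This is a bookkeeping check that I expect to be routine.

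The key step is to show that the new phase parametrizes $(C_2\circ C_1)'$ and is non-degenerate, or at least clean. Its critical set in the new variables is, by construction, the set of pairs of points of $C_2',C_1'$ whose middle components agree; here the $\sigma'$-equation matches the $\nu_2$ components and the $y'$-equation matches $\eta$. The linear independence conditions \eqref{eq:varphi0 1c clean}, \eqref{eq:varphi1 1c clean} are then equivalent to the assumed transversality of $C_2\times C_1$ with the lifted diagonal, by the standard H\"ormander argument (\cite[Theorem~4.2.2]{FIO1}). The argument must be done separately at the two scales: transversality for the $\varphi_0$ part uses the $(\sigma',v)$ derivatives, and for the $\varphi_1$ part the $(y',w)$ derivatives. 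The parabolic structure ($\varphi_0$ depending only on $\sigma,v$) is preserved because $\varphi_0^{(i)}$ depend only on $\sigma_i,v_i$.

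For the symbol, I apply stationary phase in the fiber variables to the transversal case. This gives the principal symbol as the Duistermaat–H\"ormander product $a_2\times a_1$ of \eqref{eq: symbol product, bundle maps}, and the Maslov and $E(\SL)$ factors compose as in the homogeneous theory. If the intermediate integrals lack compact support in $\sigma'$, I control them by using the $C^{-1}\le\sigma\le C$ support together with the disjointness of the closure of $\Uplambda$ from $\mathrm{lb},\mathrm{rb}$. Non-stationary phase then gives Schwartz contributions elsewhere.

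Ellipticity follows because, for graphs of symplectomorphisms, the fiber $C_q$ is a single point. The product of two nonvanishing half-densities is then nonvanishing, and the determinant factor is nonzero by transversality. The main obstacle I expect is the verification that transversality at the level of $\ococb$ (a b-blown-up space, with degenerate symplectic form at $\ffococ$) gives non-degeneracy separately for the $\xoco^{-2}$ and $\xoco^{-1}$ scales. I would handle this by restricting to the fibration structure of \cite[Proposition~6.3]{HJ2026-scattering-map}: first the base symplectic fibration in $(\sigma,\nu_2)$, then the fibers.
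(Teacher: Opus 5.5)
Your proposal is correct and follows essentially the same route as the paper. The paper defers to \cite[Proposition~7.5]{HJ2026-scattering-map} and, for the analogous compositions in Lemma~\ref{lemma:phase-composition-sec}, describes exactly your argument: add the two phases, treat the intermediate ratio as a $v$-type parameter and the intermediate angle as a $w$-type parameter, run the proof of \cite[Theorem~4.2.2]{FIO1} first at the $\xoco^{-2}$ scale and then at the $\xoco^{-1}$ scale, and read off the symbol and ellipticity by stationary phase. Your order bookkeeping ($k_0\to k_0^{(1)}+k_0^{(2)}+1$, $k_1\to k_1^{(1)}+k_1^{(2)}+n-1$, with the middle density contributing $\xoco^{-(n+1)}$) also checks out.
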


Now we define the classical scattering map $\Cl$, whose graph will have a natural correspondence to bicharacteristic lines and will be used to define our 1c-1c Lagrangian submanifold.
Given $q \in \overline{{}^{\oc}T^* \SR_-}$, under the identification with $W_-$ in Proposition~\ref{prop:Wpm-1c-identification}, there is a unique bicharacteristic that tends to it in the backward direction.
This bicharacteristic will tends to a point $q' \in \overline{{}^{\oc}T^*\SR_+}$ in the forward direction, again after identified with $W_+$ as above.
Then we define
\begin{equation} \label{eq:classical-sc-map-def}
    \Cl(q) = q'.
\end{equation}
This is a smooth map since the flow of $H_p^{2,0}$ reaches $W_\pm$ in finite time transversally.

The 1c-1c Lagrangian submanifold that we are going to use for our scattering map is the following.
Let $\tilde{\beta}_b$ be the projection map $\ococb \to \overline{{}^{\oc}{T^*X}} \times \overline{{}^{\oc}{T^*X}}$, and use $\mathrm{Gr}(\Cl)'$ to denote the twisted (i.e. with the sign of the frequency variable of the second component flipped) graph of $\Cl$ in $\overline{{}^{\oc}{T^*X}} \times \overline{{}^{\oc}{T^*X}}$, then the admissible 1c-1c Lagrangian submanifold that we will use is $\tilde{\beta}_b^*(\mathrm{Gr}(\Cl)')$, which is defined to be the closure of the preimage of the interior part of $\mathrm{Gr}(\Cl)'$.
Then the characterization of the scattering map using our calculus of 1c-1c Fourier integral operators is the following.
\begin{thm}{\cite[Theorem~1.1]{HJ2026-scattering-map}}
\label{thm: sc-map-HJ}
The scattering map $S$ is an elliptic 1-cusp Fourier integral operator of order zero, with canonical relation the graph of the classical scattering map. 
\begin{align}\label{eq:S 1c-1c FIO}
S \in I^0_{\oc-\oc}(X_b^2,\tilde{\beta}_b^*(\mathrm{Gr}(\Cl)')), \quad X = \overline{\R^n}. 
\end{align}
The scattering map $S$ acts as the identity microlocally on functions (asymptotic data) supported in a compact subset of $\R^n$, or are supported microlocally near frequency-infinity in the 1-cusp sense. 
\end{thm}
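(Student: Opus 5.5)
The plan is to start from the representation \eqref{eq:S formula}, $S = i(2\pi)^n \mathcal{P}_+^* [P,Q_+] \mathcal{P}_-$, and split the asymptotic data with the microlocalizer $Q_{\oc}$ of \eqref{eq:Q1c-Qps-def}: $S = S Q_{\oc} + S(\Id - Q_{\oc})$.

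\textbf{The part away from the perturbation.} On $\WF'(\Id-Q_{\oc})$ the bicharacteristics never meet $\WF'_{\ps}(P-P_0)$. Propagation of singularities for $P$ along $H_p^{2,0}$, together with the radial estimates at $\mathcal{R}_\pm$, then shows that $\mathcal{P}_\pm(\Id-Q_{\oc})$ agrees with the free Poisson operators, with kernel \eqref{eq:free-Poisson-kernel}, modulo smoothing errors. For the free problem $f_+=f_-$. Hence $S(\Id-Q_{\oc})$ is microlocally the identity, which is the final assertion of the theorem. The identity operator is itself an elliptic order-zero element of the 1c-1c class on the (lifted, twisted) diagonal, and that diagonal coincides with $\mathrm{Gr}(\Cl)'$ there.

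\textbf{The interacting part: composition.} Insert $Q_{\ps}$, which is microlocally the identity on $\WF'_{\ps}([P,Q_+])$ after enlarging $G_\pm$. This gives $[P,Q_+]\mathcal{P}_-Q_{\oc} \equiv [P,Q_+]Q_{\ps}\mathcal{P}_-Q_{\oc}$. By Proposition~\ref{prop:microlocalized Poisson are 1c-ps FIOs}, $Q_{\ps}\mathcal{P}_-Q_{\oc} \in I^{-3/4}_{\oc-\ps}(\Lambda_-)$, and Proposition~\ref{prop: PsiDO- 1c-ps FIO composition} then places $[P,Q_+]Q_{\ps}\mathcal{P}_-Q_{\oc}$ in $I^{-3/4+1}_{\oc-\ps}(\Lambda_-)$. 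The principal symbol is $\sigma_{\ps}([P,Q_+]) = -iH_p q_+$ tensored with that of $\mathcal{P}_-$. Similarly $\mathcal{P}_+^*$, microlocalized, is the adjoint of an element of $I^{-3/4}_{\oc-\ps}(\Lambda_+)$.

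The core step is a composition theorem $I_{\oc-\ps}(\Lambda_+)^* \circ I_{\oc-\ps}(\Lambda_-) \subset I_{\oc-\oc}$. I would prove it directly with oscillatory integrals:
\begin{itemize}
\item Write both factors using the normal form \eqref{eq:1c-ps-phase-normal-form}, with $\xoc$ the 1c defining function on each side.
\item Form the difference of phases and perform stationary phase in the spacetime variables $(t,z)$, except for one combination of $t$ that is kept as a fibre variable.
\item The $t$-dependent leading parts $-t/\xoco^2 + t/\xoct^2$ produce exactly the term $t(1-\sigma^2)/\xoco^2$ of \eqref{eq:1c-1c-phase-normal-form}.
\end{itemize}
Geometrically, the intersection $\Lambda_+^{*}\times\Lambda_-$ with the diagonal of $\psphase$ is transversal. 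The reason is that a point $\tilde q$ determines $\msf{I}_-(\tilde q)$ and $\msf{I}_+(\tilde q)$ by \eqref{eq:msf-I-def}, and the only redundancy is sliding along the bicharacteristic. That redundancy is a clean one-dimensional excess, absorbed by the $t$-parameter or integrated out as in \eqref{defn: 1c-1c principal symbol, clean}. The composite relation is therefore $\{(\msf{I}_+(\tilde q), -\msf{I}_-(\tilde q))\} = \mathrm{Gr}(\Cl)'$. Counting orders, including the half-density normalizations in \eqref{eq: 1c-ps FIO definition} and \eqref{eq: 1c-1c FIO, local form}, should give order $-3/4 + (-3/4+1) + 1/2 = 0$.

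\textbf{Ellipticity and the main obstacle.} The transport equation from Proposition~\ref{prop: vanishing principal symbol product}, applied to $P\mathcal{P}_\pm=0$, shows that the principal symbols of $\mathcal{P}_\pm$ are parallel along $H_p^{2,0}$ up to the explicit weight factors. Integrating the product symbol over the excess fibre (the bicharacteristic) therefore reduces to a nonvanishing factor times
\[
\int H_p^{2,0}q_+\,ds = q_+(\text{near } W_+) - q_+(\text{near } W_-) = 1 - 0 = 1 .
\]
This gives ellipticity, and it also shows the result is independent of the choice of $Q_+$. The constant $i(2\pi)^n$ should match the free normalization, consistent with the identity behaviour found in the first step.

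I expect the main obstacle to be the composition step: the stationary phase is non-uniform because of the parabolic scaling between the $\xoc^{-2}$ and $\xoc^{-1}$ parts of the phase. This has to be done on the blown-up spaces $\SM$ and $\ococb$, checking that the result stays away from $\mathrm{lb}$ and $\mathrm{rb}$ (that is, $\sigma$ stays in a compact set) and that $d\xioco\neq 0$ on the composite. I would first try to reduce it to the normal forms by a linear change of coordinates as in \eqref{eq:fullrank}, which makes the Hessian in $(z'',\tilde\zeta'')$ explicitly nondegenerate.
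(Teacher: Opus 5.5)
Your proposal follows the same route the paper relies on (via \cite{HJ2026-scattering-map}): the representation \eqref{eq:S formula}, the microlocalized Poisson operators in $I^{-3/4}_{\oc-\ps}$ from Proposition~\ref{prop:microlocalized Poisson are 1c-ps FIOs}, left composition with $[P,Q_+]$ up to order $\tfrac14$, and then the composition result recorded as Proposition~\ref{prop: 1c-ps composition to 1c-1c}, whose bicharacteristic-integral symbol gives order $-\tfrac34+\tfrac14+\tfrac12=0$ and ellipticity via $\int H_p^{2,0}q_+\,ds=1$; the free Poisson kernel handles the region away from the perturbation. One correction: the intersection of $\Lambda_+^*\times\Lambda_-$ with the $\ps$-diagonal is not transversal but clean with excess one, since it is $(2n+1)$-dimensional and fibred over the $2n$-dimensional composite by the bicharacteristic, and that excess lies in the $\xoc^{-1}$-level variables along the geodesic in $z$ inside a fixed time slice, not in $t$, so it must be integrated out as in \eqref{defn: 1c-1c principal symbol, clean} rather than absorbed by the $t$-parameter (this is exactly where your $+\tfrac12$ comes from).
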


We conclude this section by summarizing how $\Cl$ relates to geodesics of $g(t)$.
In the correspondence of Proposition~\ref{prop:Wpm-1c-identification}, the position variable in $\overline{{}^{\oc}T^*\R^n}$ corresponds to the frequency variable in $\overline{{}^{\ps}T^*\R^{n+1}}$. 
As discussed after \eqref{eq:CharP-def}, the only bicharacteristics meeting the metric and potential perturbations are those ones with infinite frequency.
Under the identification in Proposition~\ref{prop:Wpm-1c-identification}, these bicharacteristics in fiber infinity of $\overline{{}^{\ps}T^*\R^{n+1}}$ are identified with points in 
$\overline{{}^{\oc}T^*_{\partial \overline{\R^n}}\R^n}$.
Then in terms of $\mathrm{Gr}(\Cl)$ above, they will be canonically identified with points in
\begin{equation}
  \tilde{\beta}_b^*(\mathrm{Gr}(\Cl)') \cap  \ffococ.
\end{equation}
The part of $\Char(P)$ at fiber infinity over $B_R(0)$, after projecting out the frequency dual to $t$ and identify the fiber infinity as a sphere, can be identified with
\begin{equation} \label{eq:parametrized-copehre-bundle-def}
    \mk{B}_g = [-T,T] \times S^*_gB_R(0),
\end{equation}
which is a family of the sphere bundle $S^*_g\R^n$ parametrized by time $t \in [-T,T]$.
More concretely, we introduce:
\begin{equation}  \label{eq:iota-def}
\iota: \;  \quad (t,z,v) \in \mk{B}_g   \to  (t,z,0,-1,v) \in \Char(P),
\end{equation}
where the coordinates are as in \eqref{eq:ps-coordinates},
with $\hat{\zeta}$-part parametrized by $v$, the fiber part of $S^*_gB_R(0)$.

When we send a point from $\overline{{}^{\oc}T^*_{\partial \overline{\R^n}}\R^n}$ to a bicharacteristic line, $t$ is determined by $t=-\frac{1}{2}\xi_{\oc}$ using \eqref{eq:xioc-etaoc-t-z-relation} and fixed over the entire bicharacteristic line.
Using $\iota$ in \eqref{eq:iota-def}, each bicharacteristic line, after forgetting the (rescaled) frequency dual to $t$ and the $\rho_{\ps}$ component, is just a geodesic (lifted to the cosphere bundle) of $g(t)$. 
In addition, those geodesics enter $\mk{B}_g$, or the image of $\mk{B}_g$ under $\iota$ above, corresponds to a region of $\overline{{}^{\oc}T^*_{\partial \overline{\R^n}}\R^n}$ on which 1-cusp frequencies are bounded.
So discussion above also gives the correspondence between these geodesics and points in $\tilde{\beta}_b^*(\mathrm{Gr}(\Cl)') \cap  \ffococ$, which plays the role of `end points' of those geodesics.
\begin{prop}  \label{prop:classical-SC-geodesic}
Let $\xi_{\oc}$ be the frequency as in \eqref{eq: 1c- canonical form} lifted from the left factor to $\ococb$, then each point in $\tilde{\beta}_b^*(\mathrm{Gr}(\Cl)') \cap  \ffococ$ corresponds to a geodesic of $g(t)$ with $t=-\frac{1}{2}\xi_{\oc}$, so that its left and right projections to $\overline{{}^{\oc}T^*_{\partial \overline{\R^n}}\R^n}$ corresponds to the initial and ending point of the geodesic as above.
In addition, those geodesics enter $\mk{B}_g$ corresponds to points $\overline{{}^{\oc}T^*_{\partial \overline{\R^n}}\R^n}$ with 1-cusp frequency in a bounded region.
\end{prop}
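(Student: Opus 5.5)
The plan is to assemble Proposition~\ref{prop:classical-SC-geodesic} from the pieces already in place: the identification of $W_\pm$ with $\overline{{}^{\oc}T^*\R^n}$ in Proposition~\ref{prop:Wpm-1c-identification}, the explicit frequency formula \eqref{eq:xioc-etaoc-t-z-relation}, and the definition \eqref{eq:classical-sc-map-def} of $\Cl$. A point of $\tilde{\beta}_b^*(\mathrm{Gr}(\Cl)') \cap \ffococ$ lies over $\mathrm{bf}$. There both $\xoco$ and $\xoct$ vanish, and $\sigma$ is fixed in $[C^{-1},C]$. Hence its images under $\tilde\beta_b$ in the left and right factors both lie in $\overline{{}^{\oc}T^*_{\partial\overline{\R^n}}\R^n}$. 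By definition of the twisted graph, this pair is $(\Cl(q_-), -q_-)$ for some $q_- \in \overline{{}^{\oc}T^*_{\partial\overline{\R^n}}\R^n}$, and flipping the sign of the second fibre variable returns $q_-$. Via $W_-$, the point $q_-$ corresponds to a unique integral curve of $H_p^{2,0}$. Because $q_-$ sits over the boundary of the base, this curve lies at fibre infinity of $\psphase$ (the statement after \eqref{eq:msf-I-def}). By \eqref{eq:classical-sc-map-def}, its forward endpoint is $\Cl(q_-)$. This gives the asserted initial/ending correspondence.

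The next step is to identify the curve with a geodesic and read off $t$. At $\rhops=0$ the rescaled field reduces to $\rhops^{-1}(\partial_\zeta p\cdot\partial_z-\partial_z p\cdot\partial_\zeta)$, since the $\partial_t$ and $\partial_\tau$ components drop out by parabolic homogeneity. Therefore $t$ is constant along the curve. In the coordinates \eqref{eq:ps-coordinates}, the remaining $(z,\hat\zeta)$ equations are, up to reparametrization, the cogeodesic flow of $g(t)$ on $S^*_g$. Through the map $\iota$ of \eqref{eq:iota-def}, the curve is therefore a lifted geodesic of the frozen metric $g(t)$, and it becomes a straight Euclidean line once it leaves $B_R(0)$. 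To determine $t$, apply \eqref{eq:xioc-etaoc-t-z-relation} at the outgoing end ($\Rp$, the left factor). This gives $\xioc = -2t_0 + O(\xoc)$, and at $\xoc = 0$ this becomes $t=-\tfrac12\xioc$. One must check that $\xioc$ here means the left-factor frequency. It is not the combination $\nu_1 = \xioco+\sigma^2\xioct$ from \eqref{eq:1c1c-1form-1}; it is simply $\xioco$ pulled back by $\tilde\beta_b$. The analogous formula at $\Rm$, read in the second factor with the twist, yields the same $t$, which serves as a consistency check.

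The main obstacle is the boundedness claim. A geodesic that enters $B_R(0)$ must have its asymptotic line, with direction $y_{\oc}$ and offset $z_0'$ perpendicular to that direction, pass within distance $R$ of the origin in the Euclidean region. It must also do so at a time $t \in [-T,T]$, because otherwise the metric is flat and the curve never meets $\mk{B}_g$. By \eqref{eq:xioc-etaoc-t-z-relation}, $|\xioc| \le 2T$. We also have $|\etaoc| \lesssim |z_0'|$, after taking $y_{\oc}$ in a compact coordinate patch where $1-|\yoc|^2$ is bounded below and covering the sphere by finitely many patches. Here $z_0'$ may be taken as the perpendicular offset, which is at most $R$, because translating $z_0$ along the direction of travel changes only the parallel component. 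This gives the bound at the outgoing end. The ingoing end follows from the same argument with the roles of $\Rm$ and $\Rp$ exchanged. Finally, the map $\Cl$ restricted to these points maps a bounded region to a bounded region, since the flow time through the compact set $\mk{B}_g$ is finite by non-trapping.
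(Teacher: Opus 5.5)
Your proposal is correct and takes essentially the same route as the paper, which justifies the proposition by the discussion preceding it. That discussion uses the identification of $W_\pm$ with $\overline{{}^{\oc}T^*\R^n}$, the constancy of $t$ along fibre-infinity bicharacteristics with $t=-\frac12\xi_{\oc}$ read off from \eqref{eq:xioc-etaoc-t-z-relation}, the map $\iota$ turning these bicharacteristics into lifted geodesics of $g(t)$, and boundedness of the 1-cusp frequencies for geodesics meeting $\mk{B}_g$, which your bounds $|t|\le T$ and perpendicular offset at most $R$ make precise. Two small slips do not affect the argument: the rescaled field at fibre infinity carries the factor $\rhops$ (see \eqref{eq:Hp20-def}), not $\rhops^{-1}$; and the implication \emph{over the boundary implies at fibre infinity} that you need is the converse of the remark after \eqref{eq:msf-I-def}, so you should cite instead the correspondence stated after Proposition~\ref{prop:Wpm-1c-identification}.
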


Next we recall \cite[Theorem~8.3]{HJ2026-scattering-map}, which is motivated by \eqref{eq:S formula} and will be used to give the expression of the principal symbol of the scattering map.
\begin{prop}{\cite[Theorem~8.3]{HJ2026-scattering-map}} \label{prop: 1c-ps composition to 1c-1c}
Suppose $A_- \in I_{\oc-\ps}^{m_-}(\R^{n+1} \times \R^n,\Lambda_-)$, $A_+ \in I_{\oc-\ps}^{m_+}(\R^{n+1} \times \R^n,\Lambda_+)$, we have
\begin{align}
A_+^*A_- \in I_{\oc-\oc}^{m_++m_-+\frac{1}{2}}(X_b^2,\beta_b^*(\mathrm{Gr}(\Cl))).
\end{align}
The principal symbol of $A_+^*A_-$ at $(q-, q_+)$, where $q_- \in W_-$ and $q_+ = \Cl(q_-)$, is 
\begin{align} \label{eq: principal symbol, 1c-ps composition to 1c-1c}
\int_{-\infty}^\infty \overline{\sigma_{1c-ps}^{m_+}(A_+)} \sigma_{1c-ps}^{m_-}(A_-) (q_-, \gamma_{q_-}(s)) ds ,
\end{align}
where the integral is a global section of 
\begin{equation}
\Omega^{1/2}(\beta_b^*(\mathrm{Gr}(\Cl))) \otimes S_{\oc-\oc}^{[m_-+m_++\frac{1}{2}]}(\beta_b^*(\mathrm{Gr}(\Cl))).    
\end{equation}
\end{prop}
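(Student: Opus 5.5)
The plan is to compose the two microlocalized Poisson operators using the 1c-ps stationary phase calculus. First I fix parametrizations. By Definition~\ref{def:1c-ps-parametrization} and the normal form \eqref{eq:1c-ps-phase-normal-form}, write $A_\pm$ locally as oscillatory integrals with phases
\[
\Phi_\pm = -\frac{t}{x_{\oc}^2}+\frac{\psi_\pm(t,z,\theta_\pm,x_{\oc},y_{\oc})}{x_{\oc}} .
\]
The Schwartz kernel of $A_+^*A_-$ at $(x_{\oc,1},y_{\oc,1};x_{\oc,2},y_{\oc,2})$ is an integral over $(t,z)\in\R^{n+1}$ and $(\theta_+,\theta_-)$. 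Its phase is $\Phi_-(x_{\oc,1},\dots)-\Phi_+(x_{\oc,2},\dots)$. Writing $x_{\oc,2}=x_{\oc,1}/\sigma$, this becomes
\[
\frac{t(\sigma^2-1)}{x_{\oc,1}^2} \quad\text{(up to sign convention)} \; + \; \frac{\psi_-(\cdot,x_{\oc,1})-\sigma\psi_+(\cdot,x_{\oc,1}/\sigma)}{x_{\oc,1}} .
\]
This is exactly the 1c-1c normal form \eqref{eq:1c-1c-phase-normal-form}, with $v=t$ and $w=(z,\theta_+,\theta_-)$. The amplitude powers of $x_{\oc}$ combine as follows. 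There are the $x_{\oc}^{-(m_\pm+\dots)-1/4}$ factors and the half-density factors. The extra factor from the $\R^{n+1}$ integration is accounted for by the $(2\pi)$ and $x_{\oc}$ bookkeeping of Definition~\ref{def:1c-1c-FIO}. Checking against $k_0=1$ and $k_1=n+k_+ +k_-$ should produce order $m_++m_-+\tfrac12$.

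Second, I verify that this phase is a clean parametrization in the sense of Definition~\ref{defn: 1c-1c parametrization clean}, with excess $e=1$, and that it parametrizes $\beta_b^*(\mathrm{Gr}(\Cl))$. The critical set is analyzed in two parts. Stationarity in $\theta_\pm$ puts the points on $\Lambda_\pm$. Stationarity in $(t,z)$ forces the ps-covectors of the two factors to agree. So a critical point corresponds to a point $\tilde q\in\Char(P)$ with $\msf I_-(\tilde q)=q_-$ and $\msf I_+(\tilde q)=q_+$, which means $q_+=\Cl(q_-)$. The fiber of the critical set over a point of the graph is the bicharacteristic $\gamma_{q_-}$ itself, a one-dimensional curve parametrized by $s$. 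This is the excess $e=1$. Cleanness is the same as transversality of the flow to $W_\pm$ together with the nondegeneracy of the parametrizations of $\Lambda_\pm$, and it follows from the non-trapping assumption and \cite[Proposition~4.12]{HJ2026-scattering-map}. The $\sigma$-stationarity condition $t\sigma\approx0$ modulo lower order forces $\sigma=1$ on $\ffococ$, which is consistent with the graph lying over the b-face diagonal in $\sigma$. Next I reduce the clean parametrization to a non-degenerate one, invoking \cite[Remark~7.2, Proposition~7.3]{HJ2026-scattering-map} to place $A_+^*A_-$ in $I_{\oc-\oc}$.

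Third, the principal symbol. I use the clean symbol formula \eqref{defn: 1c-1c principal symbol, clean}. The amplitude at $x_{\oc}=0$ is the product $\overline{a_+}a_-$. Integrating along the excess fiber $C_q$, which is the bicharacteristic, and rewriting the Jacobian factors, gives the half-density product of $\overline{\sigma(A_+)}$ and $\sigma(A_-)$ along $\gamma_{q_-}$ integrated in $s$. Here $ds$ arises because the $H^{2,0}_p$ flow parameter is a coordinate on the fiber and the half-density factors combine to $|ds|$. This yields \eqref{eq: principal symbol, 1c-ps composition to 1c-1c}. The integral is finite since the amplitudes are compactly microlocalized by $G_\pm$. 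Its invariance is inherited from the invariance of the clean symbol.

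The main obstacle is the second step. There I must verify cleanness near $\ffococ$ uniformly as $x_{\oc}\to0$, with the parabolic $x_{\oc}^{-2}$ and $x_{\oc}^{-1}$ scales mixed through $\sigma$. Concretely, the excess direction must live purely in $\varphi_1$ and not involve $t$. Otherwise the clean-phase theory recalled above, which allows excess only in $\varphi_1$, would not apply. I would first try to show this by choosing $t$ as the $v$ variable and the flow parameter as $w''$, using that $dt\neq0$ on $\Lambda_\pm\cap\ffocps$.
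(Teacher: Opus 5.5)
Your outline follows the route behind the result this paper quotes from \cite[Theorem~8.3]{HJ2026-scattering-map}, as reflected in the paper's use of the phase $-\Phi_{\oc-\ps,+}+\Phi_{\oc-\ps,-}$ in the normal form \eqref{eq:1c-1c-phase-normal-form} with $v=t$: the critical set is clean with one-dimensional excess along the bicharacteristic (this $e=1$ is exactly the source of the shift $+\frac12$ in Definition~\ref{def:1c-1c-FIO}), and the symbol is obtained by integrating along that fibre. Only small corrections are needed: the left variables of $A_+^*A_-$ belong to $A_+$, so the phase is $-\Phi_{\oc-\ps,+}(\xoco,\dots)+\Phi_{\oc-\ps,-}(\xoco/\sigma,\dots)$ as in \eqref{eq:1c1c-phase-normal-form-concrete}; $\sigma$ is a base variable, so $\sigma=1$ at $\xoco=0$ comes from stationarity in $t$, i.e.\ $1-\sigma^2+\xoco\partial_t\varphi_1=0$, not from a ``$\sigma$-stationarity'' condition; and the obstacle you flagged is settled because $t=-\frac12\xioco$ is a function on $\Leg$ (bicharacteristics at fibre infinity stay in one time slice, cf.\ Proposition~\ref{prop:classical-SC-geodesic}), hence constant along the fibres of $C_{\Phi_{\oc-\oc}}\to\Leg$, so the excess lies purely in the $w=(z,\theta_+,\theta_-)$ directions of $\varphi_1$.
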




\section{The analysis of phase functions}

\subsection{Some further analysis on the forward and backward sojourn relations and the classical scattering map}

In this subsection, we examine in greater detail how $\Lambda_\pm$ and $\tilde{\beta}_b^*(\mathrm{Gr}(\Cl))$ intersect the front face in the 1c-ps and 1c-1c setting respectively. 
We will show that $\Lambda_\pm$ (resp. $\tilde{\beta}_b^*$) only intersect $\ffocps$ (resp. $\ffococ$) at the `middle' of the front face, i.e. at $\varsigma = 1$ for $\ffocps$ (resp. at $\sigma = 1$ in $\ffococ$). We consider $\Lambda_\pm$ first and the claim for $\tilde{\beta}_b^*(\mathrm{Gr}(\Cl))$ will follow as a corollary. 

\begin{prop} \label{prop:1cps-sigma-equals-1}
Let $\Lambda_\pm$ be the microlocalized sojourn relation defined in \eqref{eq:microlocalized sojourn relns}, then we have
\begin{equation} \label{eq:1cps-sigma-equals-1}
\Legps = \partial \Lambda_\pm \subset \{ \varsigma = 1 \},
\end{equation}
where $\varsigma = x_{\oc}/\rho_{\ps}$ with the choice $\rho_{\ps} = (\sum_{i,j=1}^n g^{ij}\zeta_i\zeta_j)^{-1/2}$ and $x_{\oc} = \frac{2|t|}{|z|}$ near $\partial \mathcal{R}_\pm$.
\end{prop}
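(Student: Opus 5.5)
We first reduce to a computation on the boundary. $\Legps = \Lambda_\pm \cap \ffocps$ consists of limit points of interior points $(\gamma_{q_\pm}(s), -q_\pm)$ with $\rhops \to 0$ and $\xoc \to 0$ simultaneously. The ratio $\varsigma = \xoc/\rhops$ at the limit is therefore the limiting ratio of these two defining functions along the Lagrangian. It suffices to compute this ratio at one point of each bicharacteristic and show it is constant along the flow, because the closure is taken along whole integral curves and $\Lambda_\pm$ is invariant under the lifted $H_p^{2,0}$ flow in the first factor. The natural point to use is the radial set $\mathcal{R}_\pm$ itself, where the relation between $\rhops$ and $t/|z|$ is explicit.

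\textbf{Step 1: the radial set.} By \eqref{eq:rad.corner.pm}, on $\mathcal{R}_\pm$ near the equator we have $t/|z| = \pm \rhops/2$, and there $g = g_0$. With the choice $\xoc = 2|t|/|z|$ near $\partial\mathcal{R}_\pm$, this gives exactly $\xoc = \rhops$, i.e.\ $\varsigma = 1$. The identification of Proposition~\ref{prop:Wpm-1c-identification} should be arranged (as in \cite[Lemma~3.4]{HJ2026-scattering-map}) so that the base coordinate $\xoc$ of $q_\pm$ is the value of $2|t|/|z|$ at the radial endpoint of the bicharacteristic. So at the endpoint pair $(\gamma_{q_\pm}(0), q_\pm)$, the ratio is $1$ to leading order.

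\textbf{Step 2: transport along the bicharacteristic.} For bicharacteristics at fiber infinity the time $t$ is frozen. Outside the perturbation the flow is free, so $\zeta$ is constant and $\rhops = |\zeta|^{-1}$ is constant. Inside the perturbation, $\rhops$ as defined by \eqref{eq:rho-ps-def} is the inverse of $|\zeta|_{g(t)}$, which is conserved by the Hamilton flow of $|\zeta|^2_{g(t)}$ because $t$ is fixed. Hence $\rhops$ is constant along each integral curve. Meanwhile $\xoc$ is a coordinate on the second factor and is constant along $\gamma_{q_\pm}(s)$ by definition \eqref{eq:FSR def}. Thus the ratio computed in Step 1 persists along the whole curve, and taking closures gives $\Legps \subset \{\varsigma=1\}$.

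\textbf{Main obstacle.} The main difficulty is making Step 1 precise at the boundary. On the radial set itself $\rhops$ and $\xoc$ both vanish, so the ratio must be taken as a limit within $\Sigma$, and the interior points of $\Lambda_\pm$ are not exactly on $\mathcal{R}_\pm$. I would handle this with the explicit free flow near spacetime infinity. A free bicharacteristic with $\zeta$ fixed and $z = z_0 + 2s\zeta$ (so $\tau = -|\zeta|^2$) reaches the endpoint at infinity with $t/|z| \to$ a quantity whose normalized limit gives $2|t_0|/|z| \sim \rhops$ via \eqref{eq:xioc-etaoc-t-z-relation}. Concretely, I would check that $\xoc(q_\pm)$ equals $|\zeta|^{-1}$ up to $O(\xoc^2)$ by using the free kernel \eqref{eq:free-Poisson-kernel-2}. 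There, stationarity in $z$ gives $\zeta = \overline{\yoc}/\xoc$, so $\rhops = \xoc$ exactly on the free part. The perturbation then does not change $\rhops$, by Step 2.
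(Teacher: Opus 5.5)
\textbf{Verdict: gap in the transport step.} Your overall plan is the paper's. First get $\varsigma=1$ on the part of $\Lambda_\pm$ that coincides with the free sojourn relation. Your check via the free kernel, $\zeta=\overline{\yoc}/\xoc$ and hence $\rhops=\xoc$, is fine; it is what the paper imports from \cite[Proposition~4.12]{HJ2026-scattering-map}. Then transport along the flow. The gap is that the transport step is argued at the wrong order, in two ways.

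\textbf{Step 2 is vacuous as written.} You work on bicharacteristics at fiber infinity, where $t$ is frozen and $\rhops\equiv 0$. There, ``$\rhops$ is constant along the curve'' carries no information. The quantity $\varsigma=\xoc/\rhops$ is a coordinate on the front face $\ffocps$ of the blow-up, and its evolution is governed by the first-order behaviour of the flow just off $\{\rhops=0\}$. For instance, suppose one had $H_p^{2,0}\rhops=c\,\rhops$ with $c\neq0$ at fiber infinity. Every statement in your Step 2 would still hold, yet $H_p^{2,0}\varsigma=-c\,\varsigma$, so $\varsigma$ would drift.

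\textbf{Your fallback uses a false claim.} In the last paragraph you take exact equality $\rhops=\xoc$ on the free part and then assert that ``the perturbation does not change $\rhops$, by Step 2''. That transports along interior bicharacteristics, where $t$ is \emph{not} frozen. There $H_p(g^{ij}\zeta_i\zeta_j)=\partial_t g^{ij}\zeta_i\zeta_j$ up to potential terms. This is nonzero for a genuinely time-dependent metric, so $\rhops$ is not conserved and the claim fails as stated.

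\textbf{The missing first-order computation.} This is what the paper's proof supplies. Write $H_p=\partial_t+2g^{\ell k}\zeta_\ell\partial_{z_k}-\partial_{z_k}g^{ij}\zeta_i\zeta_j\,\partial_{\zeta_k}$.
\begin{itemize}
\item The $(z,\zeta)$-part annihilates $g^{ij}\zeta_i\zeta_j$ exactly. In the paper this appears as the cancellation of the two spatial contributions to the $\partial_\varsigma$-coefficient.
\item The $\partial_t$-term carries the extra factor $\rhops$ from $H_p^{2,0}=\rhops H_p$.
\item The potential only contributes at lower order.
\end{itemize}
Hence $H_p^{2,0}\rhops=O(\rhops^2)$, equivalently $H_p^{2,0}\varsigma=O(\rhops)$. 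So the lift of $H_p^{2,0}$ to $\SM$ has vanishing $\partial_\varsigma$-component on $\Legps$. Since the flow-out from $W_-$ reaches $W_+$ in finite rescaled time (non-trapping), $\varsigma$ stays equal to $1$ on all of $\Legps$. In your interior language: $\rhops$ changes only by a factor $1+O(\rhops)$ across the perturbation, not by nothing.

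Your heuristic ``$|\zeta|_{g(t)}$ is conserved by the flow of $|\zeta|^2_{g(t)}$ because $t$ is fixed'' is the germ of this. It has to be stated and proved at first order off fiber infinity, not on it.
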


\begin{proof}
Consider the case of $\Lambda_-$ for definiteness. 
Recall that for the part of $\Lambda_\pm$ with $(t,z)$ before entering the metric perturbation, it coincide with the forward sojourn relation associated to the free operator $P_0$ and \eqref{eq:1cps-sigma-equals-1} is satisfied. See the proof of \cite[Proposition~4.12]{HJ2026-scattering-map} for more details.
Now we extend this property to the entire $\partial \Lambda_\pm$ even after meeting the metric perturbation.

Recall that at finite frequency, the Hamilton vector field is given by (ignoring the potential, which won't contribute to the leading order behavior at fiber infinity):
\begin{equation}
H_p = \partial_t + 2 \sum_{\ell,k=1}^n g^{\ell k}\zeta_i \partial_{z_k}
- \sum_{k,i,j=1}^n\partial_{z_k}g^{ij} \zeta_i\zeta_j \partial_{\zeta_k}.
\end{equation}
Next we show that the coefficient of $\partial_{\varsigma}$ of its extension lifted to $\SM$ vanishes when restricted to $\Legps = \partial \Lambda_- \subset \{ x_{\oc} = 0 = \rho_{\ps} \}$.
Since we are away from $\partial \overline{\R^{n+1}}$, we can take the boundary defining function of the spacetime boundary to be $x_{\ps}=1$ and we have $H_p^{2,0} = \rho_{\ps}H_p$.
A direct computation shows
\begin{equation}
    \frac{\partial \rho_{\ps}}{\partial \zeta_k}
    = \frac{\partial(\sum_{i,j=1}^n g^{ij}\zeta_i\zeta_j)^{-1/2}}{\partial \zeta_k} = 
    (\sum_{i,j=1}^n g^{ij}\zeta_i\zeta_j)^{-3/2}
    \sum_{i=1}^n g^{ik}\zeta_i = \rho_{\ps}^2\varsigma^{-1}\sum_{i=1}^n g^{ik}\tilde{\zeta}_i,
\end{equation}
where we used $\varsigma = x_{\oc}/\rho_{\ps}$ and $\tilde{\zeta}_i=x_{\oc}\zeta_i$. Consequently, we have
\begin{equation}
    \frac{\partial \varsigma}{\partial \zeta_k}
    = -\frac{x_{\oc}}{\rho_{\ps}^2} \frac{\partial \rho_{\ps}}{\partial \zeta_k}
    = -\rho_{\ps}\sum_{i=1}^n g^{ik}\tilde{\zeta}_i.
\end{equation}
Now we investigate the $\partial_{\varsigma}$-component of $H_p^{2,0}$.
Similarly, we have
\begin{equation}
  \frac{\partial \varsigma}{\partial z_k} =  
  -\frac{x_{\oc}}{\rho_{\ps}^2} \frac{\partial \rho_{\ps}}{\partial z_k} = - \frac{1}{2} \varsigma^{-1}  \sum_{i,j=1}^n \partial_{z_k}g^{ij}\tilde{\zeta}_i\tilde{\zeta}_j.
\end{equation}

So the coefficient of $\partial_{\varsigma}$ in $H_p^{2,0} = \rho_{\ps}H_p$ from terms other than $\partial_t$ is
\begin{align}
\begin{split}
& 2 \varsigma^{-1} \sum_{\ell,k=1}^n g^{\ell k}\tilde{\zeta}_\ell \frac{\partial \varsigma}{\partial z_k}
- \rho_{\ps} \sum_{k,i,j=1}^n\partial_{z_k}g^{ij} \zeta_i\zeta_j  \frac{\partial \varsigma}{\partial \zeta_k} 
\\ = &  \varsigma^{-1} \sum_{\ell,k=1}^n g^{\ell k}\tilde{\zeta}_\ell \big( -\varsigma^{-1} \sum_{i,j=1}^n \partial_{z_k}g^{ij}\tilde{\zeta}_i\tilde{\zeta}_j \big)
-  \rho_{\ps} \sum_{k,i,j=1}^n\partial_{z_k}g^{ij} \zeta_i\zeta_j  \big( -\rho_{\ps}\sum_{\ell=1}^n g^{\ell k}\tilde{\zeta}_\ell \big) = 0.
\end{split}
\end{align}
The contribution from $\partial_t$ is
\begin{equation}
    \rho_{\ps}\frac{\partial \varsigma}{\partial t} =
    \rho_{\ps} \Big(- \frac{1}{2} \varsigma^{-1}  \sum_{i,j=1}^n \partial_{t}g^{ij}\tilde{\zeta}_i\tilde{\zeta}_j\Big) = O(\rho_{\ps}),
\end{equation}
hence vanishes on $\Legps$.
In sum, $H_p^{2,0}$ restricted to $\Legps$ has vanishing coefficient in $\partial_{\varsigma}$, hence $\varsigma$ will remain constant $1$ by the flow-out definition of $\Lambda_-$.
\end{proof}

The similar property of $\tilde{\beta}_b^*(\mathrm{Gr}(\Cl))$ follows from the proposition above.
\begin{coro} \label{coro:1c1c-sigma-equals-1}
The boundary $\SL = \partial \tilde{\beta}_b^*(\mathrm{Gr}(\Cl)')$ satisfy:
\begin{equation}
 \SL \subset  \{ \sigma = 1 \}.
\end{equation}
\end{coro}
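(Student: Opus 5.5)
The plan is to reduce the statement to Proposition~\ref{prop:1cps-sigma-equals-1} by tracking how the boundary defining functions on the two blown-up spaces are related through the bulk-boundary correspondence. A point of $\SL$ lies over $\mathrm{bf}$, so both $\xoco$ and $\xoct$ vanish there. It is a pair $(q_-, q_+)$ with $q_+ = \Cl(q_-)$; up to the sign twist in the second factor, both points sit in $\overline{{}^{\oc}T^*_{\partial\overline{\R^n}}\R^n}$ and are joined by one bicharacteristic $\gamma$ at fibre infinity of $\psphase$. What has to be computed is the limiting ratio $\sigma = \xoco/\xoct$ as one approaches this point from the interior of $\tilde{\beta}_b^*(\mathrm{Gr}(\Cl)')$.

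Step one: use Proposition~\ref{prop:1cps-sigma-equals-1} for $\Lambda_-$ and for $\Lambda_+$. Near the boundary, each point of $\Lambda_\pm$ over the bicharacteristic satisfies $\xoc/\rhops \to 1$. This holds with $\rhops = (g^{ij}\zeta_i\zeta_j)^{-1/2}$, which is a well-defined function of the ps point and does not depend on which endpoint we use. Take a point $\tilde q$ on $\gamma$ inside the perturbation region. Its images $\msf{I}_-(\tilde q)$ and $\msf{I}_+(\tilde q)$ have $\oc$ boundary defining functions $x_{\oc,1}$ and $x_{\oc,2}$. Both are asymptotic to the same $\rhops(\tilde q)$, to leading order as $\rhops\to 0$.

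Step two: make this precise by a limit along the interior. Points of $\mathrm{Gr}(\Cl)'$ near $\SL$ come from bicharacteristics at large but finite ps frequency. Along each such curve, $\rhops$ is not exactly constant. However, Proposition~\ref{prop:1cps-sigma-equals-1} shows that the $\partial_\varsigma$ component of $H_p^{2,0}$ is $O(\rhops)$. Hence $\varsigma$ varies by $O(\rhops)$ over the finite parameter time of the flow, so we get $x_{\oc,1} = \rhops(\tilde q)(1+O(\rhops))$ and likewise $x_{\oc,2} = \rhops(\tilde q)(1+O(\rhops))$. Dividing gives $\sigma = 1 + O(\xoco)$, and restricting to $\xoco=0$ yields $\SL\subset\{\sigma=1\}$. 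The sign twist only flips frequencies, so it does not affect $\sigma$.

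The main obstacle is consistency of normalizations. The conventions $\xoc = 2|t|/|z|$ on $\Rm$ and on $\Rp$ must both be compatible with the single global $\rhops$. This is where transversality at $W_\pm$ and the free-region computation are used, as referenced in the proof of Proposition~\ref{prop:1cps-sigma-equals-1}. Outside the perturbation the flow is the free one, for which $\rhops = |\zeta|^{-1}$ is exactly conserved, so the comparison can be carried out there. Inside, the $O(\rhops)$ drift estimate above controls the change of $\varsigma$.
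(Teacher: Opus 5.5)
Your argument is correct and is essentially the paper's: the paper writes $\sigma = \frac{\xoco}{\rhops}\cdot\frac{\rhops}{\xoct}$, using the common $\rhops$ at the intermediate point where $\Lambda_-$ and $\Lambda_+^*$ are glued along the lifted ps diagonal, and then applies Proposition~\ref{prop:1cps-sigma-equals-1} to each factor. Your Step two (the $O(\rhops)$ drift of $\varsigma$ along the flow) re-derives that proposition quantitatively. It is not strictly needed: $\varsigma$ restricted to the smooth manifolds $\Lambda_\pm$, which meet $\ffocps$ transversally, equals $1$ on the boundary, and this already gives $\sigma = 1 + O(\xoco)$.
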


\begin{proof}
We first recall that $\tilde{\beta}_b^*(\mathrm{Gr}(\Cl))$ is the composition of $\Lambda_-'$ and $(\Lambda_+^*)' \subset \SM^* $, where $\Lambda_+^*$ and $\SM^*$ are manifolds obtained by switching the ps and 1c-variables of $\Lambda_+$ and $\SM$. See \cite[Lemma~8.1]{HJ2026-scattering-map} for more details about dealing with extra blowups here.
Then the composition of canonical relations happens in the space
\begin{equation}\label{eq:SM2*}
\SM^{2, *} = [ \SM^* \times \SM; \{ \rhops = \rhops' = 0 \}].
\end{equation}
Then on (the lift of) the composed canonical relation, $\sigma$ can be recovered by:
    \begin{equation}
        \sigma = \frac{\xoco}{\xoct}  = \frac{\xoco}{\rho_{\ps}} \frac{\rho_{\ps}}{\xoct},
    \end{equation}
where we used the same $\rho_{\ps}$ for two $\overline{{}^{\ps}T^*\R^{n+1}}$-factors since such composition is formed by the intersection of the product of $\Lambda_+^*$ and $\Lambda_-$ with the lifted diagonal in the factor $\overline{{}^{\ps}T^*\R^{n+1}} \times \overline{{}^{\ps}T^*\R^{n+1}}$.
In addition, $\Leg$, the boundary of $\tilde{\beta}_b^*(\mathrm{GR}(\Cl)')$ (i.e. at $\xoco=\xoct=0$), is formed by the composition of the boundary of $\Lambda_+^*$ and $\Lambda_-$.
Applying Proposition~\ref{prop:1cps-sigma-equals-1}, we know $\sigma = 1$ on $\Leg$.
\end{proof}

\subsection{The analysis of the phase functions}

In this subsection, we analyze phase functions parametrizing $\Lambda_\pm$ and show that they change with constant speed under the $H_p^{2,0}$. 
Recall from \eqref{eq:fullrank} that near each $q_0 \in \Lambda_\pm$, we have a splitting $z=(z',z'')$ and $\tilde{\zeta}=(\tilde{\zeta}',\tilde{\zeta}'')$ of coordinates so that 
\begin{equation} \label{eq:coordinates-Lamda-pm}
    (t,z',\tilde{{\zeta}}'',x_{\oc},y_{\oc})
\end{equation}
becomes a coordinate system on $\Lambda_\pm$. 
We view $H_p^{2,0}$ as a vector field lifted to $\SM$ and still denote it by $H_p^{2,0}$.
Recall the expression of $\varphi_1$, which is the second part of the phase function in \eqref{eq:1c-ps-phase-normal-form}:
\begin{align}
\varphi_1 = z'' \cdot \tilde{\zeta}''-\tilde{\varphi}_1(t,z',\tilde{{\zeta}}'',x_{\oc},y_{\oc}).
\end{align}
From now on in this proof we use $\frac{\partial \bullet}{\partial \bullet}$ to denote the partial derivative without any extra dependence (i.e. not viewing $z''$ and $\tilde{\zeta}'$ and other components as functions of $z'$ and $\tilde{\zeta}''$).
Then using the same computation in the proof of Proposition~\ref{prop:1cps-sigma-equals-1} above, we compute the expression of $H_p^{2,0}$ in terms of coordinates in \eqref{eq:coordinates near ffocps} and have
\begin{align} \label{eq:Hp-varphi1}
H_p^{2,0}(\varphi_1) = 
\sum_{j' \in I'} \sum_{i=1}^n 2g^{ij}\tilde{\zeta}_i\frac{\partial (-\tilde{\varphi}_1)}{\partial z'_{j'}}
+ \sum_{j'' \in I''} \sum_{i=1}^n  2g^{ij}\tilde{\zeta}_i\tilde{\zeta}_{j''}
- \sum_{j' \in I'} \frac{\partial\tilde{p}}{\partial z'_{j'}}\frac{\partial \varphi_1}{\partial \tilde{\zeta'}_{j'}} 
- \sum_{j'' \in I''} \frac{\partial\tilde{p}}{\partial z''_{j''}}\frac{\partial \varphi_1}{\partial \tilde{\zeta''}_{j''}} + O(x_{\oc}).
\end{align}
In our case, all terms in the third sum vanishes since $\tilde{\varphi}_1$ does not involve $\tilde{\zeta}_{j'}$ for $j' \in I'$.
For the last sum, it also vanishes because we are at the critical set of the phase function when restricted to $\partial \Lambda_\pm$, hence we have
\begin{align*}
\partial_{\tilde{\zeta}_{j''}}\varphi_1 = 0, \; \forall j'' \in I''.
\end{align*}
Also from the definition of parametrization in Definition~\ref{def:1c-ps-parametrization}, for all $j' \in I'$ we have
\begin{align*}
\partial_{z_{j'}}(-\tilde{\varphi}_1) = 
\partial_{z_{j'}}\varphi_1 = \tilde{\zeta}_{j'}.
\end{align*}
Substituting quantities above into \eqref{eq:Hp-varphi1}, with the choice $\rhops = \big( \sum_{j,k} g^{jk}(z, t) \zeta_j \zeta_k \big)^{-1/2}$ and using $x_{\ps}/\rho_{\ps} =  1$ on $\Legps$ by Proposition~\ref{eq:1cps-sigma-equals-1}, we have the following property. 
\begin{prop}
Suppose $\Phi_{\oc-\ps} = -\frac{t}{x_{\oc}^2}+\frac{\varphi_1}{x_{\oc}}$ parametrizes $\Legps$, then when restricted to $C_{\Phi_{\oc-\ps}}$ (and identified with $\Legps$), we have
\begin{align} \label{eq:varphi1-const-speed}
H_p^{2,0}(\varphi_1) = 2 \sum_{i,j=1}^n g^{ij} \tilde{\zeta_i}\tilde{\zeta}_j = 2.
\end{align}
\end{prop}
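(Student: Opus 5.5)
The plan is to compute $H_p^{2,0}(\varphi_1)$ directly from the lifted Hamilton vector field in the coordinates \eqref{eq:coordinates near ffocps}, evaluate on the critical set $C_{\Phi_{\oc-\ps}}$ at $\xoc=0$, and use the parametrization identities to collapse the result to the dual metric norm of $\tilde\zeta$. This norm equals $1$ by Proposition~\ref{prop:1cps-sigma-equals-1}.

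\textbf{Step 1: the lifted vector field.} With $x_{\ps}=1$ on the relevant compact region, $H_p^{2,0}=\rho_{\ps}H_p$. The $\partial_t$ term of $H_p$ carries a factor $\rho_{\ps}$, and $\rho_{\ps}=\xoc/\varsigma$ vanishes on $\ffocps$, so it contributes only $O(\xoc)$. The remaining parts are
\begin{equation*}
2\rho_{\ps}\sum_{i,k} g^{ik}\zeta_i\partial_{z_k} = 2\varsigma^{-1}\sum_{i,k}g^{ik}\tilde\zeta_i\partial_{z_k}
\end{equation*}
and $-\rho_{\ps}\sum \partial_{z_k}\tilde p\,\partial_{\zeta_k}$, which rewritten in $\tilde\zeta=\xoc\zeta$ becomes $-\sum \partial_{z_k}\tilde p\,\partial_{\tilde\zeta_k}$ up to powers of $\varsigma$. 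The $\partial_\varsigma$ component vanishes on $\Legps$, as computed in Proposition~\ref{prop:1cps-sigma-equals-1}, and $\varphi_1$ does not depend on $\varsigma$ anyway. Applying these terms to $\varphi_1=z''\cdot\tilde\zeta''-\tilde\varphi_1(t,z',\tilde\zeta'',\xoc,\yoc)$ gives \eqref{eq:Hp-varphi1}.

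\textbf{Step 2: use the critical-set and parametrization identities.}
\begin{itemize}
\item The $\tilde\zeta'$-derivative terms vanish because $\varphi_1$ is independent of $\tilde\zeta'$.
\item The $\tilde\zeta''$-derivative terms vanish because $\partial_{\tilde\zeta''}\varphi_1=0$ on $C_{\Phi_{\oc-\ps}}$; this is the critical-set condition, since $\tilde\zeta''$ plays the role of the parameter $\theta_1$.
\item The $z'$-derivative terms give $2\sum g^{ij'}\tilde\zeta_i\,\partial_{z'_{j'}}\varphi_1$. By \eqref{eq:1c-ps param}, $d_z\Phi_{\oc-\ps}=\zeta$, so $\partial_{z_{j'}}\varphi_1=\xoc\zeta_{j'}=\tilde\zeta_{j'}$ on $\Legps$.
\item The $z''$-derivative terms give $2\sum g^{ij''}\tilde\zeta_i\tilde\zeta_{j''}$ directly, since $\partial_{z''}\varphi_1=\tilde\zeta''$.
\end{itemize}

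\textbf{Step 3: combine.} Adding the $z'$ and $z''$ contributions gives $2\varsigma^{-1}\sum_{i,j}g^{ij}\tilde\zeta_i\tilde\zeta_j$. Since $\sum g^{ij}\tilde\zeta_i\tilde\zeta_j=\xoc^2\rho_{\ps}^{-2}=\varsigma^2$, this equals $2\varsigma$ on $\Legps$. Proposition~\ref{prop:1cps-sigma-equals-1} gives $\varsigma=1$ there, so both the quadratic form and the final value equal $1$ and $2$ respectively, which is the claim.

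The main obstacle is bookkeeping rather than ideas. One must track consistently the factors of $\varsigma$ produced when converting $\rho_{\ps}\partial_{z}$ and $\rho_{\ps}\partial_\zeta$ into the rescaled variables. One must also justify that the $O(\xoc)$ remainders, including the $\xoc$-dependence of $\tilde\varphi_1$ and of $\tilde p$, and the $t$-derivative term, genuinely drop out at $\xoc=0$. The first point I would check carefully is that the identification $\partial_{z'}\varphi_1=\tilde\zeta'$ holds exactly on $\Legps$ rather than only to leading order in $\xoc$.
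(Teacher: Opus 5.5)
Your proposal is correct and follows the paper's argument essentially step for step. You compute $H_p^{2,0}(\varphi_1)$ in the rescaled coordinates to obtain \eqref{eq:Hp-varphi1}, drop the $\tilde\zeta'$ terms by independence and the $\tilde\zeta''$ terms by the critical-set condition, use $\partial_z\varphi_1=\tilde\zeta$, and then invoke $\varsigma=1$ from Proposition~\ref{prop:1cps-sigma-equals-1}. Your explicit tracking of the $\varsigma^{-1}$ factors, which gives $2\varsigma$ before setting $\varsigma=1$, is slightly more careful than the paper, which suppresses these factors in \eqref{eq:Hp-varphi1}.
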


Next we analyze the value of phase functions parametrizing 1c-ps Lagrangian submanifolds a geometric meaning. As a corollary, this  shows that the value of phase function at their critical points does not depend on the choice of parametrizations and depends only on the underlying Legendre submanifold and its Lagrangian extension.

First, we introduce a further resolved space
\begin{equation}
\mathfrak{M} =  [ \SM ; \{\xioc+2t=0, \, \xoc = 0 \} ],
\end{equation}
and denote the blow-down map $\mk{M} \to \SM$ by $\beta_{\mk{M}}$. Then 
\begin{equation} \label{eq:Ksi-1c-def}
    \Xi_{\oc} = \frac{\xioc+2t}{\xoc}
\end{equation}
and its reciprocal parametrizes the new front face $\ff_{\mk{M}}$. 
Since $\xi_{\oc} = -2t$ on $\Lambda_\pm \cap \{x_{\oc} = 0 \}$ (this can be seen from taking $\xoc$-derivative of $\Phi_{\oc-\ps}$ in \eqref{eq:1c-ps-phase-normal-form}, see \cite[Remark~4.13]{HJ2026-scattering-map} for full details), we know that the lift of $\partial \Lambda_\pm$ is included in $\ff_{\mk{M}}$.   
In addition, since $\Lambda_\pm$ intersect $\ffocps$ transversally in the interior, by considering the Taylor expansion of  $\xi_{\oc}$ restricted to $\Lambda_\pm$, we know that $\Xi_{\oc}$ can be viewed a smooth function of other variables on $\Lambda_\pm$.
In particular, $\beta_{\mk{M}}^*(\Lambda_\pm)$, which is the lift (i.e. the closure of the preimage of its interior) of $\Lambda_\pm$ intersects $\ff_{\mk{M}}$ in the region $|\Xi_{\oc}| \lesssim 1$, and 
\begin{equation}
    \beta_{\mk{M}}|_{\partial \beta^*(\Lambda_\pm)}
    \to \partial \Lambda_\pm
\end{equation}
is a diffeomorphism. Nevertheless, we still introduce this $\mk{M}$ to give $\Xi_{\oc}$ a clear geometric meaning of this subleading part of the $\xioc$-frequency, which helps us to understand its 1c-1c analogue that we will use later.
See also \cite[Proposition~4.4]{HJ2026-scattering-map} for connections between our Taylor expansion above and  symplectic reduction.
This $\Xi_{\oc}$ is related to our phase function by the following proposition.
\begin{prop} \label{prop:Xioc-equals-varphi1}
    Suppose $\Phi_{\oc-\ps,\pm} = -\frac{t}{x_{\oc}^2} + \frac{\varphi_\pm(\msf{K}_\pm,\theta_1)}{x_{\oc}}$ parametrizes $\Lambda_\pm$ non-degenerately in the sense of Definition~\ref{def:1c-ps-parametrization}, 
    and $q' \in \tilde{C}_{\Phi_{\oc-\ps}}$ is sent to $q \in \Lambda_\pm$ under the parametrization map, 
    then we have
\begin{equation} \label{eq:Xioc-equals-varphi1}
    \Xi_{\oc}(q) = \varphi_\pm(q'),
\end{equation}
where $\Xi_{\oc}(q)$ is the value of $\Xi_{\oc}$ at $q$ when we view it as a smooth function on $\Lambda_\pm$.
Alternatively, it can also be viewed as the $\Xi_{\oc}$-component of $\beta_{\mk{M}}^{-1}(q) \cap \beta^*(\Lambda_\pm)$.
\end{prop}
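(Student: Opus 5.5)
The plan is to read $\xioc$ off the parametrization and Taylor-expand it in $\xoc$ along $\Lambda_\pm$. By Definition~\ref{def:1c-ps-parametrization}, a point $q\in\Lambda_\pm$ coming from $q'=(\xoc,\yoc,z,t,\theta_1)\in\tilde C_{\Phi_{\oc-\ps}}$ has fibre coordinates given by $d_{\msf K}\Phi_{\oc-\ps}$. These are expressed in the 1-cusp frame $\frac{d\xoc}{\xoc^3},\frac{d\yoc}{\xoc}$ on the $\overline{{}^{\oc}T^*\R^n}$ factor and in the ps frame on the other factor. Here $\tilde C$ has no $\theta_0$ variables, since $\varphi_0=-t$ carries none. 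It is therefore cut out by $d_{\theta_1}\varphi_\pm=0$ alone, and $\varphi_\pm$ is evaluated there. Pairing the 1-cusp frame with $\partial_{\xoc}$, the $\xioc$-component of $q$ should be
\begin{equation*}
\xioc = \xoc^3\,\partial_{\xoc}\Phi_{\oc-\ps,\pm} = \xoc^3\Big(\frac{2t}{\xoc^3}-\frac{\varphi_\pm}{\xoc^2}+\frac{\partial_{\xoc}\varphi_\pm}{\xoc}\Big)=2t-\xoc\varphi_\pm+\xoc^2\partial_{\xoc}\varphi_\pm .
\end{equation*}
The first thing to check is the sign convention. The paper states $\xioc=-2t$ on $\partial\Lambda_\pm$, whereas the formula above gives $+2t$. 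The discrepancy comes from the twisting $-q_\pm$ in \eqref{eq:microlocalized sojourn relns}, i.e.\ the 1c-frequency on $\Lambda_\pm$ is minus the one read from $d\Phi$. With that sign flip, the $\xioc$-coordinate of the untwisted point in $W_\pm\cong\overline{{}^{\oc}T^*\R^n}$ is
\begin{equation*}
\xioc=-2t+\xoc\varphi_\pm(q')-\xoc^2\partial_{\xoc}\varphi_\pm(q').
\end{equation*}

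Next I would compute $\Xi_{\oc}=(\xioc+2t)/\xoc$ from this expansion. Assuming the conventions are as in \eqref{eq:Ksi-1c-def}, this gives $\Xi_{\oc}=\varphi_\pm(q')-\xoc\partial_{\xoc}\varphi_\pm(q')$. The identity $\Xi_{\oc}(q)=\varphi_\pm(q')$ then holds exactly at $\xoc=0$, which is the case relevant to $\partial\Lambda_\pm\subset\ff_{\mk M}$ and to the second description via $\beta_{\mk M}^{-1}(q)\cap\beta_{\mk M}^*(\Lambda_\pm)$. Away from $\xoc=0$ the identity holds modulo $O(\xoc)$. If the statement is meant for $\xoc>0$ as well, I would interpret $\Xi_{\oc}$ as the smooth extension on $\Lambda_\pm$ and state equality at the boundary. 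Alternatively, one can note that the normal form \eqref{eq:1c-ps-phase-normal-form} may be chosen with $\varphi_1$ independent of $\xoc$ to first order, which reduces to the boundary statement.

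I expect the main obstacle to be keeping the twisting signs and the $\xoc$-dependence of the critical set $\tilde C$ straight. The chain rule is harmless here because the $\theta_1$-derivative of $\varphi_\pm$ vanishes on $\tilde C$, and $\varphi_0$ has no $\theta$-dependence. Finally, to see that the value is independent of the parametrization, I would observe that the left side depends only on $q$ and $\Lambda_\pm$.
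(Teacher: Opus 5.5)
Your proposal follows the paper's proof. You read the $\xioc$-component off $d_{\msf{K}}\Phi_{\oc-\ps,\pm}$ on $\tilde{C}_{\Phi_{\oc-\ps}}$, undo the twist, and compare with $\xioc = -2t + \xoc\Xi_{\oc}$. You also keep the term $\xoc^2\partial_{\xoc}\varphi_\pm$, which the paper's displayed identity silently drops; this is correct, and it shows the equality is exact on $\partial\Lambda_\pm$ (the only case used later) and holds only up to $O(\xoc)$ in the interior.
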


\begin{proof}
On the one hand, by discussion above, we have
\begin{equation} \label{eq:xioc-1}
    \xioc =  -2t + x_{\oc}\Xi_{\oc}
\end{equation}
on $\Lambda_\pm$. On the other hand, by the definition of parametrization, for $q' \in \tilde{C}_{\Phi_{\oc-\ps}}$, we have
\begin{equation} \label{eq:xioc-2}
   \xi_{\oc} \frac{d\xoc}{\xoc^3} = \big(-2t+\xoc \varphi_\pm(q') \big)\frac{d\xoc}{\xoc^3}.
\end{equation}
Comparing \eqref{eq:xioc-1} and \eqref{eq:xioc-2}, we obtain \eqref{eq:Xioc-equals-varphi1}.
\end{proof}

As a corollary, the value of the phase function does not depend on the choice of parametrization.
\begin{coro} \label{coro:1cps-phase-value-equal}
 Suppose both $\Phi_{\oc-\ps,\pm} = -\frac{t}{x_{\oc}^2} + \frac{\varphi_\pm(\msf{K},\theta_1)}{x_{\oc}}$ and $\tilde{\Phi}_{\oc-\ps,\pm} = -\frac{t}{x_{\oc}^2} + \frac{\tilde{\varphi}_\pm(\msf{K},\tilde{\theta}_1)}{x_{\oc}}$
 parametrizes $\Lambda_\pm$ non-degenerately, and both $q' \in \tilde{C}_{\Phi_{\oc-\ps}}$ and $\tilde{q'} \in \tilde{C}_{\tilde{\Phi}_{\oc-\ps}}$ corresponds to the same point $q \in \Lambda_\pm$ under the parametrization, then 
 \begin{equation} \label{eq:phi1-equals=tildephi1}
 \varphi_\pm(q') = \tilde{\varphi}_\pm(\tilde{q'}). 
 \end{equation}
\end{coro}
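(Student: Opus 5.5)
The corollary should follow directly from Proposition~\ref{prop:Xioc-equals-varphi1}, applied once to each parametrization. The key point is that $\Xi_{\oc}$ is a function defined intrinsically on $\Lambda_\pm$. Indeed, $\Xi_{\oc} = (\xioc+2t)/\xoc$ is built from the ambient coordinates $\xioc$, $t$, $\xoc$ on $\SM$, and its extension to the boundary is determined by the submanifold alone. We have already seen that $\xioc + 2t$ vanishes on $\Lambda_\pm \cap \{\xoc = 0\}$, and that $\Lambda_\pm$ meets $\ffocps$ transversally. Hence the restriction $(\xioc+2t)|_{\Lambda_\pm}$ is a smooth function vanishing on the boundary hypersurface $\partial\Lambda_\pm$, of which $\xoc|_{\Lambda_\pm}$ is a defining function. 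The quotient $\Xi_{\oc}|_{\Lambda_\pm}$ is therefore a well-defined smooth function on $\Lambda_\pm$, up to and including the boundary, with no reference to any phase function.

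Given this, let $q' \in \tilde C_{\Phi_{\oc-\ps}}$ and $\tilde q' \in \tilde C_{\tilde\Phi_{\oc-\ps}}$ both map to $q \in \Lambda_\pm$. Applying Proposition~\ref{prop:Xioc-equals-varphi1} to $\Phi_{\oc-\ps,\pm}$ gives $\Xi_{\oc}(q) = \varphi_\pm(q')$. Applying the same proposition to $\tilde\Phi_{\oc-\ps,\pm}$, which has the same normal form $-t/\xoc^2 + \tilde\varphi_\pm/\xoc$ and so satisfies its hypotheses, gives $\Xi_{\oc}(q) = \tilde\varphi_\pm(\tilde q')$. Equating the two yields \eqref{eq:phi1-equals=tildephi1}.

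The one step needing care is boundary points $q \in \partial\Lambda_\pm$, where $\xoc = 0$ and the identity $\xioc = -2t + \xoc\Xi_{\oc}$ used in the proposition gives no information directly. Here I would argue by continuity. At interior points, i.e.\ $\xoc > 0$, the identity $\varphi_\pm(q') = (\xioc+2t)/\xoc$ holds pointwise, and likewise for $\tilde\varphi_\pm(\tilde q')$, so the two values agree. Non-degeneracy makes the maps $\tilde C_{\Phi} \to \Lambda_\pm$ and $\tilde C_{\tilde\Phi} \to \Lambda_\pm$ local diffeomorphisms. Consequently $\varphi_\pm$ and $\tilde\varphi_\pm$ pushed forward to $\Lambda_\pm$ are smooth functions that coincide on the dense interior, and hence they coincide up to the boundary. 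Equivalently, in the language of the resolved space $\mk{M}$, both values equal the $\Xi_{\oc}$-coordinate of the unique point of $\beta_{\mk M}^{-1}(q)\cap\beta_{\mk M}^*(\Lambda_\pm)$, and that point is independent of the parametrization.
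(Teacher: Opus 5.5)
Your main argument is the paper's. Both values are identified, through Proposition~\ref{prop:Xioc-equals-varphi1}, with the parametrization-independent quantity $\Xi_{\oc}(q)$, equivalently the $\Xi_{\oc}$-coordinate of the lift of $q$ to $\mk{M}$. Your reason that $\Xi_{\oc}|_{\Lambda_\pm}$ is smooth up to $\partial\Lambda_\pm$ is the same Taylor-expansion remark the paper makes just before that proposition.

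The real problem is the interior identity in your last paragraph. Since $\varphi_\pm$ may depend on $\xoc$, the radial frequency on $\tilde C_{\Phi_{\oc-\ps}}$ is read off (up to the sign from the twist) from $\xoc^3\partial_{\xoc}\Phi_{\oc-\ps,\pm}$. It is
\[
\xioc = -2t + \xoc\varphi_\pm - \xoc^2\,\partial_{\xoc}\varphi_\pm, \qquad\text{so}\qquad \Xi_{\oc} = \varphi_\pm - \xoc\,\partial_{\xoc}\varphi_\pm .
\]
Hence $\varphi_\pm(q') = (\xioc+2t)/\xoc$ is not exact for $\xoc>0$. Your intermediate claim that the two values agree at interior points is false in general. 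For example, $\tilde\varphi_\pm = \varphi_\pm + c\,\xoc$ gives $\tilde\Phi_{\oc-\ps,\pm} = \Phi_{\oc-\ps,\pm} + c$. This has the same critical set and the same Lagrangian, so $\tilde q' = q'$, yet $\tilde\varphi_\pm(\tilde q') - \varphi_\pm(q') = c\,\xoc$.

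The repair is immediate. Both $\varphi_\pm - \xoc\partial_{\xoc}\varphi_\pm$ and $\tilde\varphi_\pm - \xoc\partial_{\xoc}\tilde\varphi_\pm$ push forward to the same smooth function $\Xi_{\oc}|_{\Lambda_\pm}$, and at $\xoc=0$ they reduce to $\varphi_\pm$ and $\tilde\varphi_\pm$. So the correct statement is:
\begin{itemize}
\item exact equality on $C_{\Phi_{\oc-\ps}}$, i.e.\ over $\partial\Lambda_\pm$;
\item equality only modulo $O(\xoc)$ on $\tilde C_{\Phi_{\oc-\ps}}$.
\end{itemize}
The boundary case you singled out as needing care is precisely where the statement is exact.

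The paper drops the same term in \eqref{eq:xioc-2}. Its Proposition~\ref{prop:Xioc-equals-varphi1}, and hence this corollary, are therefore also exact only at $\xoc=0$. Compare Proposition~\ref{prop:N1-equals-1c1c-phase}, which keeps the term and states an $O(\xoco)$ error. This is harmless downstream, because Proposition~\ref{prop:varphi1-equals-sojourn} and its consequences only use boundary points.
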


\begin{proof}
    This follows from \eqref{eq:Xioc-equals-varphi1} since both sides of \eqref{eq:phi1-equals=tildephi1} equals to $\Xi_{\oc}$ at the corresponding point $q \in \Lambda_\pm$, or $\beta_{\mk{M}}^{-1}(q) \cap \beta^*(\Lambda_\pm)$ in the lift of $\Lambda_\pm$, which is independent of the choice of parametrization.
\end{proof}

\begin{remark}
    What essentially matters in our analysis is the 1-jet of $\Lambda_\pm$ as $x_{\oc} \to 0$. If we use another Lagrangian extension of $\Legps$ with the same 1-jet, then in terms of $\xioc$, this corresponds to modifying it by a $O(x_{\oc}^2)$-term, which in turn corresponds to modifying $\Xi_{\oc}$ and the phase function $\varphi_\pm$ by a $O(x_{\oc})$ term.
    Since $\varphi_\pm$ enters in the form $e^{i\frac{\varphi_\pm}{x_{\oc}}}$, such change only introduces a smooth factor and does not change the main part of our oscillatory factor.
\end{remark}

Next we consider the 1c-1c setting. 
As shown in Corollary~\ref{coro:1c1c-sigma-equals-1}, we only need to concern the region with $\sigma$ close to $1$. 
Let coordinates be as in \eqref{eq:1c1c-1form--1}, and we define
\begin{equation} \label{eq:J-def}
    J = \{  \xioco+\xioct = 0, \, \xoco = 0 \}.
\end{equation}
Then we consider
\begin{equation} \label{eq:J-resolved-1c1c-phase-space}
    [\ococb;J]
\end{equation}
obtained by blowing up $J$ and we will show that the parameter parametrizing the front face $\ff_{SJ}$ created by blowing up encodes the sojourn time. 
Here $SJ$ stands for `sojourn' and the reason will become clear in Section~\ref{subsec:total-sj-relation}.
This allows us to recover the sojourn time geometrically from the lift of $\mathrm{Gr}(\Cli)'$. 
We denote the blow-down map by
\begin{equation} \label{eq:beta-SJ-def}
	\beta_{SJ}: \;  [\ococb;J] \to \ococb.
\end{equation}
The parameter parametrizing the front face is
\begin{equation} \label{eq:N1-def}
    N_1 =  \frac{\xioco+\xioct}{\xoco},
\end{equation}
and a coordinate system near a point with $\sigma=1$ in the interior of $\ff_{SJ}$ is 
\begin{equation} \label{eq:coordinate-near-ffSJ}
    (\xoco,\sigma,\yoco,\yoct, N_1,\xioct ,\etaoco,\etaoct).
\end{equation}

Our next goal is to investigate the relationship between $N_1$ and the value of our 1c-1c phase functions at their critical points.
In terms of coordinates in \eqref{eq:1c1c-1form-1}, we have
\begin{equation} \label{eq:N1-def-2}
    N_1 = \frac{\nu_1+\nu_2\sigma^{-1}(1-\sigma^2)}{\xoco}.
\end{equation}

Before stating the general result, let us consider the case where we take $\Phi_{\oc-\oc}$ parametrizing the lift of $\mathrm{Gr}(\Cli)'$ to be $-\Phi_{\oc-\ps,+}+\Phi_{\oc-\ps,-}$, with $\Phi_{\oc-\ps,\pm}$ parametrizing $\Lambda_\pm$.
Let $\Xi_{\oc,1}$, $\Xi_{\oc,2}$ be as in \eqref{eq:Ksi-1c-def} but defined for the left and right factors for $\Lambda_+$ and $\Lambda_-$ respectively,
then we know that restricted to $\Lambda_+^* \times \Lambda_-$ intersecting the diagonal of $\psphase \times \psphase$ in two intermediate $\ps$-variables,
we have
\begin{equation}
    \xioco = -2t + \xoco\Xi_{\oc,1}, \; \xioct = 2t - \xoct \Xi_{\oc,2},
\end{equation}
which gives
\begin{equation} \label{eq:N1-smooth-expression}
  N_1 = \Xi_{\oc,1} - \sigma^{-1} \Xi_{\oc,2}= \Xi_{\oc,1} - \sigma \Xi_{\oc,2} + O(\xoco).
\end{equation}
Using Proposition~\ref{prop:Xioc-equals-varphi1}, take $\Phi_{\oc-\ps,\pm} = -\frac{t}{x_{\oc}^2} + \frac{\varphi_\pm(\msf{K}_\pm,\theta_\pm)}{x_{\oc}}$ parametrizing $\Lambda_\pm$ non-degenerately we know that $\Xi_{\oc,1} - \sigma \Xi_{\oc,2}$ equals to $\varphi_+-\sigma\varphi_-$ at the critical point of $\Phi_{\oc-\oc}$.
On the other hand $-\varphi_++\sigma\varphi_-$ equals to the $\varphi_1$-part of $-\Phi_{\oc-\ps,+}+\Phi_{\oc-\ps,-}$ after rewriting it in the form \eqref{eq:1c-1c-phase-normal-form}

In addition, \eqref{eq:N1-smooth-expression} also shows if we further lift $\tilde{\beta}_b^*(\mathrm{Gr}(\Cl)')$ to $[\ococb; J]$, then each point in $\tilde{\beta}_b^*(\mathrm{Gr}(\Cl)')$ is lifted to a single point \footnote{The interesting part is $\tilde{\beta}_b^*(\mathrm{Gr}(\Cl)') \cap J$ since $\beta_{SJ}$ is a diffeomorphism automatically away from $J$.} and $\beta_{SJ}$ restricted to $\beta_{SJ}^*(\tilde{\beta}_b^*(\mathrm{Gr}(\Cl)'))$ is a diffeomorphism between $\beta_{SJ}^*(\tilde{\beta}_b^*(\mathrm{Gr}(\Cl)'))$ and $\tilde{\beta}_b^*(\mathrm{Gr}(\Cl)')$.
In other words, $N_1$ can effectively be viewed as a smooth function on $\tilde{\beta}_b^*(\mathrm{Gr}(\Cl)')$, while it is still convenient introduce this blow-up when we talk about more than one metrics.


Next we consider a more general version of the relation above between $N_1$ and the function value of phase functions parametrizing 1c-1c Lagrangian submanifolds.
\begin{prop} \label{prop:N1-equals-1c1c-phase}
Suppose
\begin{align}  
\Phi_{\oc-\oc}(\msf{K},t, \theta_1) =  \frac{ t(1-\sigma^2) }{\xoco^2} + \frac{\varphi_1(\msf{K},t,\theta_1)}{\xoco} , \quad \msf{K} = (\xoco, \sigma, \yoco, \yoct)
\end{align}
parametrizes $\tilde{\beta}_b^*(\mathrm{Gr}(\Cl)')$ cleanly in the sense of Definition~\ref{defn: 1c-1c parametrization clean}. For $q' \in \tilde{C}_{{\Phi}_{\oc-\oc}}$ that is sent to $q \in \tilde{\beta}_b^*(\mathrm{Gr}(\Cl)')$, 
and let $q'' = \beta_{SJ}^{-1} \cap \beta_{SJ}^*(\tilde{\beta}_b^*(\mathrm{Gr}(\Cl)'))$ be the lift of $q$, then 
\begin{equation} \label{eq:varphi1-equals--N1}
    \varphi_1(q')  = - N_1(q'') + O(\xoco).
\end{equation}
In particular, when $\xoco = 0$, we have an equality and the value of $\varphi_1$ on $C_{\Phi_{\oc-\oc}}$ is independent of the choice of the parametrization; it only depends on the corresponding point in $\tilde{\beta}_b^*(\mathrm{Gr}(\Cl)')$.
\end{prop}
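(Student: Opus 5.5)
Following the proof of Proposition~\ref{prop:Xioc-equals-varphi1}, we compute $N_1$ directly from the parametrization and compare. At a point $q' = (\msf{K}, t, \theta_1) \in \tilde{C}_{\Phi_{\oc-\oc}}$, the image point $q$ has frequencies given by $d_{\msf{K}}\Phi_{\oc-\oc}$, read off against the 1-form \eqref{eq:1c1c-1form-1}. Both $\Phi_{\oc-\oc}$ and $\alpha_{\oc-\oc}$ are written in the same coordinates $(\xoco,\sigma,\yoco,\yoct)$, so we only need the $d\xoco$ and $d\sigma$ coefficients.

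Differentiating $\frac{t(1-\sigma^2)}{\xoco^2}$ and $\frac{\varphi_1}{\xoco}$, the coefficient of $\frac{d\xoco}{\xoco^3}$ is
\[
\nu_1 = -2t(1-\sigma^2) - \xoco \varphi_1 + \xoco^2 \partial_{\xoco}\varphi_1 .
\]
The coefficient of $\frac{d\sigma}{\xoco^2}$ is
\[
\nu_2 = -2t\sigma + \xoco \partial_\sigma \varphi_1 .
\]
Here $t$ is one of the phase variables, evaluated at its critical value. Substituting into \eqref{eq:N1-def-2},
\[
\nu_1 + \nu_2 \sigma^{-1}(1-\sigma^2) = -2t(1-\sigma^2) - 2t(1-\sigma^2) - \xoco\varphi_1 + O(\xoco) \cdot (1-\sigma^2) + O(\xoco^2).
\]
This has the wrong sign, which signals that the signs must be tracked through the twisted graph. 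The frequencies on $\tilde{\beta}_b^*(\mathrm{Gr}(\Cl)')$ satisfy $\xioco = -2t+\dots$ and $\xioct = 2t+\dots$. Comparing with \eqref{eq:1c1c-1form-0} ($\nu_1 = \xioco+\sigma^2\xioct$, $\nu_2 = -\sigma\xioct$), the leading terms of the phase must produce $\nu_1 \approx -2t(1-\sigma^2)$ and $\nu_2 \approx -2t\sigma$. The combination in \eqref{eq:N1-def-2} then cancels to $-4t(1-\sigma^2)$, not to zero. The leading term therefore does not vanish identically, only at $\sigma=1$.

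So the key step is to show that the $t$-criticality condition forces $(1-\sigma^2)/\xoco$ to be bounded on $\tilde{C}_{\Phi_{\oc-\oc}}$. The condition $d_t\Phi = 0$ reads $(1-\sigma^2) + \xoco\partial_t\varphi_1 = 0$, so $1-\sigma^2 = -\xoco \partial_t\varphi_1 = O(\xoco)$, consistent with Corollary~\ref{coro:1c1c-sigma-equals-1}. With this, the $d\xoco$ coefficient rewritten purely in terms of $\xioco+\xioct$ becomes $(\xioco+\xioct)/\xoco$. The cleanest route is to compute $\xioco$ and $\xioct$ directly from $\Phi_{\oc-\oc}$ via \eqref{eq:1c1c-1form--1}, using $\xoct = \xoco/\sigma$.

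Explicitly, write $\Phi_{\oc-\oc}$ as a function of $(\xoco,\xoct)$: $t(\xoco^{-2} - \xoct^{-2}) + \varphi_1/\xoco$. Then
\[
\xioco = \xoco^3\partial_{\xoco}\Phi = -2t - \xoco\varphi_1 + O(\xoco^2),
\]
and
\[
\xioct = \xoct^3\partial_{\xoct}\Phi = 2t + \xoct^3 \xoco^{-1}\partial_{\xoct}\varphi_1 .
\]
The second term in $\xioct$ is $O(\xoco^2)$, since $\partial_{\xoct}\varphi_1$ is bounded when $\sigma \sim 1$. Hence
\[
N_1 = \frac{\xioco+\xioct}{\xoco} = -\varphi_1(q') + O(\xoco),
\]
which is \eqref{eq:varphi1-equals--N1}. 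Clean, rather than non-degenerate, parametrization does not matter, since only the values of $d_{\msf{K}}\Phi$ on $\tilde{C}_{\Phi_{\oc-\oc}}$ enter.

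Independence of the parametrization at $\xoco=0$ then follows as in Corollary~\ref{coro:1cps-phase-value-equal}. Since $\beta_{SJ}$ restricted to the lifted Lagrangian is a diffeomorphism (shown after \eqref{eq:N1-smooth-expression}), $N_1(q'')$ depends only on $q$.

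The main obstacle is the bookkeeping of the $O(\xoco)$ remainders. In particular, one must check that $\partial_{\xoco}\varphi_1$ and $\partial_{\xoct}\varphi_1$ remain bounded near $\sigma = 1$ in the change of variables $(\xoco,\sigma)\leftrightarrow(\xoco,\xoct)$, and that the clean-excess variables do not introduce singular terms. I would handle the boundedness by working in the $(\xoco,\sigma)$ chart, taking the $d\xoco$ coefficient there, and adding $\sigma^{-1}(1-\sigma^2)\nu_2$. The $t$-criticality relation $1-\sigma^2 = O(\xoco)$ absorbs the otherwise non-vanishing $-4t(1-\sigma^2)/\xoco$ contribution into $4t\,\partial_t\varphi_1$. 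I would then verify, as the sign analysis above suggests, that this term cancels against the corresponding term from $\nu_1$ once everything is expressed via $\xioco+\xioct$.
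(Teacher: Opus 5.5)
There is a gap at the one step that matters. Your overall strategy matches the paper's: read the frequencies off $d_{\msf K}\Phi_{\oc-\oc}$ on $\tilde{C}_{\Phi_{\oc-\oc}}$ and form $N_1=(\xioco+\xioct)/\xoco$. However, the justification for why only $-\varphi_1$ survives is wrong.

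In the $(\xoco,\xoct)$ chart, $\varphi_1$ is smooth in $\sigma=\xoco/\xoct$. Hence $\partial_{\xoct}\varphi_1=-\sigma^2\xoco^{-1}\partial_\sigma\varphi_1$ is \emph{not} bounded, and $\partial_{\xoco}$ at fixed $\xoct$ also hits $\sigma$. Keeping these terms (with $\partial_{\xoco}\varphi_1$ now taken at fixed $\sigma$) gives exactly
\begin{equation*}
\xioco=-2t-\xoco\varphi_1+\xoco\sigma\,\partial_\sigma\varphi_1+\xoco^2\partial_{\xoco}\varphi_1,\qquad \xioct=2t-\xoct\,\partial_\sigma\varphi_1 .
\end{equation*}
So each frequency carries an $O(\xoco)$ correction of the same size as the term $\xoco\varphi_1$ you want to isolate, and your claimed $O(\xoco^2)$ bounds are false. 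The result survives because these two corrections combine to $\xoco(\sigma-\sigma^{-1})\partial_\sigma\varphi_1$. On $\tilde{C}_{\Phi_{\oc-\oc}}$ the $t$-criticality relation you wrote down gives $\sigma-\sigma^{-1}=-\sigma^{-1}(1-\sigma^2)=\sigma^{-1}\xoco\,\partial_t\varphi_1$ (equivalently, $\sigma=1+O(\xoco)$ from Corollary~\ref{coro:1c1c-sigma-equals-1}). Therefore
\begin{equation*}
N_1=-\varphi_1+\xoco\bigl(\partial_{\xoco}\varphi_1+\sigma^{-1}\partial_t\varphi_1\,\partial_\sigma\varphi_1\bigr).
\end{equation*}
You had the needed ingredient, but you used it for the wrong purpose.

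The fallback in your last paragraph would fail as stated. With $+\sigma^{-1}(1-\sigma^2)\nu_2$ you get the $O(1)$ term $-4t(1-\sigma^2)/\xoco=4t\,\partial_t\varphi_1$, and nothing in $\nu_1$ cancels it. Your suspicion of \eqref{eq:N1-def-2} is justified, but the cure is its sign, not a cancellation. From \eqref{eq:1c1c-1form-0}, $\xioct=-\nu_2/\sigma$ and $\xioco=\nu_1+\sigma\nu_2$, so $\xioco+\xioct=\nu_1-\sigma^{-1}(1-\sigma^2)\nu_2$. With this sign the $\mp 2t(1-\sigma^2)$ contributions cancel identically, with no criticality needed. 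The only remaining nontrivial term is $-\sigma^{-1}(1-\sigma^2)\partial_\sigma\varphi_1=O(\xoco)$. This is precisely the paper's argument ($\nu_1,\nu_2$ from the $\xoco$- and $\sigma$-derivatives, plus $\sigma=1+O(\xoco)$) with the sign in \eqref{eq:N1-def-2} corrected, and it agrees with your direct computation once the $\partial_\sigma\varphi_1$ terms are kept. Your remarks on clean parametrizations and on independence of the parametrization at $\xoco=0$ are fine.
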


\begin{proof}
    By the definition of parametrization, 
    taking the $\xoco$ and $\sigma$ derivatives of $\Phi_{\oc-\oc}$, we know that at with $q,q'$ in the proposition, we have
\begin{equation}
    \nu_1 = -2t(1-\sigma^2) - \xoco \varphi_1(q') + \xoco^2 \partial_{\xoco}\varphi_1(q'),
\end{equation}
and 
\begin{equation}
    \nu_2 = -2t\sigma + \xoco \partial_\sigma\varphi_1(q').
\end{equation}
Then using \eqref{eq:N1-def-2} and the fact that $\sigma = 1 + O(\xoco)$ when restricted to $\tilde{\beta}_b^*(\mathrm{Gr}(\Cl)')$, which is proved in Corollary~\ref{coro:1c1c-sigma-equals-1}, we obtain \eqref{eq:varphi1-equals--N1}.
\end{proof}

\section{Second microlocalization: composition with scattering pseudodifferential operators}
\label{sec:second-micro}

\subsection{The 1c,sc-1c,sc phase space and Fourier integral operators}

We introduce a further resolved phase space based on $\ococb$, a class of Lagrangian submanifolds in it, and a class of Fourier integral operators associated to them in this section.
This is used to conduct second microlocalization on our 1c-1c Fourier integral operators. Geometrically, this corresponds to distinguishing different 1-jets at $\ffococ$ of 1c-1c Lagrangian submanifolds having the same 1c-1c Legendre submanifold as their boundaries.
We will only introduce minimal amount of concepts and objects needed to second-microlocalize our scattering map in detail and give a sketch of how the general framework should be developed in Appendix~\ref{appendix:global-1csc-phase-space}. 

The phase space we will use is obtained by blowing up $\ococb$ at fixed $\xioco$ and $\xioct$ frequency:
\begin{equation} \label{eq:secm-def}
\secm  =  [\ococb; \{ \xioco = \xiocl, \, \xioct = - \xiocr, \xoco = 0\} ].
\end{equation}
We denote the blow-down map by
\begin{equation} \label{eq:beta-ocsc-def}
    \beta_{\oc,\sct-\oc,\sct}: \; \secm \to \ococb,
\end{equation}
and denote the front face created by blowing up by $\ffsec$.
Then $\ffsec$ can be parametrized by
\begin{equation} \label{eq:xisc-12-def}
    \xi_{\sct,1} = \frac{\xioco-\xi_{\oc,0}}{\xoco}, \;
    \xi_{\sct,2} = \frac{\xioct+\xi_{\oc,0}}{\xoct},
\end{equation}
and their reciprocals or ratio. Only the region of $\ffsec$ with $\xi_{\sct,1},\xi_{\sct,2}$ in a bounded region, hence indeed parametrized by them, will be of interest for us and a coordinate system there is given by
\begin{equation} \label{eq:coordinates-near-ffsec}
(\xoco,\sigma,\yoco,\yoct,\xi_{\sct,1},\xi_{\sct,2},\etaoco,\etaoct).
\end{equation}

Then $\secm$ inherits a canonical 1-form 
\begin{equation}
    \alpha_{\oc,\sct-\oc,\sct} = \beta_{\oc,\sct-\oc,\sct}^* \alpha_{\oc-\oc},
\end{equation}
and the corresponding symplectic form 
\begin{equation} \label{eq:omega 1csc-1csc}
    \omega_{\oc,\sct-\oc,\sct} = d\alpha_{\oc,\sct-\oc,\sct}.
\end{equation}
Near a point in the interior of $\ffsec$, with other coordinates not involved in this further blow-up as in \eqref{eq:1c1c-1form-0}, the canonical 1-form is
\begin{align}
\label{eq:1form-sec}
\begin{split}
 \alpha_{\oc,\sct-\oc,\sct} = &
( \xi_{\oc,0}(1-\sigma^2) + (\xoco \xi_{\sct,1} + \xoct\sigma^2 \xi_{\sct,2}) \frac{d\xoco}{\xoco^3} - (-\sigma\xiocr + \xoco \xi_{\sct,2} ) \frac{d\sigma}{\xoco^2} \\& + \etaoco \frac{d\yoco}{\xoco} + \sigma \etaoct \frac{d\yoct}{\xoco}
\\ = &(\xi_{\oc,0}(1-\sigma^2))\frac{d\xoco}{\xoco^3} 
+ \sigma \xiocr \frac{d\sigma}{\xoco^2}
+ (\xi_{\sct,1} + \sigma \xi_{\sct,2}) \frac{d\xoco}{\xoco^2} -  \xi_{\sct,2}  \frac{d\sigma}{\xoco} \\& + \etaoco \frac{d\yoco}{\xoco} + \sigma \etaoct \frac{d\yoct}{\xoco} .
\end{split}
\end{align}

In $\secm$, the class of Lagrangian and Legendre submanifolds second-microlocalized at $\xi_{\oc,0}$ are defined as follows.

\begin{definition}[Second-microlocalized 1c,sc-1c,sc Lagrangian submanifolds]  \label{definition:second-microlocalized-Lag-manifold}
We define an admissible $\oc,\sct-\oc,\sct$ Lagrangian submanifold (second-microlocalized at $\xi_{\oc,0}$) of $\secm$ to be a $2n$-dimensional submanifold  $\Lagsec$ that is Lagrangian in the interior (that is, the symplectic form $\omega_{\oc,\sct-\oc,\sct}$ from \eqref{eq:omega 1csc-1csc} vanishes on it), such that 
\begin{itemize}
\item $\Lagsec$ meets $\ff_{(\oc,\sct,\xiocl)}$ transversally, 
\item the differential $d \xi_{\sct,1}$ is non-vanishing on $\Lagsec \cap \ffsec$, and 
\item  its closure is disjoint from all other boundary hypersurfaces of $\secm$ other than $\ffsec$. 
\end{itemize}
We define a $(\oc,\sct)-(\oc,\sct)$ Legendre submanifold $\Legsec$ of $\ffococ$ to be the boundary of an admissible 1c-1c Lagrangian submanifold. In other words, there exists $\Lagsec$ as above such that $\Legsec = \Lagsec \cap \ffsec$. 
\end{definition}
We emphasize that the last condition in the definition above ensures that the entire $\Legsec$ `lives within' a single 1-cusp frequency in the radial direction: $\xioco = \xiocl$, $\xioct=-\xiocr$.

We didn't include `fibered' in the name, but indeed this class of Lagrangian submanifold also have a fibration structure similar to \cite[Proposition~6.3]{HJ2026-scattering-map}, except for that now with $\xi_{\sct,1},\xi_{\sct,2}$ obtained by second-microlocalization playing the role of the original $\xioco,\xioct$.


The current $\xi_{\sct,1},\xi_{\sct,2}$ in \eqref{eq:xisc-12-def} is directly related to $N_1$ defined in \eqref{eq:N1-def}.
\begin{lmm} \label{lemma:N1-equals-xisc1-plus-xisc2}
Let $\Lag$ be an admissible $\oc-\oc$ Lagrangian submanifold in the sense of Definition~\ref{definition:admissible-1c1cLag}, and suppose that both $\beta_{SJ}^*(\Lag)$ (with $\beta_{SJ}$ in \eqref{eq:beta-SJ-def})
and $\beta_{\oc,\sct-\oc,\sct}^*(\Lag)$ (for a fixed $\xi_{\oc,0}$ in \eqref{eq:secm-def}) are diffeomorphic to $\Lag$ via $\beta_{SJ}$ and $\beta_{\oc,\sct-\oc,\sct}$,
then for $q \in \Lag$ with $\xioco = \xi_{\oc,0}, \xioct = -\xi_{\oc,0}$, we have
\begin{equation}
        N_1(\beta_{SJ}^{-1}(q)) = \xi_{\sct,1}(\beta_{\oc,\sct-\oc,\sct}^{-1}(q)) + \xi_{\sct,2}(\beta_{\oc,\sct-\oc,\sct}^{-1}(q)).
\end{equation}
\end{lmm}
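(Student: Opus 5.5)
Since both blow-downs restricted to the lifts of $\Lag$ are diffeomorphisms, $N_1$ and $\xi_{\sct,1},\xi_{\sct,2}$ may be viewed as functions on $\Lag$ near $q$, computed by the defining quotients \eqref{eq:N1-def} and \eqref{eq:xisc-12-def} along $\Lag$ and then extended continuously to $\xoco=0$. The identity is then an algebraic identity between these quotients, evaluated on the fibre over $q$.

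Concretely, take a curve (or work with Taylor expansions) in $\Lag$ approaching $q$ from the interior $\xoco>0$. By the definitions,
\begin{equation*}
\xi_{\sct,1}+\xi_{\sct,2} = \frac{\xioco-\xiocl}{\xoco} + \frac{\xioct+\xiocl}{\xoct} = \frac{\xioco-\xiocl}{\xoco} + \sigma\frac{\xioct+\xiocl}{\xoco},
\end{equation*}
using $\xoct=\xoco/\sigma$. Compare this with
\begin{equation*}
N_1 = \frac{\xioco+\xioct}{\xoco} = \frac{\xioco-\xiocl}{\xoco}+\frac{\xioct+\xiocl}{\xoco}.
\end{equation*}
The difference is $(\sigma-1)\,\frac{\xioct+\xiocl}{\xoco} = (\sigma-1)\,\sigma^{-1}\xi_{\sct,2}$.

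By hypothesis $\xi_{\sct,2}$ extends smoothly, hence is bounded, on the lift of $\Lag$ near $q$. It therefore suffices to know that $\sigma=1$ at $q$, and then the difference vanishes in the limit $\xoco\to 0$. For the Lagrangians of interest, $\tilde\beta_b^*(\mathrm{Gr}(\Cl)')$, this is Corollary~\ref{coro:1c1c-sigma-equals-1}.

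For a general admissible $\Lag$, I expect the main obstacle to be justifying that $\sigma=1$ at $q$. I would derive it from the hypothesis that $\Lag$ lifts diffeomorphically under $\beta_{SJ}$, i.e.\ that $N_1$ is bounded near $q$. Lagrangianity forces $\nu_1 = -2t(1-\sigma^2)$-type leading behaviour, so at $\xoco=0$ one has $\xioco+\sigma^2\xioct=\nu_1$ with $\nu_1$ vanishing when $\sigma=1$; equivalently, via the parametrization \eqref{eq:1c-1c-phase-normal-form} as in the proof of Proposition~\ref{prop:N1-equals-1c1c-phase}. The given values $\xioco=\xiocl$, $\xioct=-\xiocl$ with $\xiocl\neq0$ then force $\sigma^2=1$.

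If this general argument proves delicate, I would restrict to the setting actually used, where Corollary~\ref{coro:1c1c-sigma-equals-1} applies directly.
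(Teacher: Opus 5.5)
Your decomposition is the one in the paper's proof, which writes $N_1=\frac{\xioco-\xiocl}{\xoco}+\frac{\xioct+\xiocl}{\xoco}$ and reads the two terms as $\xi_{\sct,1}$ and $\xi_{\sct,2}$. You are right that the second reading needs justification. By \eqref{eq:xisc-12-def}, $\xi_{\sct,2}$ has $\xoct=\xoco/\sigma$ in the denominator, so $\xi_{\sct,1}+\xi_{\sct,2}-N_1=(\sigma-1)\sigma^{-1}\xi_{\sct,2}$; the paper's one-line argument tacitly uses $\sigma=1$.

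You close this gap using finiteness of $\xi_{\sct,2}$ over $q$ together with $\partial\Lag\subset\{\sigma=1\}$ from Corollary~\ref{coro:1c1c-sigma-equals-1}. That is correct, and it covers the case in which the lemma is actually applied, namely $\Lag=\tilde{\beta}_b^*(\mathrm{Gr}(\Clt)')$ in the proof of Proposition~\ref{prop:Cl1=Cl2}. It is also to your advantage that you only need $\xi_{\sct,2}$ to be finite at the point over $q$, rather than a global diffeomorphism. Since $d\xioco\neq0$ on $\partial\Lag$, the limit of $(\xioco-\xiocl)/\xoco$ at $q$ depends on the direction of approach. So the lift of $\Lag$ under $\beta_{\oc,\sct-\oc,\sct}$ acquires a front face over $\partial\Lag\cap\{\xioco=\xiocl\}$. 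Your continuity argument gives the identity at every point of that fibre over $q$, which is the form in which that proposition uses it.

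For a general admissible $\Lag$, however, your sketch of $\sigma(q)=1$ does not work.
\begin{itemize}
\item It is circular. Lagrangianity alone does not give $\nu_1=-2t(1-\sigma^2)$; that comes from the normal form \eqref{eq:1c-1c-phase-normal-form}. Its $t$-critical set is $\{\sigma^2=1\}$, so invoking it presupposes $\partial\Lag\subset\{\sigma=1\}$. Admissible Legendrians with $\sigma\equiv\sigma_0\neq1$ do exist, e.g.\ parametrized by $\varphi_0=t(\sigma-\sigma_0)+c$.
\item Passing from $\xiocl(1-\sigma^2)=0$ to $\sigma^2=1$ requires $\xiocl\neq0$, which is not assumed.
\end{itemize}

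Your starting point is nevertheless the right one, and the argument can be completed as follows.
\begin{itemize}
\item The $\beta_{SJ}$ hypothesis gives boundedness of $N_1$ near $q$. This forces $\xioco+\xioct\equiv0$ on $\partial\Lag$ near $q$, so there $\nu_1=\xioco(1-\sigma^2)$ and $\nu_2=\sigma\xioco$.
\item We have $\xoco^3\,\omega_{\oc-\oc}=-d\xoco\wedge(d\nu_1+2\nu_2\,d\sigma)+O(\xoco)$, and this vanishes on $\Lag$. Since $\Lag$ is transversal to $\ffococ$, one gets $(d\nu_1+2\nu_2\,d\sigma)|_{T\partial\Lag}=0$.
\item Substituting the expressions for $\nu_1,\nu_2$, this reduces to $(1-\sigma^2)\,d\xioco|_{T\partial\Lag}=0$.
\item Admissibility ($d\xioco\neq0$ on $\Lag\cap\ffococ$) then gives $\sigma\equiv1$ on $\partial\Lag$ near $q$.
\end{itemize}
This uses no normal form and no condition on $\xiocl$.
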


\begin{proof}
This follows from 
\begin{equation}
    N_1 = \frac{\xioco+\xioct}{\xoco}
    = \frac{\xioco-\xi_{\oc,0}}{\xoco} + \frac{\xioct+\xi_{\oc,0}}{\xoco} = \xi_{\sct,1} + \xi_{\sct,2},
\end{equation}
which extends down to the boundary after we introduce blow-ups on both sides respectively.
Or equivalently, one can view both sides as functions on $\Lag$ under the diffeomorphic assumption in the lemma.
\end{proof}

Next we discuss parametrizations of this class of Lagrangian submanifolds.
\begin{definition} \label{def:parametrization-sec}
Suppose $\Lagsec$ is an admissible $\oc,\sct-\oc,\sct$ Lagrangian submanifold second-microlocalized at $\xi_{\oc,0}$, and $(\ococparaone, \ococparatwo)$ is in a non-empty open subset $U$ of $\RR^{k_0+k_1}$. We say that 
\begin{align}\label{eq:Phi 1c-1c-sec param clean}
\Phi_{\oc,\sct-\oc,\sct}(\msf{K}, \ococparaone, \ococparatwo) =  \frac{ \varphi_0(\sigma,\ococparaone) }{x_{\oc,1}^2} + \frac{\varphi_1(\msf{K},\ococparaone,\ococparatwo)}{x_{\oc,1}} , \quad \msf{K} = (\xoco, \sigma, \yoco, \yoct), 
\end{align} 
gives a \emph{clean parametrization} of $\SL$ near $q$ with excess $e$ if there is a point $q' = (\msf{K}', \ococparaone', \ococparatwo')$ such that the differentials
\begin{equation} \label{eq:varphi0 1csc clean}
d_{\sigma,\ococparaone} \frac{\partial \varphi_0}{\partial v_j}, \quad 1 \leq j \leq k_0,
\end{equation}
are linearly independent at $(\sigma', \ococparaone')$, and the differentials 
\begin{equation}\label{eq:varphi1 1csc clean}
d_{\yoco, \yoct, \ococparatwo} \frac{\partial \varphi_1}{\partial w_j}, 1 \leq j \leq k_1,
\end{equation}
have a fixed rank $k_1 - e$ near $q'$, such that $\SL$ is given locally by  
\begin{align}\label{eq:1c-1c param-Lagsec}
\Lagsec
=\{ (\msf{K},d_{\msf{K}}\Phi_{\oc-\ps}) \mid \;  (\msf{K},v,w) \in \tilde{C}_{\Phi_{\oc-\oc}}\},
\end{align}
where 
\begin{align}  \label{eq: 1c-1c critical set-Lagsec}
\tilde{C}_{\Phi_{\oc-\oc}} =  \{ (\msf{K},v,w) \mid d_{v}\varphi_0 + \xoco d_{v}\varphi_1 = 0, d_{w}\varphi_1 = 0 \}.
\end{align}
When $e=0$, we say this parametrization is \emph{non-degenerate}.

As before, either $\ococparaone$ or $\ococparatwo$ may be absent, in which case conditions for the derivatives in \eqref{eq:varphi0 1csc clean}, resp. \eqref{eq:varphi1 1csc clean} are dispensed with, as well as stationarity with respect to $\ococparaone$, resp. $\ococparatwo$ in \eqref{eq: 1c-1c critical set clean}. 
\end{definition}
A typical phase function for such a parametrization takes the form
\begin{equation} \label{eq:Phisec-typical}
    \Phi = -\frac{\xi_{\oc,0}(1-\sigma^2)}{2\xoco^2}+\frac{\varphi_1(\msf{K},w)}{\xoco},
\end{equation}
which means the $v$-parameters are absent. But we are still including those $v$-parameters to handle phase functions arise in compositions. In fact, the second part $\frac{\varphi_1(\msf{K},w)}{\xoco}$ parametrizes a sc-Lagrangian submanifold. See the identification in Proposition~\ref{prop:1c-sc-phase-space-relation} below.

Now we define the class of second-microlocalized Fourier integral operators in this $\oc,\sct-\oc,\sct$ setting.
\begin{definition} \label{def:1c-1c-FIOsec}
Let $\Lagsec$ be an admissible Lagrangian submanifold of $\secm$ as in Definition~\ref{definition:second-microlocalized-Lag-manifold}. 
We define $I^{m}_{\oc,\sct-\oc,\sct}(X_b^2, \Lagsec; \Omega_{\oc-\oc}^{1/2})$, i.e., the space of $\oc,\sct-\oc,\sct$ Fourier integral operators of order $m$, to be the space of operators with Schwartz kernel given (modulo a Schwartz function) by a finite sum of terms of the form 
\begin{align} \label{eq: 1c-1c FIOsec, local form}
\begin{split}
 & (2\pi)^{-\frac{n+(k_0+k_1-e)}{2}} \Big(\int 
e^{i\Phi_{\oc,\sct-\oc,\sct}(\msf{K},v,w)} 
 a(\msf{K},v,w)
\\& x_{\oc,1}^{-m-\frac{2k_0+(k_1-e)}{2}+ \frac{n+1}{2}} dvdw \Big) 
|\frac{d\sigma dy_{\oc,2}}{x_{\oc,1}^{n+1}}|^{1/2}|\frac{dx_{\oc,1}dy_{\oc,1}}{x_{\oc,1}^{n+2}}|^{1/2},
\end{split}
\end{align}
where $\Phi_{\oc,\sct-\oc,\sct}$ is a phase function of the form \eqref{eq:Phi 1c-1c-sec param clean} locally parametrizing $\Lagsec$ non-degenerately in the sense of Definition \ref{def:parametrization-sec}.
Moreover,  $a \in C^\infty_c([0,\infty)_{x_{\oc}} \times [C^{-1},C]_{\sigma} \times \mathbb{S}^{n-1} \times \mathbb{S}^{n-1} \times \R^{k_0} \times \R^{k_1})$, where $v \in \R^{k_0},w \in \R^{k_1}$. 
\end{definition}

The proof of the phase equivalence (i.e. rewriting the oscillatory integral in \eqref{eq: 1c-1c FIOsec, local form} using another phase function parametrizing the same Lagrangian submanifold) proved in \cite[Proposition~7.3]{HJ2026-scattering-map} (with details in Appendix~A there) still applies in this setting, since the only difference compared with the standard phase equivalence (i.e., for H\"ormander's Fourier integral operator class or the class of Legendre distributions by Melrose-Zworski \cite{melrose1996scattering}) remain the same: the parabolic scaling between two parts of the phase function.
So the operator class defined using a clean parametrization can always be rewritten as an operator with non-degenerate parametrization.
In summary, we have:
\begin{prop} \label{prop: phase-equivlaence-sec-microlocal}
Let $\Phi_{\oc,\sct-\oc,\sct},\tilde{\Phi}_{\oc,\sct-\oc,\sct}$ be two clean parametrizations of $\Lagsec$ near $q \in \partial \Lagsec$ with excesses $e,\tilde{e}$ respectively, then any local expression
\begin{align*}
 \int e^{i \Phi_{\oc,\sct-\oc,\sct} } 
x_{\oc,1}^{-m - \frac{2k_0+(k_1-e)}{2} +\frac{n+1}{2} }
a(x_{\oc,1},\sigma,y_{\oc},y_{\oc}'',v,w)dvdw,
\end{align*}
with $a$ smooth, supported in the region where the parametrizations of $\Phi_{\oc,\sct-\oc,\sct},\tilde{\Phi}_{\oc,\sct-\oc,\sct}$ are both valid, can also be written as
\begin{align*}
A = u_0 + \int e^{i \tilde{\Phi}_{\oc,\sct-\oc,\sct} } 
x_{\oc,1}^{-m - \frac{2\tilde{k}_0+(\tilde{k}_1-\tilde{e})}{2} +\frac{n+1}{2} }
\tilde{a}(x_{\oc,1},\sigma,y_{\oc},y_{\oc}'',\tilde{v},\tilde{w})d\tilde{v}d\tilde{w},
\end{align*}
with $u_0$ being Schwartz, $\tilde{a}$ smooth.
\end{prop}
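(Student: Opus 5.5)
The plan is to reduce Proposition~\ref{prop: phase-equivlaence-sec-microlocal} to the phase-equivalence argument of \cite[Proposition~7.3, Appendix~A]{HJ2026-scattering-map}. There are three steps: first reduce to non-degenerate parametrizations, then handle the parabolic $\varphi_0$-part, then handle the $\varphi_1$-part.

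\emph{Step 1: from clean to non-degenerate.} Starting from a clean parametrization with excess $e$, I use the splitting $w=(w',w'')$ of \eqref{eq:ococpara-splitting}. For each fixed $w''$ the phase is non-degenerate. Since the rank condition \eqref{eq:varphi1 1csc clean} is constant, a change of variables in $(\msf{K},w)$ near $q'$ makes $\varphi_1$ independent of $w''$ modulo terms vanishing quadratically on the critical set. Integrating out $w''$ then produces a smooth amplitude and removes exactly $e$ from $k_1$. The power of $\xoco$ in \eqref{eq: 1c-1c FIOsec, local form} has been arranged so that it transforms correctly under this reduction. So it suffices to treat two non-degenerate phases $\Phi$ and $\tilde\Phi$.

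\emph{Step 2: the $\varphi_0$-part.} Here I follow \cite[Proposition~4.6]{HJ2026-scattering-map}. Both phases parametrize the same $\Lagsec$, so the leading parts $\varphi_0/\xoco^2$ and $\tilde\varphi_0/\xoco^2$ parametrize the same $\nu_2$-component (the $d\sigma/\xoco^2$ coefficient). For the second-microlocalized $\Lagsec$ this component is pinned down by $\xiocl$ through \eqref{eq:1form-sec}. By stationary phase in the $v$-variables, with large parameter $\xoco^{-2}$, I reduce both phases to the normal form $-\xiocl(1-\sigma^2)/(2\xoco^2)+\varphi_1/\xoco$ of \eqref{eq:Phisec-typical}. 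Parameters eliminated this way each contribute $\xoco^{k_0}$, which matches the $x_{\oc,1}^{-2k_0/2}$ factor. The remaining error is Schwartz away from the critical set.

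\emph{Step 3: the $\varphi_1$-part.} With the same $\varphi_0$-part on both sides, $\varphi_1/\xoco$ and $\tilde\varphi_1/\xoco$ parametrize the same scattering-type Lagrangian, in the sense of the sc identification remarked after \eqref{eq:Phisec-typical}. I then apply the Melrose--Zworski/H\"ormander equivalence argument \cite{melrose1996scattering} with large parameter $\xoco^{-1}$. That argument has three parts: match the signatures by adding quadratic forms in dummy variables; use the fact that the phase values agree on the critical set (compare Proposition~\ref{prop:N1-equals-1c1c-phase}, where the phase value is determined by $\Lagsec$); and use the Morse lemma with parameters to find a diffeomorphism $\tilde w=\kappa(\msf{K},w)$ with $\tilde\varphi_1\circ\kappa=\varphi_1$ near the critical set. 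Pulling back the amplitude and absorbing the Jacobian gives $\tilde a$. The part of the amplitude supported away from the critical set is handled by integration by parts in $w$, which yields $O(\xoco^\infty)$ and hence the Schwartz term $u_0$.

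\emph{Main obstacle.} The hard point is the interaction between the two scales $\xoco^{-2}$ and $\xoco^{-1}$. When a variable change mixes $v$ with $(\sigma,w)$, it generates cross terms in the phase. The $\xoco$-dependent correction in $\tilde C$, namely $d_v\varphi_0+\xoco d_v\varphi_1=0$, must be absorbed into a smooth reparametrization of $\sigma$ and $v$, not treated as an error. I will handle this as in \cite[Appendix~A]{HJ2026-scattering-map}: allow the diffeomorphism to depend smoothly on $\xoco$, and Taylor expand the $\varphi_0$-part so that the resulting $O(\xoco)\cdot\xoco^{-2}=O(\xoco^{-1})$ terms are moved into $\varphi_1$.
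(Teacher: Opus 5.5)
Your proposal is correct and takes the same route as the paper. The paper's proof consists of observing that the phase-equivalence argument of \cite[Proposition~7.3, Appendix~A]{HJ2026-scattering-map} carries over, because the only non-standard feature (the parabolic scaling between the $x_{\oc,1}^{-2}$ and $x_{\oc,1}^{-1}$ parts of the phase) is unchanged. Your three steps (clean-to-non-degenerate reduction, stationary phase in $v$ using that the $x_{\oc,1}^{-2}$ part is pinned to $-\xi_{\oc,0}(1-\sigma^2)/2$, then Melrose--Zworski equivalence for the $x_{\oc,1}^{-1}$ part) spell out that argument, and they match how the paper reduces to the normal form \eqref{eq:phase-second-micro-2} right afterwards.

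There are two harmless slips. The $k_0$ eliminated variables together, not each, contribute $x_{\oc,1}^{k_0}$. The Morse-lemma identity in Step~3 need only hold modulo $O(x_{\oc,1})$, since that discrepancy is absorbed into the amplitude.
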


Then the principal symbol map $\sigma^m_{\oc,\sct-\oc,\sct}$ is defined to be a half density bundle on $\Legsec$ in the same way as in \eqref{defn: 1c-1c principal symbol, non-degenerate} in case of non-degenerate parametrization and as in \eqref{defn: 1c-1c principal symbol, clean} in the more general case of clean parametrizations.
The bundle $S_{\oc,\sct-\oc,\sct}^{[m]}(\Legsec)$ is defined as in \eqref{eq:S[m]-bundle} except for with $\Leg$ replaced by $\Legsec$.
This is guaranteed to be invariant under change of parametrizations by the phase equivalence above.

Using this principal symbol map, in the same way as \eqref{eq:1c-1c-FIO-short-exact-sequence}, we have the following exact sequence:
\begin{align} \label{eq:1c-1c-FIOsec-short-exact-sequence}
\begin{split}
0  \rightarrow I^{m-1}_{\oc,\sct-\oc,\sct}(X_b^2,\Lagsec)
\rightarrow I^{m}_{\oc,\sct-\oc,\sct}(X_b^2,\Lagsec)
\xrightarrow{\sigma^m_{\oc-\oc}} C^\infty(\Legsec; 
\Omega^{1/2}(\Legsec) \otimes S_{\oc,\sct-\oc,\sct}^{[m]}(\Legsec)) \rightarrow 0.
\end{split}
\end{align}
The exactness in the middle, i.e. when a Fourier integral operator has vanishing principal symbol then it is one order lower, is proved in the same way as in \cite[Lemma~5.6]{HJ2026-scattering-map}, i.e. expand the amplitude as derivatives of the phase function and then integrate by parts.

Now we explore the relationship between the objects we are using to refine the analysis at a fixed $\oc$-frequency with objects used to analysis objects on the $\sct$-frequency level. More precisely, we will show that the part of $\ffsec \subset \secm$ we are concerning can be identified with the lift of $\scphase \times \scphase$ to $X_b^2$, with $\scphase$ defined in Section~\ref{sec: parabolic sc PsiDO}.

We consider the relationship between a single cotangent bundle first.
We will consider the resolved phase space 
\begin{equation} \label{eq:1csc-cotangent-bundle-def}
{}^{\oc,\sct}T^*\R^n:= [{}^{\oc}T^*\R^n ; \{\xioc = 0, \, \xoc = 0 \}].
\end{equation}
We call denote the front face by $\ff_{\sct}$, and the lift of $\{\xioc \neq 0, \xoc = 0\}$ by $\mathrm{mf}$.
Then we have the following identification of the interior of $\ff_{\sct}$ and ${}^{\sct}T^*\R^n$.
See also \cite[Lemma~5.1]{Vasy-second-microlocal-1} for a similar relationship between the sc-cotangent bundle and the b-cotangent bundle.

\begin{prop} \label{prop:1c-sc-phase-space-relation}
For fixed $\epsilon \ll 1$ and $C>0$, the region $|\etaoc|, |\xioc/\xoc| < C, \xoc < \epsilon$
is canonically diffeomorphic to the region $|\eta_{\sct}|<C,|\xi_{\sct}|<C,\xoc<\epsilon$ in ${}^{\sct}T^*\overline{\R^n}$ via the extension of the identity map in the interior $\{\xoc>0\}$. 
\end{prop}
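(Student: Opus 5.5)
The plan is to compare the canonical one-forms directly in coordinates and see that both are the same form written in different coordinates. Both spaces agree with $T^*\R^n$ over the interior $\{\xoc>0\}$, so the only issue is whether the identity extends smoothly, with smooth inverse, down to $\xoc=0$ in the stated regions.

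First, compute the scattering coordinates in terms of the 1-cusp ones. With the same base coordinates $(\xoc,\yoc)$, taking $x=\xoc$ as the scattering boundary defining function, the canonical form is written two ways:
\begin{equation*}
\xioc \frac{d\xoc}{\xoc^3}+\etaoc\cdot\frac{d\yoc}{\xoc} \;=\; \xi_{\sct}\frac{d\xoc}{\xoc^2}+\eta_{\sct}\cdot\frac{d\yoc}{\xoc}.
\end{equation*}
Matching coefficients on the interior gives
\begin{equation*}
\xi_{\sct}=\frac{\xioc}{\xoc}, \qquad \eta_{\sct}=\etaoc .
\end{equation*}

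Second, interpret this on the blown-up space ${}^{\oc,\sct}T^*\R^n$. Near the interior of $\ff_{\sct}$, where $|\xioc/\xoc|<C$, the functions $(\xoc,\yoc,\xioc/\xoc,\etaoc)$ are smooth coordinates. This is because blowing up $\{\xioc=0,\xoc=0\}$ introduces $\xioc/\xoc$ as a projective coordinate there, with $\xoc$ defining $\ff_{\sct}$. By the formula above, these coordinates are exactly $(x,y,\xi_{\sct},\eta_{\sct})$ on ${}^{\sct}T^*\overline{\R^n}$. Hence the identity map in the interior is, in these coordinates, the identity map of $[0,\epsilon)\times U\times\{|\xi|<C\}\times\{|\eta|<C\}$. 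It therefore extends to a diffeomorphism, and it is canonical (symplectic in the interior) because it preserves the canonical one-form.

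Third, address invariance. A different choice of $\xoc$, or of local coordinates $\yoc$, should give the same map, since the map is the extension of the interior identity and the interior is dense. The only point to check is that the coordinate changes preserve the class of regions considered. Replacing $\xoc$ by $a\xoc+O(\xoc^2)$ with $a>0$ rescales $\xioc/\xoc$ smoothly, and changes $\eta$ by smooth terms, consistent with the corresponding scattering changes. If the bounds are meant with respect to a fixed choice, possibly after shrinking $C$ and $\epsilon$, this is routine.

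The main (mild) obstacle is this bookkeeping. We need to confirm that the region $|\xioc/\xoc|<C$ really lies in the projective coordinate chart of the blow-up, away from the lift of $\mathrm{mf}$. We also need the factor $\xoc^{-3}$ versus $\xoc^{-2}$ to produce precisely the division by $\xoc$, with no $\yoc$-dependent correction. Both follow from the displayed coefficient matching.
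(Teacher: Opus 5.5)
Your proposal is correct and follows the paper's argument: you match the canonical one-forms to get $\xi_{\sct}=\xioc/\xoc$ (and $\eta_{\sct}=\etaoc$), then observe that blowing up $\{\xioc=0,\xoc=0\}$ is exactly what makes $\xioc/\xoc$ a smooth coordinate near the interior of $\ff_{\sct}$, so the interior identity extends to a diffeomorphism. Your third step on invariance goes beyond what the paper does, since it simply takes $x=\xoc$ on both sides. If you keep that step, restrict to constant $a$, because a $y$-dependent rescaling of $\xoc$ changes the $1$-cusp structure itself.
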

\begin{proof}
Comparing \eqref{eq: 1c- canonical form} and \eqref{eq:sc-1-form}, we know
\begin{equation}
    \frac{\xioc}{\xoc} = \xi_{\sct}
\end{equation}
in the interior. The blow up \eqref{eq:1csc-cotangent-bundle-def} is exactly the operation that extends the left hand side to the boundary, or more precisely into the interior of $\ff_{\sct}$, which shows that the identity map in the interior extends to a diffeomorphism as in the proposition.
\end{proof}

Returning to our double space picture. 
First observe that $\secm$ for general $\xi_{\oc,0}$ can be identified with ${}^{\oc,\sct,0}T^*X_b^2$ by translation in $\xioco$ and $\xioct$.
\footnote{This corresponds to the time translation in the Schr\"odinger equation. Since $\xioco=-2t$ in the composition of canonical relations.}
In addition, in the region $\sigma \sim 1$, the operation of blow-up in \eqref{eq:secm-def} commutes with lifting from two factors.
That is, if we lift resolved phase spaces $[\ocphase;\{\xioco = \xiocl , \xoco = 0 \}] \times [\ocphase;\{ \xioct = -\xiocr , \xoct = 0\}]$ to $X_b^2$, then it is the same as $\secm$.
In combination with Proposition~\ref{prop:1c-sc-phase-space-relation}, we know that the part of $\secm$ near the interior of $\ffsec$ with $|\xi_{\sct,1}|, |\etaoco|,|\xi_{\sct,2}|, |\etaoct|<C$ can be identified with the part of 
\begin{equation} \label{eq:lifted-scsc-phase}
    \beta_b^*({}^{\sct}T^* \overline{\R^n}  \times {}^{\sct}T^* \overline{\R^n})
\end{equation}
with $|\xi_{\sct}|,|\eta_{\sct}|$ lifted from the left and right factor being less than $C$.

Let $\secmz$ be $\secm$ constructed with $\xiocl = 0$ and let $\Lagsecz$ be the translated copy of $\Lagsec$ in $\Lagsecz$ obtained by $\xioco \to \xioco - \xiocl$, $\xioct \to \xioct + \xiocr$.
Then by the identification given in Proposition~\ref{prop:1c-sc-phase-space-relation},
$\Lagsecz$ can be identified with a Lagrangian submanifold of $\beta_b^*({}^{\sct}T^* \overline{\R^n}  \times {}^{\sct}T^* \overline{\R^n})$, which has been studied in \cite{melrose1996scattering}\cite{hassell1999spectral}\cite{hassell2001resolvent}.
In particular, as shown in \cite[Proposition~6]{melrose1996scattering}, there always exist a phase function $\frac{\varphi_1(\msf{K},w)}{\xoco}$ that parametrizes $\Lagsecz$ non-degenerately.
With such a parametrization of $\Lagsecz$, then by definition we know
\begin{equation} \label{eq:phase-second-micro-2}
    \Phi = -\frac{\xi_{\oc,0}(1-\sigma^2)}{2\xoco^2}+\frac{\varphi_1(\msf{K},w)}{\xoco},
\end{equation}
parametrizes $\Lagsec$. By the phase equivalence in Proposition~\ref{prop: phase-equivlaence-sec-microlocal}, we can always assume that an operator in $I_{\oc,\sct-\oc,\sct}^m(X_b^2,\Lagsec)$ is written using such a phase function, which in turn can be rewritten as 
\begin{equation} \label{eq:A=conjugatedA0}
    e^{-i\frac{\xiocl}{2\xoco^2}} A_0 e^{i \frac{\xiocl}{2\xoco^2} },
\end{equation}
with $A_0$ being a Legendre distribution in \cite[Section~11]{melrose1996scattering}.
This is correspondence between the conjugation on the leading order oscillation and the translation of the front face in the phase space (i.e. where the second-microlocalization is happening) is very similar to \cite{NRL-I}\cite{NRL-II}\cite{Vasy-second-microlocal-3}.


\subsection{Compositions of Fourier integral operators of various types}

We consider the composition of $\oc,\sct-\oc,\sct$ Fourier integral operators with $\oc-\oc$ Fourier integral operators first.
This will arise when we (second-)microlocalize our scattering map on one side.
In order to treat this, we discuss composition of such canonical relations in this context first.
We use $'$ to denote switching the sign of frequencies lifted from the right component.
Let $C_L' \subset \secm$ be an admissible $\oc,\sct-\oc,\sct$ Lagrangian submanifold in the sense of Definition~\ref{definition:second-microlocalized-Lag-manifold} and $C'$ be an admissible Lagrangian submanifold of $\ococb$ in the sense of Definition~\ref{definition:admissible-1c1cLag}, 
the composition of the interior of corresponding canonical relations $C_L,C$ is defined in the usual way, i.e. 
\begin{align} \label{eq:Cl-C-composition-interior}
    \begin{split}
 C_L^\circ \circ C^\circ
 = & C_L^\circ \times C^\circ \cap \{ \text{The partial diagonal of } 
 {}^\oc T^*\R^n
 \times  {}^\oc T^*\R^n \times  {}^\oc T^*\R^n \times  {}^\oc T^*\R^n 
  \\& \text{ in the second and third component }  \}.    
 \end{split}
\end{align}
Then the entire composition of canonical relations is defined by
\begin{equation} \label{eq:CL-C-composition-sec}
    C_L \circ C : = (\beta_{\oc,\sct-\oc,\sct}
    \circ \tilde{\beta}_b)^*( C_L^\circ \circ C^\circ) \subset \secm,
\end{equation}
where $\tilde{\beta}_b$ is the blow-down map $\ococb \to \ocphase \times \ocphase$.
As usual, here the lift operation means taking closure in $\secm$ of the composition of canonical relations in the interior.
In general, the corresponding Lagrangian submanifold $(C_L \circ C)'$ will not be admissible in the sense of Definition~\ref{definition:second-microlocalized-Lag-manifold} anymore. However, if $C'$ satisfies:
\begin{equation} \label{eq:C'-condition}
    \text{the lift of } C' \text{ in } [\ococb,J] \text{ defined in \eqref{eq:J-resolved-1c1c-phase-space} 
    only hits} \ff_{SJ} \text{ in the interior,}   
\end{equation}
then indeed $(C_L \circ C)'$ is admissible in the sense of Definition~\ref{definition:second-microlocalized-Lag-manifold}.
This condition means that $\xioco$ equals to $-\xioct$ on the boundary of $C'$, and they are comparable on the subleading order.
This is exactly the condition that when left variables are restriction to a region on which $\xi_{\sct,1} = \frac{\xi_{\oc,1}-\xi_{\oc,0}}{\xoco}$ is bounded, then so does $\xi_{\sct,2} = \frac{\xioct+\xiocr}{\xoco}$,
where we have used the fact that $\sigma = \xoco/\xoct$ is comparable to 1 on $C$ since $C$ is admissible in the sense of Definition~\ref{definition:admissible-1c1cLag}.

We also need the transversal intersection condition of this composition. More precisely, we assume that $C_L \times C$ intersects the lift of the partial diagonal of ${}^\oc T^*\R^n \times  {}^\oc T^*\R^n \times  {}^\oc T^*\R^n \times  {}^\oc T^*\R^n$ in the second and third component transversally.
Then in the same way as \eqref{eq: symbol product, bundle maps}, there is a natural bilinear map
\begin{align}  \label{eq: symbol product, bundle maps-sec}
\begin{split}
& \Omega^{1/2}(\partial (C_L')) \otimes S^{[m_2]}(\partial (C_L'))
 \times \Omega^{1/2}(\partial C') \otimes S^{[m_1]}(\partial C') 
 \\ & \rightarrow  \Omega^{1/2}((C_L \circ C)' \cap \ffsec) \otimes S^{[m_2+m_1]}( (C_L \circ C) ' \cap \ffsec),
\end{split}
\end{align}
which we again denote by $\times$.

Similarly, we will also consider composition with an second-microlocalized object on the right.
Let $C_R' \subset \secm$ be an admissible $\oc,\sct-\oc,\sct$ Lagrangian submanifold in the sense of Definition~\ref{definition:second-microlocalized-Lag-manifold}, then we can define
\begin{equation} \label{eq:C-CR-composition-sec}
     C \circ C_R : = (\beta_{\oc,\sct-\oc,\sct}
    \circ \tilde{\beta}_b)^*(C^\circ \circ C_R^\circ) \subset \secm,
\end{equation}
and we have the bilinear map on half-densities as in \eqref{eq: symbol product, bundle maps-sec} with $C_L'$ replaced by $C_R$ and moved to the right.

Now we discuss phase functions parametrizing such composition.
\begin{lmm} \label{lemma:phase-composition-sec}
Let $C_L',C'$ be as above, in particular suppose $C'$ satisfies \eqref{eq:C'-condition}, we denote variables used for $C_L'$ and $C'$ by $\sigma_L = x_{\oc,L}/\xoco$, $\msf{K}_L = (x_{\oc,L},\sigma_L,y_{\oc,L},\yoco)$, $\sigma = \xoco/\xoct$, $\msf{K} = (\xoco,\sigma,\yoco,\yoct)$.
Suppose $\Phi_L = \frac{\varphi_0(\sigma_L,v_L)}{\xoco^2}+\frac{\varphi_1(\msf{K}_L,v_L,w_L)}{\xoco}$ parametrizes $C_L'$ cleanly in the sense of Definition~\ref{def:parametrization-sec}, 
$\Phi_{\oc-\oc} = \frac{\varphi_0(\sigma,v)}{\xoco^2}+\frac{\varphi_1(\msf{K},v,w)}{\xoco}$ parametrizes $C'$ cleanly in the sense of Definition~\ref{eq:1c-1c param clean}, then
\begin{equation} \label{eq:phase-CL-C}
   \Phi_{\oc,\sct-\oc,\sct} = \Phi_L + \Phi_{\oc-\oc},
\end{equation}
where we take $\tilde{\sigma} = x_{\oc,L}/\xoct = \sigma_L \sigma$ as the coordinate parametrizing the b-front face, 
$(\sigma_L,\xoco,\yoco)$ as parameters and substitute in $\sigma  = \tilde{\sigma} \sigma_L^{-1}$ for all dependence on $\sigma$, parametrizes the lift of $(C_L \circ C)'$ in the sense of Definition~\ref{def:parametrization-sec} cleanly.

Similarly, if $\Phi_R$ parametrizes $C_R'$ cleanly, then $\Phi_{\oc-\oc} + \Phi_R$ parametrizes $(C \circ C_R)'$ cleanly.
\end{lmm}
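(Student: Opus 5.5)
This is the standard statement that the sum of phase functions, with the intermediate variables turned into parameters, parametrizes the composed canonical relation. I would follow the classical argument (H\"ormander, Duistermaat--Guillemin, and its 1c-1c analogue \cite[Theorem~8.3]{HJ2026-scattering-map}), adapted to the second-microlocal front face $\ffsec$.

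\textbf{Step 1: identify the phase in the interior.} Work in the interior $\{x_{\oc,L},\xoco,\xoct>0\}$ first. There $\Phi_L$ parametrizes $C_L'$ and $\Phi_{\oc-\oc}$ parametrizes $C'$ in the usual homogeneous-free sense. The parameters of the combined phase are $(v_L,w_L,v,w)$ together with the intermediate base point $(\sigma_L,\xoco,\yoco)$, after substituting $\sigma=\tilde\sigma\sigma_L^{-1}$.

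Stationarity in $(\sigma_L,\xoco,\yoco)$ says the following: the frequency of $\Phi_L$ in its right variable equals minus the frequency of $\Phi_{\oc-\oc}$ in its left variable. Because of the $'$ sign convention, this is exactly the partial diagonal condition in \eqref{eq:Cl-C-composition-interior}. Stationarity in the remaining parameters places each factor on $C_L'$ and $C'$ respectively. Hence the image of the critical set is $(C_L^\circ\circ C^\circ)'$.

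Cleanness follows from the transversality assumption on $C_L\times C$ and the partial diagonal. I would prove it by the standard linear-algebra lemma: the fiber-Hessian rank defect of the sum equals the excess of the intersection plus the excesses of the two factors.

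\textbf{Step 2: put the sum into the form \eqref{eq:Phi 1c-1c-sec param clean} uniformly down to the boundary.} Write $\xoco=x_{\oc,L}/\sigma_L$. Then the leading parts $\varphi_0/\xoco^2$ become
\[
\sigma_L^2\,\varphi_0/x_{\oc,L}^2,
\]
and similarly the $1/\xoco$ parts acquire a factor $\sigma_L/x_{\oc,L}$. So the sum has a $x_{\oc,L}^{-2}$ part depending only on $(\tilde\sigma,\sigma_L,v_L,v)$, plus a $x_{\oc,L}^{-1}$ part.

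The parameter $\sigma_L$ is then a $v$-type parameter. We must check the independence condition \eqref{eq:varphi0 1csc clean} for $d_{\tilde\sigma,\sigma_L,v_L,v}$ of the $v$-derivatives of the new $\varphi_0$. The parameters $\yoco$ and $w$ are of $w$-type. The variable $\xoco$ is traded for $\sigma_L$, since $x_{\oc,L}$ is the boundary defining function.

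\textbf{Step 3 (main obstacle): behaviour at $\ffsec$.} I must show that the critical set of this phase hits the boundary exactly over $\ffsec$ inside the region where $\xi_{\sct,1},\xi_{\sct,2}$ are bounded. This is where \eqref{eq:C'-condition} is used.

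On $C'$, Proposition~\ref{prop:N1-equals-1c1c-phase} and Corollary~\ref{coro:1c1c-sigma-equals-1} give $\sigma=1+O(\xoco)$ and $\xioco+\xioct=O(\xoco)$. Since on $C_L'$ the frequency $\xi_{\oc}$ on the right equals $-\xiocr+O(\xoco)$, matching forces $\xioct=-\xiocr+O(\xoct)$. So the composite lands in $\ffsec$ with bounded $\xi_{\sct,2}$, and $\sigma_L$ stays near $1$.

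Concretely, I would expand the $x_{\oc,L}^{-2}$ part around $\sigma_L=1$. Its stationarity in $\sigma_L$ should give a nondegenerate equation; $\partial_{\sigma_L}^2$ of the leading part should be a nonzero multiple of $\xiocl$, provided $\xiocl\neq0$. The case $\xiocl=0$ I would reduce by the translation remark preceding \eqref{eq:phase-second-micro-2}. With that equation, the $O(x_{\oc,L})$ corrections move into the $\varphi_1$ part by Taylor expansion.

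Finally, one verifies that the differential of $\xi_{\sct,1}$ is nonvanishing on the boundary, using the corresponding property of $C_L'$. Transversality to $\ffsec$ is inherited from $C_L'$.

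\textbf{The right composition.} The statement for $\Phi_{\oc-\oc}+\Phi_R$ is symmetric: interchange the roles of the left and right factors and use $\xoct$ in place of $\xoco$ as the eliminated variable.
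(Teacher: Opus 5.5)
You follow the same skeleton as the paper. The paper runs the argument of H\"ormander's Theorem~4.2.2 in the interior, first on the $\xoco^{-2}$-level part and then on the $\xoco^{-1}$-level part. It uses \eqref{eq:C'-condition} for admissibility, as in the remark preceding the lemma. It then passes to the boundary by a soft closure argument: $C_L\circ C$ is \emph{defined} in \eqref{eq:CL-C-composition-sec} as the closure in $\secm$ of the interior composition, and the image of the critical set of $\Phi_L+\Phi_{\oc-\oc}$ is a smooth closed submanifold of $\secm$ agreeing with it in the interior. Your Steps 1--2 and the frequency-matching part of Step 3 are consistent with this. However, the hands-on boundary analysis you substitute for the closure argument rests on a mechanism that is wrong.

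Take the model case: $\Phi_L$ of the form \eqref{eq:Phisec-typical}, and $\Phi_{\oc-\oc}$ in the normal form \eqref{eq:1c-1c-phase-normal-form} with $v=t$. After substituting $\xoco^{-2}=\sigma_L^2x_{\oc,L}^{-2}$ and $\xoct^{-2}=\tilde\sigma^2x_{\oc,L}^{-2}$, the leading part of the sum is
\[
x_{\oc,L}^{-2}\Big(\tfrac{\xiocl}{2}(\sigma_L^2-1)+t(\sigma_L^2-\tilde\sigma^2)\Big).
\]
At $x_{\oc,L}=0$, stationarity gives $t=-\xiocl/2$ and $\sigma_L=\tilde\sigma$. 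Hence $\partial_{\sigma_L}^2=\xiocl+2t$ \emph{vanishes} on the critical set; it is not a nonzero multiple of $\xiocl$.

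The $(\sigma_L,t)$-Hessian there is $\begin{pmatrix}0&2\sigma_L\\2\sigma_L&0\end{pmatrix}$. Nondegeneracy comes from the mixed entry, i.e.\ from transversality of the composition at the $x^{-2}$ level, which is exactly what the level-by-level H\"ormander argument supplies. This holds for every $\xiocl$. So the caveat $\xiocl\neq0$ and the translation reduction are beside the point. There is also no reason to expand at $\sigma_L=1$: at this level $\sigma_L=\tilde\sigma$ is unconstrained, and it equals $1$ only when $C_L'$ itself sits over $\sigma_L=1$, as the translated ${}^{\sct}N^*\Delta_b$ does.

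Separately, you cannot quote $\sigma=1$ on $\partial C'$ for a general $C'$ from Corollary~\ref{coro:1c1c-sigma-equals-1} or Proposition~\ref{prop:N1-equals-1c1c-phase}, since those concern only the scattering graph. Derive it instead. In the coordinates of \eqref{eq:1c1c-1form-0}, the $\xoco^{-3}$-part of $\omega_{\oc-\oc}$ is $(d\xioco+\sigma^2 d\xioct)\wedge d\xoco/\xoco^3$. Since $C'$ meets $\ffococ$ transversally, $d\xioco+\sigma^2 d\xioct=0$ on $\partial C'$. Combined with $\xioct=-\xioco$ from \eqref{eq:C'-condition} and $d\xioco\neq0$ from admissibility, this forces $\sigma=1$.

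Replace Step 3 by the transversality argument at each level, or by the paper's closure argument, and your proof goes through.
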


As we explained, after imposing the condition \eqref{eq:CL-C-composition-sec} on $C'$, the composition is admissible in the sense of Definition~\ref{definition:second-microlocalized-Lag-manifold}.
The proof follows from the same proof in the standard setting in the interior (i.e. applying the proof of \cite[Theorem~4.2.2]{FIO1} to the $\xoco^{-2})$ level part first and then to the $\xoco^{-1}$-level part), and by the way we defined $C_L \circ C$, the closure of this part in $\secm$ is this composition.
On the other hand, since it coincide with the graph of $\Phi_{\oc,\sct-\oc,\sct}$ restricted to the critical set in the interior and both are smooth closed submanifolds of $\secm$, this fact extends to the boundary. 
Then we have the following composition law.

\begin{prop} \label{prop:composition-ALA-AAR-FIOsec}
    For $A_L \in I^{m_L}_{\oc,\sct-\oc,\sct}(X_b^2,C_L')$, $A \in I^m_{\oc-\oc}(X_b^2,C')$, we have
\begin{equation}
    A_LA \in I^{m_L+m}_{\oc,\sct-\oc,\sct}(X_b^2, (C_L \circ C)').
\end{equation}
When they have $a_L,a$ as their principal symbol respectively, $A_LA$ has principal symbol
\begin{align*}
a_L \times a.
\end{align*}
Similarly, if $A_R \in I^{m_R}_{\oc,\sct-\oc,\sct}(X_b^2,C_R')$, then  
\begin{equation}
    AA_R \in I^{m+m_R}_{\oc,\sct-\oc,\sct}(X_b^2, (C \circ C_R)').
\end{equation}
\end{prop}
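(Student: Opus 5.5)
We will prove the composition law by writing out the Schwartz kernel of $A_LA$ as an oscillatory integral and recognizing it as an element of $I^{m_L+m}_{\oc,\sct-\oc,\sct}$ with the phase from Lemma~\ref{lemma:phase-composition-sec}. By the phase equivalence of Proposition~\ref{prop: phase-equivlaence-sec-microlocal} and a partition of unity, we may assume that $A_L$ is a single term \eqref{eq: 1c-1c FIOsec, local form} with phase $\Phi_L$, and that $A$ is a single term \eqref{eq: 1c-1c FIO, local form} with phase $\Phi_{\oc-\oc}$. The contributions supported away from the boundary, or away from the relevant microlocal regions, produce Schwartz errors. For the latter we use the microlocal disjointness: if the right projection of $\partial C_L'$ and the left projection of $\partial C'$ do not meet, non-stationary phase in the intermediate variables gives rapid decay.

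Writing the composition kernel, we integrate over the intermediate variables $(\xoco,\yoco)$ with the 1c density $\frac{d\xoco d\yoco}{\xoco^{n+2}}$. Concretely, we pass to coordinates $\tilde\sigma=\sigma_L\sigma$ on the b-face and treat $(\sigma_L,\yoco)$ as new parameters, following Lemma~\ref{lemma:phase-composition-sec}. The phase becomes $\Phi_L+\Phi_{\oc-\oc}$, with $\sigma=\tilde\sigma\sigma_L^{-1}$ substituted. The leading $\xoco^{-2}$ part $\varphi_0$ involves $\sigma_L$ as a $v$-type parameter, and $\yoco$ enters only at the $\xoco^{-1}$ level as a $w$-type parameter. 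We will rescale $\xoco$ against $x_{\oc,L}$ so that all factors are expressed in the left boundary defining function; since $\sigma_L\sim 1$ on the support, this only changes the amplitudes smoothly.

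The main step is the bookkeeping of powers of $x_{\oc,L}$. The product of the prefactors $x^{-m_L-\frac{2k_0^L+k_1^L}{2}+\frac{n+1}{2}}$ and $x^{-m-\frac{2k_0+k_1}{2}+\frac{n+1}{2}}$, together with the half densities and the intermediate measure $\frac{d\sigma_L d\yoco}{x^{n+1}}$, must be compared with the normalization required for order $m_L+m$. That normalization uses $k_0^L+k_0+1$ parameters of type $v$ (including $\sigma_L$) and $k_1^L+k_1+(n-1)$ of type $w$ (including $\yoco$). Counting, the additional $v$-parameter supplies $x^{-1}$ and the $n-1$ additional $w$-parameters supply $x^{-(n-1)/2}$. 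These balance the intermediate density $x^{-(n+1)}$ and the surplus half-density factors, so the order is $m_L+m$; we will verify this the same way as in \cite[Theorem~4.2.2]{FIO1} and \cite[Proposition~7.5]{HJ2026-scattering-map}.

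Cleanness of the new phase, with excess equal to the excess of the fibre product, follows from the transversality hypothesis exactly as in the homogeneous setting. This is done separately for the $\varphi_0$ level, where linear independence of $d_{\sigma,v}\partial_v\varphi_0$ holds, and for the $\varphi_1$ level. Here condition \eqref{eq:C'-condition} guarantees that the resulting Lagrangian is admissible in $\secm$, in particular that it meets $\ffsec$ only at bounded $\xi_{\sct,i}$. The clean-to-non-degenerate reduction remark after Proposition~\ref{prop: phase-equivlaence-sec-microlocal} then places $A_LA$ in the class.

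For the symbol, we evaluate the amplitude at $\xoco=0$ on the critical set and integrate over the excess fibre as in \eqref{defn: 1c-1c principal symbol, clean}. The Jacobian factors then factor into those of $a_L$ and $a$ times the canonical density on the fibre product, which is the definition of $a_L\times a$. Maslov factors combine as in \cite{FIO1}. The $E$-bundle factors combine because both operators use the same intermediate boundary defining function up to the smooth rescaling above. The right composition $AA_R$ is identical with the roles reversed.

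The expected obstacle is the uniformity near the blown-up face. We must check that the rescaling $\xoco\leftrightarrow x_{\oc,L}$ and the substitution $\sigma=\tilde\sigma/\sigma_L$ keep the amplitudes smooth up to $\ffsec$, which is why $\sigma_L,\sigma\sim1$ (Corollary~\ref{coro:1c1c-sigma-equals-1}) is used.
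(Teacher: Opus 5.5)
Your proposal is correct and follows the paper's route. The paper writes the kernel of $A_LA$ as an oscillatory integral with phase $\Phi_L+\Phi_{\oc-\oc}$ and invokes Lemma~\ref{lemma:phase-composition-sec} ($\sigma_L$ as an extra $\varphi_0$-level parameter, $\yoco$ at the $\varphi_1$ level) to see that this phase cleanly parametrizes $(C_L\circ C)'$. It then defers to the argument of \cite[Proposition~7.5]{HJ2026-scattering-map}; your order count and symbol computation are exactly the bookkeeping left to that reference. One small slip: $\sigma,\sigma_L\sim 1$ on the supports comes from admissibility of $C'$ and the compact $\sigma$-support of the amplitudes, not from Corollary~\ref{coro:1c1c-sigma-equals-1}, which concerns only the scattering-map Lagrangian.
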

The kernel of $A_LA$ is an oscillatory integral with phase function $\Phi_L+\Phi_{\oc-\oc}$ with $\Phi_L$, $\Phi_{\oc-\oc}$ parametrizing $C_L'$ and $C'$ respectively.
Then using Lemma~\ref{lemma:phase-composition-sec}, the proof follows in the same way as \cite[Proposition~7.5]{HJ2026-scattering-map}, except for that the type of Lagrangian submanifolds that it parametrizes is different.


Now we discuss composition between operators in $I_{\oc,\sct-\oc,\sct}^*$ classes.
We start with considering compositions of canonical relations.
Let $C_L',C_R'$ be admissible $\oc,\sct-\oc,\sct$ Lagrangian submanifold second-microlocalized at $\xi_{\oc,0}$ in the sense of Definition~\ref{definition:second-microlocalized-Lag-manifold}.
Then similar to \eqref{eq:C-CR-composition-sec}, their composition is still defined to be
\begin{equation} \label{eq:CL-CR-composition-sec}
     C_L \circ C_R : = (\beta_{\oc,\sct-\oc,\sct}
    \circ \tilde{\beta}_b)^*(C_L^\circ \circ C_R^\circ) \subset \secm.
\end{equation}
Suppose $\Phi_L$ parametrizes $C_L'$ cleanly and $\Phi_R$ parametrizes $C_R'$ cleanly in the sense of Definition~\ref{def:parametrization-sec}, then the analogue of Lemma~\ref{lemma:phase-composition-sec} holds: $\Phi_L+\Phi_R$ parametrizes $(C_L \circ C_R)'$.
This can still be shown by applying the proof of \cite[Theorem~4.2.2]{FIO1} to two parts of the phase function respectively.
But in this case we can further simplify. We can first apply Proposition~\ref{prop: phase-equivlaence-sec-microlocal} to reduce to phase functions taking the form \eqref{eq:phase-second-micro-2}, then the $\xoco^{-2}$-level part in $\Phi_L+\Phi_R$ takes the desired form directly and we only need to apply the proof of \cite[Theorem~4.2.2]{FIO1} to the $O(\xoco^{-1})$-level part.
Also, we have a bilinear map sending half-densities on $\partial(C_L')$ and $\partial(C_R')$ to a half-density on $\partial(C_L \circ C_R)'$, which we still denote by $\times$. 
In summary, we have the following composition law.

\begin{prop} \label{prop:composition-AL-AR-FIOsec}
    For $A_L \in I^{m_L}_{\oc,\sct-\oc,\sct}(X_b^2,C_L')$, $A_R \in I^{m_R}_{\oc,\sct-\oc,\sct}(X_b^2,C_R')$, we have
\begin{equation}
    A_LA_R \in I^{m_L+m_R}_{\oc,\sct-\oc,\sct}(X_b^2, (C_L \circ C_R)').
\end{equation}
When they have $a_L,a_R$ as their principal symbol respectively, the principal symbol of $A_LA_R$ is
\begin{align*}
a_L \times a_R.
\end{align*}
In addition, when $C_L,C_R$ are both graphs of symplectomorphisms, if $A_L,A_R$ are elliptic at $q_L',q_R'$ (in the sense that the principal symbol is non-vanishing) such that $(q_L,q_R)$ is sent to $q \in C_2 \circ C_1$, then $A_2A_1$ is elliptic at $q'$.
\end{prop}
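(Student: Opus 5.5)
The plan is to reduce to the model form \eqref{eq:A=conjugatedA0} and then rely on the scattering (Melrose–Zworski / Hassell–Vasy) Legendre calculus for the $\xoco^{-1}$-level part.

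By Proposition~\ref{prop: phase-equivlaence-sec-microlocal}, together with the existence of non-degenerate parametrizations of the translated Lagrangians $\Lagsecz$ by \cite[Proposition~6]{melrose1996scattering}, I will write both $A_L$ and $A_R$, modulo Schwartz errors, with phases of the form \eqref{eq:phase-second-micro-2}:
\[
\Phi_\bullet = -\frac{\xiocl(1-\sigma_\bullet^2)}{2x_\bullet^2}+\frac{\varphi_{1,\bullet}(\msf{K}_\bullet,w_\bullet)}{x_\bullet}, \qquad \bullet\in\{L,R\}.
\]
Equivalently, $A_\bullet = e^{-i\xiocl/(2\xoco^2)}A_{\bullet,0}\,e^{i\xiocl/(2\xoco^2)}$ with $A_{\bullet,0}$ a scattering Legendre distribution on $X_b^2$. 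The key observation is that the conjugating factors cancel in the product:
\[
A_LA_R = e^{-i\xiocl/(2\xoco^2)}\,A_{L,0}A_{R,0}\,e^{i\xiocl/(2\xoco^2)}.
\]
(Here the middle exponentials cancel exactly, because the $\xoco^{-2}$-level phase is $-\xiocl/(2x_L^2)+\xiocl/(2x^2)$, and the intermediate variable terms cancel.) So the composition reduces to composing sc-sc Legendre distributions. Under the identification of Proposition~\ref{prop:1c-sc-phase-space-relation} and the translation $\xioco\to\xioco-\xiocl$, these are associated to the translated Legendrians of $C_L'$ and $C_R'$.

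Next I will show that $C_{L,0}\circ C_{R,0}$ is again an admissible sc Legendrian whose translate is $(C_L\circ C_R)'$. In the interior this is immediate, since translation commutes with composition. I will then take closures as in \eqref{eq:CL-CR-composition-sec}. Admissibility, meaning that the closure lies over the interior of $\ffsec$, follows from boundedness of $\xi_{\sct,1},\xi_{\sct,2}$ on each factor, since the bounded-frequency regions compose to bounded ones. Transversality of the intersection with the partial diagonal is part of the hypotheses, implicitly as in the 1c-1c composition. I will then apply the composition theorem for scattering Legendre distributions (the Melrose–Zworski / Hassell–Vasy transversal composition, which is proved by applying \cite[Theorem~4.2.2]{FIO1} to the $\xoco^{-1}$-level phase $\varphi_{1,L}+\varphi_{1,R}$). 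This gives $A_{L,0}A_{R,0}$ as a Legendre distribution with phase $\Phi_L+\Phi_R$, after integrating out the intermediate variables $(\sigma_L,\xoco,\yoco)$ by stationary phase. The order is additive: tracking the powers $\xoco^{-m-\frac{k_1}{2}+\frac{n+1}{2}}$ together with the density factors, the intermediate integration over $n$ variables, and the $(2\pi)$ normalizations gives order $m_L+m_R$, exactly as in \cite[Proposition~7.5]{HJ2026-scattering-map}. Conjugating back lands in $I^{m_L+m_R}_{\oc,\sct-\oc,\sct}(X_b^2,(C_L\circ C_R)')$.

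For the symbol, the leading stationary phase term is the product of the amplitudes at the critical point, multiplied by the Hessian determinant factor. By the standard argument of \cite[Theorem~4.2.2]{FIO1}, this is precisely the bilinear pairing $a_L\times a_R$ of half-densities (with the Maslov factors and the $E$-bundles multiplying). Finally, suppose $C_L$ and $C_R$ are graphs of symplectomorphisms. Then the composition is automatically transversal, and each point $q$ of the composition comes from a unique pair $(q_L,q_R)$. The half-density pairing of two non-vanishing half-densities is then non-vanishing, so ellipticity follows.

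I expect the main obstacle to be the bookkeeping at the boundary. The rest reduces to known scattering calculus once the $\xoco^{-2}$ parts cancel. The hard part is justifying that the closure of the interior composition is exactly the parametrized set near $\ffsec$ with a uniform stationary phase. For this I would verify that the critical set of $\Phi_L+\Phi_R$ in the intermediate variables is non-degenerate uniformly down to $\xoco=0$, using the transversality hypothesis.
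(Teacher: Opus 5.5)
Your argument follows the paper's route. You put both factors in the model form \eqref{eq:phase-second-micro-2} via Proposition~\ref{prop: phase-equivlaence-sec-microlocal}. You then note that the $\xoco^{-2}$-level parts combine trivially; your cancellation of the conjugating factors in \eqref{eq:A=conjugatedA0} is exactly the paper's remark that this part ``takes the desired form directly''. Finally you run the proof of \cite[Theorem~4.2.2]{FIO1} on the $\xoco^{-1}$-level part. The symbol formula and the ellipticity statement for graphs then follow as you describe.

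The step that does not go through as written is the order count you attribute to \cite[Proposition~7.5]{HJ2026-scattering-map}.

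\textbf{Why the 1c-1c count does not transfer.} In the 1c-1c composition, the intermediate $\sigma_L$ enters the $\xoco^{-2}$-level phase. It is therefore a first-level ($v$-type) parameter, weighted by $1$ in $\frac{2k_0+k_1}{2}$. Here, precisely because of the cancellation you exploit, the $\xoco^{-2}$-level part of $\Phi_L+\Phi_R$ is $-\frac{\xiocl}{2}(1-\tilde\sigma^2)$, which is independent of $\sigma_L$. So $\sigma_L$, like $\yoco$, can only be a $\xoco^{-1}$-level parameter, weighted by $\frac12$.

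\textbf{The count with this normalization.} With the normalization of Definition~\ref{def:1c-1c-FIOsec}:
the two amplitudes give $x_{\oc,L}^{-m_L-m_R-\frac{k_{1,L}+k_{1,R}}{2}+(n+1)}$;
the middle 1c density gives $\frac{d\xoco\, d\yoco}{\xoco^{n+2}}=\sigma_L^{n}\,x_{\oc,L}^{-(n+1)}\,d\sigma_L\, d\yoco$;
matching with $k_0=0$ and $k_1=k_{1,L}+k_{1,R}+n$ yields order $m_L+m_R+\frac12$, not $m_L+m_R$.

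\textbf{Sanity check.} $|dZ\,dZ'|^{1/2}$ is $\xoco$ times a smooth positive multiple of the half-density in Definition~\ref{def:1c-1c-FIOsec}. Hence a scattering pseudodifferential operator of order $(0,0)$ with fibre-compactly supported symbol, written with \eqref{eq:sc-phase} ($k_1=n$), lies in $I^{-1/2}_{\oc,\sct-\oc,\sct}$. The composition of two such operators is again of this type, with principal symbol the product of the two symbols. It is therefore in $I^{-1/2}$ but not in $I^{-1}$, so the defect is not an artifact of the model phase.

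\textbf{Consequence.} With this normalization the law should read $I^{m_L+m_R+\frac12}$. Alternatively, replacing $\frac{n+1}{2}$ by $\frac{n+2}{2}$ in Definition~\ref{def:1c-1c-FIOsec} makes both this law and Proposition~\ref{prop:composition-ALA-AAR-FIOsec} additive. This discrepancy is already present in the statement you were given and in the paper's sketch. It only shifts order labels, for example in Corollary~\ref{coro:sec-micro-noncompact}. It does not affect the symbol formula or ellipticity, which is all that Section~\ref{subsec:determine-1-jet} uses.
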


The following two special cases are of our interest. 
\begin{coro} \label{coro:ALAR-trivial}
    For $A_L \in I^{m_L}_{\oc,\sct-\oc,\sct}(X_b^2,C_L')$, $A_R \in I^{m_R}_{\oc,\sct-\oc,\sct}(X_b^2,C_R')$ with $C_L \circ C_R = \emptyset$, then
    \begin{equation}
        A_LA_R \in \mathcal{S}(\R^n \times \R^n).
    \end{equation}
\end{coro}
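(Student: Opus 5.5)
We will reduce the corollary to a stationary-phase statement for the oscillatory integral giving the kernel of $A_LA_R$, with the empty composition meaning that this integral has no critical points. By Proposition~\ref{prop: phase-equivlaence-sec-microlocal} and a partition of unity, we may write $A_L$ and $A_R$ as finite sums of oscillatory integrals \eqref{eq: 1c-1c FIOsec, local form}. Their phases $\Phi_L$ and $\Phi_R$ take the form \eqref{eq:phase-second-micro-2}, and their amplitudes are compactly supported near points of $\partial C_L'$ and $\partial C_R'$ respectively. The kernel of $A_LA_R$ is then a finite sum of integrals in the intermediate variables $(\xoco,\yoco)$ and the parameters $w_L,w_R$. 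Each has phase $\Phi_L+\Phi_R$ and a smooth amplitude with a power of $\xoco$.

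The next step identifies stationary points. As in the discussion preceding Proposition~\ref{prop:composition-AL-AR-FIOsec}, the $\xoco^{-2}$-level parts of $\Phi_L$ and $\Phi_R$ combine directly. The points where $d_{(\sigma_L,\yoco,w_L,w_R)}$ of the $\xoco^{-1}$-level part vanishes correspond, by the argument of \cite[Theorem~4.2.2]{FIO1}, exactly to the interior intersection $C_L^\circ\times C_R^\circ\cap\{\text{partial diagonal}\}$ and its closure in $\secm$. Since $C_L\circ C_R=\emptyset$, this closure is empty. Compactness of the amplitude supports then gives two uniform bounds on the support. First, away from $\xoco=0$ we have $|d_{\mathrm{int}}(\Phi_L+\Phi_R)|\ge c>0$ in the interior. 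Second, near $\ffsec$ the rescaled derivative $\xoco\, d_{\mathrm{int}}(\Phi_L+\Phi_R)$ is bounded below. Here $d_{\mathrm{int}}$ denotes the differential in the integrated variables.

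The main obstacle is this uniformity up to the boundary. Emptiness of the interior composition alone does not rule out critical points escaping to $\xoco=0$. We handle this with the definition \eqref{eq:CL-CR-composition-sec}, which takes the composition as a closure in the compact region of $\secm$ fixed by the amplitude supports. On that region, a sequence of near-critical points would have a limit point in $C_L\circ C_R$. That limit must be taken in the $\xoco^{-1}$-rescaled sense coming from the sc-level identification of Proposition~\ref{prop:1c-sc-phase-space-relation}.

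With these bounds, we integrate by parts repeatedly, using an operator $L$ that satisfies $L e^{i(\Phi_L+\Phi_R)}=e^{i(\Phi_L+\Phi_R)}$. Its coefficients are $\xoco$ times smooth functions, so each application gains a factor of $\xoco$. This yields $O(\xoco^\infty)$ decay together with all 1c-derivatives in the outer variables. Equivalently, in the conjugated picture \eqref{eq:A=conjugatedA0} it reduces to the Melrose--Zworski statement that composition of scattering Legendre distributions with empty composed relation is Schwartz. Hence $A_LA_R\in\mathcal S(\R^n\times\R^n)$.
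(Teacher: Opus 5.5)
Your proposal is essentially the paper's proof. Since $C_L\circ C_R=\emptyset$, the phase $\Phi_L+\Phi_R$ has no critical points, so repeated integration by parts gives infinite-order decay in $\xoco$ (equivalently in $\xoct$, as $\sigma\sim 1$), which absorbs the polynomial losses from differentiation. Your extra attention to critical points escaping to $\xoco=0$ goes beyond the paper. Note, however, that a limit of near-critical points is a priori only a point of the composition of the boundary Legendrians. Placing it in $C_L\circ C_R$, which is defined as a closure of the interior composition, therefore tacitly uses transversality of the composition (or reads the hypothesis as including the boundary level), exactly as the paper's one-line ``no critical point'' claim does.
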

\begin{proof}
    In this case, the phase $\Phi_L+\Phi_R$ does not admit any critical point, and one can integrate by parts (i.e. apply a non-stationary phase argument) to show that the kernel of $A_LA_R$ has infinite order decay in $\xoco$ (hence in $\xoct$ as well since we are in the region $\sigma \sim 1$).
    Since differentiation only causes at most $\xoco^{-3}$ order loss, which can be absorbed by the infinite order decay, hence the Schwartz kernel of $A_LA_R$ is in $\mathcal{S}(\R^n \times \R^n)$ as claimed.
\end{proof}

\begin{coro} \label{coro:sec-micro-noncompact}
    Suppose $A \in I^0_{\oc,\sct-\oc,\sct}(X_b^2,\Lagsec)$ with $(\Lagsec)'$ being the lift of a graph of a symplectomorphism, 
    if it is elliptic at certain point $q \in \Legsec = \partial \Lagsec$, then it is not compact from $L^2(\R^n)$ to itself.
\end{coro}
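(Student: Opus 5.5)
We argue by contradiction, reducing to the non-compactness of a microlocalized operator that is elliptic near the identity. Suppose $A$ is compact on $L^2(\R^n)$ and elliptic at $q \in \Legsec$. Since $(\Lagsec)'$ is the lift of the graph of a symplectomorphism $\kappa$, we build an approximate inverse on the other side: choose $B \in I^0_{\oc,\sct-\oc,\sct}(X_b^2,\Lagsec^{-1})$ associated to the lift of the graph of $\kappa^{-1}$. We take its principal symbol to be elliptic at the point corresponding to $q$, with symbol chosen so that $a_B \times a_A$ is nonvanishing (say equal to a cutoff that equals $1$) near the base point of $q$. Such a $B$ exists by the surjectivity in the exact sequence \eqref{eq:1c-1c-FIOsec-short-exact-sequence}.

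By Proposition~\ref{prop:composition-AL-AR-FIOsec}, $BA \in I^0_{\oc,\sct-\oc,\sct}(X_b^2,\Lagsec^{\mathrm{diag}})$, where the canonical relation is the lifted diagonal second-microlocalized at $\xiocl$, and $BA$ is elliptic at the point over $q$. If $B$ is $L^2$-bounded, then $BA$ is compact. Boundedness should follow from the conjugation representation \eqref{eq:A=conjugatedA0}: $B = e^{-i\xiocl/(2\xoco^2)} B_0 e^{i\xiocl/(2\xoco^2)}$, with $B_0$ a scattering Legendre distribution of order $0$ associated to a graph of a (scattering) canonical transformation. Such operators are bounded on $L^2$ by the Melrose--Zworski theory, and the conjugating factors are unitary multiplications. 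For the same reason $BA = e^{-i\xiocl/(2\xoco^2)} C_0 e^{i\xiocl/(2\xoco^2)}$, where $C_0$ is a Legendre distribution associated to the lifted diagonal of ${}^{\sct}T^*\overline{\R^n}$. By Proposition~\ref{prop:1c-sc-phase-space-relation} and the identification \eqref{eq:lifted-scsc-phase}, this makes $C_0$ a scattering pseudodifferential operator in $\Psi_{\sct}^{-\infty,0}$ (order $0$ in decay and microlocalized to bounded sc-frequency), elliptic at a boundary point of $\overline{{}^{\sct}T^*\R^n}$ over $\partial\overline{\R^n}$. Compactness is preserved under unitary conjugation, so $C_0$ is compact.

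The remaining step, and the main obstacle, is to show that a scattering pseudodifferential operator of decay order $0$ that is elliptic at a point $p$ of ${}^{\sct}T^*_{\partial\overline{\R^n}}\overline{\R^n}$ is not compact on $L^2$. One needs a normalized sequence whose sc-wavefront concentrates at $p$ and which tends weakly to $0$. We take the wave packets $u_k(Z) = k^{-n/2}\chi((Z-Z_k)/k)e^{iZ\cdot\zeta_0}$ with $|Z_k|\to\infty$ in the direction $y_0$ and $\zeta_0$ the sc-frequency of $p$. They are bounded in $L^2$, converge weakly to $0$ (their supports escape to infinity), and, since the full symbol $c$ of $C_0$ satisfies $|c|\ge c_0>0$ near $p$, they obey $\|C_0u_k - c(Z_k,\zeta_0)u_k\|_{L^2}\to 0$ by the symbol estimates \eqref{eq: sc symbol}. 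Hence $\liminf\|C_0u_k\|\ge c_0$. A compact operator maps a weakly null sequence to a strongly null one, so this is a contradiction.

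A point of care is that ellipticity of $A$ at $q$ transfers to ellipticity of $C_0$ at the corresponding sc point, and not merely on a set disjoint from the support of $\chi$. This is guaranteed by the ellipticity statement in Proposition~\ref{prop:composition-AL-AR-FIOsec}.
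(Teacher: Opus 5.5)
Your proof is correct and follows essentially the same route as the paper. Both arguments conjugate away the factor $e^{\pm i\xiocl/(2\xoco^2)}$, compose with an operator associated to the inverse graph to obtain a scattering pseudodifferential operator in $\Psi_{\sct}^{-\infty,0}$ that is elliptic at a boundary point, and conclude non-compactness.

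There are two minor differences. First, the paper uses $A_0^*$ itself as the left factor, so under the contradiction hypothesis its $L^2$-boundedness follows automatically from the compactness of $A$; this removes the need for your appeal to Melrose--Zworski to bound $B$. Second, the paper simply asserts that an elliptic sc-operator is not compact, whereas your wave-packet argument actually proves it. For that argument you should take $|Z_k|/k\to\infty$, so that the packets localize at $y_0$ and the symbol variation estimate holds.
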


\begin{proof}
Writing $A$ in the form \eqref{eq:A=conjugatedA0}, we only need to show that $A_0 \in I^0_{\oc,\sct-\oc,\sct}(X_b^2,\Lagsecz)$ is not compact from $L^2(\R^n)$ to itself.

As pointed out after \eqref{eq:lifted-scsc-phase}, $\Lagsecz$ can be identified with Lagrangian submanifolds of $\beta_b^*({}^{\sct}T^* \overline{\R^n}  \times {}^{\sct}T^* \overline{\R^n})$.

Now we apply Proposition~\ref{prop:composition-AL-AR-FIOsec} to $A_0^*A$ and obtain that it is in 
\begin{equation}
    I_{\oc,\sct-\oc,\sct}^0(X_b^2, \big((\Lagsecz)^{*'} \circ (\Lagsecz)'\big)' )
\end{equation}
where $*$ means switching left and right variables and the sign of frequencies.
One can verify that this is the conormal bundle of the lifted diagonal in $X_b^2$, which is parametrized by
\begin{equation} \label{eq:sc-phase}
    \Phi = \xi_{\sct} \frac{(1-\sigma)}{\xoco} + \eta_{\sct} \cdot \frac{(y-y')}{\xoco},
\end{equation}
    and is in the class of sc-pseudodifferential operators defined in \cite{Melrose1994}.
In addition, by the assumption and the formula for the principal symbol in Proposition~\ref{prop:composition-AL-AR-FIOsec}, $A_0^*A_0$ is a scattering pseudodifferential operator that is elliptic at some point $q_1 \in {}^{\sct}T^*_{\partial \overline{\R^n}} \overline{\R^n}$, hence it is not compact from $L^2(\R^n)$ to itself.
\end{proof}
\section{Determining the metric} \label{sec:determine-metric}

We prove that the leading order behavior of the scattering matrix determines the metric in this section.
First we will show that the Lagrangian submanifold associated to the scattering map determines the scattering relation, sending initial position and velocity to the escaping position and velocity, truncated to a ball.
Then we will show that if two metrics have the same sojourn time, then their geodesics have the same length as well.
 Finally we will show that the 1c-1c Lagrangian submanifold associated to the scattering map, in particular its $N_1$-component defined in \eqref{eq:N1-def}, determines the sojourn time.
All these ingredients together allow us to determine the metric from the scattering map.

\subsection{Determining the truncated scattering relation}
In this subsection, we show that the information in our geometric scattering relation $\Cli, \, i=1,2$ (defined using \eqref{eq:classical-sc-map-def} with $g$ replaced by $g_i$)
determines its `truncated' analogue: the geometric scattering relation restricted to $B_R(0)$. 
In order to `read' length of geodesic truncated to a finite region, we define $\mk{B}_{g_i}$ to be the time-parametrized cosphere bundle as in \eqref{eq:parametrized-copehre-bundle-def} with $g$ replaced by $g_i$:
\begin{equation} \label{eq:parametrized-cosphere-def}
\mk{B}_{g_i} = [-T,T] \times S^*_{g_i}B_R(0).
\end{equation}
Since $g_1,g_2$ coincide over $[-T,T] \times \partial B_R(0)$, we know that 
\begin{equation}
\mk{B}_{g_1}|_{\partial B_R(0)} = \mk{B}_{g_2}|_{\partial B_R(0)},
\end{equation}
and we will omit the subscript $i$ for objects restricted to this boundary.
Note that $S^* \partial B_R(0)$ (since $g_1$ and $g_2$ are the same here, we ignore the index $1,2$, the same below) divides $\mk{B}_{g_i}|_{\partial B_R(0)}$ into two components: the inward pointing one and the outward pointing one, which we denote by
\begin{equation}
\partial_{\mp}\mk{B},
\end{equation}
where $-$ corresponds to the inward component and $+$ corresponds to the outward component.
Then we can define the ($R$-truncated) scattering relation of $g_i(t)$ to be the map from $\partial_- \mk{B}$ to $\partial_+ \mk{B}$ by
\begin{equation} \label{eq:Cli-truncated-def}
\Cli^{R}: \quad (t,z,v) \to (t,z',v'),
\end{equation}
where $(z,v),  (z',v') \in S^*_{\partial B_R(0)} B_R(0)$ and $(z',v')$ is the location and direction exiting $B_R(0)$ (in the sense reaching $\partial B_R(0)$) for the first time along the unit speed geodesic with respect to $g_i(t)$, and we denote its length by
\begin{equation} \label{eq:definition-length-map}
\ell_{g_i}:  \partial_- \mk{B} \to [0,\infty).
\end{equation}
Here $(z',v')$ always exists and depends on $(z,v)$ smoothly due to our non-trapping condition and compactness\footnote{One would also have such nice properties of geometric scattering relations even if the region is not compact if one has certain assumption on the metric or introduces certain resolutions. For example, see \cite{melrose1996scattering}\cite{jia2022tensorial} for the case of asymptotically conic manifolds.} of the region we are considering. Also, $\ell_{g_i}$ always has a finite value for the same reason.

Next we discuss how to recover the scattering relation truncated to a large ball from the scattering relation at infinity.
Let $\iota_i$ be the map defined in \eqref{eq:iota-def} with $g$ replaced by $g_i$. 
For any $q \in \overline{{}^{\oc}T^*_{\partial \overline{\R^n} }\R^n}$ with $(\xi_{\oc}, \eta_{\oc})$ in a bounded region as in the last part of Proposition~\ref{prop:classical-SC-geodesic}, then the identification in Proposition~\ref{prop:Wpm-1c-identification} above sends it to a point $q' \in W_+$, which in turn is identified with a bicharacteristic line within the time slice $\{ t = -\frac{1}{2}\xi_{\oc} \}$. 
Then this bicharacteristic line passes through $\partial_+\mk{B}$ at a point $(t,z',v') \in \partial_+\mk{B}$, which is the place where such a (lifted) geodesic leaves $\mk{B}_{g_i}$, and we define $\mathsf{I}_{+}^{*}$ by
\begin{equation} \label{eq:I_+-star-def}
\mathsf{I}_{+}^{*}(q) = (t, z', v') \in \partial_+\mk{B}.
\end{equation}
This is smooth in $q$ because $\iota_i((t,z',v'))$ can be obtained from $q'$ through a finite time flow of $H_p^{2,0}$, and the map sending $q$ to $q'$ and $\iota_i^{-1}$ are smooth as well.
Here $\mathsf{I}_{+}^{*}$ is the map sending the endpoint of a lifted geodesic to the place where it leaves $B_R(0)$. 
In addition, this only concerns the part of the bicharacteristic line after escaping the metric perturbation, hence this definition is independent of the choice of using either $g_1$ or $g_2$, which justifies the omission of the subscript $i$.
In the same way, we can define $\msf{I}_-^*$ by 
\begin{equation} \label{eq:I_--star-def}
\msf{I}_-^*: \; 
    q \in \overline{{}^{\oc}T^*_{\partial \overline{\R^n} }\R^n}  \to  (t,z,v) \in \partial_-\mk{B},
\end{equation}
which is the point that the bicharacteristic corresponding to $q$ first enters $\mk{B}$. 
Notice that $g_1(t)$ and $g_2(t)$ coincide outside $B_R(0)$, so $(t,z',v')$ above does not depend on using $\iota_{1}$ or $\iota_{2}$, and so do $\mathsf{I}_{\pm}^{*}$. Now we are ready to state the main result of this part: the classical scattering map $\Cli$ determines their truncated version $\Cli^R$ in \eqref{eq:Cli-truncated-def}.

\begin{prop} \label{prop:determine-truncated-SC-map}
Let $g_1,g_2$ be as in \eqref{eq:g-definition-perturbation}, suppose that $\Clo$ and $\Clt$ have the same restriction on $\overline{{}^{\oc}T^*_{\partial \overline{\R^n}}\R^n}$, then we have $\Clo^R=\Clt^R$. 
\end{prop}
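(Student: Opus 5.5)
The plan is to factor the truncated scattering relation through the maps $\msf{I}_\pm^*$ and to use that these maps do not depend on $i$. Fix $(t,z,v) \in \partial_-\mk{B}$. Follow the lifted free geodesic (the metric is Euclidean outside $B_R(0)$) through $(z,v)$ in the time slice $t$ backward to infinity. This gives a bicharacteristic line in fiber infinity of $\psphase$ within $\{t=\text{const}\}$, and hence, via Proposition~\ref{prop:Wpm-1c-identification}, a point $q_- \in \overline{{}^{\oc}T^*_{\partial\overline{\R^n}}\R^n}$ with bounded 1-cusp frequency, as in Proposition~\ref{prop:classical-SC-geodesic}. By construction $\msf{I}_-^*(q_-)=(t,z,v)$. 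Since $\msf{I}_-^*$ only involves the part of the bicharacteristic outside the perturbation, it is the same for $g_1$ and $g_2$. It is a bijection onto $\partial_-\mk{B}$ from the set of $q_-$ whose lines enter $B_R(0)$: the inverse is explicit through \eqref{eq:xioc-etaoc-t-z-relation}, with $t=-\tfrac12\xioc$ and the transverse position read off from $\etaoc$.

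The key identity is
\begin{equation*}
\Cli^R(t,z,v) = \msf{I}_+^*\big(\Cli(q_-)\big), \qquad q_- = (\msf{I}_-^*)^{-1}(t,z,v), \quad i=1,2.
\end{equation*}
To prove it, follow the $H_p^{2,0}$-integral curve of $P_i$ starting from $W_-$ at $q_-$. By \eqref{eq:iota-def} and the remark after it, once we forget $\hat\tau$ and $\rho_{\ps}$, this curve inside $\mk{B}_{g_i}$ is the lifted unit-speed geodesic of $g_i(t)$ with initial data $(z,v)$, and $t$ is frozen along it. It leaves $B_R(0)$ at $\Cli^R(t,z,v)$, after which it is a free line that ends at $\Cli(q_-) \in W_+$. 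Reading this exterior segment backward from its endpoint is, by definition \eqref{eq:I_+-star-def}, exactly $\msf{I}_+^*(\Cli(q_-))$. The first exit is the one recorded, because after leaving $B_R(0)$ in the outward direction a Euclidean line never re-enters the convex ball. The whole trajectory stays at fiber infinity, so only the restriction of $\Cli$ to $\overline{{}^{\oc}T^*_{\partial\overline{\R^n}}\R^n}$ enters. With the hypothesis $\Clo=\Clt$ there and the $i$-independence of $\msf{I}_\pm^*$, the identity gives $\Clo^R=\Clt^R$.

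The main obstacle is making the middle step precise. We need that the bicharacteristic of $P_i$ attached to $q_-$ really projects to the $g_i(t)$-geodesic with the same time slice; this is the parabolic structure showing that the $\partial_t$ component of $H_p^{2,0}$ vanishes at $\rho_{\ps}=0$. We also need that the identifications in Proposition~\ref{prop:Wpm-1c-identification} for $P_1$ and $P_2$ agree near $\partial\overline{\R^n}$. I would verify both directly from the local form of $H_p^{2,0}$ at $\rho_{\ps}=0$ used in the proof of Proposition~\ref{prop:1cps-sigma-equals-1}. For the second point, the $W_\pm$ identification is built from the free region, where $P_1=P_2=P_0$.

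A subtler point is the twisted sign convention in $\mathrm{Gr}(\Cl)'$. I would fix it by working with $\Cl$ itself rather than its twisted graph, so that no sign flip enters the comparison.
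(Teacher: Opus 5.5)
Your proposal is correct and follows the paper's argument: the paper proves $\msf{I}_{+}^* \circ \Cli \circ \msf{I}_-\circ \iota_i|_{\partial_-\mk{B}} = \Cli^R$ by the same route (from $\partial_-\mk{B}$ back to $W_-$, then $\Cli$, then back to $\partial_+\mk{B}$), and concludes from the $i$-independence of these outer maps, since $g_1=g_2$ on and outside $\partial B_R(0)$. Your map $(\msf{I}_-^*)^{-1}$ is exactly the paper's $\msf{I}_-\circ\iota_i|_{\partial_-\mk{B}}$, and your extra checks only make explicit what the paper uses implicitly: the frozen time slice, no re-entry into the ball, and agreement of the $W_\pm$ identifications in the free region.
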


\begin{proof}
Starting from $(t,z,v) \in \partial_- \mk{B}$, we first identify such point with a point in $\Char(P_i)$ (the part in fiber infinity) using $\iota_i$ defined as in \eqref{eq:iota-def}.
From this point, we go backward (using $\mathsf{I}_-$ in \eqref{eq:msf-I-def}) to $W_-$, then apply $\Clo=\Clt$, and finally go backward along the geodesic flow again to go back to $\partial_+ \mk{B}$ (using $\mathsf{I}_+^*$). This final image is  $\Cli^R(t,z,v)$ by definition.
Here we used the fact that $g_1,g_2$ coincide outside $B_R(0)$ and on its boundary, hence $\msf{I}_-$ defined in \eqref{eq:msf-I-def} using $g_1$ and $g_2$ coincide on the image of $\partial_-\mk{B}$ under $\iota_1$ and $\iota_2$.
Summarizing the process above, we have the following formula for $\Cli^R$ showing that it can be determined by $\Cli$:
\begin{equation} \label{eq:Cli-R-from-Cli}
    \mathsf{I}_{+}^* \circ \Cli \circ \mathsf{I}_-\circ \iota_i|_{\partial_-\mk{B}} = \Cli^R.
\end{equation}

Using again the fact that $g_1$ and $g_2$ coincide on the boundary of and outside $B_R(0)$, we know $\iota_1|_{\partial_-\mk{B}} = \iota_2|_{\partial_-\mk{B}}$ and \eqref{eq:Cli-R-from-Cli} shows that $\Cli^R$, $i=1,2$ will be the same if $\Cli$ are the same.
\end{proof}

\subsection{The total sojourn time and the lens equivalence}
\label{subsec:total-sj-relation}
For $i=1,2$, let $\Lambda_\pm^i$ be $\Lambda_\pm$ defined in \eqref{eq:microlocalized sojourn relns} using bicharacteristics of $P_i$, $\Leg^i = \partial \Lambda_\pm^i$, and $\Cli$ be $\Cl$ defined in \eqref{eq:classical-sc-map-def} with $g$ replaced by $g_i$. 
In this subsection, we analyze phase functions parametrizing $\Lambda_\pm^i$ and $\tilde{\beta}_b^*(\mathrm{Gr}(\Cli)')$ to show that they encode the information of lengths of geodesics.
More precisely, our calculus of Legendrian distributions allows us to record the sojourn time defined in \eqref{eq:def-totoal-sojourn-time} below,
which is analogous to the Buseman function in more general contexts.  
This sojourn time, in turn, determines the length of geodesics restricted to $B_R(0)$. 
The sojourn time was considered by Guillemin \cite{Guillemin-sojourn} in the setting of scattering by an obstacle. 
Let $\ell_{g_i(t)}$ be the length of geodesic defined as in \eqref{eq:def-length-g} with $g$ being $g_i$.

We begin with the following concept of microlocal lens equivalence, which reveals how we capture the lens data defined in \eqref{eq:scattering-relation}\eqref{eq:def-length-g} using Lagrangian submanifolds arising in microlocal analysis.
\begin{definition} \label{definition:microlocal-lens-equivalence}
Let $[\ococb,J]$ be as in \eqref{eq:J-resolved-1c1c-phase-space} and $\beta_{SJ}$ being the blow-down map there, we say that $\Clo$ is lens equivalent to $\Clt$ if
\begin{equation}
    \partial \beta_{SJ}^*\tilde{\beta}_b^*(\mathrm{Gr}(\Clo)')
    = \partial \beta_{SJ}^*\tilde{\beta}_b^*(\mathrm{Gr}(\Clt)').
\end{equation}
This is equivalent to requiring that $\partial \tilde{\beta}_b^*(\mathrm{Gr}(\Clo)') = \partial \tilde{\beta}_b^*(\mathrm{Gr}(\Clt)')$ and $N_1$ coincide as a function restricted to them.
When this holds, we will also say that $g_1$ and $g_2$ are microlocally lens equivalent.
\end{definition}

The goal of this subsections is to prove the following:
\begin{thm} \label{thm:Cl-determines-length}
Suppose $g_1,g_2$ are as in \eqref{eq:g-definition-perturbation} with $\Clo$ microlocally lens equivalent to $\Clt$, then we have $\ell_{g_1(t)} = \ell_{g_2(t)}$.
\end{thm}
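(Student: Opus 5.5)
The argument relates $N_1$ on $\partial\beta_{SJ}^*\tilde{\beta}_b^*(\mathrm{Gr}(\Cli)')$ to a total sojourn time along each bicharacteristic, and then shows that this sojourn time differs from the truncated length $\ell_{g_i(t)}$ only by quantities computed outside $B_R(0)$, where $g_1=g_2$.

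First I compute $N_1$ geometrically. Fix a point $q$ of $\partial\tilde{\beta}_b^*(\mathrm{Gr}(\Cli)')$, that is, a pair $(q_-,q_+=\Cli(q_-))$ with $q_\pm\in\overline{{}^{\oc}T^*_{\partial\overline{\R^n}}\R^n}$. Choose the phase $\Phi_{\oc-\oc}=-\Phi_{\oc-\ps,+}+\Phi_{\oc-\ps,-}$, built from normal-form parametrizations \eqref{eq:1c-ps-phase-normal-form} of $\Lambda^i_\pm$ and evaluated at an intermediate point $\tilde q$ on the bicharacteristic joining $q_-$ to $q_+$. By \eqref{eq:N1-smooth-expression}, Corollary~\ref{coro:1c1c-sigma-equals-1} and Proposition~\ref{prop:Xioc-equals-varphi1}, at $\xoco=0$
\begin{equation*}
N_1(q)=\Xi_{\oc,1}-\Xi_{\oc,2}=\varphi_+(\tilde q)-\varphi_-(\tilde q).
\end{equation*}
By Corollary~\ref{coro:1cps-phase-value-equal}, the right-hand side does not depend on the parametrization or on the choice of $\tilde q$. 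The sign conventions here have to be tracked carefully, in particular the twist in $\mathrm{Gr}(\Cl)'$ and the sign of $\xioct$.

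Next I integrate along the flow. By \eqref{eq:varphi1-const-speed}, $H_p^{2,0}\varphi_\pm=2$ on $\Legps$, with $H_p^{2,0}$ restricted to $\Char(P)$ at fiber infinity being (up to reparametrization) the unit-speed cogeodesic flow of $g_i(t)$ in the fixed slice $t=-\tfrac12\xioc$. Hence $\varphi_\pm$ is an affine function of the arclength parameter $s$ along the geodesic, with slope $2$. In particular $\varphi_+-\varphi_-$ is constant along the bicharacteristic, which is consistent with the independence of $\tilde q$. The idea is to split the geodesic into three pieces: incoming outside $B_R(0)$, inside $B_R(0)$, and outgoing outside. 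Write $\varphi_-$ at the entry point $\msf{I}_-^*(q_-)\in\partial_-\mk{B}$ and $\varphi_+$ at the exit point $\msf{I}_+^*(q_+)\in\partial_+\mk{B}$. Then
\begin{equation*}
N_1(q)=\varphi_+\big(\msf{I}_+^*(q_+)\big)-\varphi_-\big(\msf{I}_-^*(q_-)\big)-2\,\ell_{g_i(t)}\big(\msf{I}_-^*(q_-)\big),
\end{equation*}
with the sign of the last term fixed by the orientation of the flow (going back from the exit to the entry point along $\varphi_+$ subtracts $2\ell$). Both boundary terms are the values of phase functions parametrizing $\Lambda^i_\pm$ near the exit and entry points. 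Near those points, $\Lambda^i_\pm$ is the flowout from $W_\pm$ through the region $|z|\ge R$, where $P_1=P_2$, so $\Lambda^1_\pm=\Lambda^2_\pm$ there, and by Corollary~\ref{coro:1cps-phase-value-equal} these values agree for $i=1,2$. This is the step I expect to be the main obstacle. Making it rigorous requires (a) checking that $\Lambda_\pm^1$ and $\Lambda_\pm^2$ coincide near $\iota(\partial_\mp\mk{B})$ including their 1-jets at $\ffocps$, which holds because the flowout depends only on $P$ along the segment from $W_\pm$ to that point; (b) handling the fact that a single normal-form parametrization may not cover the whole bicharacteristic; and (c) making sure the points $q_\pm$ correspond to the same entry and exit points for both metrics. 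For (b), I would patch local parametrizations and use parametrization-independence of $\varphi_\pm$ (Corollary~\ref{coro:1cps-phase-value-equal}) to define $\varphi_\pm$ as a smooth function on $\Lambda_\pm^i$, then integrate $H_p^{2,0}\varphi_\pm=2$ globally. For (c), Proposition~\ref{prop:determine-truncated-SC-map} gives $\Clo^R=\Clt^R$, since lens equivalence implies $\Clo=\Clt$ on the boundary, so the entry and exit points indeed match.

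Finally I conclude. Given $(t,z,v)\in\partial_-\mk{B}$, pick $q_-$ with $\msf{I}_-^*(q_-)=(t,z,v)$; this is possible via $\msf{I}_-\circ\iota$. Let $q_+=\Clo(q_-)=\Clt(q_-)$. Microlocal lens equivalence gives $N_1$ equal for $i=1,2$ at $(q_-,q_+)$. Subtracting the two instances of the displayed identity, the boundary terms cancel, leaving $\ell_{g_1(t)}(t,z,v)=\ell_{g_2(t)}(t,z,v)$. Since $(t,z,v)$ was arbitrary in $\partial_-\mk{B}$, this proves the theorem.
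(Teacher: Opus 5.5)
Your argument is correct and uses the same mechanism as the paper: you express $N_1$ through $\varphi_+-\varphi_-$ via \eqref{eq:N1-smooth-expression} and Proposition~\ref{prop:Xioc-equals-varphi1}, use the constant growth \eqref{eq:varphi1-const-speed} along the flow, and use metric-independence of the phase values where $\Lambda^1_\pm=\Lambda^2_\pm$; the one difference is that you cancel the entry and exit contributions directly instead of evaluating them as $z\cdot\overline{y_{\oc,-}}$ and $z'\cdot\overline{y_{\oc,+}}$ and passing through the total sojourn time of Proposition~\ref{prop:varphi1-equals-sojourn}, which is a legitimate shortcut. One harmless slip: the $H_p^{2,0}$-flow traverses the geodesic at speed $2$, so $H_p^{2,0}\varphi_\pm=2$ gives $\varphi_\pm$ slope $1$ (not $2$) in arclength, and the last term of your identity should be $-\ell_{g_i(t)}$, which does not affect the conclusion.
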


We will reduce this to showing that $\Cli$ determines the total sojourn time, which encodes the difference of the length of geodesic compared with the Euclidean case.
Then since the sojourn time does not change outside $[-T,T] \times B_R(0)$, this means the sojourn time of $g_1$ and $g_2$ restricted to $[-T,T] \times B_R(0)$ are the same, which implies $\ell_{g_1(t)} = \ell_{g_2(t)}$.
We begin by giving the definition of the total sojourn time. For a geodesic $\gamma(s)$ with respect to $g_i(t)$ with arc-length parametrization, we denote the $x_{\ps}$-component of $\gamma(s)$ by $x_{\ps}(\gamma(s))$ and define its \emph{total sojourn time} to be
\begin{equation} \label{eq:def-totoal-sojourn-time}
	\TSJ_{g_i(t)}(\gamma) =  \lim_{s \to \infty, s' \to -\infty} (s  -s' - \frac{1}{x_{\ps}(\gamma(s))} -s' - \frac{1}{x_{\ps}(\gamma(s'))}).
\end{equation}
Suppose we can show that two metrics $g_1(t)$, $g_2(t)$ with $\Clo$ microlocally lens equivalent to $\Clt$ have the same total sojourn time for their geodesics $\gamma_i(s)$ starting at $(z,v) \in \partial_- SB_R(0)$, we will show $\ell_{g_1(t)}(z,v) = \ell_{g_2(t)}(z,v)$.
Let $(z',v') \in \partial_+ SB_R(0)$ be the point and direction at which $\gamma_i(s)$ exit $B_R(0)$, then $\TSJ_{g_1(t)}(\gamma_1) = \TSJ_{g_i(t)}(\gamma_2)$ gives
\begin{equation} \label{eq:SJ-equal-3}
	\lim_{s_1 \to \infty, s_1' \to -\infty} (s_1  -s_1' - \frac{1}{x_{\ps}(\gamma_1(s_1))} - \frac{1}{x_{\ps}(\gamma_1(s_1'))})
	= \lim_{s_2 \to \infty, s_2' \to -\infty} (s_2  -s_2' - \frac{1}{x_{\ps}(\gamma_2(s_2))} - \frac{1}{x_{\ps}(\gamma_2(s_2'))}).
\end{equation}
Let $s_{i,\pm}$ be the time at which $\gamma_i$ enters and leaves $B_R(0)$ in the chosen parametrization, which means 
\begin{align}
	(z,v) = (\gamma_i(s_{i,-}),\dot{\gamma}_i(s_{i,-})), \quad (z',v') = (\gamma_i(s_{i,+}),\dot{\gamma}_i(s_{i,+})).
\end{align}
Since the part of $\gamma_1$ with $s>s_{1,+}$ (resp. $s<s_{1,-}$) coincide with the part of $\gamma_2$ with $s>s_{2,+}$ (resp. $s<s_{2,-}$), we know the increment of $s_i - \frac{1}{x_{\ps}(\gamma_i(s_i))}$ from $s_{i,+}$ to $ \infty$ are the same for $i=1,2$. Also, the increment of $-s_i'-\frac{1}{x_{\ps}(\gamma_i(s'_i))}$ from $s_i'=s_{i,-}$ to $-\infty$ are the same. In combination with \eqref{eq:SJ-equal-3}, we have
\begin{equation} \label{eq:SJ-equal-4}
	 (s_{1,+}  -s_{1,-} - \frac{1}{x_{\ps}(\gamma_1(s_{1,+}))} - \frac{1}{x_{\ps}(\gamma_1(s_{1,-}))})
	= (s_{2,+}  -s_{2,-} - \frac{1}{x_{\ps}(\gamma_2(s_{2,+}))} - \frac{1}{x_{\ps}(\gamma_2(s_{,2-}))}),
\end{equation}
which gives 
\begin{equation}
	s_{1,+}  -s_{1,-} = s_{2,+}  -s_{2,-},
\end{equation}
and this is equivalent to $\ell_{g_1(t)} = \ell_{g_2(t)}$.
So we are reduced to showing 
\begin{align} \label{eq:mark-sojourn-equal}
   \TSJ_{g_1(t)}(\gamma_1) = \TSJ_{g_i(t)}(\gamma_2).
\end{align}

We use upper indices to indicate to which metric those phase functions and parameters are associated to.
Concretely, let 
\begin{equation}
\Phi_{\oc-\oc}^i(\msf{K}, \theta_0^i, \theta_1^i) =	\frac{ t(1-\sigma^2) }{x_{\oc}^2} + \frac{\varphi^i_1(\msf{K},\theta_0^i,\theta_1^i)}{x_{\oc}}, \quad \msf{K} = (\xoco, \sigma, \yoco, \yoct), 
\end{equation}
be as in \eqref{eq:1c1c-phase-normal-form-concrete} with $g$ being $g_i$. Recall the formula for $S$ in \eqref{eq:S formula}, we know that $\Phi_{\oc-\oc}^i$ parametrizing $(\tilde{\beta}_b)^*(\mathrm{Gr}(\Cli)')$ can be chosen to be
\begin{equation}
\Phi^i_{\oc-\oc} = 	-\Phi^i_{\oc-\ps,+} + \Phi^i_{\oc-\ps,-},
\end{equation}
where 
\begin{equation}
\Phi^i_{\oc-\ps,\pm} = -\frac{t}{x_{\oc,\pm}}+\frac{\varphi^i_\pm(x_{\oc,\pm},\msf{K}_\pm',\tilde{\zeta}''_\pm)}{x_{\oc,\pm}},
\; \msf{K}'_\pm = (t,z,y_{\oc,\pm})	
\end{equation}
 parametrizes $\Lambda^i_\pm$ non-degenerately in the sense of Definition~\ref{def:1c-ps-parametrization}. 
In addition, we can choose $\varphi_\pm^i$ in the form of \eqref{eq:1c-ps-phase-normal-form}, so that it satisfies \eqref{eq:varphi1-const-speed}.\footnote{In fact, the property in \eqref{eq:varphi1-const-speed} is satisfied by other phase functions not necessarily in this normal form as well, since the phase equivalence shows that function value of phase functions at their critical points parametrizing the same point on the Legendre submanifold are the same.}  
So we have, with $\varphi_\pm^i$ satisfying \eqref{eq:varphi1-const-speed},
\begin{equation} \label{eq:1c1c-phase-normal-form-concrete}
	\Phi^i_{\oc-\oc} =   \frac{t(1 - \sigma^2)}{x_{\oc,+}^2}
+ \frac{-\varphi^i_{+}(x_{\oc,+},\msf{K}'_+, \tilde{\zeta}''_+) + \sigma \varphi^i_-(\sigma^{-1} x_{\oc, _+}, \msf{K}'_-,\tilde{\zeta}''_-)}{x_{\oc,+}},
\end{equation}
where $\sigma = x_{\oc,+}/x_{\oc,-}$.

\begin{prop} \label{prop:varphi1-equals-sojourn}
 Let $\Phi_{\oc-\oc}^i$ be as in \eqref{eq:1c1c-phase-normal-form-concrete}, and $q' \in C_{\Phi}$ corresponds to $q \in \Leg^i$, then 
 \begin{equation} \label{eq:varphi1-equals-sojourn}
     \varphi_1^i(q') = \TSJ_{g_i(t)}(\gamma_i)
 \end{equation}
 where $\gamma_i$ is the geodesic with respect to $g_i$ associated to $q$ as in Proposition~\ref{prop:classical-SC-geodesic}.
\end{prop}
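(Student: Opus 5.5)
We will prove \eqref{eq:varphi1-equals-sojourn} by splitting $\varphi_1^i$ into its two 1c-ps pieces and computing each along the bicharacteristic. At a critical point $q'$ lying over $q \in \Leg^i$ we have $\xoco = 0$, and by Corollary~\ref{coro:1c1c-sigma-equals-1} also $\sigma = 1$. Hence \eqref{eq:1c1c-phase-normal-form-concrete} gives
\begin{equation*}
\varphi_1^i(q') = -\varphi_+^i(q'_+) + \varphi_-^i(q'_-),
\end{equation*}
where $q'_\pm$ are the critical points of $\Phi^i_{\oc-\ps,\pm}$ at the common intermediate point $\tilde q \in \Char(P_i)$ on the bicharacteristic $\gamma_i$ selected by the composition. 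By Corollary~\ref{coro:1cps-phase-value-equal} and Proposition~\ref{prop:Xioc-equals-varphi1}, $\varphi_\pm^i(q'_\pm) = \Xi_{\oc}$ evaluated on $\Lambda_\pm^i$ at the point over $\tilde q$. These values therefore depend only on $\tilde q$ and not on the parametrization, so we may regard $\varphi_\pm^i$ as functions on $\Char(P_i)$ at fibre infinity.

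The key step is the transport equation \eqref{eq:varphi1-const-speed}: along the flow of $H_p^{2,0}$ each $\varphi_\pm^i$ increases at rate $2$. Since $\rho_{\ps}^{-1}|\zeta|_g = 1$, this flow traces the geodesic of $g_i(t)$ at constant speed, so in the arc-length parameter $s$ each $\varphi^i_\pm$ is affine with a fixed slope. We are not free to pick the point $\tilde q$, since it is determined by $q$ up to sliding along $\gamma_i$. However, the difference $-\varphi_+^i + \varphi_-^i$ is constant along $\gamma_i$, because both pieces have the same derivative. So we may evaluate at any convenient point. We will take $\tilde q$ far out, beyond $B_R(0)$ in the outgoing region, where the metric is flat.

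It remains to fix the additive constants. In the flat region the Lagrangians $\Lambda^i_\pm$ coincide with the free ones. The free phase \eqref{eq:free-Poisson-kernel-2}, combined with \eqref{eq:xioc-etaoc-t-z-relation}, gives explicitly $\varphi_\pm = z\cdot\overline{y_{\oc}}$ on the critical set, up to normalization. This is the projection of the position onto the asymptotic direction, which equals $\pm\big(s - s_\pm^{0}\big)$ along a straight line. For a free geodesic the two contributions therefore cancel exactly, giving $\varphi_1 = 0 = \TSJ$ for the Euclidean metric.

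For $g_i$ we use the following comparison. $\varphi_-^i$ is matched to the free value on the incoming flat part and then transported along $\gamma_i$ at unit rate in $s$. $\varphi_+^i$ is matched on the outgoing flat part. Evaluating both at a point $\gamma_i(s)$ with $s \gg 1$ gives
\begin{equation*}
-\varphi_+^i + \varphi_-^i = \big(s - s'\big) - \big(\text{outgoing projection}\big) - \big(\text{incoming projection at } s'\big).
\end{equation*}
As $s \to \infty$ and $s' \to -\infty$ the projections become $x_{\ps}^{-1}(\gamma_i(s)) + O(s^{-1})$, since $|z|$ and the projection onto the asymptotic direction agree to leading order with $t$ fixed. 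This yields \eqref{eq:def-totoal-sojourn-time}.

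The main obstacle is bookkeeping of constants and signs: the factor $2$ in \eqref{eq:varphi1-const-speed} against the unit-speed parametrization, the twist $-q_\pm$, and the $O(x_{\oc})$ normalization in $\varphi_\pm$. To settle these we would first carry out the check for $g_0$ using the explicit kernel \eqref{eq:free-Poisson-kernel-2}, where both sides must vanish, and fix all conventions there before inserting the perturbation.
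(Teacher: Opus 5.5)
Your proposal is correct and is essentially the paper's argument. You split $\varphi_1^i=-\varphi_+^i+\varphi_-^i$ at $\sigma=1$ and use \eqref{eq:varphi1-const-speed} to transport $\varphi_-^i$ at unit rate in arc length from the incoming flat region. You match $\varphi_-^i$ on the incoming side and $\varphi_+^i$ on the outgoing side to the free value $z\cdot\overline{y_{\oc,\pm}}$ via Corollary~\ref{coro:1cps-phase-value-equal}. Finally you replace the projections by $1/x_{\ps}$ as the endpoints go to infinity. The paper evaluates at the entry and exit points of $B_R(0)$ to get \eqref{eq:sojourn-expression-1} and then slides the endpoints outward, which is the same computation.

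One small correction to your sign bookkeeping. The $\overline{y_{\oc,\pm}}$ are the directions of motion, so $z\cdot\overline{y_{\oc,\pm}}$ increases with $s$ along both free rays. It equals $s-s^0_\pm$, not $\pm(s-s^0_\pm)$. Accordingly, the ``incoming projection'' in your final formula must be read as $-z(s')\cdot\overline{y_{\oc,-}}\to+\infty$.
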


\begin{proof}

We denote the subleading part of $\Phi^i_{\oc-\oc}$ above by
\begin{equation} \label{eq:phi1-difference}
T^i =  -\varphi^i_{+}(x_{\oc,+},t,z,y_{\oc,+},\tilde{\zeta}^{'',i}_+) +\sigma \varphi^i_{-}(\sigma^{-1}x_{\oc,+},t,z,y_{\oc,-},\tilde{\zeta}^{'',i}_-),
\end{equation}
the starting point at $\partial_- \mk{B}$ by $(t,z,v)$, the endpoint at $\partial_+ \mk{B}$ by $(t,z',v')$, and denote the geodesic connecting them by $\gamma$. 

Recall that using $\iota_i$ in \eqref{eq:iota-def} (defined using $g_i$ instead), we can identify the part of $\Char(P_i)$ at fiber infinity with $\mk{B}_{g_i}$ defined in \eqref{eq:parametrized-cosphere-def}.
Under this identification, our bicharacteristics are identified with lifted geodesics of $g_i$ in the cosphere bundle. In addition, when taking the parametrization into account, the integral curve of $H_p^{2,0} = \rho_{\ps}H_p$\footnote{Again, we are only concerning $(t,z)$ away from spacetime infinity, hence we can take $x_{\ps}=1$.} corresponds to a geodesic with constant speed $2$ by the expression of $H_p^{2,0}$ in \eqref{eq:Hp-varphi1}.

On the other hand, points on a bicharacteristic line corresponds to a point in $\Lambda_\pm$ by its definition in \eqref{eq:microlocalized sojourn relns}, which in turn corresponds to a point in the critical set of $\varphi_\pm^i$ via the inverse of our parametrization map in Definition~\ref{def:1c-ps-parametrization} since our parametrization is non-degenerate. 

Composing those two identifications, we obtain a local diffeomorphism
\begin{equation}
\mathscr{F}^i_\pm: \quad  \mk{B}_{g_i} \to C_{\Phi_{\oc-\ps,\pm}^i}
\end{equation}
so that for $(t,z,v) \in \mk{B}_{g_i}$, 
let $q \in \Lambda_\pm^i$ be the point corresponds to $(t,z,v)$ as above, then $\mathscr{F}^i_\pm(t,z,v)$ is the point that is sent to $q$ under the parametrization map.

Let $\overline{y_{\oc,\pm}} \in \R^n$ be the image of $y_{\oc,\pm} \in \partial \overline{\R^n} = \mathbb{S}^{n-1}$ under the embedding $\mathbb{S}^{n-1} \hookrightarrow \R^n$.
Since the part of $\Lambda_\pm^i$ corresponds to $\partial_\pm \mk{B}$ coincide with the sojourn relation $\Lambda_0$ associated to the free operator $P_0$,  
applying Corollary~\ref{coro:1cps-phase-value-equal}, the value of the phase function on the image of $\partial_-\mk{B}$ coincide with the phase function parametrizing $\Lambda_0$ as well.
Using \eqref{eq:free-Poisson-kernel-2}, the function value of $\varphi_{-}^i$ at the starting point $\mathscr{F}^i_-(t,z,v)$ with $(t,z,v) \in \partial_-\mk{B}$ 
is determined by $z$ and $\overline{y_{\oc,-}}$ directly: 
\begin{equation}
    \varphi_{-}^i(\mathscr{F}^i_-(t,z,v)) = z \cdot \overline{y_{\oc,-}}.
\end{equation}

As aforementioned, the integral curve of $H_p^{2,0}$ is identified with a lifted geodesic in the cosphere bundle with constant speed 2, then in combination with \eqref{eq:varphi1-const-speed}, we know that the increment of $\varphi_-^i$ equals to the length of the geodesic. 
Though \eqref{eq:varphi1-const-speed} is only valid over a region of local parametrization, but using Corollary~\ref{coro:1cps-phase-value-equal} or Proposition~\ref{prop:1cps-sigma-equals-1}, we know that the phase function restricted to their critical sets can be viewed as a function on $\Lambda_\pm$ globally and the fact that the increment of $\varphi^i_-$ equals to the length of the corresponding part of the geodesic continues to holds globally.
So let $(t,z',v') \in \partial_+\mk{B}$ be the ending point of this geodesic, $\varphi_{-}^i$ at $\mathscr{F}(t,z',v')$ 
is $z \cdot \overline{y_{\oc,-}} + \ell_{g_i}(t,z,v)$ (where $\ell_{g_i}$ is defined in \eqref{eq:definition-length-map}).
On the other hand, at the ending point we have $\varphi_{+}(\mathscr{F}^i_+(t,z',v')) = z' \cdot \overline{y_{\oc,+}}$ since $\Lambda_+^i$ coincide with $\Lambda_0$ there.
So on the critical set of the phase function, we have
\begin{equation}  \label{eq:sojourn-expression-1}
T^i =  - z' \cdot \overline{y_{\oc,+}} + \ell_{g_i}(t,z,v)  + z \cdot \overline{y_{\oc,-}}.
\end{equation}

\begin{figure}
\includegraphics[scale=0.5]{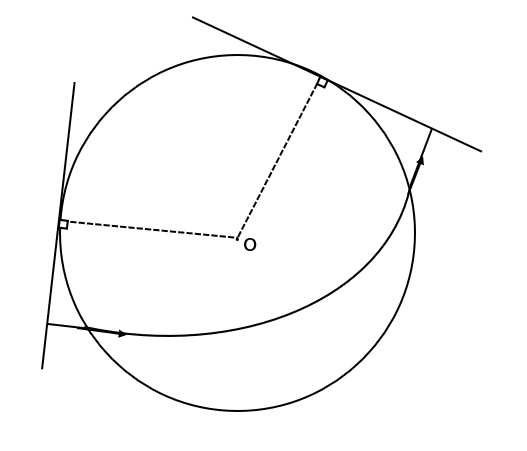}
\caption{Illustration of the sojourn time, with the $t$ direction compressed and only one of $\gamma_i$ is drawn as the solid curve. The momentum variables are indicated as arrows. The length of the solid curve subtract the sum of the length of two dashed lines is the sojourn time.}
\end{figure}

Now we use this expression of $T^i$ to show that $T^i$ equals to the sojourn time $\mk{T}_{g_i}$ defined in \eqref{eq:def-totoal-sojourn-time}.
We decompose $\gamma_i$ into three parts: we denote the part before entering $\mk{B}_{g_i}$ by $\gamma_{i,-}$, the part inside $\mk{B}_{g_i}$ by $\gamma_{i,0}$, and the part after exiting $\mk{B}_{g_i}$ by $\gamma_{i,+}$.
Notice that if we allow $z,z'$ in \eqref{eq:sojourn-expression-1} to be other points on $\gamma_i$ as long as they are on the side before and after $\gamma_i$ encounters $\mk{B}_{g_i}$, and replace $\ell_{g_i}(t,z,y_{\oc,-})$ by the length of the part of $\gamma_i$ connecting them, which we denote by $\tilde{\ell}_{g_i}(t,z,y_{\oc,-})$ below, then this $T$ will remain unchanged. 
This is because, if we move $z$ backward along the geodesic, then $z \cdot \overline{y_{\oc,-}}$ is decreasing with unit speed and $\tilde{\ell}_{g_i}$ is increasing with unit speed.
On the other hand, if we move $z'$ forward along the geodesic, then $-z' \cdot \overline{y_{\oc,+}}$ is decreasing with unit speed while $\tilde{\ell}_{g_i}$ is increasing with unit speed.

Now we have shown that we can extend either ends of the part we count this length while keeping $T^i$ unchanged, and the next step is to show
that $T^i$ equals to the sojourn time in Guillemin \cite[Section~3]{Guillemin-sojourn} (in particular, the discussion after \cite[Figure~2]{Guillemin-sojourn}), which is to extend the part of the geodesic to the `plane of incidence' and the `plane of refraction'. More concretely, we consider the tangent plane of $\partial_-\mk{B}$ that is perpendicular to $\overline{y_{\oc,-}}$. There are two such planes and we choose the one intersect $\gamma_{i,-}$ and set $(t,\tilde{z})$ to be this point of intersection.
Similarly, we consider the tangent plane of $\partial_+\mk{B}$ that is perpendicular to $\overline{y_{\oc,+}}$ and intersects $\gamma_{i,+}$, and let $(t,\tilde{z}')$ be the point where $\gamma_{i,+}$ intersects this plane. Then we have
\begin{equation}
T^i = - \tilde{z}' \cdot \overline{y_{\oc,+}} + \tilde{\ell}_{g_i}(t,\tilde{z},y_{\oc,-})  + \tilde{z} \cdot \overline{y_{\oc,-}},
\end{equation}
which equals to the sojourn time of $g_i(t)$ used in \cite[Section~3]{Guillemin-sojourn}.

Next we show that this equals to the total sojourn time introduced in \eqref{eq:def-totoal-sojourn-time}, which is also used by Hassell and Wunsch \cite{hassell-wunsch2005schrodinger}\cite[Section~15]{Hassell-Wunsch-semiclassical-resolvent}. 
Let $z^{\perp}$ be the component of $z$ that is perpendicular to $\overline{y_{\oc,-}}$, which remain the same for all $z$ on $\gamma_{i,-}$.
Then we know, with $|\cdot|$ denoting the Euclidean length,
\begin{equation}
|z^{\perp}|^2 + |z \cdot \overline{y_{\oc,-}}|^2 = |z|^2,
\end{equation}
which shows $|z \cdot \overline{y_{\oc,-}}| - |z| \to 0$ as $|z| \to \infty$ along $\gamma_{i,-}$ backward. And similarly we know $|z' \cdot \overline{y_{\oc,+}}| - |z'| \to 0$ when $|z'| \to \infty$ along $\gamma_{i,+}$. So it does not matter to use $|z|$ and $|z'|$ or $z \cdot \overline{y_{\oc,-}}$ and $z' \cdot \overline{y_{\oc,+}}$ to renormalize the length when we take this limit and our $T^i$ thus coincide with the sojourn time defined in \cite[Section~15]{Hassell-Wunsch-semiclassical-resolvent}, which applies to our setting for each fixed $t$.
See also \cite[Equation~(1.4)]{hassell-wunsch2005schrodinger} for the version that take `half' of it, that is, only sending one end to infinity in the process above.
\end{proof}

Now we return to the proof of Theorem~\ref{thm:Cl-determines-length}.
\begin{proof}[Proof of Theorem~\ref{thm:Cl-determines-length} resumed]
Let $\Phi_{\oc-\oc}^1$ and $\Phi_{\oc-\oc}^2$ be two phase functions in the form \eqref{eq:1c-1c-phase-normal-form}  parametrizing $\tilde{\beta}_b^*(\Clo)$ and $\tilde{\beta}_b^*(\Clt)$ cleanly respectively.
Since $\Clo$ is microlocally lens equivalent to $\Clt$, if both $q'_i \in C_{\Phi^i_{\oc-\oc}}$, $i=1,2$ are sent to $q \in \partial \tilde{\beta}_b^*(\mathrm{Gr}(\Cli)')$ under the parametrization map, then by Proposition~\ref{prop:N1-equals-1c1c-phase}, both $\varphi_1^1(q'_1)$ and $\varphi_1^2(q'_2)$ equals to the $N_1$ component of the point in the further lift of $q$ in $[\ococb;J]$, hence
\begin{equation}
    \varphi_1^1(q'_1) = \varphi_1^2(q'_2).
\end{equation}

On the other hand, Proposition~\ref{prop:varphi1-equals-sojourn} shows that the function value of $\varphi_1^i$ at $q'_i$ equals to the sojourn time of $g_1(t)$ and $g_2(t)$ of the corresponding geodesic in Proposition~\ref{prop:classical-SC-geodesic}, hence we have $\mk{T}_{g_1}(\gamma_1) = \mk{T}_{g_2}(\gamma_2)$ when they correspond to the same point in $\beta_b^*(\mathrm{Gr}(\Cli)) \cap \ffococ$. 
Combining with discussion before \eqref{eq:mark-sojourn-equal}, we know this gives $\ell_{g_1(t)} = \ell_{g_2(t)}$ and completes the proof of Theorem~\ref{thm:Cl-determines-length}. 
\end{proof}

We conclude this subsection by the following analogue of \cite[Lemma~4.1.3]{Duistermaat-FIO-book}. It shows that, just as in discussion above where we glue the phase function `globally', the Maslov bundle can also be trivialized globally if we relax the requirement on the phase shift, allowing them to be not necessarily integer multiples of $\pi/2$. 
We should emphasize that this has different nature with the phase equivalence like Proposition~\ref{prop: phase-equivlaence-sec-microlocal} even though they are similar. The value of the phase function is addressing the high-frequency oscillation. If we can't glue them together, then it causes $O(x_{\oc}^{-1})$ level phase shift between different parametrizations. In contrast, the Maslov factor only causes $O(1)$ level phase shift which does not affect the overall smoothness or oscillation behavior, and it is usually incorporated with the amplitude or the symbol.
\begin{prop} \label{prop:Maslov-global-trivialization}
The Maslov bundle over any $\oc-\ps$ Legendrian submanifold $\Legps$ can be trivialized globally, as a complex line bundle\footnote{It is generally non-trivial as a $\mathbb{Z}/4\mathbb{Z}$-bundle, where $\mathbb{Z}/4\mathbb{Z}$ is the multiple of $\frac{\pi}{2} i$ in the phase. It is because we relaxed the condition to allow the $1$-chain to be real valued instead of integer (modulo a constant factor) valued, so that the closed $2$-chain becomes exact.}. 
That is, there are local trivializations $\{(U_j,\phi_j)\}$:
\begin{equation}
  \phi_j: \; M(\Legps)|_{U_j} \to U_j  \times \mathbb{C},
\end{equation}
and smooth (real-valued) functions $\chi_j \in C^\infty(U_j)$ such that
\begin{equation}
e^{-i\chi_k} \cdot \phi_k = e^{-i\chi_j} \cdot \psi_j,
\end{equation}
where $\cdot$ is the multiplication action on the fiber part.
\end{prop}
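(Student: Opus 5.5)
The plan is to mimic the proof of \cite[Lemma~4.1.3]{Duistermaat-FIO-book}: the Maslov bundle $M(\Legps)$ is a flat complex line bundle whose transition functions are constants of the form $e^{i\frac{\pi}{2}m_{jk}}$, with $m_{jk}\in\mathbb{Z}$. We aim to show that the associated real \v{C}ech $1$-cocycle $c_{jk}=\frac{\pi}{2}m_{jk}$ is a coboundary in the sheaf of smooth real-valued functions. That sheaf is fine, so it admits partitions of unity and all its higher cohomology vanishes.

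The first step is to fix a locally finite cover $\{U_j\}$ of $\Legps$ by open sets on which non-degenerate parametrizations $\Phi^j_{\oc-\ps}$ exist, as in Definition~\ref{def:1c-ps-parametrization}. Existence comes from \cite[Proposition~4.6]{HJ2026-scattering-map}. We then define $\phi_j$ as the trivialization induced by $\Phi^j$. On overlaps, the phase equivalence of Proposition~\ref{prop:invariance under parametrization change} shows that $\phi_k=e^{ic_{jk}}\phi_j$. Here $c_{jk}$ is locally constant, equal to $\frac{\pi}{4}$ times the difference of signatures of the Hessians in the fibre variables, for the $\xoc^{-2}$ part and the $\xoc^{-1}$ part separately. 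This parallels how \cite{HJ2026-scattering-map} handles the parabolic scaling.

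The cocycle identity $c_{jk}+c_{kl}+c_{lj}=0$ holds on triple overlaps, up to multiples of $2\pi$. Because signature differences are additive, it in fact holds exactly as real numbers when $c_{jk}$ is taken to be the signature difference itself. The next step is to choose a partition of unity $\{\rho_l\}$ subordinate to $\{U_l\}$ and set
\begin{equation*}
\chi_j=\sum_l \rho_l\, c_{lj}\quad\text{on } U_j.
\end{equation*}
A direct check using the cocycle identity gives $\chi_k-\chi_j=\sum_l\rho_l(c_{lk}-c_{lj})=c_{jk}$. Hence $e^{-i\chi_k}\phi_k=e^{-i\chi_j}\phi_j$ on $U_j\cap U_k$, which is the claimed global trivialization.

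The main obstacle is making the cocycle property exact over $\mathbb{R}$ rather than only modulo $2\pi$. If that fails, the lift of the class to $H^1$ could still be defined, but the real-valued cochain would pick up an integer $2$-cocycle $2\pi n_{jkl}$. This is harmless: since $H^2$ of the fine sheaf vanishes, one writes $2\pi n_{jkl}$ as a coboundary $b_{jk}+b_{kl}+b_{lj}$ of smooth real functions and replaces $c_{jk}$ by $c_{jk}-b_{jk}$, which is now an exact real cocycle. This is the point where real-valued rather than $\frac{\pi}{2}\mathbb{Z}$-valued phases are essential, as the footnote indicates. We also need to check that the two-level (parabolic) structure of the phase does not destroy additivity of the signature differences. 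For this we would treat the $\varphi_0$ and $\varphi_1$ Hessians separately, as in the phase equivalence argument of \cite[Appendix~A]{HJ2026-scattering-map}.
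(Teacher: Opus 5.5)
Your main argument is the same as the paper's. The transition phases form a real-valued \v{C}ech $1$-cocycle in the fine sheaf of smooth functions, so they are a coboundary $c_{jk}=\chi_k-\chi_j$. Your formula $\chi_j=\sum_l\rho_l c_{lj}$ just writes out the partition-of-unity step that the paper states as vanishing sheaf cohomology. Your remark that the $c_{jk}$ are telescoping differences of integer-valued signature functions is exactly what the paper uses without comment when it writes $\psi_{\ell j}=\psi_{\ell k}+\psi_{kj}$. Up to that point the proposal is correct.

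Your final paragraph, however, contains a genuine error and should be deleted. Suppose the real lift only satisfied $c_{jk}+c_{kl}+c_{lj}=2\pi n_{jkl}$. Writing $2\pi n_{jkl}=b_{jk}+b_{kl}+b_{lj}$ with arbitrary smooth real $b_{jk}$ and replacing $c_{jk}$ by $c_{jk}-b_{jk}$ changes the bundle, because $e^{-ib_{jk}}\neq 1$ unless $b_{jk}\in 2\pi\mathbb{Z}$. You may only alter the $c_{jk}$ by locally constant multiples of $2\pi$. The obstruction is therefore the class of $\{n_{jkl}\}$ in $H^2(\Legps;\mathbb{Z})$, which is the first Chern class, not a class in $H^2$ of the fine sheaf. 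The latter vanishes on every manifold, so your fallback would prove that every complex line bundle is trivial, which is false (e.g. the tautological bundle over $\mathbb{C}P^1$).

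So exact additivity over $\mathbb{R}$ is not a removable technicality; it is the entire content of the proposition. It holds because the Maslov transition functions are $i^{\sigma_{jk}}$ with $\sigma_{jk}$ an honest $\mathbb{Z}$-valued cocycle. These are signature differences, taken separately for the $\varphi_0$ and $\varphi_1$ Hessians, and defined as differences of pointwise-defined signature functions so that they telescope exactly. That is what you should cite in place of the fallback.

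The footnote's phrase ``real valued instead of integer valued'' refers to allowing the real $1$-cocycle a real, non-locally-constant coboundary $\chi_j$. This is why the bundle can be trivial as a complex line bundle while nontrivial as a $\mathbb{Z}/4\mathbb{Z}$-bundle. It does not refer to modifying the cocycle itself.
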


\begin{proof}
The proof is similar to that of \cite[Lemma~4.1.3]{Duistermaat-FIO-book}, which we sketch here for the completeness.
Let $e^{i\psi_{kj}}$ be the transition factor from $j$-th patch to the $k$-th patch:
\begin{equation}
\phi_k = e^{i \psi_{kj}} \cdot \phi_j.
\end{equation}
Then the compatibility condition of this transition factor is:
\begin{equation}
\psi_{\ell j} = \psi_{\ell k} + \psi_{kj},
\end{equation}
which means $\{ \psi_{kj} \}$ form a closed $2$-chain. But the $C^\infty$-sheaf over $\Legps$ is a  fine sheaf\footnote{One can see this by using a partition of unity.}, so $H^2(\Legps,C^\infty) = 0$ and this closed 2-chain is exact, which is equivalent to the existence of smooth $\{\chi_j\}$ such that $\psi_{kj} = \chi_k - \chi_j$. 

\end{proof}

\subsection{Determining the 1-jet}
\label{subsec:determine-1-jet}

In this subsection we show that if the assumption of Theorem~\ref{thm:main-intro} is satisfied, then $g_1$ and $g_2$ are lens equivalent in the sense of Definition~\ref{definition:microlocal-lens-equivalence}.

\begin{prop} \label{prop:Cl1=Cl2}
 Suppose that either $g_1$ or $g_2$ admits a convex function in the sense of Definition~\ref{definition:metric-classes} and $S_{g_1}-S_{g_2}$ is a compact operator from $L^2(\R^n)$ 
 to itself, then $\Clo$ is microlocally lens equivalent to $\Clt$ in the sense of Definition~\ref{definition:microlocal-lens-equivalence}.
\end{prop}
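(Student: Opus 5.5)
We argue by contradiction in two stages: first show the Legendrians $\partial \tilde{\beta}_b^*(\mathrm{Gr}(\Clo)')$ and $\partial \tilde{\beta}_b^*(\mathrm{Gr}(\Clt)')$ coincide, then show that $N_1$ agrees on them. By Theorem~\ref{thm: sc-map-HJ}, $S_{g_i}\in I^0_{\oc-\oc}(X_b^2,\tilde{\beta}_b^*(\mathrm{Gr}(\Cli)'))$ is elliptic, and it is microlocally the identity away from the region of bounded $\oc$-frequency over $\partial\overline{\R^n}$ (Proposition~\ref{prop:classical-SC-geodesic}). The idea is to compose $S_{g_1}-S_{g_2}$ with suitable microlocalizing operators so that, if the claim fails at some point, the composition is an elliptic $\oc,\sct-\oc,\sct$ Fourier integral operator. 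Such an operator is non-compact by Corollary~\ref{coro:sec-micro-noncompact}, whereas compactness of $S_{g_1}-S_{g_2}$ forces the composition to be compact.

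\textbf{Stage one.} Suppose there is $q_-$ in the $\oc$-boundary with $\Clo(q_-)\neq\Clt(q_-)$. Choose $B\in\Psi_{\oc}^{0,0}$ elliptic at $\Clo(q_-)$ with wavefront set disjoint from $\Clt(\WF'(A))$, and $A\in\Psi^{0,0}_{\oc}$ elliptic at $q_-$ with small wavefront set. Then $BS_{g_2}A$ is Schwartz (empty composition, as in Corollary~\ref{coro:ALAR-trivial}), while $BS_{g_1}A$ is elliptic at $(\Clo(q_-),q_-)$. Compactness of $B(S_{g_1}-S_{g_2})A$ would then contradict non-compactness of an elliptic order-zero 1c-1c FIO associated to a symplectomorphism graph. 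To get that non-compactness, compose with the adjoint: $(BS_{g_1}A)^*BS_{g_1}A$ is a 1c pseudodifferential operator of order $(0,0)$ elliptic at a boundary point, which is not compact. Hence $\Clo=\Clt$ on the boundary, so the Legendrians agree. By Proposition~\ref{prop:determine-truncated-SC-map} this already gives equality of the truncated scattering relations.

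\textbf{Stage two: the $N_1$ (sojourn) component.} This is the main obstacle, because $N_1$ is invisible to 1c-level microlocalization. Suppose $N_1^{(1)}(q)\neq N_1^{(2)}(q)$ at some boundary point $q$, with $\xioco=\xi_{\oc,0}$. Take $A_L\in I^0_{\oc,\sct-\oc,\sct}$ second-microlocalized at $\xi_{\oc,0}$, for instance of the form \eqref{eq:A=conjugatedA0} with $A_0$ a scattering pseudodifferential operator, elliptic at the lift of the left point of $q$ and microsupported at prescribed $\xi_{\sct,1}$. Take $A_R$ similarly on the right, microsupported near the $\xi_{\sct,2}$ value dictated by metric $1$. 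Condition \eqref{eq:C'-condition} holds for both Lagrangians by \eqref{eq:N1-smooth-expression}. Proposition~\ref{prop:composition-ALA-AAR-FIOsec} then puts $A_LS_{g_i}A_R$ in the $\oc,\sct-\oc,\sct$ class. By Lemma~\ref{lemma:N1-equals-xisc1-plus-xisc2}, $\xi_{\sct,1}+\xi_{\sct,2}=N_1$ along each composed Lagrangian. Since the $N_1$ values differ for the two metrics, after shrinking microsupports, the metric-2 composition has empty composed relation and is Schwartz, while the metric-1 composition is elliptic at a point. Corollary~\ref{coro:sec-micro-noncompact} gives non-compactness, contradicting the compactness of $A_L(S_{g_1}-S_{g_2})A_R$ (the $A$'s are bounded on $L^2$ because of the conjugated scattering form). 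Therefore $N_1$ coincides and the two maps are microlocally lens equivalent.

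The delicate points are two. First, one must check that the second-microlocalized operators are bounded on $L^2$ and exist with the required microsupport; the conjugation by $e^{\pm i\xi_{\oc,0}/(2\xoco^2)}$ reduces both to scattering calculus facts. Second, one must verify transversality of the compositions, which holds because the relations are graphs of symplectomorphisms. The convexity hypothesis is not used here; it enters only later through Theorem~\ref{thm:boundary-rigidity}.
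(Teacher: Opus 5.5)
Your proposal is correct and follows essentially the same route as the paper. The first case (where $\Clo$ and $\Clt$ differ on $\overline{{}^{\oc}T^*_{\partial\overline{\R^n}}\R^n}$) is handled by $\oc$-microlocalizers together with the adjoint argument of Lemma~\ref{lemma:1c1c-FIO-compact-criterion}. The second case (where only $N_1$ differs) is handled by conjugated scattering operators $e^{-i\xi_{\oc,0}/(2\xoco^2)}\tilde A\, e^{i\xi_{\oc,0}/(2\xoco^2)}$ localized in $\xi_{\sct,1},\xi_{\sct,2}$, using Lemma~\ref{lemma:N1-equals-xisc1-plus-xisc2}, Corollary~\ref{coro:ALAR-trivial} and Corollary~\ref{coro:sec-micro-noncompact}; the paper places both $\xi_{\sct}$ values near $N_{10}/2$, and it also inserts the microlocalizer $Q_{\oc}$ and notes that $A^*A$ lies in $\Psi_{\oc}^{-\infty,0}$, neither of which changes your argument.
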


We have the following criterion on the compactness of $\oc-\oc$ Fourier integral operators. 
\begin{lmm} \label{lemma:1c1c-FIO-compact-criterion}
    Suppose $A \in I_{\oc-\oc}^m(X_b^2,\Lag)$ for an admissible 1c-1c Lagrangian submanifold $\Lag$ with $m<0$, then $A$ acts as a compact operator from $H_{\oc}^{s,r}(\R^n)$ to itself.

    On the other hand, if $A \in I_{\oc-\oc}^0(X_b^2,\Lag)$ with $\Lag'$ being a graph of a symplectomorphism,
    and $A$ is elliptic at $q \in \Leg = \partial \Lag$, then $A$ is not compact from any $H_{\oc}^{s,r}(\R^n)$ to itself.
\end{lmm}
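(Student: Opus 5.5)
For the first part I would show that $A$ maps $H_{\oc}^{s,r}$ boundedly into a strictly smaller space that embeds compactly, namely $H_{\oc}^{s',r'}$ with $s'>s$ and $r'>r$. The natural order-zero candidate for boundedness is to compose with adjoints. By Proposition~\ref{prop: 1c-1c transversal composition}, $A^*A\in I^{2m}_{\oc-\oc}(X_b^2,(\Lag^{*}\circ\Lag)')$. When $\Lag'$ is not a graph, I would first reduce to that case by a partition of unity and the composition law, or use a $T^*T$ argument on each microlocal piece.

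The key step is to identify order-$0$ operators on graphs of symplectomorphisms as bounded on $L^2$. Writing the kernel in the form \eqref{eq: 1c-1c FIO, local form} and composing with its adjoint, the composed Lagrangian is the lifted conormal bundle of the diagonal, so $A^*A$ is a $1$-cusp pseudodifferential operator in $\Psi^{0,2m}_{\oc}$ (with differential order irrelevant, since the kernel is microlocalized away from fiber infinity). I would check this by writing the phase sum as the $1$-cusp quantization phase $\xi_{\oc}(x-x')/x^3+\eta_{\oc}\cdot(y-y')/x$ after stationary phase in the intermediate variables; this is the same computation as in Corollary~\ref{coro:sec-micro-noncompact}. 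Then $L^2$ boundedness of $\Psi_{\oc}$ gives $\|Au\|^2=\langle A^*Au,u\rangle\lesssim\|x_{\oc}^{-m}u\|^2$.

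Conjugating by $1$-cusp elliptic weights $\Lambda_{s,r}\in\Psi_{\oc}^{s,r}$ (compositions with pseudodifferential operators stay in the class by Proposition~\ref{prop: 1c-1c transversal composition}, treating $\Psi_{\oc}$ as FIOs on the diagonal) reduces $H^{s,r}_{\oc}\to H^{s,r}_{\oc}$ boundedness to the $L^2$ case. Because the kernel is compactly supported in frequency, $A$ in fact maps $H^{s,r}_{\oc}$ into $H^{N,r-m}_{\oc}$ for all $N$, and since $-m>0$ this is compact into $H^{s,r}_{\oc}$ by Rellich for $1$-cusp Sobolev spaces. Alternatively, $A$ is a norm limit of operators cut off to $x_{\oc}>\epsilon$, which are smoothing with compact support.

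For the second part I would argue by contradiction, following Corollary~\ref{coro:sec-micro-noncompact}. Suppose $A$ is compact. Choose $B\in I^0_{\oc-\oc}$ on the inverse graph, elliptic at the image point; then $BA$ is compact. By the composition law, $BA\in\Psi^{0,0}_{\oc}$ and is elliptic at a boundary point $\tilde q$ over $x_{\oc}=0$. Next construct a sequence $u_k$, normalized in $H^{s,r}_{\oc}$, converging weakly to zero and concentrating at $\tilde q$, for instance $u_k=x_{\oc}^{r}e^{i\phi/x_{\oc}^{2}}\chi_k$ with bumps moving toward the boundary. For such a sequence $\|BAu_k\|\ge c>0$ by ellipticity, which contradicts compactness. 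Equivalently, an operator in $\Psi^{0,0}_{\oc}$ that is elliptic at a boundary point is not compact, because its symbol does not tend to zero there.

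The main obstacle I expect is the careful identification of $A^*A$ as a $1$-cusp pseudodifferential operator with the correct weight. The parabolic structure of the phase, with $x_{\oc}^{-2}$ and $x_{\oc}^{-1}$ levels, has to combine into the $1$-cusp quantization phase, and the density factors have to give weight exactly $x_{\oc}^{-2m}$. I would first try to verify this using the normal form \eqref{eq:1c-1c-phase-normal-form}.
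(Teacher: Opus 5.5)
Your proposal follows essentially the same route as the paper. Compose with the adjoint so that $A^*A\in I^{2m}_{\oc-\oc}(X_b^2;{}^{\oc}N^*\Delta_b)=\Psi_{\oc}^{-\infty,2m}(\R^n)$. For part one, use the mapping property $H^{s,r}_{\oc}\to H^{s+s',r-2m}_{\oc}$. For part two, use that an operator in $\Psi^{-\infty,0}_{\oc}$ elliptic at a boundary point is not compact. In part two the paper takes your auxiliary $B$ to be $A^*$ itself, which automatically lies on the inverse graph and is elliptic at the right point.

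Your $TT^*$ estimate, combined with the compact embedding $H^{N,r-m}_{\oc}\hookrightarrow H^{s,r}_{\oc}$, is a useful addition. It spells out the passage from $A^*A$ to $A$ itself, which the paper leaves implicit.

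One step would not work: reducing a non-graph $\Lag'$ to the graph case by a partition of unity. Being a local canonical graph is an intrinsic property of $\Lag'$, and localization does not produce it. For degenerate relations you should not expect $m<0$ alone to compensate for the loss coming from the degenerate projection.

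The paper's proof carries the same tacit restriction, since it asserts that $(\Lag^*)'\circ\Lag'$ is the identity. Moreover, only the graph case $\Lag=\tilde{\beta}_b^*(\mathrm{Gr}(\Clo)')$ is ever used. So you should simply add that hypothesis to part one rather than attempt the reduction.
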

\begin{proof}
First of all, $A^* \in I_{\oc-\oc}^m(X_b^2,\Lag^*)$, where $\Lag^*$ obtained by switching the left hand right variables of $\Lag$ and the sign of all frequencies. Then we know that the composition of the corresponding canonical relation
\begin{equation}
    (\Lag^*)' \circ \Lag',
\end{equation}
where the prime stands for switching the sign of frequencies lifted from the right variable,
is the identity map, which means that the corresponding Lagrangian submanifold is the conormal bundle of the lifted diagonal ${}^{\oc}N^*\Delta_b$ in $\ococb$. 
Then by the composition law in Proposition~\ref{prop: 1c-1c transversal composition}, we have
\begin{align} \label{eq:A*A-1c1c}
A^*A \in I^{2m}_{\oc-\oc}(X^2_b;{}^{\oc}N^*\Delta_b).
\end{align}
On the other hand, the class $I^{2m}_{\oc-\oc}(X^2_b;{}^{\oc}N^*\Delta_b)$ is precisely the class of $\oc$-pseudodifferential operators
\begin{align*}
\Psi_{\oc}^{-\infty,2m}(\R^n),
\end{align*}
defined in \cite[Section~2.3]{zachos2022inverting} (see also \cite[Section~2.1]{jia2022tensorial}), where $-\infty$ is the differential order and $2m$ is the decay order.
In particular, $A^*A$ acts as a bounded map
\begin{equation}
    H_{\oc}^{s,r}(\R^n) \to H_{\oc}^{s+s',r-2m}(\R^n), \; \forall s' \in \R.
\end{equation}
When $m<0$, taking $s'>0$, we know that it is compact operator from $H_{\oc}^{s,r}(\R^n)$ to itself.

For the second part, we take $m=0$ in \eqref{eq:A*A-1c1c}, and also use the last part of Proposition~\ref{prop: 1c-1c transversal composition} now.
Let $q_r \in \ocphase$ be the right projection of $q$ and $q_r'$ be the point with the sign of its frequency changed, then $A^*A$ is elliptic at the lift of $(q_r',q_r)$ as an element of $I^{0}_{\oc-\oc}(X^2_b;{}^{\oc}N^*\Delta_b)$, which is equivalent to that $A^*A \in \Psi_{\oc}^{-\infty,0}(\R^n)$ is elliptic at $q_r'$. Such operator is not compact from any $H^{s,r}_{\oc}(\R^n)$ to itself, so does $A$, since otherwise $A^*A$ would be a composition of a compact operator with a bounded operator, which is compact.
\end{proof}

\begin{remark} \label{remark:coincide-leading-order-vs-compact}
Lemma~\ref{lemma:1c1c-FIO-compact-criterion} shows that, if $S_{g_1}$ and $S_{g_2}$ coincide in the sense that their principal symbols are the same, or equivalently
    \begin{equation} \label{eq:Sg1-Sg2-leading-same-remark}
    S_{g_1} - S_{g_2} \in  I^{-1}_{\oc-\oc}(X_b^2,\tilde{\beta}_b^*(\mathrm{Gr}(\Clo)')),
\end{equation}
then the condition ``$S_{g_1}-S_{g_2}$ acts as a compact operator on $L^2(\R^n)$ or other $\oc$-Sobolev spaces'' in Theorem~\ref{thm:main-intro} is satisfied. So our main theorem is formulated to state the stronger version (i.e. with weaker condition) among two questions under conditions \eqref{eq:S1-S2-condition-1}\eqref{eq:S1-S2-condition-2}.
\end{remark}

Now we prove Proposition~\ref{prop:Cl1=Cl2}.
\begin{proof}[Proof of Proposition~\ref{prop:Cl1=Cl2}]
  Suppose $\Clo$ is not microlocally lens equivalent to $\Clt$, then either the restriction to $\overline{{}^{\oc}T^*_{\partial\overline{\R^n}}\R^n}$ are different or the lift of $\mathrm{Gr}(\Clo)$ and $\mathrm{Gr}(\Clt)$ in the further lift of $\ffococ$ $[\ococb;J]$ have different $N_1$-component defined in \eqref{eq:N1-def} at some point with the same projection in $\ffococ \subset \ococb$.

For the first case, $\mathrm{Gr}(\Clo)$ and $\mathrm{Gr}(\Clt)$ will be different on the level of 1-cusp frequencies and it is sufficient to use 1-cusp pseudodifferential operators to microlocalize $S_{g_i}$ to produce a contradiction.
More precisely, we only need to show that we can take pseudodifferential operators $Q,Q' \in \Psi_{\oc}^{0,0}$, which are bounded from $L^2(\R^n)$ to $L^2(\R^n)$, so that
\begin{equation}
    Q (S_{g_1}-S_{g_2}) Q_{\oc}Q'
\end{equation}
is not compact, where $Q_{\oc}$ satisfy conditions after \eqref{eq:Q1c-Qps-def}. In fact, we will choose $Q,Q'$ so that $QS_{g_2}Q_{\oc}Q'$ is compact from $L^2(\R^n)$ to itself while $QS_{g_1}Q_{\oc}Q'$ is not.

By the condition in this case, there exists $(q_0,\tilde{q}_0) \in \overline{{}^{\oc}T^*_{\partial\overline{\R^n}}\R^n} \times \overline{{}^{\oc}T^*_{\partial\overline{\R^n}}\R^n}$
such that 
\begin{equation}
    (q_0,\tilde{q}_0) \in \mathrm{Gr}(\Clo) \subset \ocphase \times \ocphase, \; (q_0,\tilde{q}_0) \notin \mathrm{Gr}(\Clt).
\end{equation}
Then we choose $Q_0,Q_0' \in \Psi_{\oc}^{0,0}(\R^n)$ such that $Q_0$ (resp. $Q_0'$) is elliptic at $q_0$ (resp. $\tilde{q}_0$) and with $\WF'_{\oc}(Q_0)$ (resp. $\WF'(Q_0')$) contained in a small neighborhood of $q_0$ (resp. $\tilde{q}_0$). In particular, we can assume that
\begin{equation}
    \text{There is no point } q \in \WF'_{\oc}(Q_0), \, q' \in \WF'_{\oc}(Q_0') \text{ such that }
    (q,q') \in \mathrm{Gr}(\Clt).
\end{equation}

Then by \cite[Proposition~7.6]{HJ2026-scattering-map} (and the definition of $\oc-\oc$ wavefront set there), we know
\begin{equation}
    QS_{g_2}Q_{\oc}Q' \in \mathcal{S}(\R^n \times \R^n).
\end{equation}

On the other hand, by the definition of $Q_{\oc}$, both $\Clo$ and $\Clt$ coincide with identity outside the region on which $Q_{\oc}$ is microlocally identity, so we know $q_0$ is in such a region.
Applying Proposition~\ref{prop: 1c-1c transversal composition} repeatedly, we know that 
\begin{equation}
    QS_{g_1}Q_{\oc}Q' \in I_{\oc-\oc}^0(X_b^2, \tilde{\beta}_b^*(\mathrm{Gr}(\Clo)')),
\end{equation}
and is elliptic at the lift of $(q_0,\tilde{q}_0')$ in $\tilde{\beta}_b^*(\mathrm{Gr}(\Clo)')$.
Applying the second part of Lemma~\ref{lemma:1c1c-FIO-compact-criterion}, we know this is not compact from $L^2(\R^n)$ to itself and finishes the proof in this case.

Now we consider the second case, where $\Clo$ and $\Clt$ have the same restriction to the boundary, but lifts of their twisted graphs in $[\ococb;J]$ have different $N_1$-components. In this case, we need to use tools developed in Section~\ref{sec:second-micro} to further microlocalize to the scattering frequency level.
Let coordinates be as in \eqref{eq:coordinate-near-ffSJ} with $1,2$ replaced by $+,-$, then there is a point $q_0 = (\sigma_0 = 1,x_{\oc,+0}=0,y_{\oc,+0},y_{\oc,-0}, N_{10},\xi_{\oc,+0} ,\eta_{\oc,+0},\eta_{\oc,-0})$ such that
\begin{align*}
q_0 \in \beta_{SJ}^*\tilde{\beta}_b^*(\mathrm{Gr}(\Clo)),
\quad q_0 \notin \beta_{SJ}^*\tilde{\beta}_b^*(\mathrm{Gr}(\Clt)),
\end{align*}
but $\beta_{SJ}(q_0) = (1,0,y_{\oc,+0},y_{\oc,-0} ,\xi_{\oc,+0},-\xi_{\oc,+0},\eta_{\oc,+0},\eta_{\oc,-0})$ is in both $\partial \tilde{\beta}_b^*(\mathrm{Gr}(\Clo))$ and $\partial \tilde{\beta}_b^*(\mathrm{Gr}(\Clt))$.

Let $S_{\sct}^{0,0}(\R^n)$ be the class of scattering symbols defined in Section~\ref{subsec:sc-bundle-PsiDO} and let $a_\pm \in S_{\sct}^{0,0}(\R^n)$ be symbols supported near $(0,y_{\oc,\pm},\frac{N_{10}}{2},\eta_{\oc,\pm})$ with $\tilde{A}_\pm \in \Psi_{\sct}^{0,0}(\R^n)$ being their quantizations.

Let $U$ be a neighborhood of $\beta_{SJ}(q_0) \in \ococb$ so that the $N_1$-component on $\beta_{SJ}^*\tilde{\beta}_b^*(\mathrm{Gr}(\Clt) \cap U)$ is not in $[N_{10}-\delta_0,N_{10}+\delta_0]$ for some $\delta_0>0$.
This is possible because as pointed out after \eqref{eq:N1-smooth-expression}, $N_1$ is still smooth when viewed as a function on $\tilde{\beta}_b^*(\mathrm{Gr}(\Clt))$.
We may choose $a_\pm \geq 0$ to have small support so that: 
\begin{enumerate}
    \item The support of $a_\pm$ is contained in the projection of $U$ to the left and right factors and further projected to $(x_{\oc,\pm},y_{\oc,\pm},\eta_{\oc,\pm})$-components, and $\xi_{\oc,+}+\xi_{\oc,-}$ is contained in $[N_{10}-\delta_0,N_{10}+\delta_0]$.   \label{condition:a-pm-1}
    \item   In addition, we may assume
   \begin{equation} \label{eq:a-pm-1}
       a_\pm(x_{\oc,\pm 0},y_{\oc,\pm 0},\frac{N_{10}}{2},\eta_{\oc,\pm 0}) = 1.
   \end{equation}
   \label{condition:a-pm-2}
\end{enumerate}

As pointed out in \eqref{eq:sc-phase}, the phase function of $\tilde{A}_\pm$ parametrizes ${}^{\sct}N^*\Delta_b$, the conormal bundle of the diagonal in $\beta_b^*(\scphase \times \scphase)$.
So if we consider
\begin{equation}
    A_\pm = e^{-\frac{\xi_{\oc,+0}}{2x_{\oc,+}^2}} \tilde{A}_\pm e^{\frac{\xi_{\oc,+0}}{2x_{\oc,+}^2}},
\end{equation}
then $A_\pm \in I_{\oc,\sct-\oc,\sct}^{0}(X_b^2,T_{\xi_{\oc,0}}({}^{\sct}N^*\Delta_b))$, where 
$T_{\xi_{\oc,0}}({}^{\sct}N^*\Delta_b)$ is the $\oc,\sct-\oc-\sct$ Lagrangian submanifold second-microlocalized at $\xi_{\oc,0+}$ in ${}^{\oc,\sct,\xi_{\oc,0+}}T^*X_b^2$ obtained by translating $\xi_{\oc,+}$ by $\xi_{\oc,+0}$ and $\xi_{\oc,-}$ by $-\xi_{\oc,+0}$, where we used the identification in Proposition~\ref{prop:1c-sc-phase-space-relation}.
Now let $C_+',C_-'$ be the part of $T_{\xi_{\oc,+0}}({}^{\sct}N^*\Delta_b))$ such that $T_{\xi_{\oc,+0}}^{-1}(C_\pm')$ projects (by the left projection) to a small neighborhood of $\WF'_{\sct}(\tilde{A}_\pm)$, then we know
\begin{equation}
    A_\pm \in I_{\oc,\sct-\oc,\sct}^{0}(X_b^2,C_\pm').
\end{equation}

On the other hand, applying Lemma~\ref{lemma:N1-equals-xisc1-plus-xisc2}, we know condition \eqref{condition:a-pm-1} above ensures that
\begin{align*}
C_+ \circ \tilde{\beta}_b^*(\mathrm{Gr}(\Clt)) \circ C_- = \emptyset.
\end{align*}
So we apply Proposition~\ref{prop:composition-ALA-AAR-FIOsec} first to see that $A_+S_{g_2}Q_{\oc} \in I^0_{\oc,\sct-\oc,\sct}(X_b^2,C_+ \circ \tilde{\beta}_b^*(\mathrm{Gr}(\Clt)))$ and then by Corollary~\ref{coro:ALAR-trivial} we know
\begin{equation}
    A_+S_{g_2}Q_{\oc}A_- \in \mathcal{S}(\R^n \times \R^n),
\end{equation}
hence acts compactly on $L^2(\R^n)$.

So it remains to show that $A_+S_{g_1}Q_{\oc}A_-$ does not act compactly on $L^2(\R^n)$.
We still apply Proposition~\ref{prop:composition-ALA-AAR-FIOsec} and Proposition~\ref{prop:composition-AL-AR-FIOsec} to see that 
\begin{equation}
    A_+S_{g_1}Q_{\oc}A_- \in I^0_{\oc,\sct-\oc,\sct}(X_b^2,C_+ \circ \tilde{\beta}_b^*(\mathrm{Gr}(\Clt)) \circ C_-).
\end{equation}
In addition, using Lemma~\ref{lemma:N1-equals-xisc1-plus-xisc2}, we know that in terms of coordinates in \eqref{eq:coordinates-near-ffsec},
$\partial(C_+ \circ \tilde{\beta}_b^*(\mathrm{Gr}(\Clt)) \circ C_-)$ are those points 
\begin{equation}
(0,1,y_{\oc,+},y_{\oc,-},\xi_{\sct,+},\xi_{\sct,-},\eta_{\oc,+},\eta_{\oc,-})
\end{equation}
such that 
\begin{equation}
    (0,1,y_{\oc,\pm},y_{\oc,\pm},\xi_{\sct,\pm},-\xi_{\sct,\pm},\eta_{\oc,\pm},-\eta_{\oc,\pm}) \in \partial C_+', 
\end{equation}
and in terms of coordinates in \eqref{eq:coordinate-near-ffSJ}
\begin{equation}
(0,1,y_{\oc,+},y_{\oc,-},\xi_{\sct,+}+\xi_{\sct,-},\xi_{\oc,0},\eta_{\oc,+},\eta_{\oc,-}) \in 
\partial \beta_{SJ}^*\tilde{\beta}_b^*(\mathrm{Gr}(\Clt)).
\end{equation}
So by condition \eqref{eq:a-pm-1} above, it is elliptic at $(0,1,y_{\oc,+0},y_{\oc,-0},\frac{N_{10}}{2},\frac{N_{10}}{2},\etaoco,\etaoct)$.
Then by Corollary~\ref{coro:sec-micro-noncompact}, it is not compact from $L^2(\R^n)$ to itself, which completes the proof.

\end{proof}

\subsection{The proof of the main theorem}
We prove the main result, i.e. the `only if' part of Theorem~\ref{thm:main-intro} now.
Suppose $S_{g_1}-S_{g_2}$ is a compact operator from $L^2(\R^n)$ to itself, then by Proposition~\ref{prop:Cl1=Cl2}, $\Clo$ is microlocally lens equivalent to $\Clt$. 
By Proposition~\ref{prop:determine-truncated-SC-map}, we know that their scattering relation truncated to a ball are the same $\Clo^R = \Clt^R$.
Then by Theorem~\ref{thm:Cl-determines-length}, we know $\ell_{g_1(t)} = \ell_{g_2(t)}$ for all $t$. 
Finally, applying Theorem~\ref{thm:boundary-rigidity} completes the proof.

\appendix

\section{The `forward problem': the diffeomorphism invariance of the scattering map on the leading order}
\label{sec:appendix-forward-invariance}

In this appendix, we prove that if two metrics are related by a pull-back, then the scattering map associated to the corresponding Schr\"odinger equations coincide on the leading order. We give the precise formulation first.

\begin{thm} \label{thm:forward-coincide-leading-order}
Let $\psi: \R^{n+1}_{t,z} \to \R^{n+1}_{t,z}$ be a diffeomorphism in form of \eqref{eq:psi-concrete} (i.e. preserving $t$ and only acts non-trivially on a compact region), and 
\begin{equation} \label{eq:metric-pullback}
    g_1 = \psi^*g_2
\end{equation}
satisfying conditions in Section~\ref{subsec:main-results}.
Then they have the same classical scattering map (defined in \eqref{eq:classical-sc-map-def}) $\Clo=\Clt$ and
\begin{equation} \label{eq:Sg1-Sg2-leading-same}
    S_{g_1} - S_{g_2} \in  I^{-1}_{\oc-\oc}(X_b^2,\beta_b^*(\mathrm{Gr}(\Clo)')), \quad X = \overline{\R^n}. 
\end{equation}
In particular, in combination with the first part of Lemma~\ref{lemma:1c1c-FIO-compact-criterion}, this proves the "if" direction in Theorem~\ref{thm:main-intro}.
\end{thm}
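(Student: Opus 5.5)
The plan is to show first that $\psi$ intertwines the geometric objects attached to $P_1$ and $P_2$, giving $\Clo=\Clt$, and then that the Poisson operators of $P_1$ and $P_2$ agree to leading order in the 1c-ps calculus, so the formula \eqref{eq:S formula} yields \eqref{eq:Sg1-Sg2-leading-same}.

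For the geometric step, write $\psi(t,z)=(t,\psi_1(t,z))$. The time-slice-preserving diffeomorphism $\psi$ lifts to a map $\tilde\psi$ on $T^*\R^{n+1}$. On the $\zeta$-variables it acts by the inverse transpose of $d_z\psi_1$. In addition, $\tau$ is shifted by the term $-\zeta\cdot\partial_t\psi_1$ coming from the $t$-dependence of $\psi_1$. At ps-fiber infinity this shift is lower order in the parabolic scaling, since $\tau\sim|\zeta|^2$ while the correction is $O(|\zeta|)$. Hence $\tilde\psi$ extends smoothly to $\overline{{}^{\ps}T^*\R^{n+1}}$ and carries $\Char(P_1)$ to $\Char(P_2)$. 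Since $g_1=\psi^*g_2$, it maps $g_1(t)$-geodesics in each time slice to $g_2(t)$-geodesics. By \eqref{eq:iota-def}, the rescaled flows $H^{2,0}_{p_i}$ at fiber infinity are these lifted geodesic flows, so $\tilde\psi$ intertwines them there.

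Since $\psi$ is the identity outside $[-T,T]\times B_R(0)$, $\tilde\psi$ is the identity near $\SR_\pm$ and $W_\pm$. Each bicharacteristic of $P_1$ is therefore mapped to one of $P_2$ with the same endpoints under Proposition~\ref{prop:Wpm-1c-identification}, so $\Clo=\Clt$. The same reasoning shows $\tilde\psi$ maps $\Lambda^1_\pm$ onto $\Lambda^2_\pm$, with the identity acting on the $\oc$-factor. Since $\psi$ preserves geodesic length, Proposition~\ref{prop:varphi1-equals-sojourn} also gives equal sojourn times, consistent with the 1-jets agreeing.

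For the analytic step, let $\Psi^*u=u\circ\psi$ be the pullback, twisted by $|\det d_z\psi_1|^{1/2}$ so that it is unitary on the $L^2$ half-densities in $z$. A direct computation gives
\begin{equation*}
\Psi^* P_2 (\Psi^*)^{-1} = D_t + \Delta_{g_1(t)} + V' + X,
\end{equation*}
where $X$ is a first-order differential operator in $z$ that comes from $\partial_t$ hitting $\psi_1(t,z)$; its principal part is essentially $-i(\partial_t\psi_1)\cdot\nabla$. $X$ and $V'$ are compactly supported. The key point is that $X\in\Psi^{1,0}_{\ps}$ has ps-order one, strictly lower than $P$'s order two. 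It therefore changes only the subprincipal symbol $p_{\sub}$ in Proposition~\ref{prop: vanishing principal symbol product}.

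Let $\tilde P_1=P_1+X+(V'-V_1)$. Its scattering map equals $S_{g_2}$, because $\Psi^*$ is the identity near spacetime infinity and so preserves the asymptotic data $f_\pm$. So it suffices to compare $S_{g_1}$ with $\tilde S$, the scattering map of $\tilde P_1$. Both are elliptic elements of $I^0_{\oc-\oc}(X_b^2,\tilde\beta_b^*(\mathrm{Gr}(\Clo)'))$ by Theorem~\ref{thm: sc-map-HJ}, and I would compare their principal symbols through \eqref{eq:S formula} and Proposition~\ref{prop: 1c-ps composition to 1c-1c}. By Proposition~\ref{prop:microlocalized Poisson are 1c-ps FIOs}, the Poisson operators of $P_1$ and $\tilde P_1$ lie in the same class on the same Lagrangians $\Lambda^1_\pm$. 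Their principal symbols solve transport equations of the form \eqref{eq: 1c-ps vanishing principal composition, principal symbol}. They have identical initial data at $W_\mp$, since both operators agree with the free operator $P_0$ near $\SR_\pm$.

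The main obstacle is the subprincipal term: $X$ contributes $\sigma_1(X)=(\partial_t\psi_1)\cdot\zeta$ to $p_{\sub}$, which is not a priori zero. I would handle it in one of two ways. The first is to show that this contribution integrates to a pure phase along each bicharacteristic, equal to the jump in a generating function between the endpoints. That jump vanishes because $\psi$ is the identity outside the perturbation region. The fallback is to absorb the term into the conjugation: rewrite $\Psi^*$ as a 1c-ps-microlocal operator whose symbol corrects the phase, and check that the resulting factor is $1$ at $W_\pm$. Either way the Poisson symbols agree, so by Proposition~\ref{prop: 1c-ps composition to 1c-1c} the principal symbols of $S_{g_1}$ and $\tilde S=S_{g_2}$ coincide. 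The exact sequence \eqref{eq:1c-1c-FIO-short-exact-sequence} then gives \eqref{eq:Sg1-Sg2-leading-same}.
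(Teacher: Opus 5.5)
Your geometric step ($\Clo=\Clt$, and $\Lambda^1_\pm$ carried to $\Lambda^2_\pm$) agrees with the paper. You have also isolated exactly the right term in the analytic step. The gap is that neither of your two ways of disposing of it can work, because its contribution is not a boundary term.

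Let $b(t,w)=(\partial_t\psi_1)\big(t,\psi_1(t,\cdot)^{-1}(w)\big)$ be the velocity field of the family $\psi_1(t,\cdot)$, and write $\psi^*v=v\circ\psi$. Naturality of the Laplacian gives the exact identity $P_1\psi^*=\psi^*(P_2+b\cdot D_w)$, taking $V_1=\psi^*V_2$; the potentials play no role at leading order. So $S_{g_1}$ is the scattering map of $P_2$ perturbed by a real, compactly supported first order term, which acts like a magnetic potential $b/2$.

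A bicharacteristic at fibre infinity crosses $B_R(0)$ instantaneously at $t_0=-\tfrac12\xi_{\oc}$, along a $g_2(t_0)$-geodesic $\gamma$. The real subprincipal term $b\cdot\zeta$ in the transport equation then multiplies the amplitude by $\exp\big(-\tfrac{i}{2}\int_\gamma b^\flat\big)$, where $b^\flat=g_2(t_0)\big(b(t_0,\cdot),\cdot\big)$. This is the geodesic X-ray transform of a $1$-form, not the jump of a generating function. It vanishes for every $\gamma$ when $b^\flat(t_0)$ is exact with potential vanishing near $\partial B_R(0)$, but nothing in the hypotheses forces that. Your fallback fails for the same reason: the principal symbol of $S_{g_1}$ relative to $S_{g_2}$ is intrinsic, so no auxiliary conjugation that is trivial near spacetime infinity can remove this phase.

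In fact the step cannot be completed, because the statement fails as written. Take $n\ge 2$, $g_2=g_0$, $V_1=V_2=0$, and $\psi_1(t,z)=R\big(\theta(t)\phi(|z|)\big)z$. Here $R(\alpha)$ is the rotation by $\alpha$ in the $(z_1,z_2)$-plane, $\theta\in C_c^\infty(\R)$, and $\phi(|\cdot|)\in C_c^\infty(\R^n)$ with $\phi\ge 0$, $\phi\not\equiv 0$. Then $g_1=\psi^*g_0$ is flat at each time, non-trapping, and admits the convex function $|z|^2$. Its velocity field is $b=\theta'(t)\phi(|w|)Jw$ with $Jw=(-w_2,w_1,0,\dots,0)$. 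Along the line $\{he_2+se_1: s\in\R\}$ at time $t_0$, the phase is $c=\tfrac12\theta'(t_0)h\int_{\R}\phi(\sqrt{s^2+h^2})\,ds$. This is nonzero and small when $\theta'(t_0)\ne 0$ and $h\ne 0$ is small. Since $S_{g_2}=\Id$, the operator $S_{g_1}-S_{g_2}$ has principal symbol $(e^{ic}-1)$ times that of $\Id$ at the corresponding point of $\partial\tilde\beta_b^*(\mathrm{Gr}(\Clo)')$. It is therefore not compact, by the second part of Lemma~\ref{lemma:1c1c-FIO-compact-criterion}.

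An extra hypothesis is needed, for instance that $\psi$ is independent of $t$, or that $b^\flat(t)$ has vanishing geodesic X-ray transform for every $t$. Under such a hypothesis the contribution of $X$ integrates to zero and your argument goes through. The paper's proof passes over the same point. It asserts that $p_{\sub,1}$ agrees with $\psi_{\oc-\ps}^*p_{\sub,2}$ to leading order. But the symplectic lift sends $\tau$ to $\tau_w=\tau-\zeta_w\cdot\partial_t\psi_1$, so $\psi_{\ps}^*p_2=p_1-\zeta_w\cdot\partial_t\psi_1$. This term of parabolic order one is exactly what enters $p_{\sub}$.
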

The fact $\Clo=\Clt$ follows from the property of $\psi$ that it acts by identity outside a compact region. Let $\gamma(s)$ be a geodesic with respect to $g_1$, then $\psi(\gamma(s))$ is a geodesic with respect to $g_2$ which coincides with $\gamma(s)$ outside a compact region and the conclusion follows.

Now we turn to \eqref{eq:Sg1-Sg2-leading-same}. 
By the short exact sequence in \eqref{eq:1c-1c-FIO-short-exact-sequence}, proving \eqref{eq:Sg1-Sg2-leading-same} is reduced to prove that $S_{g_1}$ and $S_{g_2}$ have the same principal symbol.
Let $\Poi_{\pm,i}$, $i = 1,2$ be the forward and backward Poisson operators associated to $g_1,g_2$, then $S_{g_i}$ are related to them via the formula in \cite[Equation~(7.33)]{gell2022propagation}:
\begin{equation}
 S_{g_i} = i(2\pi)^n \mathcal{P}_{+,i}^* [P_i,Q_+] \mathcal{P}_{-,i}.
\label{eq:S formula 2} 
\end{equation}

Let $\psi_{\ps}$ be the symplectic lift of $\psi$ induced on $T^*\R^{n+1}$ first and then extended to $\overline{{}^{\ps}T^*\R^{n+1}}$. 
Then this further induces a symplectomorphism, which we denote by $\psi_{\oc-\ps}$, on the part of $\oc-\ps$ phase space $\mathcal{M}$ with $\sigma = x_{\oc}/\rho_{\ps} \lesssim 1$ via acting on components lifted from $\overline{{}^{\ps}T^*\R^{n+1}}$ by $\psi_{\ps}$ and act as identity on $\sigma$ and other components lifted from $\overline{{}^\oc{T^*\RR^n}}$.
In particular, this region includes our admissible fibered-Legendre submanifolds and their Lagrangian extensions.
Let $\Lambda_{\pm}^i$, $i=1,2$ be the microlocalized Lagrangians defined in \eqref{eq:microlocalized sojourn relns} with the metric being $g_i$, then by definition and relations between geodesics of $g_1 = \psi^*g_2$ and $g_2$, we know that potentially after shrinking one of $\Lambda_\pm^i$
\begin{equation} \label{eq:Lambda-12-relation}
    \psi_{\oc-\ps}|_{\Lambda_{\pm}^1}: \; \Lambda_{\pm}^1 \to \Lambda_{\pm}^2
\end{equation}
is a diffeomorphism.
If we can prove \footnote{Here we ignore certain microlocalizers needed to put the Poisson operator into our class of Fourier integral operators. See Proposition~\ref{prop:Poisson-leading-invariance} below for the precise version.}
\begin{equation} \label{eq:Poisson-principal-pullback}
    \psi_{\oc-\ps}^*\sigma_{\oc-\ps}^{-3/4}(\Poi_{\pm,2}) = \sigma_{\oc-\ps}^{-3/4}(\Poi_{\pm,1}),
\end{equation}
then since $[P_i,Q_+]$ have the same principal symbol because $P_i$ have the same principal symbol, 
by Proposition~\ref{prop: PsiDO- 1c-ps FIO composition}, we know
\begin{equation}
    \psi_{\oc-\ps}^*\sigma_{\oc-\ps}^{1/4}([P_2,Q_+]\Poi_{-,2}) = \sigma_{\oc-\ps}^{1/4}([P_1,Q_+]\Poi_{-,1}).
\end{equation}
Combining this with \eqref{eq:Poisson-principal-pullback} with $+$ sign, and notice that $\gamma(s)$ is a geodesic of $g_1$ if and only if $\psi(\gamma(s))$ is a geodesic of $g_2$, then by Proposition~\ref{prop: 1c-ps composition to 1c-1c}, we know $S_{g_i}$ expressed as the right hand side of \eqref{eq:S formula 2} have the same principal symbol. 



By discussion above, the following proposition completes the proof of Theorem~\ref{thm:forward-coincide-leading-order}.

\begin{prop} \label{prop:Poisson-leading-invariance}
Let $g_1,g_2$ be as in Theorem~\ref{thm:forward-coincide-leading-order} and $\psi_{\oc-\ps}$ be as above,
then $\Poi_{\pm,\ell}$ will coincide on the leading order in the sense that:
\begin{equation} \label{eq:principal-1}
    \psi_{\oc-\ps}^*\sigma^{-3/4}_{\oc-\ps}(Q_{\ps}\Poi_{-,2}Q_{\oc}) =  \sigma^{-3/4}_{\oc-\ps}(Q_{\ps} \Poi_{-,1}Q_{\oc}).
\end{equation}
where $Q_{\ps},Q_{\oc}$ are microlocalizers as in Section~\ref{subsec:1c-ps-calculus}.
\end{prop}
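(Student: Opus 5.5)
We compare the two microlocalized Poisson operators through their principal symbols on $\Lambda^1_-$ and $\Lambda^2_-$, which are identified by the diffeomorphism \eqref{eq:Lambda-12-relation}. By Proposition~\ref{prop:microlocalized Poisson are 1c-ps FIOs}, $Q_{\ps}\Poi_{-,i}Q_{\oc}\in I^{-3/4}_{\oc-\ps}(\R^{n+1}\times\R^n,\Lambda^i_-)$. Its principal symbol $\mathbf{a}_i$ is then determined by two pieces of data. The first is the transport equation obtained from Proposition~\ref{prop: vanishing principal symbol product} applied to $P_iQ_{\ps}\Poi_{-,i}Q_{\oc}$, which is trivial at top order modulo the commutator $[P_i,Q_{\ps}]$ supported where $Q_{\ps}$ is not microlocally the identity. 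The second is the initial data at the incoming end near $W_-$, where $\Lambda^i_-$ coincides with the free sojourn relation and $\Poi_{-,i}$ agrees with the free Poisson operator with kernel \eqref{eq:free-Poisson-kernel-2}. Since $\psi$ is the identity outside $[-T,T]\times B_R(0)$, $\psi_{\oc-\ps}$ is the identity there, so the initial data for $\mathbf{a}_1$ and $\psi^*_{\oc-\ps}\mathbf{a}_2$ coincide. It therefore suffices to show that $\psi_{\oc-\ps}^*\mathbf{a}_2$ satisfies the same transport equation on $\Lambda^1_-$ as $\mathbf{a}_1$.

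For the transport equation, the transport equation in \eqref{eq: 1c-ps vanishing principal composition, principal symbol} with $m'=2$ (for $P$ as an operator of parabolic order $2$) reads
\[
\bigl(-i\mathscr{L}_{\mathsf{H}^{2,0}_{p_i}}+i c\,(x_{\oc}^{-1}\mathsf{H}^{2,0}_{p_i}x_{\oc})+p_{i,\sub}\bigr)\mathbf{a}_i=0,
\]
with $c=\tfrac12+m+\tfrac{2n+5}{4}$ and $m=-3/4$. I would check three invariances under $\psi_{\oc-\ps}$. First, the symplectic lift $\psi_{\ps}$ intertwines the principal symbols: $p_{2,\hom}\circ\psi_{\ps}=p_{1,\hom}$, because the principal symbol is $\tau+g^{ij}\zeta_i\zeta_j$ and $\psi$ preserves $t$. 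It is important here that $\psi(t,z)=(t,\psi_1(t,z))$: the lift sends $\tau\mapsto\tau+(\partial_t\psi_1)\cdot\zeta$, and this extra term is lower order at parabolic fiber infinity, namely $O(|\zeta|)$ compared with $|\zeta|^2$. Hence the Hamilton vector fields $\mathsf{H}^{2,0}_{p_i}$, restricted to the Legendrian at $\rho_{\ps}=0$, are $\psi_{\oc-\ps}$-related; because $\psi_{\oc-\ps}$ is symplectic, the Lie derivatives on half-densities correspond as well. Second, $x_{\oc}$ is preserved by $\psi_{\oc-\ps}$, so the term $x_{\oc}^{-1}\mathsf{H}x_{\oc}$ corresponds. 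Third, and this is the delicate point, the subprincipal terms $p_{i,\sub}$ must correspond.

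I expect the subprincipal term to be the main obstacle, since the equation is not pull-back invariant. Writing $\Delta_{g_1}=\Psi^*\Delta_{g_2}(\Psi^{-1})^*$ with $\Psi u=u\circ\psi$, we get
\[
\Psi^*P_2(\Psi^{-1})^*=D_t+\Delta_{g_1}+W+\Psi^*V_2,
\]
where $W=-i(\partial_t\psi_1^{-1})\circ\psi\cdot\nabla_z$ is a first-order term. At parabolic fiber infinity $W$ has order $1<2$, so it does not affect $\mathsf{H}^{2,0}$. It does, however, enter $p_{\sub}$ through a term proportional to $\rho_{\ps}(\partial_t\psi_1^{-1})\cdot\zeta$, which restricted to $\Legps$ is generally nonzero.

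My plan for handling $W$ is to show that its contribution is exactly absorbed by the change in the half-density and phase. On $\Lambda_-$, the principal symbol carries the oscillatory factor, and the $\tau$-shift $\tau\mapsto\tau+(\partial_t\psi_1)\cdot\zeta$ of the symplectic lift produces precisely this real, skew-adjoint first-order term. In other words, $\psi_{\oc-\ps}^*$ acting on half-densities, together with the unitary pullback $\Psi$ with its Jacobian factor $|\det\partial_z\psi_1|^{1/2}$, intertwines $P_1$ and the transported $P_2$ modulo operators whose symbols vanish on $\Lambda^1_-$ to higher order. If the cancellation is not exact, I would fall back on the fact that $W$ contributes only a real first-order drift, hence a purely imaginary addition to $p_{\sub}$. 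Its integral along the bicharacteristic is a $t$-derivative term that vanishes in the leading-order symbol because $\mathsf{H}^{2,0}$ has no $\partial_t$ component at fiber infinity.

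With the three invariances in hand, the uniqueness of solutions to this first-order ODE along the flow, with the common incoming data, yields \eqref{eq:principal-1}.
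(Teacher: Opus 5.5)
Your outline is the same as the paper's. The paper also compares the two symbols through their transport equations. It uses the parametrix term $K_{-,\ell,0}$ with forcing $\sigma^{1/4}_{\oc-\ps}(Q_{1,\ell}\mathcal{P}_0Q_{\oc})$ and zero early data, where you use a homogeneous equation with free incoming data; the two are equivalent. The paper settles the lower-order term in one line by asserting $p_1=\psi^*_{\oc-\ps}p_2$ and $p_{\sub,1}=\psi^*_{\oc-\ps}p_{\sub,2}$.

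You have correctly located the weak point: $\psi_{\ps}^*p_2=p_1+\tilde b\cdot\zeta$ with $\tilde b=(\partial_t\psi_1^{-1})\circ\psi$. Equivalently, $\Psi^*P_2(\Psi^{-1})^*=P_1+W$ plus an order-zero term, with $W=\tilde b\cdot D_z$. But neither of your ways of disposing of $W$ works.

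\begin{itemize}
\item The symbol $\tilde b\cdot\zeta$ of $W$ is real, not imaginary. On $\Legps$ it therefore adds the real potential $\rho_{\ps}\,\tilde b\cdot\zeta=\tilde b\cdot\hat\zeta$ to the transport operator.
\item A positive factor $J$ such as $|\det\partial_z\psi_1|^{1/2}$ only shifts the potential by $-i\,\mathsf{H}^{2,0}_p(\log J)$, which is imaginary, so it cannot absorb a real term.
\item $\sigma(W)$ does not vanish on $\Lambda^1_-$, so there is no intertwining modulo terms vanishing there.
\item Nor is the term a $t$-derivative. Precisely because $\mathsf{H}^{2,0}_p$ has no $\partial_t$-component, $t$ is frozen and the flow is the speed-$2$ geodesic flow of $g_1(t)$. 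The term therefore integrates to $\tfrac12\int_\gamma g_1(t)(\tilde b,\dot\gamma)\,d\ell$, the line integral of a $1$-form that is in general not exact.
\end{itemize}

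Hence the symbols of $\Psi^*\mathcal{P}_{-,2}$ and $\mathcal{P}_{-,1}$ differ by the unimodular factor $\exp\bigl(\pm\tfrac{i}{2}\int g_1(\tilde b,\dot\gamma)\,d\ell\bigr)$, integrated along the bicharacteristic up to the given point. Geometrically, $\psi^{-1}_{\oc-\ps}(\Lambda^2_-)$ is the flow-out of $p_1+\tilde b\cdot\zeta$. It has the same boundary as $\Lambda^1_-$ but differs from it at order $x_{\oc}$ in $\varphi_1$, i.e.\ by an $O(1)$ phase. So \eqref{eq:Lambda-12-relation} only holds at $\ffocps$.

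This is a genuine obstruction, not an artefact of the method. Take $g_2=g_0$, $V_\ell=0$ and $\psi_1(t,z)=z+c(t)\chi(z)e_1$, with $c\in C_c^\infty((-T,T))$, $\chi\in C_c^\infty(B_R(0))$, $\chi\ge 0$ and $|c\,\partial_{z_1}\chi|<1$. Then $S_{g_2}=\Id$. In the coordinates $w=\psi_1(t,z)$, $P_1$ becomes $D_t+\Delta_0+c'(t)\chi(\psi_1^{-1}(t,w))D_{w_1}$. A WKB computation for a packet with frequency $\lambda e_1$, $\lambda\to\infty$, crossing the support at time $t_0$ with impact parameter $p$ gives the factor $\exp\bigl(-\tfrac{i}{2}c'(t_0)\int_{\R}\chi(p+se_1)\,ds\bigr)$, which is independent of $\lambda$.

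So \eqref{eq:principal-1} cannot hold in any sense that would afterwards give $\sigma(S_{g_1})=\sigma(S_{g_2})$, and no cancellation argument can close your step. One needs an extra hypothesis on $\partial_t\psi_1$: the vanishing of these line integrals, for instance a gradient Eulerian velocity field, together with a compensating gauge factor $e^{i\phi}$ in the comparison.
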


\begin{proof}
For definiteness we prove the case with $-$ sign and the other case can be proved in the same way.


From the proof of \cite[Proposition~5.16]{HJ2026-scattering-map}, we know that the non-trivial part of $Q_{\ps} \Poi_{-,\ell}Q_{\oc}$ completely comes from the contribution of the parametrix constructed in  \cite[Proposition~5.15]{HJ2026-scattering-map}, which we denote by $K_{-,\ell}$, $\ell=1,2$.
We recall their basic properties here.
First take $Q_{\oc} \in \Psi^{0,0}(\R^n)$ with $\WF'_{\oc}(Q_{\oc})$ away from fiber infinity of $\overline{{}^{\oc}T^*\R^n}$.
Then take $\tilde{Q}_{\ps,\ell} \in \Psi_{\ps}^{0,0}(\R^{n+1})$ satisfying:
\begin{multline}\label{eq:tilde Qps cond}
\text{no point of $\WF_{\ps}'(\tilde Q_{\ps,\ell})$ is forward, with respect to the Hamilton flow of $p_\ell$,}
\\ \text{of any point of $\WF_{\ps}'(P_\ell-P_0)$.} 
\end{multline}
We decompose $[P_\ell,\tilde{Q}_{\ps,\ell}]$ as
\begin{equation}
    [P_\ell,\tilde{Q}_{\ps,\ell}] = Q_{1,\ell}+Q_{2,\ell}
\end{equation}    
such that 
\begin{equation}
    \pi_{\ps}^{-1}(\WF'_{\ps}(Q_{2,\ell})) \cap \big( \Lambda_0' \circ \WF'_{\oc}(Q_{\oc}) \big) = \emptyset,
\end{equation}
and both of $Q_{1,\ell},Q_{2,\ell}$ still satisfy the wavefront bound in \eqref{eq:tilde Qps cond}.

In particular, we can choose $\tilde{Q}_{\ps,\ell}$ so that their principal symbols $\tilde{q}_{\ps,\ell}$ are related by
\begin{equation}
    \tilde{q}_{\ps,1} = \psi_{\ps}^*\tilde{q}_{\ps,2}.
\end{equation}
Then this allows us to choose $Q_{1,\ell}$ so that their principal symbols $q_{1,\ell},q_{2,\ell}$ are related by
\begin{equation} \label{eq:qi-principal-relation}
    q_{1,1} = \psi_{\ps}^*q_{1,2},\; q_{2,1} = \psi_{\ps}^* q_{2,2}.
\end{equation}
Then with $Q_{\oc}, Q_{1,\ell}$ as above, the parametrices constructed in \cite[Proposition~5.15]{HJ2026-scattering-map} are characterized by:
\begin{align}
K_{-,\ell} = \sum_{j=0}^\infty K_{-,\ell,j}, 
\quad K_{-,\ell,j}  \in I^{-j -\frac{3}{4} }_{\oc-\ps}(\R^{n+1} \times \R^n, \Lambda_{-,\ell}),
\end{align}
such that
\begin{align} \label{eq: parametrix property, up to N}
\Big( P_\ell \sum_{j=0}^N K_{-,\ell,j} - Q_{1,\ell}\mathcal{P}_0 Q_{\oc} \Big) \in I^{-\frac{3}{4}-N}_{\oc-\ps}(\R^{n+1} \times \R^n, \Lambda_{-,\ell}).
\end{align}
The desired conclusion \eqref{eq:principal-1} follows if we can prove
\begin{equation}
    \psi_{\oc-\ps}^*\sigma_{\oc-\ps}^{-3/4}(K_{-,1,0})
    = \sigma_{\oc-\ps}^{-3/4}(K_{-,2,0}).
\end{equation}  
By Proposition~\ref{prop: vanishing principal symbol product},
the amplitudes of $K_{-,\ell,0}$ are obtained from solving transport equations
\begin{align} \label{eq: transport ODE, a-0}
(-i\mathscr{L}_{ H_{p_\ell}^{2,0} } 
+i (\frac{n}{2}+1)(x_{\oc}^{-1} H_{p_\ell}^{2,0} x_{\oc}) 
+ p_{\sub,\ell})(a_{-,k,0}) = \sigma_{\oc-\ps}^{1/4}(Q_{1,\ell}\mathcal{P}_0 Q_{\oc}),
\end{align}
with the initial condition $0$ for sufficiently negative time. Since $p_1 = \psi_{\oc-\ps}^*p_2$ by \eqref{eq:metric-pullback}, we know
\begin{equation}
    (\psi_{\oc-\ps}^{-1})_*H_{p_2}^{2,0}
    = H_{p_2}^{1,0}.
\end{equation}
Also, by the discussion before \cite[Theorem~5.8]{HJ2026-scattering-map} and the reference therein, we know
that $p_{\sub,1}$ coincide with $\psi_{\oc-\ps}^*p_{\sub,2}$ on the leading order.
For the forcing on the right hand side, by the condition on the wavefront set of $\tilde{Q}_{\ps,\ell}$ in \eqref{eq:tilde Qps cond}, we know that the wavefront set of $Q_{1,\ell}\Poiz Q_{\oc}$ is contained in the part of $\Lambda_0$ coincide with $\Lambda_-$: before meeting the metric perturbation. So in particular $Q_{1,\ell}\mathcal{P}_0 Q_{\oc} \in I^{1/4}_{\oc-\ps}(\R^{n+1} \times \R^n,\Lambda_{-,\ell})$ and the principal symbol is only non-vanishing on a region on which $\psi_{\oc-\ps}$ is the identity, or equivalently where $\Lambda_{-,1}$ locally is the same $\Lambda_{-,2}$.

In summary, the transport equation for $a_{-,1,0}$ coincide the one obtained by pull-back the transport equation for $a_{0,1,0}$ on the leading order (in terms of either $\rho_{\ps}$ or $\xoc$) and \eqref{eq:principal-1} follows. 
\end{proof}

\section{The global phase space for 1c,sc analysis}
\label{appendix:global-1csc-phase-space}

In this appendix, we briefly sketch how to implement the second-microlocalization in Section~\ref{sec:second-micro} if one wish to handle large $\etaoc$ via constructing a global phase joint phase space encoding the 1-cusp and sc-level frequencies simultaneously.
This will be needed if one want to handle metric perturbations that are not compactly supported in space.
The key step is to extend the identification in Proposition~\ref{prop:1c-sc-phase-space-relation} up to the boundary of the front face, which corresponds to fiber infinity of the scattering cotangent bundle.
But in other regions, the characterization of this identification is slightly different.
In this case, we should start with the compactified 1-cusp phase space $\ocphase$ and the first resolved submanifold is  $\{ \xioc/\la \etaoc \ra = 0, \, \xoc = 0\}$.
So the phase space we will use is
\begin{equation} \label{eq:oc-sc-bundle-0}
    [\ocphase;\, \{ \xioc/\la \etaoc \ra = 0, \, \xoc = 0\}].
\end{equation}
\begin{figure}
\begin{subfigure}[t]{.45\textwidth}
    \centering
    \includegraphics[width=0.8\linewidth]{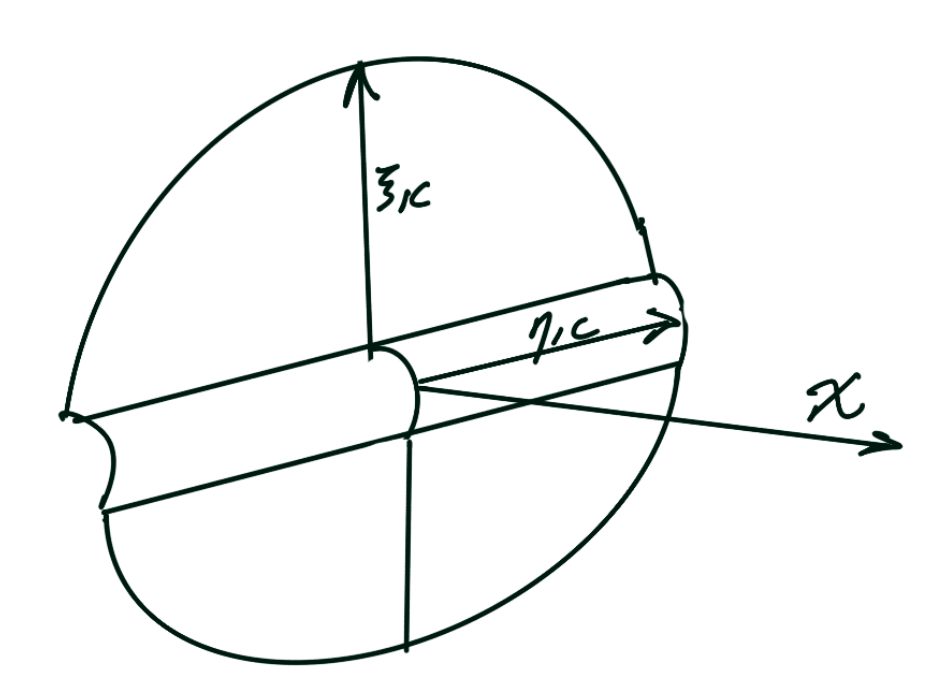}
    \caption{A graphic illustration of the phase space in \eqref{eq:oc-sc-bundle-0}. The disk represents $\overline{\R}^n$, which is the compactified fiber in $\ocphase$, with $\etaoc$ compressed into one dimension. The `bump' in the middle of the disk is the front face, which will be identified as a scattering cotangent bundle. }
    \label{subfig:1c-1}    
\end{subfigure}
\begin{subfigure}[t]{.45\textwidth}
    \centering
    \includegraphics[width=0.9\linewidth]{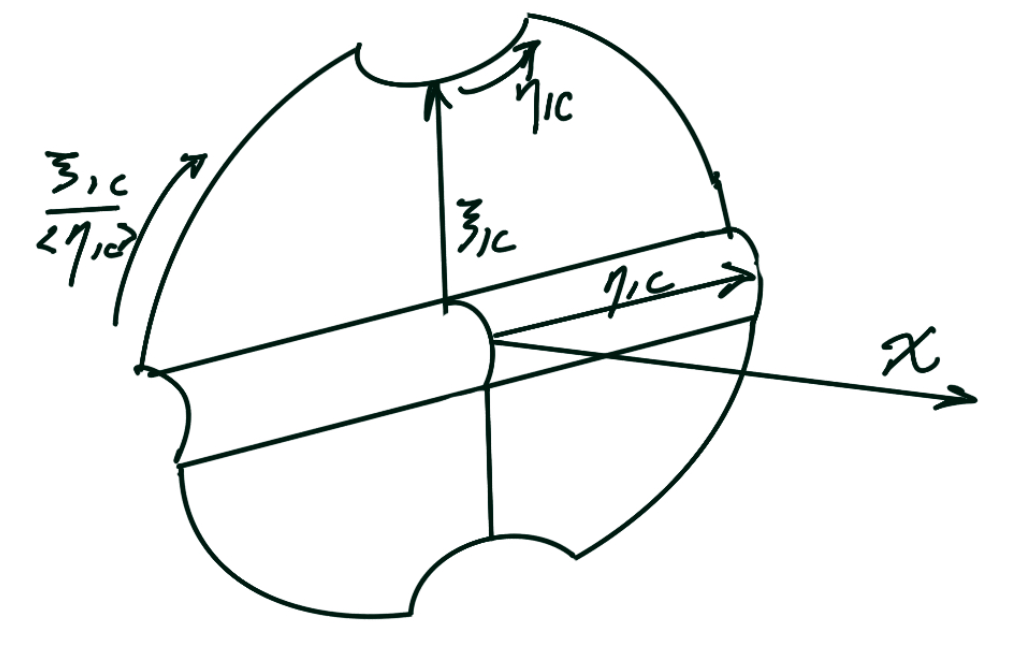}
    \caption{A graphic illustration of the phase space in \eqref{eq:oc-sc-bundle-1}. The top and bottom face created by the further blow-up is parametrized by $\etaoc \in \overline{\R^{n-1}}$. }
    \label{subfig:1c-2}    
\end{subfigure}
\caption{Resolved $\oc$-phase spaces.}
\label{fig:1c-phase-1-2}
\end{figure}

In order to facilitate the identification between objects constructed out of $\overline{{}^{\sct}T^*\R^n}$, we further blow up it at the north and south pole $\{ \etaoc/|\xioc| = 0, \, 1/\xioc =0  \}$:
\begin{equation} \label{eq:oc-sc-bundle-1}
   \Big[ [\ocphase;\, \{ \xioc/\la \etaoc \ra = 0, \, \xoc = 0\}]; \{ \la \etaoc \ra/\xioc = 0, \, 1/\xioc =0  \} \Big].
\end{equation}

This can be identified with the $\oc,\sct$-cotangent bundle constructed out of the scattering cotangent bundle as follows.
Starting from $\overline{{}^{\sct}T^*\R^n}$, we first blow up the north and south poles in the compactified fiber defined by $\{\eta_{\sct}/\xi_{\sct} = 0, 1/\xi_{\sct}=0\}$ to produce
\begin{equation} \label{eq:sc-resolved-1}
    [\overline{{}^{\sct}T^*\R^n}; \{\eta_{\sct}/\xi_{\sct} = 0, 1/\xi_{\sct}=0\}].
\end{equation}

\begin{figure}
\begin{subfigure}[t]{.45\textwidth}
    \centering
    \includegraphics[width=0.8\linewidth]{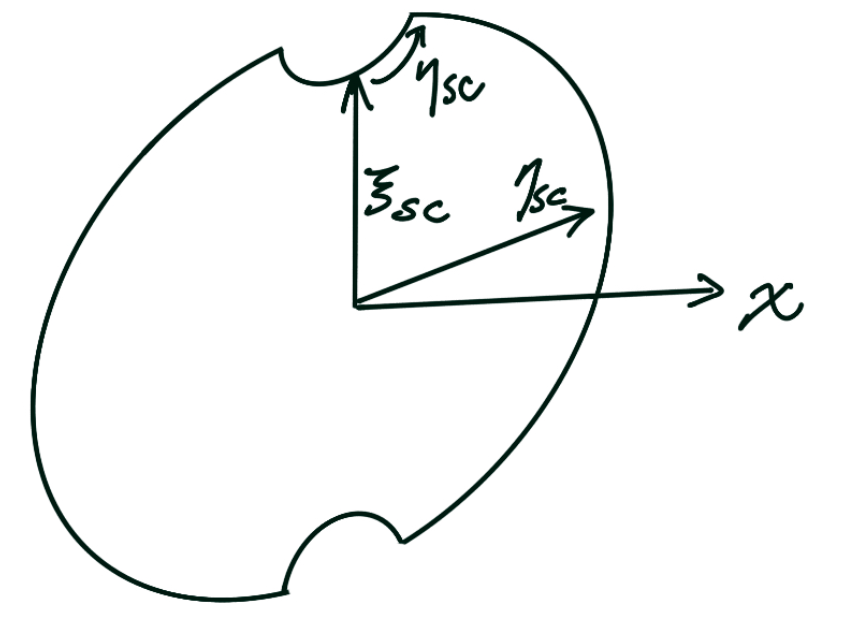}
    \caption{A graphic illustration of the phase space in \eqref{eq:sc-resolved-1}. The top and bottom face created by the further blow-up is parametrized by $\eta_{\sct} \in \overline{\R^{n-1}}$.}
    \label{subfig:sc-1}    
\end{subfigure}
\begin{subfigure}[t]{.45\textwidth}
    \centering
    \includegraphics[width=0.8\linewidth]{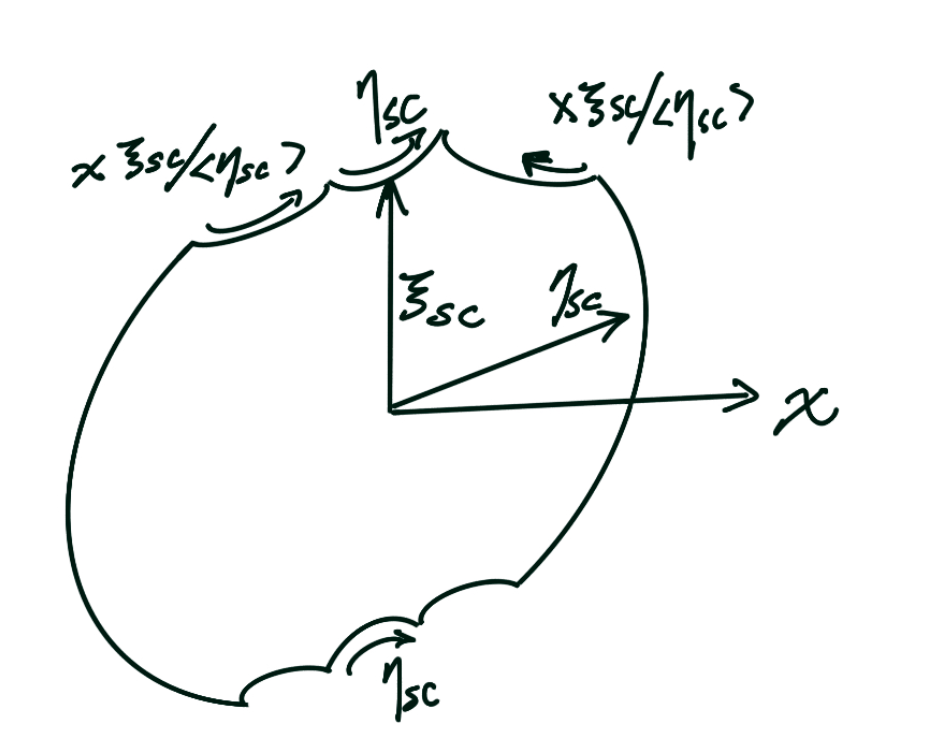}
    \caption{ The phase space in \eqref{eq:oc-sc-bundle-2}. The new front face is parametrized by $x\xi_{\sct}/\la \eta_{\sct}\ra$ (and its reciprocal), and is identified with arcs in
    \eqref{subfig:1c-2} parametrized by $\xioc/\la \etaoc \ra$ (and its reciprocal). }
    \label{subfig:sc-2}    
\end{subfigure}
\caption{Resolved $\sct$-phase spaces.}
\end{figure}
This blow-up `decouples' $\xi_{\sct}$ and $\eta_{\sct}$ and results in a bundle with fiber being a box $\overline{\R} \times \overline{\R^{n-1}}$, \footnote{Of course, one can start with compactifying ${}^{\sct}T^*\R^n$ to be so directly, here we are just building things based on more standard objects.}
where the $\overline{\R}$ is parametrized by $\xi_{\sct}/|\eta_{\sct}|$ or $|\eta_{\sct}|/\xi_{\sct}$, and $\overline{\R^{n-1}}$ is parametrized by $\eta_{\sct}$ in the interior and $(|\eta_{\sct}|^{-1},\hat{\eta}_{\sct}) \in [0,\epsilon) \times \mathbb{S}^{n-2}$ near the boundary.
Let $x$ be the boundary defining function of the base $\overline{\R^n}$, then we further blow up the cap and the bottom of the fiber at the boundary, which is $\{ 1/\xi_{\sct} = 0, \, x = 0\}$  to obtain
\begin{equation} \label{eq:oc-sc-bundle-2}
\Big[ [\overline{{}^{\sct}T^*\R^n}; \{\eta_{\sct}/\xi_{\sct} = 0, 1/\xi_{\sct}=0\}] ;\{ \la \eta_{\sct} \ra/\xi_{\sct} = 0, \, x = 0\} \Big].    
\end{equation}
Now we show that manifolds (with corners) in \eqref{eq:oc-sc-bundle-1} and \eqref{eq:oc-sc-bundle-2} are canonically diffeomorphic to each other via extending the identity map in the interior.
In terms coordinates in \eqref{eq: 1c- canonical form} and \eqref{eq:sc-1-form}, this is 
\begin{equation} \label{eq:1c-to-sc-map}
    (\xoc,\yoc,\xioc,\etaoc) \to (x=\xoc,y=\yoc,\xi_{\sct}=\frac{\xioc}{x},\eta_{\sct}=\etaoc).
\end{equation}

Consider the case with $\etaoc$ in a bounded region first. The interior of the front face created by the first blow-up, which we denote by $\ff_{\oc}$, is identified with the part of $\overline{{}^{\sct}T^*\R^n}$ with finite $\sct$-frequency as before by extending \eqref{eq:1c-to-sc-map}.
Near the corner $\{\frac{\xoc}{\xioc} = 0, \xioc = 0 \}$, which is the part with high-frequency on the sc-level but near $\xioc = 0$, a valid coordinate system is
\begin{equation} \label{eq:coordinate-1csc-corner-low}
    \frac{\xoc}{\xioc}, \, \xioc, \, \etaoc.
\end{equation}
On the other hand, for \eqref{eq:oc-sc-bundle-2}, near the corner formed by the intersection of the lift of $\{\frac{\la \eta_{\sct} \ra}{|\xi_{\sct}|}>0,x=0\}$ and the front face, a valid coordinate system is
\begin{equation} \label{eq:coordinate-1csc-corner-low-2}
  \frac{1}{\xi_{\sct}},\,   \frac{x}{1/\xi_{\sct}}, \,   \eta_{\sct}, 
\end{equation}
which can be identified with \eqref{eq:coordinate-1csc-corner-low}.
Using similar argument, the region near the interior of two front faces at the top and the bottom in \eqref{eq:oc-sc-bundle-2} will be identified with the part of \eqref{eq:oc-sc-bundle-1} with finite frequencies and $\xioc \neq 0$,
and the lift of $\{x>0,\frac{\la \eta_{\sct}\ra}{\xi_{\sct}} = 0\}$ will be identified with front face in \eqref{eq:oc-sc-bundle-1} created by the last blow-up and defined by $\la \etaoc \ra/\xioc = 0$, both of which are parametrized by $\eta_{\oc}=\eta_{\sct}$. 

For the part with large $|\eta_{\oc}|$ or $|\eta_{\sct}|$, the proof is almost the same except for with $\xioc$, $\xi_{\sct}$ and their reciprocals are replaced by $\xioc/\la \eta_{\oc} \ra$ and $\xi_{\sct} / \la \eta_{\sct} \ra$ and their reciprocals.
For example, coordinates in \eqref{eq:coordinate-1csc-corner-low} becomes
\begin{equation} \label{eq:coordinate-1csc-corner-low'}
    \frac{\xoc}{\xioc/\la \etaoc \ra}, \, \xioc/\la \etaoc \ra, \, |\etaoc|^{-1}, \, \hat{\eta}_{\oc} \in \mathbb{S}^{n-2}.
\end{equation}
Coordinate in \eqref{eq:coordinate-1csc-corner-low-2} becomes 
\begin{equation} \label{eq:coordinate-1csc-corner-low-2'}
  \frac{\la \eta_{\sct} \ra}{\xi_{\sct}},\,   \frac{x}{\la \eta_{\sct} \ra/\xi_{\sct}}, \,   |\eta_{\sct}|^{-1}, \hat{\eta}_{\sct} \in \mathbb{S}^{n-2}, 
\end{equation}
and two parts can be identified as before.

The symbol class that is natural to use should be (polynomially weighted) conormal functions on the space in  \eqref{eq:oc-sc-bundle-0}. Equivalently, one can blow-down the face $\{ \frac{\la \eta_{\sct} \ra/\xi_{\sct}}{x}  = 0 \}$ 
in \eqref{eq:oc-sc-bundle-2} and consider conormal functions on that.
This face blows down to the north and south pole of $\overline{\R^n}$ parametrized by 1-cusp frequencies.
This redundant face was inherited from the first blow-up, which was to recover $\eta_{\sct} = \eta_{\oc}$ up to $\xi_{\sct} = \infty$, but this is not needed for the part with large 1-cusp frequencies.
The space in \eqref{eq:oc-sc-bundle-0} have three types of boundary faces:
\begin{enumerate}
    \item The lift of $\{\xioc/\la \etaoc \ra \neq 0, \xoc =0\}$, which we denote by $\mathrm{1cf}$.
    \item The lift of the original fiber infinity in $\ocphase$, which we denote by $\mathrm{df}$ (differential face).
    \item The front face created by blow-up, which we denote by $\ff_{\sct}$.
\end{enumerate}
Let $\rho_{\bullet}$ be the boundary defining function of the face $\bullet$, then we denote conormal functions on the space in \eqref{eq:oc-sc-bundle-0} by $S_{\oc,\sct}^{0,0,0}$ and define
\begin{equation}
    S_{\oc,\sct}^{m,r_1,r_2} = \rho_{\mathrm{df}}^{-m}\rho_{\mathrm{1cf}}^{-r_1}\rho_{\ff_{\sct}}^{-r_2}S_{\oc,\sct}^{0,0,0}.
\end{equation}
Then we use the usual (left) quantization formula 
\begin{equation}
    \Op(a) = (2\pi)^{-n} \int e^{i(Z-Z') \cdot \xi} a(Z,\mk{Z}) \, d\mk{Z}, \quad Z, \mk{Z} \in \R^n,
\end{equation}
when written as Euclidean coordinates, to quantize this symbol class to obtain the $\oc,\sct$ pseudodifferential class $\Psi_{\oc,\sct}^{m,r_1,r_2}$.

As in Section~\ref{sec:second-micro}, if the sojourn time of a geodesic corresponds to the point in the lifted $\Cli$ with $\xioco = \xi_{\oc,0}$ are different,
then we can use 
\begin{equation}
    e^{-i\frac{\xi_{\oc,0}}{2\xoc^2}} A e^{i\frac{2\xi_{\oc,0}}{\xoc^2}}, \; A \in \Psi_{\oc,\sct}^{m,r_1,r_2}
\end{equation}
to microlocalize as in Section~\ref{subsec:determine-1-jet} to produce a contradiction, but now we don't require $A$ to have a symbol compactly supported in $\etaoc$, which corresponds to requiring the projection of points on the geodesic to the plane orthogonal to its asymptotic direction to be uniformly bounded.

\bibliographystyle{plain}
\bibliography{bib_sc_map_inverse}

\end{document}